\providecommand{\mb}[1]{\mathbb{ #1} }
\newtheorem{theorem}{Theorem}
\numberwithin{theorem}{section}
\newtheorem{corollary}[theorem]{Corollary}
\newtheorem{lemma}[theorem]{Lemma}
\newtheorem{proposition}[theorem]{Proposition}
\theoremstyle{definition}
\newtheorem{definition}[theorem]{Definition}
\newtheorem{remark}[theorem]{Remark}
\newcommand{\add}{\operatorname{add}}
\newcommand{\ve}{\varepsilon}
\newcommand{\PP}{\mathbb{P}}
\newcommand{\F}{\mathbb{F}}
\newcommand{\Z}{\mathbb{Z}}
\newcommand{\Q}{\mathbb{Q}}
\newcommand{\cycl}{\operatorname{cycl}}
\newcommand{\Tr}{\operatorname{Tr}}
\newcommand{\Id}{\operatorname{Id}}
\newcommand{\Frob}{\operatorname{Frob}}
\newcommand{\GL}{\operatorname{GL}}
\newcommand{\gr}{\operatorname{gr}}
\newcommand{\arr}{\operatorname{ar}}
\newcommand{\smr}{\operatorname{smr}}
\newcommand{\nsmr}{\operatorname{nsmr}}
\newcommand{\tors}{\operatorname{tors}}
\newcommand{\iso}{\operatorname{iso}}
\newcommand{\SL}{\operatorname{SL}}
\newcommand{\Gal}{\operatorname{Gal}}
\newcommand{\mult}{\operatorname{mult}}
\newcommand{\abs}[1]{\left|#1\right|}
\newcommand{\legendre}[2]{\ensuremath{\left( \frac{#1}{#2} \right) }}
\newcommand{\etale}{\'etal\@ifstar{\'e}{e\xspace}}
\newcommand{\sg}[1]{{\color{orange} \textsf{[[SG: #1]]}}}
\newcommand\nnfootnote[1]{%
  \begin{NoHyper}
  \renewcommand\thefootnote{}\footnote{#1}%
  \addtocounter{footnote}{-1}%
  \end{NoHyper}
}
\begin{document}
\author{Stevan Gajović, Lazar Radi\v{c}evi\'{c}, Matteo Verzobio}
\title{The density of elliptic curves over $\Q_p$ with a rational 3-torsion point or a rational 3-isogeny}
\date{}
\maketitle
\begin{abstract}
We determine the probability that a random Weierstrass equation with coefficients in the $p$-adic integers defines an elliptic curve with a non-trivial $3$-torsion point, or with a degree $3$ isogeny, defined over the field of $p$-adic numbers. We determine these densities by calculating the corresponding $p$-adic volume integrals and analyzing certain modular curves. Additionally, we explore the case of $\ell$-torsion for $\ell>3$ prime. 
\end{abstract}
\renewcommand{\thefootnote}{}
\nnfootnote{2020 \emph{Mathematics Subject Classification}: Primary 11G07; Secondary 14H52, 11S80}
\nnfootnote{\emph{Key words and phrases}: Elliptic curves, torsion points, isogenies, p-adic fields, elliptic curves over local fields, p-adic integration.}
\renewcommand{\thefootnote}{\arabic{footnote}}
\setcounter{footnote}{0}
\section{Introduction}
\subsection{Main results}

Let $p \geq 3$ be a prime number. In this paper, we determine the probability that a random Weierstrass equation $y^2+a_1xy+a_3y=x^3+a_2x^2+a_4x+a_6$, with coefficients in the $p$-adic integers $\Z_p$, defines an elliptic curve with a non-trivial $3$-torsion point or with a degree $3$ isogeny defined over the field $\Q_p$ of $p$-adic numbers.
More precisely, we consider the affine space $W = \Z_p^5$ of tuples $[a_1,a_2,a_3,a_4,a_6]$ of coefficients, equipped with the Haar measure $\mu_{\Z_p^5}$ normalized so that $\mu_{\Z_p^5}(W)=1$. Let $
W_{\tors}\subset W$ be the subspace of tuples for which the equation $y^2+a_1xy+a_3y=x^3+a_2x^2+a_4x+a_6$ defines an elliptic curve $E$ over $\Q_p$ with a non-trivial $\Q_p$-rational 3-torsion point. Our main result is the following theorem. 

\begin{theorem}\label{thm:main}
 Let $p\geq3$. We have
		\[
		\mu_{\Z_p^5}(W_{\tors})=\begin{cases} 
			\frac{p^2 ( 3 p^6+4p^2-4p + 4)}{8 (p^8 + p^6 + p^4+p^2 + 1)} \text{ if } p\equiv 1 \pmod 3, \\
			\frac{p^2}{2 (p^2 + p + 1)} \text{ if } p\equiv 2 \pmod 3,\\
 \frac{3}{26} \text{ if } p=3.
		\end{cases}
		\]
\end{theorem}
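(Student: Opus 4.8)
I would compute $\mu_{\Z_p^5}(W_{\tors})$ as an explicit $p$-adic volume integral, using the modular curve $X_1(3)$ to parametrise the curves in question. The starting point is the classical normalisation: an elliptic curve $E/\Q_p$ with a point $P$ of exact order $3$ is $\Q_p$-isomorphic to a curve in the Tate family
\[
E_{a_1,a_3}\colon\ y^2+a_1xy+a_3y=x^3,\qquad P=(0,0),
\]
where $(a_1,a_3)$ lies in $V=\{\,a_3(a_1^3-27a_3)\neq0\,\}$ (equivalently $\Delta(E_{a_1,a_3})=a_3^3(a_1^3-27a_3)\neq0$), and the only coordinate changes preserving this shape together with the marked point are the scalings $(a_1,a_3)\mapsto(u^{-1}a_1,u^{-3}a_3)$, $u\in\Q_p^\times$. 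Consequently $\mathbb{G}_m$-orbits in $V$ correspond to isomorphism classes of such pairs, with modular invariant $t=a_1^3/a_3$ and $j=t(t-24)^3/(t-27)$; in particular $X_1(3)\cong\mathbb{P}^1_t$, with cusps at $t=27$ and $t=\infty$. I would first prove this lemma (by moving $P$ to the origin and using that $P$ is a flex of the cubic) and deduce that $W_{\tors}$ is precisely the set of $\underline a\in\Z_p^5$ whose curve becomes, over $\Q_p$, isomorphic to some $E_{a_1,a_3}$.

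\textbf{Reduction to a $p$-adic integral.} Let $G=\mathbb{G}_m\ltimes\mathbb{G}_a^3$ be the group of coordinate changes of Weierstrass equations, acting on the affine space $\mathbb{A}^5$ of tuples $(a_1,a_2,a_3,a_4,a_6)$ so that $G$-orbits over $\Q_p$ are isomorphism classes of elliptic curves. By the lemma, $W_{\tors}$ is the intersection with $\Z_p^5$ of the image of the action map $\alpha\colon G\times V\to\mathbb{A}^5$, $(g,(a_1,a_3))\mapsto g\cdot(a_1,0,a_3,0,0)$. The volume form $da_1\wedge da_2\wedge da_3\wedge da_4\wedge da_6$ is invariant under the unipotent directions of $G$ and is scaled by $u(g)^{-16}$ under the $\mathbb{G}_m$-part; combining this with the description of the fibres of $\alpha$ from the lemma (a copy of the scaling group, together with the generic automorphism $\{\pm1\}$), a change of variables rewrites $\mu_{\Z_p^5}(W_{\tors})$ as the integral, over those $(a_1,a_3,g)$ for which $\alpha(g,(a_1,a_3))\in\Z_p^5$, of a power of $|u(g)|_p$ (the Jacobian), suitably normalised by the volume of the scaling subgroup and by $2$. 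Integrating out the unipotent parameters and the scaling, all subject to the integrality constraints, collapses this to a one-dimensional integral over $t\in X_1(3)(\Q_p)=\mathbb{P}^1(\Q_p)$ whose integrand is the local density of integral Weierstrass models of $E_t$. For $p\ge5$ it is equivalent, and often cleaner, to pass to a short model $y^2=x^3+Ax+B$, in which a rational $3$-torsion point exists iff the $3$-division polynomial $3x^4+6Ax^2+12Bx-A^2$ has a root $x_0\in\Q_p$ with $x_0^3+Ax_0+B\in(\Q_p^\times)^2$; either way the computation is reduced to finitely many $p$-adic volumes.

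\textbf{A multiplicity to watch.} A curve with full rational $3$-torsion carries four rational order-$3$ subgroups and is hence counted four times by the parametrisation, whereas a curve with $\#E(\Q_p)[3]=3$ is counted once. Because $\Q_p$ contains $\mu_3$ precisely when $p\equiv1\pmod3$, the full-$3$-torsion locus has positive measure only in that case, and only there must one subtract a correction $3\,\mu_{\Z_p^5}(W_{\mathrm{full}})$, where the density $\mu_{\Z_p^5}(W_{\mathrm{full}})$ of curves with full rational $3$-torsion is computed by the same method from the $X(3)$-family (equivalently, by requiring the $3$-division polynomial of $E_{a_1,a_3}$ to split completely over $\Q_p$). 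For $p\equiv2\pmod3$ and for $p=3$ no such correction is needed.

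\textbf{Evaluation and the main obstacle.} It remains to evaluate the $p$-adic volume integrals explicitly. I would stratify the integration by the valuations of $a_1$, $a_3$ and $a_1^3-27a_3$ --- equivalently, by the reduction behaviour of $E_t$ along $X_1(3)$ and the position of $t$ relative to the two cusps; on each stratum the integrand is locally constant and the answer assembles from finitely many geometric series in $p^{-1}$. The split between $p\equiv1$ and $p\equiv2\pmod3$ enters through the number of cube classes in $\Q_p^\times$ (which governs the scaling integral) and through the Galois action on $E[3]$ (which governs which quadratic twists of a given $j$-invariant actually acquire a rational $3$-torsion point). I expect the genuinely laborious part to be the stratum bookkeeping for $p\equiv1\pmod3$ --- which is what produces the six-term numerator --- together with the exceptional prime $p=3$: there the short Weierstrass form is unavailable, the $3$-division polynomial degenerates modulo $3$, and the $3$-torsion of $E_t(\Q_3)$ can be wildly ramified, so the additive-reduction strata have to be analysed directly (e.g.\ through the formal group together with an explicit Kodaira-type case analysis). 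This last point is also why $3/26$ is not a specialisation of either generic formula. Assembling all contributions and simplifying then yields the three stated values.
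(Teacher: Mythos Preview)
Your proposal is correct and follows essentially the same route as the paper's second proof (Section~5): parametrise via the Tate normal form $E_{a_1,a_3}$, compute the density as a $p$-adic Jacobian integral over the $X_1(3)$-cover, and subtract the $3\mu(W_{\mathrm{full}})$ overcount via $X(3)$ when $p\equiv1\pmod3$; the paper streamlines this by first reducing to short Weierstrass equations (Proposition~\ref{prop:assume-short-WE}), so the integral is over $\Z_p^2$ with explicit Jacobian $|v|_p^2$. The paper also gives a first, independent proof (Section~4) via a reduction-type case split using the Cremona--Sadek Kodaira densities, and treats $p=3$ (Section~6) by a direct modulo-$9$ analysis of $\psi_3$ together with the Tate uniformisation for the split multiplicative strata, much as you anticipate.
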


We also compute the probability that a random Weierstrass equation defines an elliptic curve with a $\Q_p$-rational isogeny of degree $3$. Let $
W_{\iso}\subset \Z_p^5$ be the subset of tuples $[a_1,a_2,a_3,a_4,a_6]$ of coefficients which define an elliptic curve with a $\Q_p$-rational $3$-isogeny.
\begin{theorem}\label{thm:mainiso}
Let $p>3$.
We have
\[
		\mu_{\Z_p^5}(W_{\iso})=\begin{cases} 
			\frac{3p^4 + 3p^3 + 4p^2 + 4}{4(p^4 + p^3 + p^2 + p + 1)} \text{ if } p\equiv 1 \pmod 3, \\
 \frac{p^4 + p^3 + 2p^2 + 2}{2(p^4 + p^3 + p^2 + p + 1)} \text{ if } p\equiv 2 \pmod 3.
		\end{cases}
		\]
	\end{theorem}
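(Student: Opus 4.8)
The plan is to reduce the five-dimensional integral over $W$ to a two-dimensional one, to evaluate the part coming from good reduction by a point count on the modular curve $X_0(3)$, and to handle the bad-reduction locus by a self-similarity argument that produces the geometric-series denominator.

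First I would record that an elliptic curve $E/\Q_p$ admits a $\Q_p$-rational $3$-isogeny if and only if its $3$-division polynomial $\psi_3^E\in\Q_p[x]$ has a root in $\Q_p$: a root $x_0$ gives the Galois-stable subgroup $\{O\}\cup\{P\in E[3]:x(P)=x_0\}$, and conversely the kernel of a rational $3$-isogeny has this form, forcing $x_0\in\Q_p$. Since $p>3$, completing the square and the cube gives a morphism $W=\Z_p^5\to\Z_p^2$, $(a_1,\dots,a_6)\mapsto(A,B)$, sending a Weierstrass equation to the short model $y^2=x^3+Ax+B$ of the isomorphic curve; an elementary Jacobian computation (fibrewise over the coordinate projections it is an affine shear of unit Jacobian) shows it pushes $\mu_{\Z_p^5}$ forward to the Haar measure $\mu_{\Z_p^2}$. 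As admitting a $3$-isogeny is an isomorphism invariant, this reduces the theorem to computing $\mu_{\Z_p^2}(S)$, where $S=\{(A,B)\in\Z_p^2:\ 3x^4+6Ax^2+12Bx-A^2\ \text{has a root in }\Q_p\}$. Inspecting valuations shows every such root already lies in $\Z_p$; solving $\psi_3(x_0)=0$ for $B$ gives the useful identities $x_0^3+Ax_0+B=(3x_0^2+A)^2/(12x_0)$ and $\psi_3'(x_0)=12(x_0^3+Ax_0+B)$.

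On the locus $p\nmid\Delta$ of good reduction, $\psi_3\bmod p$ is separable, so by Hensel's lemma membership in $S$ is detected modulo $p$; as the mod-$3$ Galois representation is unramified there, the mod-$p$ condition is exactly that $\bar E/\F_p$ admits an $\F_p$-rational $3$-isogeny. Since $p>3$, the curve $X_0(3)$ has good reduction, genus $0$ and two cusps, so the good-reduction contribution to $\mu_{\Z_p^2}(S)$ is $p^{-2}$ times an elementary point count on $X_0(3)_{\F_p}$; this accounts for the bulk of the answer, and its dependence on $p\bmod3$ enters through the $\F_p$-splitting type of $\psi_3$ (equivalently the image of $\Frob$ in $\operatorname{PGL}_2(\F_3)\cong S_4$), in particular through whether $\zeta_3\in\F_p$. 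For the remaining locus $p\mid\Delta$, I would first split off the non-minimal part $\{p^4\mid A,\ p^6\mid B\}$: since $(A,B)$ and $(p^{-4}A,p^{-6}B)$ define isomorphic curves, this part contributes exactly $p^{-10}\mu_{\Z_p^2}(S)$, so $\mu_{\Z_p^2}(S)=(1-p^{-10})^{-1}\mu_{\Z_p^2}(S_{\mathrm{min}})$, and after cancelling the factor $p^5+1$ this leaves the denominator $p^4+p^3+p^2+p+1=\tfrac{p^5-1}{p-1}$. On the minimal bad-reduction locus one evaluates the measure by stratifying according to the $\Q_p$-factorisation type of $\psi_3$ — equivalently the image of $G_{\Q_p}$ in $\operatorname{PGL}_2(\F_3)=S_4$ among the subgroups fixing a point of $\PP^1(\F_3)$, namely $\{e\}$, $\langle(12)\rangle$, $\langle(123)\rangle$, $S_{\{1,2,3\}}$, further refined by the ramification of the auxiliary quadratic or cubic field — and computing each stratum by an explicit analysis modulo a bounded power of $p$ of when $\psi_3$ acquires a $\Z_p$-root.

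The main obstacle is this last step. Because "$\psi_3$ has at least two $\Q_p$-rational roots'' is a positive-measure (congruence-type, not measure-zero) condition, the stratification cannot be truncated cheaply: one must carry out an inclusion–exclusion over factorisation types — equivalently, integrate $\mathbf 1[\#\{\text{roots}\}\ge1]=\sum_{k\ge1}(-1)^{k+1}\binom{\#\{\text{roots}\}}{k}$ by parametrising configurations of $k$ rational roots — while tracking the ramification of the auxiliary extensions and the additive-reduction substrata via Tate's algorithm, and it is precisely here that the two residue classes of $p\bmod3$ yield genuinely different rational functions. A useful internal check is that $W_{\tors}\subset W_{\iso}$ and that rerunning the computation with the extra requirement that $3x_0$ be a square in $\Q_p$ recovers Theorem~\ref{thm:main}.
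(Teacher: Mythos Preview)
Your outline diverges from the paper's route and, as you yourself flag, leaves the hardest step undone. The paper does \emph{not} split into good and bad reduction for this theorem; instead it parametrises pairs (short Weierstrass equation, rational $3$-isogeny) uniformly. By Lemma~\ref{3-isogeny lemma} every rational $3$-isogeny has kernel generated by a point $P$ with $x(P)\in\Q_p$ and $y(P)^2/\lambda\in\Q_p$ for some $\lambda\in\{1,\nu,p,\nu p\}$, giving four explicit maps $\Psi_\lambda:\Z_p^2\to\Z_p^2$ (Proposition~\ref{3-isogeny parametrization}) whose images cover $W_{\iso}$. Each $\Psi_\lambda$ has Jacobian $4\lambda^2 v^2$, so by the change-of-variables formula (Theorem~\ref{change of vars}) its image has measure $|\lambda|_p^2\cdot p^2/(p^2+p+1)$, and summing gives $\sum_\lambda\eta_\lambda = 2(p-1)(p^2+1)/(p^3-1)$. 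The inclusion--exclusion is then organised by the number of rational $3$-isogenies: writing $\eta_i$ for the measure of curves with exactly $i$ isogenies, one has $2\eta_1+4\eta_2+8\eta_4=\sum_\lambda\eta_\lambda$, and $\eta_2,\eta_4$ are computed separately via the twisted full-level parametrisations $\pi_{E_b}$ of Lemma~\ref{twisted X(3) jacobian lemma}. This treats good and bad reduction in one stroke: the bad locus contributes through the $p$-adic absolute value of the Jacobian, not through a Kodaira-type case split.

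Your proposed stratification by the $\Q_p$-factorisation type of $\psi_3$ on the minimal bad-reduction locus is where the genuine gap lies. Unlike rational $3$-torsion, the existence of a rational $3$-isogeny is \emph{not} determined by the Kodaira symbol alone (it depends on the full Galois action on $E[3]$, not just on whether $3$ divides a Tamagawa number), so the clean argument of \cref{sub:bad-reduction} does not transplant. You would need to work case by case through the additive types, tracking how $\psi_3$ factors over $\Z_p$ as the valuations of $A,B$ vary; this is doable but laborious, and you have not carried it out. The paper's parametrise-and-integrate method sidesteps this entirely, which is its main advantage. Your good-reduction sketch via $X_0(3)$ is fine in spirit, but note that one must count Weierstrass equations (equivalently pairs $(E,\omega)$), not $j$-invariants, so the point count on $X_0(3)_{\F_p}$ needs to be weighted correctly---this is exactly what the moduli spaces $X_{\omega,\psi}(3)$ in \cref{marked torsion point section} are set up to do.
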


A fundamental idea in number theory is to study Diophantine equations by reducing them modulo a prime $p$. Often, this simple approach is enough to show that a given equation has no integral or rational solutions. A natural question, then, is: for a fixed prime $p$, how frequently does this method successfully rule out solutions for a given class of equations? In this article, we focus on this question in the context of elliptic curves, specifically examining how often reduction modulo $p$ can detect the absence of rational $3$-torsion points. 

Intuitively, the Haar measure on $\Z_p$ is the probability measure that captures the notion that the probability that a random integer is divisible by $p$ should be $1/p$. For example, let $E/\Q$ be an elliptic curve with good reduction at $p \ne 3$, so that the reduction $\Tilde{E}$ is an elliptic curve over $\F_p$. Then any non-zero $3$-torsion point of $E$ defined over $\Q_p$ reduces to a non-zero $3$-torsion point of $\Tilde{E}$ defined over $\F_p$. By iterating this procedure for several primes $p$ we obtain an efficient algorithm to constrain the size of the rational $3$-torsion subgroup of $E$. When $E$ does not have good reduction at $p$, the situation becomes more complicated, requiring the study of congruences modulo higher powers $p$. The $p$-adic numbers provide a nice way to package together information modulo various powers of $p$. Theorem \ref{thm:main} can be viewed as the determination of the total number of congruences that characterize elliptic curves with a non-trivial $\Q_p$-rational $3$-torsion point. We also consider the following related questions.

\paragraph{$\ell$-torsion points for $\ell>3$.}  We consider the probability of a non-trivial $\Q_p$-rational $\ell$-torsion point, for general prime $\ell$ and $p$. Many, but not all, features of the case $\ell=3$ also appear in this general setting, as we discuss in Section~\ref{sec:comm}. We obtain an asymptotic result (Theorem~\ref{asymptotic probability}) as $p \mapsto \infty$. We also compute these probabilities for families of twists in \S\ref{subsec:twists-l-torsion}.

\paragraph{On $p$-torsion over $\Q_p$.}\label{subsec:l=p-results} We discuss the case $p=\ell$ in Section~\ref{p-torsion section}, proving Theorem \ref{thm:main} for $p=3$. The main difference in the $p=\ell$ case is that the multiplication by $p$ isogeny is not separable in characteristic $p$. In particular, while by Hensel's lemma an $\ell$-torsion point in $\Tilde{E}(\F_p)[\ell]$ always lifts to an $\ell$-torsion point in $E(\Q_p)[\ell]$ (if $\ell\neq p$), this is is not true for $p$-torsion points. In fact, we show (Proposition \ref{prob lift of p torsion}) that the probability that a non-trivial $p$-torsion $\F_p$-point on an elliptic curve with good ordinary reduction lifts to a non-trivial $p$-torsion $\Q_p$-point is $1/p$. The main idea of the proof is to show that the Serre-Tate canonical lift of $\tilde{E}$ is the unique lift modulo $p^2$ with this lifting property.

\paragraph{Counting elliptic curves over $\Q$ having a non-trivial rational 3-torsion point.} Using Theorem \ref{thm:main}, by a straightforward application of the large sieve, we obtain an upper bound for the number of elliptic curves, defined by a short Weierstrass equation, that has a non-trivial rational $3$-torsion point and whose coefficients lie in a box of arbitrary side lengths and center, see Proposition~\ref{sieve bound} and Corollary~\ref{cor:sieve-cases}. 
The bound in Corollary~\ref{cor:sieve-cases}(1) remains larger in order of magnitude than what one might expect based on the results of Harron and Snowden~\cite{harron2017counting} (see also~\cite{cullinan2022probabilistic}), where they count elliptic curves of bounded height having a rational $3$-torsion point using geometry of numbers.
However, the flexibility afforded by our results leads to a new statement when counting such elliptic curves in so-called short intervals; see Corollary~\ref{cor:sieve-cases}(2). The significance of taking
short intervals lies in the fact that statistical patterns start to emerge
even when sampling a very thin subset of elliptic curves. These intervals have become of major interest recently, especially after the breakthrough paper \cite{matomaki2016multiplicative} by Matom{\"a}ki and Radziwi{\l}{\l}. 

\subsection{Previous work on related topics}

\paragraph{$p$-adic probabilities in other contexts.}  While a priori the $p$-adic density of an arbitrary subset of $\Z_p^5$ does not even need to be a rational number, note that the densities $\mu_{\Z_p^5}(W_{\tors})$ and $\mu_{\Z_p^5}(W_{\iso})$ are fixed rational functions of $p$ (depending only on the class of $p$ mod 3). This happens frequently in
the study of densities of $\Q_p$-soluble equations for many other classes of equations. For example, in \cite{BCFG}, Bhargava, Cremona, Fisher, and the first author show that the probability that a random degree $n$ polynomial in $\Z_p[x]$ has a rational zero is a rational function of $p$. Other cases that were investigated are $\Q_p$-solubility of genus one curves of degree $2$ and $3$ \cite{bhargava2021proportion}, \cite{bhargava2016proportion} by Bhargava, Cremona, and Fisher, $(2,2)$-forms \cite{fisher2021everywhere} by Fisher, Ho, and Park, and cubic hypersurfaces \cite{beneish2024often} by Beneish and Keyes, and in all these cases the corresponding probability was shown to be a fixed rational function of $p$.

\paragraph{The case of 2-torsion.} Note that in the case $\ell=2$, the probability that $E : y^2=f(x)$ has a non-trivial $\Q_p$-rational $2$-torsion point is the probability that $f(x)$ has a $\Q_p$-rational root. This probability can be readily derived from \cite[Section 1.2.3]{BCFG}, where the probability of a random cubic polynomial possessing a $\Q_p$-rational root is computed. This is based on the observation that $f(x)$ is essentially the 2-division polynomial of $E$. This strategy however does not extend well to the case $\ell>2$, since then the $\ell$-division polynomials of elliptic curves form a very special family of degree $\ell^2-1$ polynomials in $\Z_p[x]$ which are not well-suited for the recursive method of \cite{BCFG}. 

\paragraph{The case $\ell=p$, anomalous elliptic curves and cryptography.} Let $p\geq 7$. If $E/\Q_p$ has good reduction a non-trivial $p$-torsion point, then for its reduction modulo $p$, $\tilde{E}/\F_p$, we have $\#\tilde{E}(\F_p)=p$. Such curves are called anomalous and are very important in cryptography, as for them the discrete logarithm problem can be solved very efficiently,  see Smart \cite{Smart-discrete-log}, so it is very important to avoid using them in any real world applications of cryptography, and it is a natural question to ask how often they occur. In another context, Balakrishnan, {\c{C}}iperiani, Mazur, and Rubin in \cite[Remark 5.8]{balakrishnan2024shadow} observed that when the anomalous elliptic curves are ordered by conductor, numerical data suggest that the proportion of those with a $\Q_p$-rational $p$-torsion point tends to $1/p$ as we increase the conductor. Counting elliptic curves ordered by conductor is a subtle question (see, for example, \cite{shankar2019families}). If we order elliptic curves by naive height instead, Proposition \ref{prob lift of p torsion} implies immediately that this proportion is indeed equal to $1/p$.  

\paragraph{Asymptotic counting over $\Q$.} Recently the study of the asymptotic counts of elliptic curves over $\Q$ with a $\Q$-rational non-trivial $\ell$-torsion point has also attracted a lot of attention see \cite{harron2017counting}, \cite{cullinan2022probabilistic}, \cite{pizzo2020counting}, and \cite{bruin2022counting} among others. In particular, Harron and Snowden \cite[Theorem 1.1]{harron2017counting} provide an asymptotic formula for the number of elliptic curves of bounded height with a $\Q$-rational non-trivial $3$-torsion point. An elliptic curve over $\Q$ with a $\Q$-rational non-trivial $3$-torsion point must also have a $\Q_p$-rational non-trivial $3$-torsion point for all primes $p$, and in Section~\ref{sieve section} we prove an upper bound for the number of elliptic curves by imposing local conditions at many primes 
$p$ and sieving.

\subsection{Strategy and organization of the paper}

\paragraph{Difficulties and challenges.}\label{subsec:difficulties} Previous work on determining densities of $p$-adically soluble equations in families was, as far as we are aware, restricted to families of hypersurfaces parametrized by an affine space, and the densities were determined by an explicit analysis of the equation that defines the hypersurface via a recursive method.  In our setting, we consider the family of 3-torsion subgroups $E_{a,b}[3]$, where $E_{a,b} : y^2=x^3+ax+b$,  as the pair $(a,b)$ varies in the affine plane $\Z_p^2$. As a variety, $E_{a,b}[3]$ consists of 9 geometric points, one of which corresponds to the origin, and we want to understand how often one of the remaining 8 points is rational. There appears to be no natural way to view $E_{a,b}[3]$ as a hypersurface, and its defining equations are rather complicated, involving the 3-division polynomial and the defining equation of $E_{a,b}$. This is a significant difficulty for the recursive method, which we circumvent by making use of $p$-adic integration and the theory of modular curves.  

\paragraph{Modular curves.} Throughout the paper, we exploit the fact that the modular curves $X_1(3)$ and $X(3)$ that parametrize elliptic curves with level-3 structure are isomorphic to the projective line $\PP^1$. This is essentially the classical theory of Tate and Hesse normal forms of elliptic curves. We review this theory in Section \ref{moduli space section}. We prove Theorem \ref{thm:main}, in the case $p>3$, in Section \ref{sec:3tors}. The proof splits into cases according to the reduction type of the Weierstrass equations. For Weierstrass equations with good reduction, we count them by counting $\F_p$-points on $X_1(3)$ and $X(3)$. For Weierstrass equations with bad reduction, their $3$-torsion subgroup can be determined solely from their Kodaira type. To compute the probability in this case, we rely on the results of Cremona and Sadek \cite{cremona2020localpub}, which provide the probabilities of an elliptic curve having a given Kodaira type.

\paragraph{$p$-adic integration.} In Section \ref{sec:iso} we prove Theorem \ref{thm:mainiso} and give a second proof of Theorem \ref{thm:main}. Admitting a rational $3$-isogeny is equivalent to the $3$-division polynomial $\psi_3$ (see \eqref{eq:psi3}) having a rational root. One might attempt to find such a root modulo $p$ and then lift it via Hensel's lemma. This is hard to perform when the reduction of $E$ modulo $p$ is not smooth, since many different cases must be dealt with individually. For this reason, we carry out the counting through $p$-adic integration with respect to a Haar measure on $\Z_p^2$. The main tool is the $p$-adic change of variables formula, which allows us to compute $p$-adic volumes by computing integrals of $p$-adic absolute values of Jacobian determinants. The $p$-adic integral can be viewed as an extension of the $\F_p$-point count on the modular curve to the points of bad reduction, and allows us to treat the cases of good and bad reduction uniformly. The method is very flexible and can be used to compute $p$-adic densities of many sets of interest in arithmetic statistics, arising in a variety of situations. For example, it is also used in the work of Bhargava \cite[Theorem 21]{bhargava2008higher} on counting quintic number fields, in the proof of Serre's mass formula \cite{serre1978formule} for totally ramified extensions of $\Q_p$ and in Tate's treatment of the functional equation of the Riemann zeta function.  

\paragraph{Final sections}

As already mentioned, in Section~\ref{p-torsion section} we study the case of $p$-torsion in $\Q_p$. We study the case $p\neq \ell$ in Section \ref{subsec:l-torsion}. For $\ell=2,3,4,5$, the modular curve $X(\ell)$ that parametrizes elliptic curves with full level-$\ell$ structure is isomorphic to $\PP^1$, and so we expect that Theorem \ref{thm:main} generalizes to these $\ell$ without any major changes. For $\ell>5$, the curve $X(\ell)$ has positive genus, and the number of its $\F_p$-points is typically not a fixed polynomial of $p$. In Remark \ref{rmk:l-torsion} we give a formula for the probability as a function of $p$ and these $\F_p$-point counts, showing in particular that it is a rational number. We do not expect that the probability is a fixed rational function of the variable $p$ alone for any $\ell>5$. We however have a simple asymptotic formula (Theorem \ref{asymptotic probability}) for this probability in the limit $p \to \infty$.

We did not analyze the case $p=2$ in Theorem \ref{thm:main}, nor the cases $p=2,3$ in Theorem \ref{thm:mainiso}. However, with the methods developed in this paper, in particular in Section \ref{p-torsion section}, these cases should be straightforward to handle, albeit somewhat tedious. Since their inclusion would not add much to the mathematical content of this work, we have chosen to omit them.

\textbf{Acknowledgements} We thank Jennifer Balakrishnan, John Cremona, Netan Dogra, Antonio Lerario, Damien Robert, and Jaap Top for helpful conversations and comments. We are very grateful to Efthymios Sofos for his interest in this work and some very useful comments on the first draft of this paper. In particular, his comments led to an improvement of Section \ref{sieve section}. The authors want to thank Mathematical Institute of Serbian Academy of Sciences, Charles University, ISTA, and University of Groningen for their hospitality when they collaborated. The first author was supported by the MPIM guest postdoctoral fellowship program, Czech Science Foundation GAČR, grant 21-00420M, Junior Fund grant for postdoctoral positions at Charles University, and Mathematical Institute of Serbian Academy of Sciences during various stages of this project. The second author was supported by the Max Planck Institute for Mathematics, Mathematical Institute of Serbian Academy of Sciences and by the UKRI grant MR/T041609/2 during his stay at King's College London. While working on this paper
	the third author was supported by the European Union’s Horizon 2020 research and 
	innovation program under the Marie Skłodowska-Curie Grant Agreement No. 
	101034413.

\section{Preliminaries} \label{preliminaries} 
 We are going to recall some basic facts on elliptic curves that we are going to use in the paper. For more details and all the proofs, see \cite{silverman2009arithmetic}. In particular, for the properties of elliptic curves over local fields, we always refer to \cite[Chapter 7]{silverman2009arithmetic}.

When $p>3$, the change of variables that reduces a long Weierstrass equation to a short one with $a_1=a_2=a_3=0$ is measure preserving (see Proposition ~\ref{prop:assume-short-WE}) and so throughout the paper (except in the case $p=3)$ we will restrict attention to the Weierstrass equations of the form $y^2=x^3+a_4x+a_6$, where $a_4,a_6\in \Z_p$.
 
 Let $E$ be an elliptic curve defined over a field $K$. If $\ell$ is a prime and $\mu_\ell$, a primitive $\ell$-th root of unity, is not in $K$, then $E(K)[\ell]$ cannot have order $\ell^2$. For all $n\geq 1$, we can find two polynomials $\theta_n(x,y)$ and $\psi_n(x,y)$, depending on the Weierstrass equation defining $E$, such that for non-zero $P\in E(K)$ with $nP\neq 0_E$ we have
 \[
 x(nP)=\frac{\theta_n(x(P),y(P))}{\psi_n^2(x(P),y(P))}
 \]
 and $\psi_n^2(x(P),y(P))=0$ if and only if $nP=0_E$.
 In particular, if $E$ is defined by the Weierstrass equation $y^2+a_1xy+a_3y=x^3+a_2x^2+a_4x+a_6$, then $\psi_3$ is the 3-division polynomial:
 \begin{equation}\label{eq:psi3}
 \psi_3\left(x,y\right)= 3x^4 + \left(a_1^2 + 4a_4\right)x^3 + 3\left(2a_4+a_1a_3\right)x^2 + 3\left(a_3^2+4a_6\right)x + a_1^2a_6 + 4a_2a_6 -a_1a_3a_4 + a_2a_3^2 -a_4^2.
 \end{equation}

 Let $E$ be an elliptic curve defined over $\Q_p$ by a Weierstrass equation $y^2+a_1xy+a_3y=x^3+a_2x^2+a_4x+a_6$. All the isomorphisms from $E$ to another Weierstrass equation are given by maps of the form $(x,y)\mapsto (u^2x+r,u^3y+u^2sx+t)$, where $u \in \Q_p^*$ and $r,s,t \in \Q_p$ which we call a change of variables. If $E$ is in short Weierstrass form, that is defined by $y^2=x^3+Ax+B$, then the only change of variables preserving this form is given by maps of the form $(x,y)\mapsto (u^2x,u^3y)$, that gives a map $E\to E_u$ with $E_u$ defined by $y^2=x^3+Au^4x+Bu^6$. Notice in particular that $E_u$ is defined over $\Q_p$ if and only if $u^2\in\Q_p$ (unless $A=0$ or $B=0$).

 For the definition of elliptic curves in minimal form, see \cite[Section VII.1]{silverman2009arithmetic}, For the definition and properties of good, additive, and multiplicative reduction, see \cite[Section VII.5]{silverman2009arithmetic}. For the Kodaira symbols, we refer to \cite[Table C.15.1]{silverman2009arithmetic}. We will often use Hensel's lemma, as stated in \cite[Lemma IV.1.2]{silverman2009arithmetic}, and see \cite[Corollary 6.2.13]{qingliu} for the geometric version.

\section{Moduli spaces of elliptic curves with level-3 structure} \label{moduli space section}
In this section, we review some well-known results on parametrizations of moduli spaces of Weierstrass equations that define elliptic curves with level-3 structure. Classically these results are stated in terms of modular curves, which parametrize elliptic curves with level $3$-structure up to isomorphism. Note that a point on a modular curve only determines an elliptic curve up to isomorphism and does not specify a preferred Weierstrass equation for the curve. We restate these modular parametrizations in terms of a finer moduli space that parametrizes Weierstrass equations of elliptic curves, which allows us to treat $j$-invariants $0$ and $1728$ uniformly and simplifies our counting problem. Throughout this section we will assume $k$ is a perfect field of characteristic $\ne 2,3$.
\subsection{Counting Weierstrass equations} 
We first observe that counting short Weierstrass equations is the same as counting elliptic curves with a regular differential. For definition and properties of differentials, see \cite[Section III.5]{silverman2009arithmetic}. Consider pairs $(E,\omega)$ of an elliptic curve $E/k$ and a regular differential $1$-form $\omega$ on $E$. We say two pairs $(E_1,\omega_1)$ and $(E_2,\omega_2)$ are $k$-isomorphic if there is a $k$-isomorphism $\gamma: E_1 \xrightarrow[]{} E_2$ with $\gamma^*\omega_2=\omega_1$. 
\begin{definition}
 Define $X_{\omega}(k)$ as the set of $k$-isomorphism classes $(E,\omega)$.
\end{definition}
\begin{proposition}\label{lem:Xomega}
There is a bijection between the set $\mb{A}^2_{X}=\{(a,b) \in \mb{A}^2: 4a^3+27b^2 \ne 0\}$ and the set $X_{\omega}(k)$ given by the map $\phi : \mb{A}^2_{X}(k) \xrightarrow[]{} X_{\omega}(k) $ defined as
\[
(a,b) \mapsto \left(E_{a,b}: y^2=x^3+ax+b, \frac{dx}{y}\right).
\]
\end{proposition}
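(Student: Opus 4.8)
The plan is to exhibit an explicit inverse to $\phi$, i.e.\ to send a class $(E,\omega)$ to the unique pair $(a,b)$ with $E\cong E_{a,b}$ carrying $\omega$ to $\tfrac{dx}{y}$; concretely I would establish well-definedness of $\phi$ and then check surjectivity and injectivity separately. The only inputs needed are the reduction to short Weierstrass form (available since $\operatorname{char}k\neq 2,3$), the classification of isomorphisms between short Weierstrass models recalled in Section~\ref{preliminaries}, and the fact that the space of regular differentials on an elliptic curve is one-dimensional.

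\textit{Well-definedness.} I would note that the discriminant of $y^2=x^3+ax+b$ equals $-16(4a^3+27b^2)$, so $(a,b)\in\mathbb{A}^2_X$ is precisely the condition that $E_{a,b}$ be nonsingular, hence an elliptic curve over $k$. Since $\operatorname{char}k\neq 2$, the form $\tfrac{dx}{y}$ is a nonzero scalar multiple of the invariant differential $\tfrac{dx}{2y}$, which is regular and nowhere vanishing by \cite[Section III.5]{silverman2009arithmetic}; so $\tfrac{dx}{y}$ is a nonzero regular differential and $\phi(a,b)=(E_{a,b},\tfrac{dx}{y})$ genuinely lies in $X_\omega(k)$.

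\textit{Surjectivity.} Given $(E,\omega)$, pick a short Weierstrass model $y^2=x^3+ax+b$ for $E$ (possible as $\operatorname{char}k\neq 2,3$); pulling $\omega$ back along the chosen isomorphism gives a regular differential on $E_{a,b}$, necessarily of the form $\mu\cdot\tfrac{dx}{y}$ for a unique $\mu\in k^*$. The change of variables $(x,y)\mapsto(u^2x,u^3y)$ gives an isomorphism $E_{a/u^4,\,b/u^6}\to E_{a,b}$ pulling $\tfrac{dx}{y}$ back to $u^{-1}\tfrac{dx}{y}$; choosing $u=\mu$ yields $\phi(a/\mu^4,\,b/\mu^6)=(E_{a,b},\mu\tfrac{dx}{y})\cong(E,\omega)$, and $(a/\mu^4,b/\mu^6)$ still lies in $\mathbb{A}^2_X$ because $4a^3+27b^2$ is only rescaled by $\mu^{-12}\neq 0$.

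\textit{Injectivity and the difficulty.} If $\phi(a,b)$ and $\phi(a',b')$ are $k$-isomorphic via $\gamma:E_{a,b}\to E_{a',b'}$ with $\gamma^*(\tfrac{dx}{y})=\tfrac{dx}{y}$, then by the classification in Section~\ref{preliminaries} $\gamma$ has the form $(x,y)\mapsto(u^2x,u^3y)$ for some $u\in k^*$, which forces $a'=u^4a$, $b'=u^6b$ and pulls $\tfrac{dx}{y}$ back to $u^{-1}\tfrac{dx}{y}$; compatibility with the differentials then forces $u=1$, whence $(a,b)=(a',b')$. There is no serious obstacle here: this is a clean instance of rigidifying a moduli problem by adjoining a differential to kill automorphisms. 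The only point to keep straight is the scalar $u^{-1}$ by which $\tfrac{dx}{y}$ transforms under $(x,y)\mapsto(u^2x,u^3y)$ — this is exactly what makes the datum $\omega$ eliminate the automorphism $[-1]$ (the case $u=-1$) and, at $j=0$ and $j=1728$, the extra automorphisms, so that $\mathbb{A}^2_X\to X_\omega(k)$ is a bijection rather than merely a surjection whose fibres are automorphism groups.
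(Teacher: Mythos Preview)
Your proof is correct and follows essentially the same approach as the paper: check injectivity via the classification of isomorphisms between short Weierstrass models (forcing $u=1$ from the differential), and surjectivity by picking any short Weierstrass model and rescaling $(a,b)$ using that $H^0(E,\Omega_E)$ is one-dimensional. You are slightly more explicit about well-definedness and the rescaling computation, but the argument is the same.
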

\begin{proof}
We check injectivity: If $(E_{a',b'},\frac{dx'}{y'}) \cong (E_{a,b},\frac{dx}{y})$, then there is a change of variables $(x',y')=\gamma(x,y)=(u^2x,u^3y)$ and then $a'=u^4 a$ and $b'=u^6 b$. We compute $\frac{dx}{y}=\gamma^*(\frac{dx'}{y'})=u^{-1}\frac{dx}{y}$, so $u=1$. We now check surjectivity. Given a pair $(E,\omega) \in X_{\omega}(k)$, choose a Weierstrass model $E_{a,b}$ for $E$. Since the space $H^0(E,\Omega_E)$ of global differentials on $E$ is $1$-dimensional by the Riemann-Roch theorem, we can achieve $\frac{dx}{y}=\omega$ by rescaling the pair $(a,b)$.
\end{proof}

\subsection{Elliptic curves with a marked \texorpdfstring{$3$}{}-torsion point} \label{marked torsion point section}
 We recall the well-known parametrization of elliptic curves with a marked $k$-rational non-trivial $3$-torsion point.
\begin{lemma} \label{tate normal form}
	Let $E/k$ be an elliptic curve, $P\in E(k)[3]$ a non-trivial $3$-torsion point. Then there is an isomorphism $i :E \xrightarrow{} E'$ defined over $k$ where $E'$ is given by
	\[
	E': y^2+u xy+v y=x^3
	\]
	with $i(P)=(0,0)$. The pairs $(u,v)$ are unique up to scaling by $s \in k^{\times}$: $(u,v) \mapsto (su,s^3v)$, and we have a bijection between pairs $(u,v)$ up to this equivalence relation and isomorphism classes of pairs $(E,P)$.
\end{lemma}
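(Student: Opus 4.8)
The plan is to normalize a Weierstrass equation for $E$ so that $P$ sits at the affine origin $(0,0)$, and then to exploit the classical fact — a consequence of the chord--tangent description of the group law — that a point of exact order $3$ on $E$ is the same thing as an inflection point, in order to eliminate three of the five Weierstrass coefficients.

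\emph{Normalization.} Start from any Weierstrass equation $y^2+a_1xy+a_3y=x^3+a_2x^2+a_4x+a_6$ for $E$. Translating by an admissible change $(x,y)\mapsto(x+x_0,y+y_0)$ moves $P$ to $(0,0)$, and then substituting $x=y=0$ forces $a_6=0$. Since $3P=0_E$ and $P\neq 0_E$, the point $P$ has order $3$, equivalently it is an inflection point: the tangent line $L$ to $E$ at $P$ meets $E$ only at $P$, with multiplicity $3$. The linear part of $y^2+a_1xy+a_3y-x^3-a_2x^2-a_4x$ at the origin is $a_3y-a_4x$, so $L$ is the line $a_3y=a_4x$; smoothness of $E$ at $(0,0)$ gives $(a_3,a_4)\neq(0,0)$, and $L$ cannot be vertical, for a vertical line through $P$ passes through $0_E$ and its third intersection point with $E$, namely $-2P$, would then be $0_E$, forcing $2P=0_E$ and contradicting $\mathrm{ord}(P)=3$. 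Hence $a_3\neq0$, and the admissible shear $(x,y)\mapsto(x,\,y-(a_4/a_3)x)$ fixes the origin and turns $L$ into $\{y=0\}$; in the transformed equation this says exactly $a_4=0$. Finally, substituting $y=0$ into $y^2+a_1xy+a_3y=x^3+a_2x^2$ gives $x^2(x+a_2)=0$, and the triple-root condition at $x=0$ forces $a_2=0$. We obtain $E':y^2+uxy+vy=x^3$ with $u=a_1$, $v=a_3\neq0$, and $i(P)=(0,0)$, proving existence.

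\emph{Converse and the equivalence relation.} Conversely, if $v\neq0$ then the linear part of $y^2+uxy+vy-x^3$ is $vy$, so the tangent at $(0,0)$ is $\{y=0\}$, which meets $E'$ only at $x=0$ with multiplicity $3$; thus $(0,0)$ is an inflection point, hence a nontrivial $3$-torsion point. It remains to determine which admissible changes $(x,y)\mapsto(w^2x+r,\,w^3y+w^2\sigma x+t)$ preserve both the shape $y^2+uxy+vy=x^3$ and the marked point $(0,0)$. Fixing $(0,0)$ forces $r=t=0$; substituting $x=w^2x'$, $y=w^3y'+w^2\sigma x'$ and clearing $w^6$, the coefficient of $x'$ in the new equation is $v\sigma/w^4$, which must vanish, so $\sigma=0$ because $v\neq0$. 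The surviving transformations are $(x,y)\mapsto(w^2x,w^3y)$, sending $(u,v)$ to $(u/w,\,v/w^3)$; writing $s=1/w$ this is precisely the scaling $(u,v)\mapsto(su,s^3v)$. Hence two pairs $(u,v)$ yield $k$-isomorphic $(E,P)$ exactly when they differ by this action, and combining this with the normalization above gives the asserted bijection between equivalence classes of $(u,v)$ and isomorphism classes of $(E,P)$.

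I expect the only genuinely computational point to be the determination of the stabilizer — verifying that the shear parameter $\sigma$ must vanish and that the residual one-parameter group acts by $(u,v)\mapsto(su,s^3v)$. Everything else is a sequence of routine normalizations, the single point of care being the verification that the tangent at $P$ is not vertical, which is what licenses the shear step and pins down $a_3\neq0$.
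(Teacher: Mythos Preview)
Your proof is correct and follows essentially the same route as the paper: move $P$ to the origin, make the tangent line $y=0$ (forcing $a_4=a_6=0$), use the $3$-torsion condition to kill $a_2$, and then determine the stabilizer of the normal form. The only cosmetic difference is that the paper deduces $a_2=0$ by evaluating the $3$-division polynomial $\psi_3$ at $x=0$ (getting $a_2a_3^2=0$), whereas you use the equivalent inflection-point formulation directly via the triple root of $x^2(x+a_2)=0$; likewise, the paper pins down $s'=0$ in the uniqueness step by invoking preservation of the tangent line $y=0$, while you extract it from the vanishing of the $x'$-coefficient---these are the same argument phrased geometrically versus algebraically.
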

\begin{remark}
 The equation $ y^2+u xy+v y=x^3$ is known as the Tate normal form of an elliptic curve.
\end{remark}
\begin{proof}
Suppose $E$ is given by
	\[
	E: y^2+a_1xy+a_3y=x^3+a_2x^2+a_4x+a_6.
	\]
	Choose coordinates so that $P=(0,0)$ and $y=0$ is the tangent line at $P$, so that $a_4=a_6=0$. Then $\psi_3(0)=0$, that implies that $a_2=0$. Any two different Weierstrass equations corresponding to $(u_1,v_1)$ and $(u_2,v_2)$ are related by a substitution 
 $x'=(u')^2 x+r', y'=(u')^3y+(u')^2s'x+t'$, 
 and since in both sets of coordinates $P=(0,0)$ and $y=0$ is the tangent line at $P$, we have $r'=s'=t'=0$ and $(u_2,v_2)=(u'u_1,(u')^3v_1)$ and the claim follows.
\end{proof}

We consider triples $(E,\omega,P)$, where $\omega$ is a regular 1-form on $E$ and we have $P \in E[3](k)$. Two triples $(E_1,\omega_1,P_1)$ and $(E_2,\omega_2,P_2)$ are $k$-isomorphic if there is an $k$-isomorphism $\gamma : E_{1} \xrightarrow[]{} E_2$ with $\gamma^* \omega_2=\omega_1$ and $\gamma(P_1)=P_2$. 
\begin{definition}
 Let $X_{\omega,1}(3)(k)$ denote the set of $k$-isomorphism classes $(E,\omega,P)$, where $\omega$ is a regular 1-form on $E/k$ and $P \in E[3](k) \neq 0_E$.
\end{definition}
 Let 
 \begin{equation}\label{eq:A21}
 \mb{A}^2_{X_1(3)}=\left\{(u,v) \in \mb{A}^2 : \left(u^3-27v\right)v^3 \neq 0 \right\}.
 \end{equation}
Notice that $(u^3-27v)v^3$ is the discriminant of the Weierstrass equation in Tate normal form.

\begin{proposition}\label{marked 3-tors point parametrization prop}
	There is a bijection $\phi_1 : \mb{A}^2_{X_1(3)}(k) \xrightarrow{} X_{\omega,1}(3)(k) $ defined by 
	\[
	(u,v) \mapsto \left(y^2+uxy+vy=x^3, \frac{dx}{y+ux+v},(0,0)\right).
	\]		
\end{proposition}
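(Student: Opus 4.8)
The plan is to deduce this from Lemma~\ref{tate normal form} by the same rigidification trick that turns a Weierstrass model into a pair $(E,\omega)$ in Proposition~\ref{lem:Xomega}: prescribing a regular differential cuts the residual rescaling $(u,v)\mapsto(su,s^3v)$ of Lemma~\ref{tate normal form} down to the trivial one. Throughout I write $\omega_{u,v}$ for the $1$-form in the statement, i.e. the invariant differential $\tfrac{dx}{2y+ux+v}$ of $E_{u,v}\colon y^2+uxy+vy=x^3$; it is a regular, nowhere-vanishing $1$-form. The only computation that has to be tracked carefully is how $\omega_{u,v}$ behaves under the rescaling; everything else reduces to Lemma~\ref{tate normal form} and Proposition~\ref{lem:Xomega}.

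\emph{Well-definedness.} Let $(u,v)\in\mb{A}^2_{X_1(3)}(k)$. Since $(u^3-27v)v^3\neq0$ is the discriminant of $E_{u,v}$, the curve $E_{u,v}$ is an elliptic curve over $k$. Setting $y=0$ in its equation leaves $x^3=0$, so the line $\{y=0\}$ meets $E_{u,v}$ only at $(0,0)$, with multiplicity three; hence $3\cdot(0,0)=0_E$, and as $(0,0)\neq0_E$ this is a non-trivial $3$-torsion point. Since $\omega_{u,v}$ is a regular $1$-form, $\phi_1(u,v)=\bigl(E_{u,v},\omega_{u,v},(0,0)\bigr)$ lies in $X_{\omega,1}(3)(k)$.

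\emph{Injectivity.} Suppose $\gamma\colon E_{u_1,v_1}\to E_{u_2,v_2}$ is a $k$-isomorphism with $\gamma(0,0)=(0,0)$ and $\gamma^*\omega_{u_2,v_2}=\omega_{u_1,v_1}$. An isomorphism of curves maps tangent lines to tangent lines and $\{y=0\}$ is the tangent to $E_{u_i,v_i}$ at $(0,0)$, so by the uniqueness statement in Lemma~\ref{tate normal form} (and the explicit description of the change of variables in its proof) there is $s\in k^\times$ with $\gamma\colon(x,y)\mapsto(s^2x,s^3y)$ and $(u_2,v_2)=(su_1,s^3v_1)$. A direct computation, identical in spirit to the one in the proof of Proposition~\ref{lem:Xomega}, gives $\gamma^*\omega_{u_2,v_2}=s^{-1}\omega_{u_1,v_1}$; comparing with the hypothesis forces $s=1$, so $(u_1,v_1)=(u_2,v_2)$.

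\emph{Surjectivity.} Given $(E,\omega,P)\in X_{\omega,1}(3)(k)$, Lemma~\ref{tate normal form} provides a $k$-isomorphism $i\colon E\to E_{u,v}$ with $i(P)=(0,0)$, and $(u,v)\in\mb{A}^2_{X_1(3)}(k)$ because $E_{u,v}\cong E$ is an elliptic curve. As the space of regular $1$-forms on $E_{u,v}$ is $1$-dimensional, $(i^{-1})^*\omega=\lambda\,\omega_{u,v}$ for some $\lambda\in k^\times$. The change of variables $(x,y)\mapsto(s^2x,s^3y)$ is an isomorphism $E_{u,v}\to E_{su,s^3v}$ fixing $(0,0)$, and, as in the injectivity step, its pullback sends $\omega_{su,s^3v}$ to $s^{-1}\omega_{u,v}$. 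Composing with $i$ and taking $s=\lambda^{-1}$ yields a $k$-isomorphism $E\to E_{su,s^3v}$ under which the pullback of $\omega_{su,s^3v}$ is $s^{-1}\lambda^{-1}\omega=\omega$ and $P\mapsto(0,0)$; thus $(E,\omega,P)\cong\phi_1(su,s^3v)$. In short, the whole statement is a corollary of Lemma~\ref{tate normal form} once the transformation law $\gamma^*\omega_{u_2,v_2}=s^{-1}\omega_{u_1,v_1}$ is verified — the single power $s^{-1}$ is exactly what makes fixing $\omega$ rigidify the pair in the way needed for both injectivity and surjectivity — and I anticipate no further obstacle.
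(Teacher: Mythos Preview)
Your proof is correct and follows essentially the same approach as the paper: the key observation in both is that the rescaling $(u,v)\mapsto(su,s^3v)$ of Lemma~\ref{tate normal form} transforms the invariant differential by $\omega\mapsto s^{-1}\omega$, so fixing $\omega$ rigidifies the Tate normal form completely. Your write-up simply spells out the well-definedness, injectivity, and surjectivity steps that the paper leaves implicit in its one-line proof.
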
 
\begin{proof}
We observe that rescaling the pair $(u,v) \mapsto (su,s^3v)$ in Tate normal form rescales the differential $\omega \mapsto s^{-1} \omega$, and the result follows immediately from the proof of the previous lemma. 
\end{proof}
To count isogenies of degree $3$ we will also need a twisted variant of $X_{\omega,1}(3)$. First, we recall a simple criterion for an elliptic curve to have a $k$-rational $3$-isogeny.
\begin{lemma} \label{3-isogeny lemma}
 An elliptic curve $E/k : y^2=f(x)$ has a $k$-rational $3$-isogeny if and only if it admits a Weierstrass equation of the form $y^2=(x-\alpha)^3+\lambda(u(x-\alpha)+v)^2$ where $\lambda\in k^{*}$ and $\alpha,u,v\in k$. The kernel of the $3$-isogeny is $\{0_E,(\pm\alpha, \sqrt{\lambda}v)\}$.
\end{lemma}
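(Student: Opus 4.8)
The plan is to translate the statement into the language of Galois-stable subgroups and then make everything explicit on the plane cubic. Recall the standard correspondence (see \cite{silverman2009arithmetic}): $E/k$ admits a $k$-rational $3$-isogeny if and only if $E(\overline{k})$ contains a $\Gal(\overline{k}/k)$-stable subgroup $C$ of order $3$, and every such $C$ has the form $\{0_E,T,-T\}$ for a nonzero $3$-torsion point $T$. Since $0_E\in E(k)$, the subgroup $C$ is $\Gal(\overline{k}/k)$-stable if and only if the set $\{T,-T\}$ is; and because $\{T,-T\}$ is the fibre of the $x$-coordinate over $x(T)$ and consists of two distinct points (a point of $E:y^2=f(x)$ with $y=0$ is $2$-torsion, not $3$-torsion), this holds if and only if $x(T)\in k$. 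So it suffices to prove that $E$ admits a Weierstrass equation of the stated form if and only if $E(\overline{k})$ contains a nonzero $3$-torsion point $T$ with $x(T)\in k$. I will also use the classical fact that a nonzero point $T$ on the plane cubic $E$ has order $3$ if and only if it is a flex, i.e.\ the tangent line to $E$ at $T$ meets $E$ only at $T$.

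For the ``if'' direction: given $E:y^2=(x-\alpha)^3+\lambda(u(x-\alpha)+v)^2$, translate $x\mapsto x+\alpha$ to reduce to $\alpha=0$. Note $v\neq 0$, since otherwise the right-hand side equals $x^2(x+\lambda u^2)$ and $E$ is singular. I would then take $T=(0,v\sqrt{\lambda})$, check that the tangent line to $E$ at $T$ is $y=\sqrt{\lambda}\,(ux+v)$, and substitute it into the equation of $E$ to obtain $x^3=0$; hence $T$ is a flex, so $T$ has order $3$, and $x(T)=0\in k$. Undoing the translation, $T=(\alpha,v\sqrt{\lambda})$ and $C=\{0_E,(\alpha,\pm v\sqrt{\lambda})\}$, which is $\Gal(\overline{k}/k)$-stable (trivially if $\lambda$ is a square, and via the swap $\sqrt{\lambda}\mapsto-\sqrt{\lambda}$ otherwise).

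For the ``only if'' direction: starting from a nonzero $3$-torsion point $T$ with $\alpha:=x(T)\in k$, translate so that $\alpha=0$ and write $E:y^2=x^3+b_2x^2+b_4x+b_6$ (possible after rescaling $x,y$), $T=(0,y_0)$ with $y_0^2=b_6$; here $y_0\neq 0$ (else $T$ is $2$-torsion), so $b_6\neq 0$. Imposing that the tangent line $y=y_0+\tfrac{b_4}{2y_0}x$ at $T$ meets $E$ only at $T$ gives the single relation $4b_2b_6=b_4^2$. One then verifies by direct expansion that, with $\lambda=b_6$, $u=b_4/(2b_6)$, $v=1$, one has $x^3+\lambda(ux+v)^2=x^3+(b_4^2/(4b_6))x^2+b_4x+b_6$, which equals the equation of $E$ precisely because $4b_2b_6=b_4^2$; undoing the translation yields the required form, with $\lambda=b_6\neq 0$.

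The one step that needs care is that $T$ need not be $k$-rational: when $b_6$ is a non-square, $y_0$ and the slope $b_4/(2y_0)$ live only in $k(\sqrt{b_6})$. The point is that only $\alpha=x(T)$ and $b_2=(b_4/2y_0)^2$ actually enter the argument, and these lie in $k$, while the explicit triple $(\lambda,u,v)=(b_6,\ b_4/(2b_6),\ 1)$ is visibly in $k$; so no field-of-definition obstruction arises. The remaining verifications are routine computations on the cubic.
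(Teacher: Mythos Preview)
Your proof is correct and proceeds along essentially the same lines as the paper's: both hinge on the fact that the tangent line $y=\sqrt{\lambda}(u(x-\alpha)+v)$ at a nonzero $3$-torsion point meets $E$ with multiplicity three, yielding the factorization $y^2-\lambda(u(x-\alpha)+v)^2=(x-\alpha)^3$. The paper packages this via principal divisors (the function $y+Ax+B$ has divisor $3\cdot 0_E-3\cdot P$ because $P$ is $3$-torsion), whereas you invoke the flex characterization and compute the tangent explicitly to extract the relation $4b_2b_6=b_4^2$; the underlying algebra and the handling of the quadratic extension $k(\sqrt{\lambda})$ are the same in both arguments.
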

\begin{proof}
 Suppose that $E$ has a $k$-rational $3$-isogeny with kernel generated by a point $P=(x(P),y(P))$. For all $\sigma\in \Gal(\overline{k}/k)$, we have $P^\sigma=\pm P$, so that we have $x(P)\in k$ and $y(P)$ is defined over a (at worst quadratic extension $k(\sqrt{\lambda})/k$, for some $\lambda \in k^{*}$ (if $y(P)\in k$, we put $\lambda=1$). As the divisor $-3 \cdot P+3 \cdot 0_E$ is principal, there exists a rational function on $E$ which has a triple pole at $0_E$, a triple zero at $P$ and no other zeros or poles, which we can write as $y+Ax+B$, for some $A,B \in k(\sqrt{\lambda})$. We observe that $y-Ax-B$ has a triple zero at $-P$ and no other zeros, and hence we have $(y+Ax+B)(y-Ax-B)=(x-x(P))^3$, and so $y^2=(x-x(P))^3+(Ax+B)^2$. If $P \not\in E(k)$, then $P$ and $-P$ are Galois conjugate, so we must have $x(P), A/\sqrt{\lambda},B/\sqrt{\lambda} \in k$, and our claim follows. Conversely, from the factorization 
 \[
 \left(y-\sqrt{\lambda}(u(x-\alpha)+v)\right)\left(y+\sqrt{\lambda}(u(x-\alpha)+v)\right)=(x-\alpha)^3
 \]
 we see that the divisor of $(y-\sqrt{\lambda}(u(x-\alpha)+v))$ is $3\cdot 0_E-3\cdot(\alpha,\sqrt{\lambda}v)$, and so $P=(\alpha,\sqrt{\lambda}v)$ is a $3$-torsion point defined over $k(\sqrt{\lambda})$, and so generates a $k$-rational $3$-isogeny.
\end{proof}

\begin{remark}
 When $\lambda=1$, the above normal form can also be obtained by completing the square on the left-hand side of the Tate normal form of the elliptic curve.
\end{remark}

Fix $\lambda \in k^{*}\setminus (k^{*})^2$, and let $\psi : \mathrm{Gal}(\overline{k}/k) \xrightarrow{} \{\pm1\}$ be the character of degree $2$ associated with the quadratic field extension $k(\sqrt{\lambda})$, defined by $\psi(\sigma)=\frac{\sigma(\sqrt{\lambda})}{\sqrt{\lambda}}$ for $\sigma \in \mathrm{Gal}(\overline{k}/k)$. 

 Consider triples $(E,\omega,P)$, where $\omega$ is a regular 1-form on $E$ and $P \in E[3](\overline{k})$ is such that we have $\sigma(P)=\psi(\sigma)P$ for all $\sigma \in \mathrm{Gal}(\overline{k}/k)$. Equivalently, if $\psi$ is non-trivial, $ P\in E[3](k(\sqrt{\lambda}))$ and $ P \not\in E[3](k)$. Then $\{0_E,P,-P\}$ is a cyclic subgroup of $E$, defined over $k$, on which $\mathrm{Gal}(\overline{k}/k)$ acts through the character $\psi$. We say that two triples $(E_1,\omega_1,P_1)$ and $(E_2,\omega_2,P_2)$ are $k$-isomorphic if there is a $k$-isomorphism $\gamma : E_{1} \xrightarrow[]{} E_2$ with $\gamma^* \omega_2=\omega_1$ and $\gamma(P_1)=P_2$. 
 \begin{definition} 
 Let $X_{\omega,\psi}(3)(k)$ denote the set of $k$-isomorphism classes of $(E,\omega,P)$ where $E/k$ is an elliptic curve, $\omega$ is a regular 1-form on $E$ and $P \in E[3](\overline{k})$ is a non-zero point such that we have $\sigma(P)=\psi(\sigma)P$ for all $\sigma \in \mathrm{Gal}(\overline{k}/k)$.
 \end{definition}
Define 
\[
\mb{A}^2_{X_{\psi}(3)} = \left\{ (u,v) \in \mb{A}^2 : 4\lambda^3u^3v^3 - 27\lambda^2v^4 \ne 0 \right\}.
\] 
Notice that $16(4\lambda^3u^3v^3 - 27\lambda^2v^4)$ is the discriminant of the elliptic curve defined in the statement of Lemma \ref{3-isogeny lemma}.

\begin{proposition} \label{3-isogeny parametrization}
 There is a bijection $\phi_{\psi} : \mb{A}^2_{X_{\psi}(3)}(k) \xrightarrow{} X_{\omega,\psi}(3)(k)$ defined by
 \[
 (u,v) \mapsto \left(y^2=x^3+\left(2\lambda u v-\frac{\lambda^2u^4}{3}\right)x+\left(\lambda v^2-\frac{2\lambda^2vu^3}{3}+\frac{2\lambda ^3u^6}{27}\right), \frac{dx}{y}, \left(\frac{\lambda u^2}{3},\sqrt{\lambda}v\right)\right).
 \]

\end{proposition}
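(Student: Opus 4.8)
The plan is to argue exactly as in the proofs of Lemma~\ref{tate normal form} and Proposition~\ref{marked 3-tors point parametrization prop}, with Lemma~\ref{3-isogeny lemma} as the essential geometric input: I would check in turn that $\phi_{\psi}$ lands in $X_{\omega,\psi}(3)(k)$, that it is injective, and that it is surjective, the last step carrying essentially all of the content. As in Propositions~\ref{lem:Xomega} and~\ref{marked 3-tors point parametrization prop}, the role of the differential $\omega$ is to rigidify the objects so that no scaling ambiguity remains.

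For well-definedness, given $(u,v)\in\mb{A}^2_{X_{\psi}(3)}(k)$ I would observe that the curve in the statement is obtained from the curve $y^2=(x-\alpha)^3+\lambda(u(x-\alpha)+v)^2$ of Lemma~\ref{3-isogeny lemma}, with $\alpha=0$, by a translation in $x$; this preserves the discriminant, so by the remark preceding the proposition it equals $16(4\lambda^3u^3v^3-27\lambda^2v^4)\neq 0$ and we really do get an elliptic curve with $dx/y$ a regular differential, and the same translation sends the $3$-torsion point $(0,\sqrt{\lambda}v)$ to $(\lambda u^2/3,\sqrt{\lambda}v)$. Since $u,v,\lambda u^2/3\in k$ and $\sigma(\sqrt{\lambda})=\psi(\sigma)\sqrt{\lambda}$, this point $P$ satisfies $\sigma(P)=\psi(\sigma)P$, so $\phi_{\psi}(u,v)\in X_{\omega,\psi}(3)(k)$. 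For injectivity, an isomorphism between the short Weierstrass curves underlying $\phi_{\psi}(u_1,v_1)$ and $\phi_{\psi}(u_2,v_2)$ must have the form $(x,y)\mapsto(w^2x,w^3y)$ with $w\in k^{*}$ and pulls $dx/y$ back to $w^{-1}dx/y$; compatibility with the differentials forces $w=1$, hence the two Weierstrass equations and the two marked points coincide, giving $v_1=v_2$ and $u_1^2=u_2^2$, and comparing the coefficients of $x$ (using $v_i\neq 0$, which holds by the discriminant condition) yields $u_1=u_2$.

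For surjectivity, let $(E,\omega,P)\in X_{\omega,\psi}(3)(k)$. Since $\sigma(P)=\pm P$ for all $\sigma$, the subgroup $\{0_E,P,-P\}$ is $\Gal(\overline{k}/k)$-stable, so $E$ carries a $k$-rational $3$-isogeny with this kernel; writing $E$ as $y^2=f(x)$ and applying Lemma~\ref{3-isogeny lemma}, I get a model $y^2=(x-\alpha)^3+\mu(u'(x-\alpha)+v')^2$ with $\mu\in k^{*}$, $\alpha,u',v'\in k$ and isogeny kernel $\{0,(\alpha,\pm\sqrt{\mu}v')\}$, where necessarily $v'\neq 0$ (otherwise the curve is singular). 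Matching the two kernels, $P=(\alpha,\varepsilon\sqrt{\mu}v')$ for some sign $\varepsilon$; feeding this into $\sigma(P)=\psi(\sigma)P$ shows $\sigma(\sqrt{\mu})=\psi(\sigma)\sqrt{\mu}$, hence $k(\sqrt{\mu})=k(\sqrt{\lambda})$ and $\mu=c^2\lambda$ for some $c\in k^{*}$. Rewriting $\mu(u'(x-\alpha)+v')^2=\lambda((cu')(x-\alpha)+(cv'))^2$ and, if $\varepsilon=-1$, replacing $(cu',cv')$ by its negative (which does not change the curve but flips the sign of the $y$-coordinate of the marked point), I may assume the model is $y^2=(x-\alpha)^3+\lambda(\bar u(x-\alpha)+\bar v)^2$ with $P=(\alpha,\sqrt{\lambda}\,\bar v)$. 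A translation in $x$ now brings this to short Weierstrass form (and leaves $dx/y$ unchanged), and a direct expansion shows it becomes precisely the curve of $\phi_{\psi}(\bar u,\bar v)$ with marked point $(\lambda\bar u^2/3,\sqrt{\lambda}\,\bar v)$. Finally, since $H^0(E,\Omega_E)$ is one-dimensional, the image of $\omega$ in these coordinates is $s^{-1}dx/y$ for some $s\in k^{*}$; applying the isomorphism $(x,y)\mapsto(s^2x,s^3y)$, which rescales the Weierstrass coefficients so as to send the curve of $\phi_{\psi}(\bar u,\bar v)$ to that of $\phi_{\psi}(s\bar u,s^3\bar v)$, carries the marked point to the corresponding marked point and pulls $dx/y$ back to $s^{-1}dx/y$, I conclude that $(E,\omega,P)\cong\phi_{\psi}(s\bar u,s^3\bar v)$.

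The bijection is otherwise formal; the delicate bookkeeping is concentrated in the surjectivity step — replacing the ``$\lambda$'' returned by Lemma~\ref{3-isogeny lemma} by the globally fixed $\lambda$, which is only possible up to squares and is precisely why the character $\psi$ (rather than merely the extension $k(\sqrt{\lambda})/k$) is the right datum to fix; pinning down the sign $\varepsilon$ so that the marked point is $P$ and not $-P$; and absorbing the scalar between $\omega$ and $dx/y$. The passage to short Weierstrass form, and checking that it reproduces exactly the coefficients written in the statement, is a routine computation that I would not spell out in detail.
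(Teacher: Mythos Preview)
Your proof is correct and follows the same approach as the paper: both use Lemma~\ref{3-isogeny lemma} as the key input and observe that $\alpha=\lambda u^2/3$ is the unique translation putting the curve in short Weierstrass form. The paper's proof is a two-line sketch that defers entirely to Lemma~\ref{3-isogeny lemma}, whereas you carefully spell out well-definedness, injectivity, and especially the surjectivity bookkeeping (replacing the $\mu$ produced by Lemma~\ref{3-isogeny lemma} with the fixed $\lambda$ via $\mu=c^2\lambda$, absorbing the sign, and rescaling to match $\omega$); these details are implicit in the paper but your treatment is more complete.
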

\begin{proof}
 The unique value of $\alpha$ for which the equation of Lemma \ref{3-isogeny lemma} is a short Weierstrass equation is $\alpha=\frac{\lambda u^2}{3}$. The statement now follows from the proof of Lemma \ref{3-isogeny lemma}, putting $\alpha=\frac{\lambda u^2}{3}$.
\end{proof}

\subsection{Elliptic curves with full level-3 structure} \label{Full level-3 section}
Now, we describe the family of elliptic curves with $E[3]$ that has a certain Galois structure. Consider the $\mathrm{Gal}(\overline{k}/k)$-module $M\coloneq\Z/3\Z \times \mu_3$ equipped with the alternating pairing $\Lambda^2 M \xrightarrow{} \mu_3$ defined by $((a,\zeta_3^{b}),(c,\zeta_3^{d}))=\zeta_3^{ad-bc}$, where $\zeta_3$ is a primitive cube root of unity and $\mu_3=\langle \zeta_3\rangle$. We consider pairs $(E,\phi)$, where $E$ is an elliptic curve over $k$ and $\phi : M \xrightarrow[]{} E[3]$ is an isomorphism defined over $k$, which is compatible with the alternating pairing on $M$ and the Weil pairing on $E[3]$ (notice in particular that $E$ has a $k$-rational non-trivial $3$-torsion point). We say that two pairs $(E,\phi)$ and $(E',\phi')$ are isomorphic if there is a $k$-isomorphism $\psi : E \xrightarrow{} E'$ such that $\phi'\coloneq\psi \circ \phi$. Pairs $(E,\phi)$ are parametrized by an open subset of the modular curve $X(3) \cong \PP^1$, and a point $(u:v) \in \PP^1(k)$ corresponds to the elliptic curve 
\[
C_{u,v}\coloneq u(x^3+y^3+z^3)-3vxyz=0
\]
with $(0:1:-1)$ as the identity element, and the isomorphism $\phi : M \xrightarrow[]{} C_{u,v}[3]$ defined by $\phi(1,0)=(1:-1:0)$ and $\phi(0,\zeta_3) =(0:\zeta_3:-\zeta_3^2)$. This family of plane cubic curves is known as the Hesse pencil, see \cite[Section 1.1]{rubin1995families} or \cite{artebani2009hesse}.

As before, to count Weierstrass equations, we add the data of a regular differential $\omega$ on $E$ to a pair $(E,\phi)$ and consider triples $(E,\omega,\phi)$. We define the notion of $k$-isomorphism of such triples in a similar way as before.

\begin{definition}
 Define $X_{\omega}(3)(k)$ to be the set of $k$-isomorphism classes triples $(E,\omega,\phi)$ where $E/k$ is an elliptic curve, $\omega$ is a regular differential on $E$, and $\phi : M \xrightarrow[]{} E[3]$ is an isomorphism defined over $k$, which is compatible with the alternating pairing on $M$ and the Weil pairing on $E[3]$.
\end{definition} 
Let 
\begin{equation}\label{eq:A2}
\mb{A}^2_{X(3)}=\left\{(u,v) \in \mb{A}^2 : u\left(u^3-v^3\right) \ne 0\right\}.
\end{equation}
Notice that $u(u^3-v^3)$ is the discriminant of the elliptic curve $C_{u,v}$.
\begin{proposition} \label{split X(3)}
	There is a bijection $\phi_2 : \mb{A}^2_{X(3)}(k) \xrightarrow[]{} X_{\omega}(3)(k)$ defined by
	\[
	(u,v) \mapsto \left( u(x^3+y^3+z^3)-3vxyz=0,\frac{
 y dz -zdy}{3ux^2-3vyz},\phi\right) 
	\]
	where the isomorphism $\phi : \Z/3\Z \times \mu_3 \xrightarrow[]{} E[3]$ sends the generators $(0,\zeta_3)$ and $(1,1)$ of $\Z/3\Z \times \mu_3$ to $\phi (0,\zeta_3)=(0: \zeta_3,-\zeta_3)^2$ and $\phi(1,1)=(1:-1:0)$. 
\end{proposition}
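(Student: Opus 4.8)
The plan is to mirror the proofs of Propositions~\ref{lem:Xomega} and \ref{marked 3-tors point parametrization prop}. The classical theory of the Hesse pencil (see \cite{rubin1995families}, \cite{artebani2009hesse}) already provides a bijection between the points $(u:v) \in \PP^1(k)$ with $u(u^3-v^3) \neq 0$ and the isomorphism classes of pairs $(E,\phi)$ consisting of an elliptic curve together with a Weil-symplectic full level-$3$ structure, realized by $(u:v) \mapsto (C_{u,v}, \phi)$, where $\phi$ is read off from the base points of the pencil. The role of the extra datum $\omega$ is to rigidify away the residual $\mathbb{G}_m$ of rescalings, promoting this projective bijection to the asserted bijection with the affine open set $\mb{A}^2_{X(3)}(k)$. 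First I would record that $\phi_2$ is well defined: for $(u,v) \in \mb{A}^2_{X(3)}(k)$ the plane cubic $C_{u,v}$ is smooth, since its discriminant is a unit times $u(u^3-v^3)$, hence an elliptic curve with origin $(0:1:-1)$; writing $F = u(x^3+y^3+z^3) - 3vxyz$, the form $\tfrac{y\,dz - z\,dy}{3ux^2 - 3vyz} = \tfrac{y\,dz - z\,dy}{\partial F/\partial x}$ is the (nowhere-vanishing) invariant differential on $C_{u,v}$; and $\phi$ is the standard symplectic level-$3$ structure attached to the base points of the pencil. All of this is classical.

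For injectivity, suppose $\phi_2(u,v) \cong \phi_2(u',v')$ via a $k$-isomorphism of elliptic curves $\gamma : C_{u,v} \to C_{u',v'}$ with $\gamma^*\omega' = \omega$ and $\gamma \circ \phi = \phi'$. Forgetting the differentials, the Hesse parametrization forces $(u:v) = (u':v')$ in $\PP^1(k)$, so $(u',v') = (tu,tv)$ for some $t \in k^\times$. But $C_{tu,tv}$ and $C_{u,v}$ are then literally the same curve in $\PP^2$, with the same base points and hence the same level structure, so $\gamma$ is an automorphism of $C_{u,v}$ fixing a symplectic basis of $C_{u,v}[3]$. Since $\operatorname{char} k \neq 2,3$, the group $\Aut(E)$ acts faithfully on $E[3]$ (equivalently, a pair consisting of an elliptic curve with full level-$3$ structure has no nontrivial automorphisms; this is precisely where the hypothesis on the characteristic enters, ruling out extra automorphisms at $j = 0$ and $j=1728$), so $\gamma = \operatorname{id}$ and $\omega = \omega'$. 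Finally, replacing $(u,v)$ by $(tu,tv)$ replaces $F$ by $tF$ and so rescales the displayed differential by $t^{-1}$; thus $\omega' = t^{-1}\omega$, forcing $t = 1$ and $(u,v) = (u',v')$.

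For surjectivity, let $(E,\omega,\phi) \in X_\omega(3)(k)$. Applying the classical parametrization to the pair $(E,\phi)$ yields a point $(u_0:v_0) \in \PP^1(k)$ with $u_0(u_0^3-v_0^3) \neq 0$ and a $k$-isomorphism $\iota : E \to C_{u_0,v_0}$ carrying $\phi$ to the Hesse level structure. As $H^0(C_{u_0,v_0}, \Omega^1)$ is one-dimensional by Riemann--Roch, we have $(\iota^{-1})^*\omega = c^{-1}\, \tfrac{y\,dz - z\,dy}{3u_0 x^2 - 3v_0 yz}$ for a unique $c \in k^\times$; setting $(u,v) = (cu_0, cv_0)$, the curve $C_{u,v} = C_{u_0,v_0}$ now carries $(\iota^{-1})^*\omega$ as its invariant differential by the scaling just computed, so $\iota$ exhibits $\phi_2(u,v) \cong (E,\omega,\phi)$; and $u(u^3-v^3) = c^4 u_0(u_0^3-v_0^3) \neq 0$, so $(u,v) \in \mb{A}^2_{X(3)}(k)$.

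The main obstacle is not really in the argument above but in the input it rests on: the classical statement that the Hesse pencil gives a genuine (not merely coarse) bijection between $\PP^1(k)$ minus the discriminant locus and isomorphism classes of pairs $(E,\phi)$, and that the base points of the pencil furnish a basis of $E[3]$ which is symplectic for the Weil pairing and matches the chosen alternating pairing on $M$. Once that is granted, the only remaining work is the well-definedness check and the two elementary $t^{\pm 1}$ scaling computations for the differential.
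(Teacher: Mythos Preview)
Your proof is correct and follows essentially the same route as the paper's: cite the classical fact that the Hesse pencil realizes $Y(3)$ as a fine moduli space (so that pairs $(E,\phi)$ correspond bijectively to points $(u:v)$ with $u(u^3-v^3)\ne 0$), observe that rescaling $(u,v)\mapsto(tu,tv)$ rescales the displayed differential by $t^{-1}$, and conclude that the choice of $\omega$ pins down the representative in the $\mathbb{G}_m$-orbit. The only difference is that you unpack the fine-moduli input by explicitly invoking the faithfulness of $\Aut(E)$ on $E[3]$ to force $\gamma=\mathrm{id}$, whereas the paper absorbs this into the citation of \cite{rubin1995families}; both amount to the same thing.
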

\begin{proof}
 The differential $\frac{
 y dz -zdy}{3ux^2-3vyz}$ is a regular differential on $C_{u,v}$ by the adjunction formula applied to $C_{u,v} \subset \PP^2$ (see also \cite[Lemma 5.21]{g1inv}), and so the map $\phi_2$ is well-defined. Rescaling the pair $(u,v)$ by a $\lambda \in k^{*}$ rescales $\frac{
 y dz -zdy}{3ux^2-3vyz}$ by $\lambda^{-1}$. The family $C_{u,v}$ is the universal elliptic curve with level-3 structure over the open modular curve $Y(3) \cong \{(u:v) \in \PP^1 : u(u^3-v^3) \ne 0\}$, see for example \cite[Section 1.1]{rubin1995families} for a proof. Thus any pair $(E,\phi)$ arises from a pair $(u,v) \in \mb{A}^2_{X(3)}$ that is unique up to scaling, and fixing a differential $\omega$ on $E$ pins down the scaling uniquely. 
\end{proof}
\begin{corollary}\label{cor:bij3structure}
	There is a bijection $\phi_2 : \mb{A}^2_{X(3)}(k) \xrightarrow[]{} X_{\omega}(3)(k)$ defined by
	\[
	(u,v) \mapsto \left(y^2 = x^3 + \left(-\frac{27}{2}u^3v - \frac{27}{16}v^4\right)x + \left(-\frac{27}{4}u^6 - \frac{135}{8}u^3v^3 + \frac{27}{32}v^6\right),\frac{dx}{y},\phi\right)
	\]
	where the isomorphism $\phi : \Z/3\Z \times \mu_3 \xrightarrow[]{} E[3]$ sends the generators $(1,1)$ and $(0,\zeta_3)$ of $\Z/3\Z \times \mu_3$ to 
 \[
 \phi(1,1)=\left(\frac 34\left(4u^2 + 4uv + v^2\right), -\frac 92\left(u^3 +u^2v +uv^2\right) \right)\] and \[\phi (0,\zeta_3)=\left(-\frac 94 v^2, 3\left(2\zeta_3 + 1\right)\left(u^3-v^3\right)\right).
 \]
\end{corollary}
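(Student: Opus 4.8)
The plan is to deduce this from Proposition~\ref{split X(3)} by writing down, for each $(u,v)\in\mb{A}^2_{X(3)}(k)$, an explicit $k$-isomorphism $\gamma_{u,v}$ from the Hesse cubic $C_{u,v}$ to the elliptic curve $E_{u,v}:y^2=x^3+Ax+B$ of the statement (abbreviating the two displayed coefficients by $A$ and $B$), which carries the Hesse differential to $dx/y$ and the level-$3$ structure of Proposition~\ref{split X(3)} to the one described here. Once this is done, the map of the corollary is literally $\phi_2$ followed by replacing each triple by the $k$-isomorphic triple $\bigl(E_{u,v},\,dx/y,\,\gamma_{u,v}\circ\phi\bigr)$, so it is the same map on $k$-isomorphism classes and hence a bijection; compatibility of the new $\phi$ with the alternating and Weil pairings is inherited from Proposition~\ref{split X(3)} since $\gamma_{u,v}$ is an isomorphism of elliptic curves.

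To build $\gamma_{u,v}$, recall that the identity $O=(0:1:-1)$ is one of the nine base points of the Hesse pencil, hence a flex of $C_{u,v}$, with flex tangent the line $L:vx+u(y+z)=0$. Since $u\ne 0$ on $\mb{A}^2_{X(3)}$, the linear change of coordinates $(X:Y:Z)=(x:y:vx+uy+uz)$ is invertible over $k$, sends $O$ to $(0:1:0)$ and $L$ to $\{Z=0\}$. A direct substitution shows that in the new coordinates the cubic has no $Y^3$, $X^2Y$ or $XY^2$ term and restricts to $(u^3-v^3)X^3=0$ on $\{Z=0\}$, so after dehomogenising at $Z=1$ it is a generalised Weierstrass equation with the flex at infinity. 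Completing the square and the cube (permissible since $\cha k\ne 2,3$), and then rescaling by the unique factor that makes the image of $\frac{y\,dz-z\,dy}{3ux^2-3vyz}$ equal to $dx/y$ — possible because $H^0$ of the regular differentials is one-dimensional, so the two differ a priori only by a scalar, all defined over $k$ — yields precisely the equation $y^2=x^3+Ax+B$ with the stated $A$ and $B$. In particular one computes $4A^3+27B^2=\frac{3^9}{2^4}\bigl(u(u^3-v^3)\bigr)^3$, which is nonzero on $\mb{A}^2_{X(3)}$, so $E_{u,v}$ is genuinely elliptic and $\gamma_{u,v}$ an isomorphism.

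It then remains to push the two marked points through $\gamma_{u,v}$. Under $\phi_2$ the generators $(1,1)$ and $(0,\zeta_3)$ of $\Z/3\Z\times\mu_3$ map to $(1:-1:0)$ and $(0:\zeta_3:-\zeta_3^2)$ on $C_{u,v}$; applying the linear change followed by the completing-square/cube substitution and simplifying gives $\phi(1,1)=\bigl(\tfrac34(2u+v)^2,\,-\tfrac92\,u(u^2+uv+v^2)\bigr)$ and $\phi(0,\zeta_3)=\bigl(-\tfrac94 v^2,\,3(2\zeta_3+1)(u^3-v^3)\bigr)$, which are the points in the statement (note $4u^2+4uv+v^2=(2u+v)^2$ and $u^3+u^2v+uv^2=u(u^2+uv+v^2)$). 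As an internal check one verifies directly that both points lie on $E_{u,v}$ and that the $3$-division polynomial $\psi_3$ vanishes at $x=\tfrac34(2u+v)^2$; the factor $2\zeta_3+1=\sqrt{-3}$ reflects that $\phi(0,\zeta_3)$ is defined over $k(\zeta_3)$ with Galois conjugate $-\phi(0,\zeta_3)=\phi(0,\zeta_3^2)$, consistent with $\phi$ being a morphism of Galois modules.

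The only genuine work lies in the second and third paragraphs: carrying out the Weierstrass reduction with the normalisation of the differential correctly pinned down, and simplifying the image of $(0:\zeta_3:-\zeta_3^2)$, where one must track the cube root of unity carefully; everything else is formal.
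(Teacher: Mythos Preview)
Your argument is correct and takes a genuinely different route from the paper's. The paper obtains the Weierstrass equation and the differential compatibility by invoking Weil's formula for the Jacobian of a plane cubic (\cite[Theorem 3.1]{an2001jacobians}, in the form of \cite[Theorem 1.2]{fisher2018invforalln}), and then verifies by a separate direct calculation that the displayed points $\phi(1,1)$ and $\phi(0,\zeta_3)$ are $3$-torsion with the correct Weil pairing and that $\phi$ is Galois-equivariant. You instead construct the isomorphism $\gamma_{u,v}$ by hand---identifying the flex tangent at $O$, sending it to the line at infinity, and then completing the square and cube---and transport the level structure through $\gamma_{u,v}$. Your approach is more self-contained (no external black box), and has the pleasant feature that the compatibility of $\phi$ with the pairings is automatic, being inherited from Proposition~\ref{split X(3)} via the isomorphism; the paper's appeal to Fisher's theorem is quicker to write down but hides the actual change of variables and requires the separate Weil-pairing check. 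Both are fine; yours trades a citation for an explicit (if routine) computation.
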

\begin{proof}
We compute a Weierstrass equation for the curve $C_{u,v}$ by using Weil's formula for the Weierstrass equation of plane cubic, see \cite[Theorem 3.1]{an2001jacobians} and \cite[Theorem 1.2]{fisher2018invforalln} for a more general version. It is given by \[ 
E_{u,v}\coloneq y^2 = x^3 + \left(-\frac{27}{2}u^3v - \frac{27}{16}v^4\right)x + \left(-\frac{27}{4}u^6 - \frac{135}{8}u^3v^3 + \frac{27}{32}v^6\right).
\]
For the explicit Weierstrass equation, see \cite[Equation 1]{rubin1995families}.
In fact, \cite[Theorem 1.2]{fisher2018invforalln} implies that we have an isomorphism of pairs $(E,\omega)$: 
\begin{align*}&\left(y^2 = x^3 + \left(-\frac{27}{2}u^3v - \frac{27}{16}v^4\right)x + \left(-\frac{27}{4}u^6 - \frac{135}{8}u^3v^3 + \frac{27}{32}v^6\right),\frac{dx}{y}\right) \\\cong &\left( u\left(x^3+y^3+z^3\right)-3vxyz=0,\frac{
 y dz -zdy}{3ux^2-3vyz}\right).\end{align*}
A direct calculation shows that $P_1=\phi(1,1)$ and $P_2=\phi(0,\zeta_3)$ are $3$-torsion points with Weil pairing $e(P_1,P_2)=\zeta_3$, and that $\phi : \Z/3\Z \times M \xrightarrow{} E_{u,v}[3]$ is a Galois equivariant isomorphism.
\end{proof}

We will also need a twisted form of this parametrization. Fix an elliptic curve $F/k : y^2=x^3+ax+b$ and consider the modular curve $X_F(3)$, the $k$-points of which parametrize elliptic curves $E/k$ with an isomorphism $\phi : F[3] \xrightarrow{} E[3]$ that respects the Weil pairing. The curve $X_F(3)$ is a twist of $X(3) \cong \PP^1$ and is also isomorphic to $\PP^1$, since there is a $k$-rational point on $X_F(3)$ corresponding to the curve $F$ itself. Theorem 13.2 of \cite{fisher2012hessian} gives the following explicit parametrization: a point $(u: v) \in X_F(3)(k)$ determines the elliptic curve
\[
E: y^2=x^3+\mathbf{c}_4(u,v)x+\mathbf{c}_6(u,v)
\]
and an isomorphism $\phi_{u,v} : F[3] \xrightarrow[]{} E[3]$ of Galois modules. The coefficients $\mathbf{c}_4,\mathbf{c}_6$ are called \textit{Hesse} polynomials, given by 
\[
\mathbf{c}_4=-6912 a^3 v^4 - 288 a^2 u^2 v^2 - 3456 a b u v^3 + a u^4 - 62208 b^2 v^4 + 72 b u^3 v
\]
and
	\begin{align*}
		\mathbf{c}_6\coloneq&-110592 a^4 u v^5 - 995328 a^3 b v^6 + 34560 a^2 b u^2 v^4 - 16 a^2 u^5 v\\& - 497664 a b^2 u v^5 - 720 a b u^4 v^2 - 5971968 b^3 v^6 - 17280 b^2 u^3 v^3 + b u^6.
	\end{align*}
We define $\mathbf{disc}(u,v)=-16(4\mathbf{c}_4^3+27\mathbf{c}_6^2)$. When $v=0$, we obtain $y^2=x^3+ax+b$. The isomorphism $\phi_{u,v}$ can also in principle be made explicit, see \cite{fisher2012hessian}, but we do not go into more details since we do not need this explicit description. 

\begin{definition}
 Let $X_{\omega,F}(3)(k)$ be the set of isomorphism classes of triples $(E,\omega,\phi)$ consisting of an elliptic curve $E/k$, a regular $1$-form $\omega$, and a symplectic isomorphism $\phi : E[3] \xrightarrow[]{} F[3]$. 
\end{definition}
Let $\mb{A}^2_{X_F(3)}=\{(u,v) \in \mb{A}^2 : \mathbf{disc}(u,v) \ne 0\}$. The following proposition then follows from \cite[Theorem 13.2]{fisher2012hessian}.
\begin{proposition} \label{twisted X(3)}
	There is a bijection $\phi_F : \mb{A}^2_{X_F(3)}(k) \xrightarrow[]{} X_{\omega,F}(3)(k)$ defined by
	\[
	(u,v) \mapsto \left(y^2=x^3+\mathbf{c}_4(u,v)x+\mathbf{c}_6(u,v),\frac{dx}{y},\phi_{u,v}\right).
	\] 
\end{proposition}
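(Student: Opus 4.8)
The plan is to derive the proposition from \cite[Theorem 13.2]{fisher2012hessian} in exactly the way that Proposition~\ref{split X(3)} and Corollary~\ref{cor:bij3structure} are derived from the parametrization $X(3)\cong\PP^1$: passing from the modular curve to the affine plane amounts to adding the datum of a regular differential, which rigidifies the remaining scaling ambiguity. Recall that Theorem 13.2 of \cite{fisher2012hessian} states that $(u:v)\mapsto(E_{u,v},\phi_{u,v})$, where $E_{u,v}:y^2=x^3+\mathbf{c}_4(u,v)x+\mathbf{c}_6(u,v)$, is a bijection from $X_F(3)(k)=\PP^1(k)$ onto the set of isomorphism classes of pairs $(E,\phi)$ consisting of an elliptic curve $E/k$ and a symplectic isomorphism $\phi:E[3]\xrightarrow{}F[3]$. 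Well-definedness of $\phi_F$ is immediate: on $\mb{A}^2_{X_F(3)}(k)$ we have $\mathbf{disc}(u,v)=-16(4\mathbf{c}_4^3+27\mathbf{c}_6^2)\ne0$, so $E_{u,v}$ is a smooth elliptic curve, $\tfrac{dx}{y}$ is a regular $1$-form on it, and $\phi_{u,v}$ is a symplectic isomorphism, so that $\phi_F(u,v)\in X_{\omega,F}(3)(k)$.

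The key structural point is the scaling behaviour. By inspection of the displayed formulas, $\mathbf{c}_4$ and $\mathbf{c}_6$ are homogeneous in $(u,v)$ of degrees $4$ and $6$; hence for every $s\in k^{\times}$ the assignment $(x,y)\mapsto(s^2x,s^3y)$ is a $k$-isomorphism $\iota_s:E_{u,v}\xrightarrow{\sim}E_{su,sv}$, and one computes $\iota_s^{*}\tfrac{dx}{y}=s^{-1}\tfrac{dx}{y}$. Moreover $\iota_s$ respects the level structures, $\phi_{su,sv}\circ\iota_s|_{[3]}=\phi_{u,v}$; this is the $\mathbb{G}_m$-equivariance of Fisher's parametrization, i.e.\ the statement that his family is the pullback of the universal family over $X_F(3)$, and in principle can be checked on his explicit formula for $\phi_{u,v}$. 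Thus $\phi_F$ is equivariant for the $k^{\times}$-actions given by $s\cdot(u,v)=(su,sv)$ on the source and by rescaling $\omega$ on the target, and both actions are free: the source action obviously, and the target action because an automorphism of an elliptic curve that is trivial on its $3$-torsion is the identity (as $\Aut(E)\hookrightarrow\GL(E[3])$).

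Surjectivity and injectivity are then formal. For surjectivity, given $(E,\omega,\phi)\in X_{\omega,F}(3)(k)$, Theorem 13.2 applied to $(E,\phi)$ yields $(u_0:v_0)\in\PP^1(k)$ and an isomorphism $\gamma_0:E\xrightarrow{}E_{u_0,v_0}$ carrying $\phi$ to $\phi_{u_0,v_0}$, with $(u_0,v_0)\in\mb{A}^2_{X_F(3)}$ because $E$ is smooth; writing $\gamma_0^{*}\tfrac{dx}{y}=c\,\omega$ with $c\in k^{\times}$, the isomorphism $\iota_c\circ\gamma_0$ carries $(E,\omega,\phi)$ to $\phi_F(cu_0,cv_0)$ by the two displayed properties of $\iota_c$. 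For injectivity, if $\gamma:E_{u_1,v_1}\xrightarrow{}E_{u_2,v_2}$ is an isomorphism of triples, then $(u_1:v_1)=(u_2:v_2)$ by Theorem 13.2, so $(u_2,v_2)=(su_1,sv_1)$ for some $s\in k^{\times}$; now $\gamma\circ\iota_s^{-1}$ is an automorphism of $E_{u_2,v_2}$ that is trivial on $3$-torsion, hence $\gamma=\iota_s$, and comparing $\gamma^{*}\tfrac{dx}{y}=\tfrac{dx}{y}$ with $\iota_s^{*}\tfrac{dx}{y}=s^{-1}\tfrac{dx}{y}$ forces $s=1$, so $(u_1,v_1)=(u_2,v_2)$.

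The only point requiring real care, beyond quoting \cite{fisher2012hessian}, is the equivariance $\phi_{su,sv}\circ\iota_s|_{[3]}=\phi_{u,v}$ used in both halves of the argument: the homogeneity of $\mathbf{c}_4$ and $\mathbf{c}_6$ is visible on the page, whereas the matching homogeneity of the level-structure isomorphism $\phi_{u,v}$ is precisely the content we are taking from Theorem 13.2 and, for a fully self-contained treatment, would need to be extracted from Fisher's explicit description of $\phi_{u,v}$. Everything else is identical in shape to the proof of Proposition~\ref{split X(3)}.
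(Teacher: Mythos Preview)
Your proof is correct and takes the same approach as the paper, which simply states that the proposition follows from \cite[Theorem 13.2]{fisher2012hessian}. You have unpacked the rigidification-by-differential argument in considerably more detail than the paper does, modelling it explicitly on the proof of Proposition~\ref{split X(3)}, but the underlying strategy is identical.
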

\subsection{Covering maps}
A point of the modular curve $X(3)$ is represented by an elliptic curve $E$ with a marked pair of non-trivial $3$-torsion points $P,Q \in E[3]$. By forgetting the point $Q$, we can view the (isomorphism class of) pair $(E,P)$ as a point on the modular curve $X_1(3)$, and by forgetting the point $P$, we view (the isomorphism class of) curve $E$ as a point on the modular curve $X(1)$. These maps can also be defined on the moduli spaces of Weierstrass equations defined above. We identify $X_{\omega}(3)= \mb{A}^2_{X(3)}$,$X_{\omega,1}(3)= \mb{A}^2_{X_1(3)}$ and $X_{\omega}= \mb{A}^2_{X}$, thanks to Propositions \ref{lem:Xomega}, \ref{marked 3-tors point parametrization prop}, \ref{split X(3)}.

\begin{lemma} \label{Jacobian lemma}
 The map $\pi_1 : \mb{A}^2_{X_1(3)} \xrightarrow[]{} \mb{A}^2_{X}$ corresponding to $(E,\omega,P) \mapsto (E,\omega)$ is given by:

 \[
 \pi_1(u,v)=\left(8u v-\frac{u^4}{3}, 16v^2-\frac{8vu^3}{3}+\frac{2u^6}{27}\right).
 \] 
 The Jacobian determinant of $\pi_1(u,v)$ is $J(u,v)=256 v^2$. 
 The map $\pi_2 :\mb{A}^2_{X(3)}\xrightarrow[]{} \mb{A}^2_{X}$ corresponding to $(E,\omega,\phi) \mapsto (E,\omega)$ is given by 
	\[
	\pi_2(u,v) =\left(-216u^3v - 27v^4,-432u^6 -
	1080u^3v^3 + 54v^6\right).
	\]
	The Jacobian determinant of this map at a point $(u,v)$ is given by
	\[
	J(u,v)=-559872 u^2\left(u-v\right)^2\left(u^2+uv+v^2\right)^2.
	\]
\end{lemma}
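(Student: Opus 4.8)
The plan is to read the two coordinate maps off the parametrizations already set up, and then compute the Jacobian determinants by direct differentiation, using the covering $X(3)\to X(1)$ to organize the second computation.

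Consider first $\pi_1$. By Proposition \ref{marked 3-tors point parametrization prop} the point $(u,v)\in\mb{A}^2_{X_1(3)}$ corresponds to the Tate normal form $E':y^2+uxy+vy=x^3$ together with its invariant differential $\omega'$ and the marked $3$-torsion point $(0,0)$; forgetting the point gives the class of $(E',\omega')$ in $X_\omega=\mb{A}^2_X$, which by Proposition \ref{lem:Xomega} is the unique $(a,b)$ with $(E_{a,b},dx/y)\cong(E',\omega')$. I would compute $(a,b)$ by the usual reduction to short Weierstrass form: complete the square via $y\mapsto y-\tfrac12(ux+v)$, depress the cubic via $x\mapsto x-\tfrac{u^2}{12}$, and then apply $(x,y)\mapsto(\lambda^2x,\lambda^3y)$ with the unique $\lambda$ rescaling the differential to the normalization $dx/y$ of Proposition \ref{lem:Xomega}; matching $\omega'$ against $dx/y$ forces $\lambda=2$, so the completed-and-depressed coefficients get multiplied by $\lambda^4=16$ and $\lambda^6=64$, giving $\pi_1(u,v)=\bigl(8uv-\tfrac{u^4}{3},\ 16v^2-\tfrac{8vu^3}{3}+\tfrac{2u^6}{27}\bigr)$. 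The Jacobian is then immediate: with $a=8uv-\tfrac{u^4}{3}$, $b=16v^2-\tfrac{8vu^3}{3}+\tfrac{2u^6}{27}$ one has $a_u=8v-\tfrac{4u^3}{3}$, $a_v=8u$, $b_u=-8u^2v+\tfrac{4u^5}{9}$, $b_v=32v-\tfrac{8u^3}{3}$, and in $a_ub_v-a_vb_u$ the $u^3v$- and $u^6$-terms cancel, leaving $J(u,v)=256v^2$.

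For $\pi_2$ the coordinate formula comes from the same computation as in the proof of Corollary \ref{cor:bij3structure}: apply Weil's formula for the Weierstrass equation of the plane cubic $C_{u,v}$ to get a short Weierstrass model, forget the level-$3$ structure $\phi$, and rescale the invariant differential $\tfrac{y\,dz-z\,dy}{3ux^2-3vyz}$ to the normalization $dx/y$ (once more the factor $\lambda=2$), arriving at $\pi_2(u,v)=\bigl(-216u^3v-27v^4,\ -432u^6-1080u^3v^3+54v^6\bigr)$. Differentiating, $a_u=-648u^2v$, $a_v=-216u^3-108v^3$, $b_u=-2592u^5-3240u^2v^3$, $b_v=-3240u^3v^2+324v^5$, and expanding $a_ub_v-a_vb_u$ gives $-559872u^8+1119744u^5v^3-559872u^2v^6=-559872\,u^2(u^3-v^3)^2$; since $u^3-v^3=(u-v)(u^2+uv+v^2)$ this is the asserted formula.

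All of this is elementary; the only points that require genuine care — and where the content of the lemma really lies — are the normalization of the differential and, for $\pi_2$, the factorization of the degree-$8$ Jacobian. On the first: Proposition \ref{lem:Xomega} attaches $dx/y$ (which is twice the standard invariant differential $dx/(2y)$) to $y^2=x^3+ax+b$, and it is precisely this choice that fixes the numerical coefficients of $\pi_1$ and $\pi_2$, since using the wrong $\lambda$ multiplies $(a,b)$ by $(\lambda^4,\lambda^6)$. On the second: $\pi_2$ is étale on $\mb{A}^2_{X(3)}$ — being the forgetful map $(E,\omega,\phi)\mapsto(E,\omega)$ of fine moduli spaces, it is finite with all fibres of the same cardinality, namely the Weil-pairing-compatible level-$3$ structures on $E$ — so the polynomial $J$ can vanish only on the complementary locus $u(u^3-v^3)=0$; being homogeneous of degree $8$ it must be $\kappa\cdot u^{m}(u-v)^{n}(u^2+uv+v^2)^{\ell}$ with $m+n+2\ell=8$, and the derivative computation pins down $(m,n,\ell)=(2,2,2)$ and $\kappa=-559872$. (As a weaker analogous check, $J=256v^2$ for $\pi_1$ vanishes exactly along the component $v=0$ of the locus excluded from $\mb{A}^2_{X_1(3)}$.)
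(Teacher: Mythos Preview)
Your proof is correct and follows essentially the same route as the paper's: convert the Tate (respectively Hesse) normal form to short Weierstrass form using the standard formulas, read off $\pi_1$ and $\pi_2$, and compute the Jacobian determinants directly --- the paper simply delegates these computations to MAGMA rather than writing them out. Your supplementary observation that $\pi_2$ is \'etale on $\mb{A}^2_{X(3)}$, forcing $J$ to vanish only along $u(u^3-v^3)=0$ and hence to factor as $\kappa\,u^2(u^3-v^3)^2$, is a pleasant conceptual check not present in the paper, but the underlying approach is the same.
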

\begin{proof}
 A straightforward calculation, which we carry out using the computer algebra system MAGMA \cite{magma}. For the map $\pi_1$, we use formulas from \cite[Section III.1]{silverman2009arithmetic} to convert an elliptic curve in Tate normal form (the one defined in Proposition \ref{marked 3-tors point parametrization prop}) to short Weierstrass form, which gives the equations defining $\pi_1$, from which we can directly compute the Jacobian determinant. We use the formula for the Weierstrass form of an elliptic curve in the Hesse normal form given in Corollary \ref{cor:bij3structure} to compute the map $\pi_2$ and its Jacobian determinant. 
\end{proof}
\begin{lemma} \label{preimage lemma}
 Let $(E,\omega) \in X_{\omega}(k)$ be defined over a perfect field $k$. There is a bijection between the set of $k$-rational points in the preimage $\pi_1^{-1}(E,\omega) \cap X_{\omega,1}(k)$ and the set $E[3](k) \setminus \{0_E\}$ of non-zero $k$-rational $3$-torsion points of $E$, and a bijection between the set of $k$-rational points in the preimage $\pi_2^{-1}(E,\omega) \cap X_{\omega,E}(3)(k)$ and the set of $k$-rational isomorphisms $\Z/3\Z \times \mu_3 \xrightarrow{} E[3]$.
\end{lemma}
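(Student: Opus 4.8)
The plan is to unwind the modular interpretation of $\pi_1$ and $\pi_2$ and to exploit the rigidity supplied by the differential $\omega$. By Lemma \ref{Jacobian lemma} the map $\pi_1$ is the forgetful map $(E',\omega',P)\mapsto(E',\omega')$, so, using Propositions \ref{lem:Xomega} and \ref{marked 3-tors point parametrization prop}, a $k$-point of $\pi_1^{-1}(E,\omega)$ lying in $X_{\omega,1}(3)(k)$ is the $k$-isomorphism class of a triple $(E',\omega',P)$ with $P\in E'[3](k)\setminus\{0_{E'}\}$ and with $(E',\omega')$ in the same $k$-isomorphism class as $(E,\omega)$; concretely this triple comes from a pair $(u,v)\in k^2$ in Tate normal form. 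Likewise, by Lemma \ref{Jacobian lemma} and Propositions \ref{lem:Xomega} and \ref{split X(3)}, a $k$-point of $\pi_2^{-1}(E,\omega)$ inside $X_{\omega}(3)(k)$ is the class of a triple $(E',\omega',\phi)$ with $(E',\omega')$ in the class of $(E,\omega)$ and $\phi:\Z/3\Z\times\mu_3\to E'[3]$ a $k$-rational isomorphism compatible with the alternating pairing on the source and the Weil pairing on the target.

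The key point is that $\Aut_{\overline k}(E,\omega)=\{\mathrm{id}\}$ for every such pair $(E,\omega)$: the group $\Aut_{\overline k}(E)$ of automorphisms of $E$ fixing the origin acts on the one-dimensional space $H^0(E,\Omega^1_E)$ through a character, and on a short Weierstrass model the automorphism $(x,y)\mapsto(u^2x,u^3y)$ carries $dx/y$ to $u^{-1}dx/y$, so this character is injective and only $\mathrm{id}$ fixes $\omega$. (This is precisely the reason for rigidifying by $\omega$: it removes the extra automorphisms at $j=0$ and $j=1728$.) It follows that whenever $(E',\omega')$ lies in the $k$-isomorphism class of $(E,\omega)$ there is a \emph{unique} $\overline k$-isomorphism $\gamma:E'\to E$ with $\gamma^*\omega=\omega'$; since a $k$-isomorphism witnessing the $k$-class exists and, being a $\overline k$-isomorphism of this kind, must coincide with $\gamma$, the map $\gamma$ is defined over $k$.

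I would then define $\Phi$ on $\pi_1^{-1}(E,\omega)(k)$ by $\Phi([(E',\omega',P)])=\gamma(P)$, where $\gamma$ is this unique isomorphism; since $\gamma$ is a $k$-isomorphism and $P\in E'[3](k)\setminus\{0_{E'}\}$, we have $\gamma(P)\in E[3](k)\setminus\{0_E\}$. Uniqueness of $\gamma$ makes $\Phi$ well defined and injective: a $k$-isomorphism $\delta$ of triples from $(E'_1,\omega'_1,P_1)$ to $(E'_2,\omega'_2,P_2)$ satisfies $\gamma_2\circ\delta=\gamma_1$ by uniqueness, hence $\gamma_1(P_1)=\gamma_2(P_2)$; and conversely if $\gamma_1(P_1)=\gamma_2(P_2)$ then $\gamma_2^{-1}\circ\gamma_1$ is a $k$-isomorphism of triples. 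For surjectivity, given $P'\in E[3](k)\setminus\{0_E\}$ the triple $(E,\omega,P')$ maps under $\pi_1$ to $(E,\omega)$ and $\Phi([(E,\omega,P')])=P'$. Running the identical argument with $\phi$ in place of $P$ and Proposition \ref{split X(3)} in place of Proposition \ref{marked 3-tors point parametrization prop} — sending $[(E',\omega',\phi)]$ to $\gamma\circ\phi$ — yields the asserted bijection between the $k$-points of $\pi_2^{-1}(E,\omega)$ and the set of $k$-rational isomorphisms $\Z/3\Z\times\mu_3\to E[3]$ compatible with the pairings.

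I do not anticipate a real obstacle. The only substantive ingredient is the rigidity identity $\Aut_{\overline k}(E,\omega)=\{\mathrm{id}\}$ and the resulting descent of $\gamma$ to $k$ (which is where perfectness of $k$ is used, if one phrases the descent via $\Gal(\overline k/k)$-invariance); everything else is formal bookkeeping with isomorphism classes of rigidified triples. If one prefers to bypass the descent step, one may simply take $\gamma$ to be the explicit $k$-rational transformation from Tate (resp.\ Hesse) normal form to short Weierstrass form that defines $\pi_1$ (resp.\ $\pi_2$) in Lemma \ref{Jacobian lemma}, and then deduce injectivity of $\Phi$ directly from the injectivity of $\phi_1$ in Proposition \ref{marked 3-tors point parametrization prop} (resp.\ of $\phi_2$ in Proposition \ref{split X(3)}).
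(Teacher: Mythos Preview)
Your proof is correct and follows essentially the same approach as the paper: both arguments hinge on the rigidity statement $\Aut_{\overline k}(E,\omega)=\{\mathrm{id}\}$, derived from the fact that $(x,y)\mapsto(u^2x,u^3y)$ scales $dx/y$ by $u^{-1}$. The paper's proof is terser---it simply notes that distinct $3$-torsion points $P_1,P_2$ give non-isomorphic triples $(E,\omega,P_i)$ by this rigidity, and then invokes Galois equivariance for the rationality---whereas you spell out the construction of the canonical $k$-isomorphism $\gamma$ and verify well-definedness, injectivity, and surjectivity of $\Phi$ explicitly; but the substance is identical.
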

\begin{proof}
 By definition, we have $\pi_1^{-1}(E,\omega)=\{(E,\omega,P) : P \in E[3](\overline{k}), P \ne 0_E\}$. If $P_1,P_2$ are distinct non-zero $3$-torsion points, then the triples $(E,\omega,P_1)$ and $(E,\omega,P_2)$ are not isomorphic, since the only automorphism of $E$ that fixes $E$ and $\omega$ is the identity. The same argument applies to the map $\pi_2$. As this bijection is compatible with the action of $\mathrm{Gal}(\overline{k}/k)$, the rationality statement follows. 
\end{proof}
These covering maps can also be defined for the twisted moduli spaces $X_{\omega,\psi}(3)$ and $X_{\omega,F}(3)$. We make explicit the case when $F: y^2=x^3+b$ has $j$-invariant zero. 

\begin{lemma}\label{lem:explchar}
 Let $\psi : \mathrm{Gal}(\overline{k}/k)$ be the quadratic character associated with $k(\sqrt{\lambda})/k$. The map 
 \[
 \pi_\psi : \mb{A}^2_{X_{\psi}(3)} \xrightarrow[]{} \mb{A}^2_{X}
 \]
 corresponding to $\pi_1(E,\omega,P)=(E,\omega)$ is given by:

 \[
 \pi_{\psi}(u,v)=\left(2\lambda u v-\frac{\lambda^2u^4}{3},\lambda v^2-\frac{2\lambda^2vu^3}{3}+\frac{2\lambda ^3u^6}{27}\right).
 \] 
 The Jacobian determinant of $\pi_{\psi}(u,v)$ is $J(u,v)=4 \lambda^2 v^2$. 
 
\end{lemma}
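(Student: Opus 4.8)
The plan is to read off the formula for $\pi_{\psi}$ directly from Proposition \ref{3-isogeny parametrization} and then verify the Jacobian by a short direct computation. Under the parametrization $\phi_{\psi}$ of Proposition \ref{3-isogeny parametrization}, a point $(u,v) \in \mb{A}^2_{X_{\psi}(3)}(k)$ corresponds to the triple $(E,\omega,P)$ with
\[
E : y^2 = x^3 + \left(2\lambda u v-\frac{\lambda^2u^4}{3}\right)x + \left(\lambda v^2-\frac{2\lambda^2vu^3}{3}+\frac{2\lambda ^3u^6}{27}\right), \qquad \omega = \frac{dx}{y}.
\]
Forgetting $P$ and using the identification $X_{\omega} = \mb{A}^2_X$ from Proposition \ref{lem:Xomega}, the pair $(E,\omega)$ is the point of $\mb{A}^2_X$ whose two coordinates are exactly the displayed coefficients; hence $\pi_{\psi}$ is given in coordinates by the asserted formula. (That it lands in $\mb{A}^2_X$, i.e.\ away from the discriminant locus, is automatic: $\mb{A}^2_{X_{\psi}(3)}$ is cut out by the nonvanishing of $16(4\lambda^3u^3v^3 - 27\lambda^2v^4)$, which is the discriminant of $E$.)

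It then remains to compute the determinant of the matrix of partial derivatives of $a(u,v) = 2\lambda u v-\frac{\lambda^2u^4}{3}$ and $b(u,v) = \lambda v^2-\frac{2\lambda^2vu^3}{3}+\frac{2\lambda ^3u^6}{27}$. One finds $\partial_u a = 2\lambda v - \frac{4\lambda^2 u^3}{3}$, $\partial_v a = 2\lambda u$, $\partial_u b = -2\lambda^2 u^2 v + \frac{4\lambda^3 u^5}{9}$ and $\partial_v b = 2\lambda v - \frac{2\lambda^2 u^3}{3}$, and upon expanding $J = \partial_u a\,\partial_v b - \partial_v a\,\partial_u b$ the two terms $-4\lambda^3 u^3 v$ and $\frac{8\lambda^4 u^6}{9}$ cancel, leaving $J(u,v) = 4\lambda^2 v^2$.

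There is no real obstacle here: the first part is a matter of quoting Propositions \ref{lem:Xomega} and \ref{3-isogeny parametrization}, and the second is a two-line determinant computation (which, as in the proof of Lemma \ref{Jacobian lemma}, one can also confirm with a computer algebra system). As a consistency check, setting $\lambda = 1$ and substituting $v \mapsto 4v$ turns $\pi_{\psi}$ into the untwisted map $\pi_1$ of Lemma \ref{Jacobian lemma}, and the chain rule then correctly reproduces the Jacobian $256 v^2$ of $\pi_1$ from $4\lambda^2 v^2$.
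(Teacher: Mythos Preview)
Your proof is correct and follows exactly the approach indicated in the paper: read off the map from Proposition~\ref{3-isogeny parametrization} and compute the Jacobian directly, just as in Lemma~\ref{Jacobian lemma}. The paper's proof is the one-line remark ``Exactly the same as the first part of Lemma~\ref{Jacobian lemma}, using Proposition~\ref{3-isogeny parametrization}'', and your write-up simply spells this out (with a pleasant consistency check against $\pi_1$ at the end).
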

\begin{proof}
 Exactly the same as the first part of Lemma \ref{Jacobian lemma}, using Proposition \ref{3-isogeny parametrization}.
\end{proof}

 \begin{lemma} \label{twisted X(3) jacobian lemma}
 Let $F : y^2=x^3+b$ be an elliptic curve. The map $\pi_F :\mb{A}^2_{X_F(3)} \xrightarrow[]{} \mb{A}^2_{X}$ defined by $(E,\omega,\phi) \mapsto (E,\omega)$ is given by 
	\[
	\pi_F(u,v) =\left(-2^8\cdot 3^5\cdot b^2v^4 + 2^3 \cdot 3^2 \cdot b u^3v, - 2^{13}\cdot 3^6 \cdot b^3v^6 - 2^7\cdot3^3\cdot5 \cdot b^2u^3v^3 + bu^6\right).
	\]		
	
 \end{lemma}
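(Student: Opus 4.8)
The plan is to read the formula for $\pi_F$ directly off the parametrisation of $X_{\omega,F}(3)(k)$ given in Proposition \ref{twisted X(3)}, specialised to the case $a=0$. Recall from Proposition \ref{lem:Xomega} that we identify $\mathbb{A}^2_X$ with $X_\omega(k)$ via $(a,b)\mapsto(E_{a,b},dx/y)$. Under this identification, Proposition \ref{twisted X(3)} says precisely that the forgetful map $(E,\omega,\phi)\mapsto(E,\omega)$ is $(u,v)\mapsto\big(\mathbf{c}_4(u,v),\mathbf{c}_6(u,v)\big)$, where $\mathbf{c}_4,\mathbf{c}_6$ are the Hesse polynomials attached to $F:y^2=x^3+ax+b$. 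Since in our situation $F:y^2=x^3+b$, it only remains to substitute $a=0$ into the explicit expressions for $\mathbf{c}_4$ and $\mathbf{c}_6$ recorded above and to tidy up the numerical constants.

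Concretely, setting $a=0$ kills all but the last two terms of $\mathbf{c}_4$, leaving
\[
\mathbf{c}_4(u,v)\big|_{a=0}=-62208\,b^2v^4+72\,bu^3v=-2^8\cdot3^5\,b^2v^4+2^3\cdot3^2\,bu^3v,
\]
which is the first coordinate of $\pi_F(u,v)$. Similarly, setting $a=0$ leaves only three terms of $\mathbf{c}_6$, namely
\[
\mathbf{c}_6(u,v)\big|_{a=0}=-5971968\,b^3v^6-17280\,b^2u^3v^3+bu^6=-2^{13}\cdot3^6\,b^3v^6-2^7\cdot3^3\cdot5\,b^2u^3v^3+bu^6,
\]
which is the second coordinate. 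Thus the formula in the statement follows.

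I do not expect any genuine obstacle here: the mathematical content is already contained in Proposition \ref{twisted X(3)} (ultimately in \cite[Theorem 13.2]{fisher2012hessian}), and what remains is a routine specialisation together with the factorisation of four integers, which one can confirm with a computer algebra system as in the proof of Lemma \ref{Jacobian lemma}. If one also wants the Jacobian determinant of $\pi_F$, in analogy with Lemmas \ref{Jacobian lemma} and \ref{lem:explchar}, it is obtained by differentiating these two polynomials directly; one expects the zero locus of the Jacobian to be contained in $\{\mathbf{disc}(u,v)=0\}$ since $\pi_F$ should be \'etale away from the discriminant locus, so that the Jacobian comes out as a constant times a power of $b$ times a power of $\mathbf{disc}(u,v)$.
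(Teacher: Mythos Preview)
Your proof is correct and follows exactly the same approach as the paper: specialise the Hesse polynomials $\mathbf{c}_4,\mathbf{c}_6$ of Proposition~\ref{twisted X(3)} to $a=0$. The paper's own proof says only this, so your version is in fact more detailed (the explicit factorisations and the remark about the Jacobian are extra but harmless).
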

 \begin{proof}
 Similar to Lemma \ref{Jacobian lemma}. Note that the formula for the map $\pi_F$ is obtained by specializing the formulas of Proposition \ref{twisted X(3)} to the case $a=0$.
 \end{proof}
 Finally, we also note that an analog of Lemma \ref{preimage lemma} is also true for these twisted parametrizations, and can be proven in the same way.

	\section{Proof of Theorem~\ref{thm:main}}\label{sec:3tors}
 We assume $p \geq 5$ in this section. The case $p=3$ is treated in Section \ref{p-torsion section}.

\begin{proof}[Proof of Theorem~\ref{thm:main}] We distinguish cases of good, split and non-split multiplicative, and additive reduction, and compute the probability in each separate case. These probabilities will be conditional to being in minimal form, which, by \cite[Proposition 2.1]{cremona2020localpub} happens with probability $1-p^{-10}$, so we need to multiply our results by $\frac{1}{1-p^{-10}}$. The theorem follows by adding the results from Propositions \ref{thm:gr}, \ref{thm:smr}, \ref{thm:nsmr}, and \ref{thm:ar}.
\end{proof}

\begin{lemma}\label{lem:l-divides-iff-l-tors}
Let $E/\Q_p$ be an elliptic curve in minimal form and $\ell \ne p$ a prime. Consider the associated filtration $E(\Q_p) \supset E_0(\Q_p) \supset E_1(\Q_p)$, where $E_0(\Q_p)$ is the subgroup of points with non-singular reduction modulo $p$ and $E_1(\Q_p)$ is the kernel of the reduction modulo $p$. There exists a non-zero $P \in E(\Q_p)[\ell]$ if and only if $\ell\mid [E(\Q_p):E_1(\Q_p)]$.
\end{lemma}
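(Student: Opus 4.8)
The plan is to use the structure theory of elliptic curves over local fields from \cite[Chapter VII]{silverman2009arithmetic}, exploiting the fact that $\ell \ne p$ forces all $\ell$-torsion to be prime-to-$p$ and hence to inject into the component group / reduction. First I would recall the exact sequences
\[
0 \longrightarrow E_1(\Q_p) \longrightarrow E_0(\Q_p) \longrightarrow \tilde{E}_{\mathrm{ns}}(\F_p) \longrightarrow 0
\quad\text{and}\quad
0 \longrightarrow E_0(\Q_p) \longrightarrow E(\Q_p) \longrightarrow \Phi(\F_p) \longrightarrow 0,
\]
where $\tilde{E}_{\mathrm{ns}}$ is the non-singular locus of the reduction and $\Phi$ is the (finite) component group. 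Combining these, $[E(\Q_p):E_1(\Q_p)]$ is finite, equal to $\#\Phi(\F_p)\cdot\#\tilde{E}_{\mathrm{ns}}(\F_p)$ in the bad-reduction cases and to $\#\tilde{E}(\F_p)$ in the good-reduction case.

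The key point is that $E_1(\Q_p)$ is a pro-$p$ group (it is isomorphic as a topological group to the formal group $\hat{E}(p\Z_p)$, which by \cite[Chapter IV, VII]{silverman2009arithmetic} is a $\Z_p$-module, so $\ell$-divisible and $\ell$-torsion-free when $\ell \ne p$). Therefore multiplication by $\ell$ is an automorphism of $E_1(\Q_p)$. The next step is the elementary group-theoretic observation: if $A$ is an abelian group with a subgroup $B$ such that $\ell$ acts invertibly on $B$ and $A/B$ is finite, then $A$ has an element of order $\ell$ if and only if $\ell \mid [A:B]$. Indeed, the snake lemma applied to multiplication-by-$\ell$ on $0 \to B \to A \to A/B \to 0$ gives $A[\ell] \cong (A/B)[\ell]$ (since $B[\ell]=0$ and $B/\ell B = 0$), and for a finite abelian group $A/B$ one has $(A/B)[\ell] \ne 0$ iff $\ell$ divides $\#(A/B) = [A:B]$. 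Applying this with $A = E(\Q_p)$, $B = E_1(\Q_p)$ yields the claim, because a non-zero $P \in E(\Q_p)[\ell]$ exists iff $E(\Q_p)[\ell] \ne 0$, iff $E(\Q_p)[\ell] \cong (E(\Q_p)/E_1(\Q_p))[\ell] \ne 0$, iff $\ell \mid [E(\Q_p):E_1(\Q_p)]$.

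There is essentially no hard part here; the only thing requiring a little care is making sure $[E(\Q_p):E_1(\Q_p)]$ is finite (so that ``$\ell$ divides it'' makes sense) and that the snake-lemma argument is applied to honest abelian groups, ignoring the topology. Finiteness follows since $\Phi(\F_p)$ is finite and $\tilde{E}_{\mathrm{ns}}(\F_p) \subseteq \tilde{E}(\F_p)$ is finite; one could alternatively phrase everything with $E_0(\Q_p)$ in place of $E_1(\Q_p)$ and then remark that the two differ by the finite $p'$-contribution of $\#\Phi(\F_p)$ plus the $p$-part, but going straight to $E_1$ is cleanest. I would close by noting this lemma is what lets us read off the $\ell$-torsion purely from Kodaira type and reduction data in the bad-reduction cases of the main proof.
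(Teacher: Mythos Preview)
Your argument is correct and is essentially the same as the paper's: both hinge on the fact that multiplication by $\ell$ is an automorphism of $E_1(\Q_p)$ (via the formal group), so that $E(\Q_p)[\ell]\cong (E(\Q_p)/E_1(\Q_p))[\ell]$ and one concludes by finiteness of the quotient. The paper phrases the middle step as ``$[\ell]$ is injective on $E(\Q_p)$ iff it is injective on $E(\Q_p)/E_1(\Q_p)$'' rather than invoking the snake lemma, and it cites $E_1(\Q_p)\cong\Z_p$ directly (using $p\ge 3$) instead of the pro-$p$ statement, but these are cosmetic differences.
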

\begin{proof}
 Since $p \geq 3$, we have $E_1(\Q_p) \cong \Z_p$ as a topological group by the theory of formal groups \cite[Proposition VII.2.2]{silverman2009arithmetic}. Hence multiplication by $\ell$ induces an isomorphism $E_1(\Q_p) \xrightarrow{\cdot \ell} E_1(\Q_p)$. Thus, $E(\Q_p) \xrightarrow{\cdot \ell} E(\Q_p)$ is an injection if and only if $E(\Q_p)/E_1(\Q_p) \xrightarrow{\cdot \ell} E(\Q_p)/E_1(\Q_p)$ is an injection, and the claim follows. 
\end{proof}

We now compute $\mu_{\Z_p^5}(W_{\gr})$, $\mu_{\Z_p^5}(W_{\smr})$, $\mu_{\Z_p^5}(W_{\nsmr})$, and $\mu_{\Z_p^5}(W_{\arr})$, where $W_{\gr}$, $W_{\smr}$, $W_{\nsmr}$, and $W_{\arr}$ are subsets of $\mb{A}_{\Delta\neq 0}^5$ of those $[a_1,a_2,a_3,a_4,a_6]$ for which $y^2+a_1xy+a_3y=x^3+a_2x^2+a_4x+a_6$ has good reduction, split multiplicative reduction, non-split multiplicative reduction, and additive reduction, respectively, where $E$ is in minimal form, and has a non-trivial $\Q_p$-rational $3$-torsion point.

\subsection{Good reduction}\label{sub:good-reduction}

\begin{lemma} \label{good reduction count}
 Let $W_{3,p}$ be the number of short Weierstrass equations over $\F_p$ with a $\F_p$-rational non-trivial $3$-torsion point. If $p \equiv 1 \pmod{3}$, then $W_{3,p}=\frac{3p^2-4p+1}{8}$. If $p \equiv 2 \pmod{3}$, then $W_{3,p}=\frac{(p-1)^2}{2}$. 
 \end{lemma}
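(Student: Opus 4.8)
The plan is to count short Weierstrass equations $y^2 = x^3 + ax + b$ over $\F_p$ (with $4a^3 + 27b^2 \ne 0$) that carry a non-trivial $\F_p$-rational $3$-torsion point, by relating this count to the fibers of the covering map $\pi_1 : \mb{A}^2_{X_1(3)} \to \mb{A}^2_X$ of Lemma \ref{Jacobian lemma}. By Proposition \ref{lem:Xomega}, short Weierstrass equations over $\F_p$ are exactly the $\F_p$-points of $\mb{A}^2_X = X_\omega$, and by Lemma \ref{preimage lemma} a point $(E,\omega) \in X_\omega(\F_p)$ has exactly $\#(E[3](\F_p) \setminus \{0_E\})$ preimages in $X_{\omega,1}(3)(\F_p) = \mb{A}^2_{X_1(3)}(\F_p)$. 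So $W_{3,p} = \#\{(E,\omega) : E[3](\F_p) \ne 0\}$, and a short equation with $k \in \{2, 8\}$ non-trivial rational $3$-torsion points is counted once in $W_{3,p}$ but appears $k$ times among the points of $\mb{A}^2_{X_1(3)}(\F_p)$.

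The key steps, in order: (i) Compute $\#\mb{A}^2_{X_1(3)}(\F_p)$ directly: this is the number of pairs $(u,v) \in \F_p^2$ with $(u^3 - 27v)v^3 \ne 0$, i.e. $v \ne 0$ and $u^3 \ne 27v$; for each of the $p - 1$ nonzero values of $v$ there is exactly one forbidden $u$ (since cubing is a bijection when $p \equiv 2 \pmod 3$, and when $p \equiv 1 \pmod 3$ the value $27v$ either is or is not a cube, but in either case $u^3 = 27v$ has either $0$ or $3$ solutions — so I must be a little careful and instead count $\{(u,v): v\ne 0\}$ and subtract $\#\{(u,v): v \ne 0,\ u^3 = 27v\}$, the latter being $p-1$ since for each nonzero $u$ there is a unique $v = u^3/27 \ne 0$). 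This gives $\#\mb{A}^2_{X_1(3)}(\F_p) = p(p-1) - (p-1) = (p-1)^2$. (ii) Identify the contribution of curves with a full rational $3$-torsion subgroup: $E[3](\F_p) \cong (\Z/3\Z)^2$ forces $\mu_3 \subset \F_p$, hence this case occurs only when $p \equiv 1 \pmod 3$, and such a pair $(E,\omega)$ contributes $8$ to $\#\mb{A}^2_{X_1(3)}(\F_p)$ but $1$ to $W_{3,p}$. The number of such $(E,\omega)$ is $\#X_\omega(3)(\F_p) / \#\mathrm{Aut}$-type data; more precisely, by Lemma \ref{preimage lemma} applied to $\pi_2$, each such $(E,\omega)$ has $\#\mathrm{GL}_2(\F_3)$-worth — no: the preimages under $\pi_2$ correspond to symplectic-compatible isomorphisms $\Z/3\Z \times \mu_3 \to E[3]$, of which there are $\#\mathrm{SL}_2(\F_3) = 24$ when $\mu_3 \subset \F_p$. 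So I instead count $\#\mb{A}^2_{X(3)}(\F_p) = \#\{(u,v): u(u^3 - v^3) \ne 0\} = p(p-1) - (p-1) \cdot ?$ — again by the same elementary count this is $(p-1)(p-2)$ when $p \equiv 1 \pmod 3$ (now $u^3 = v^3$ has $3$ solutions $u$ for each nonzero $v$) and dividing by $24$ gives the number of pairs $(E,\omega)$ with full level-$3$ structure, call it $N_3$. (iii) Combine: among the $(p-1)^2$ points of $\mb{A}^2_{X_1(3)}(\F_p)$, the $8 N_3$ coming from full-$3$-torsion curves collapse to $N_3$ in $W_{3,p}$, while the remaining $(p-1)^2 - 8N_3$ come from curves with exactly one rational $3$-torsion subgroup, hence exactly $2$ nonzero rational $3$-torsion points, collapsing two-to-one; thus $W_{3,p} = N_3 + \frac{(p-1)^2 - 8N_3}{2}$.

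Carrying out (iii): when $p \equiv 2 \pmod 3$ we have $N_3 = 0$ (no full level-$3$ structure), so $W_{3,p} = \frac{(p-1)^2}{2}$, matching the claim. When $p \equiv 1 \pmod 3$, $N_3 = \frac{(p-1)(p-2)}{24}$, so $W_{3,p} = \frac{(p-1)(p-2)}{24} + \frac{(p-1)^2 - \frac{(p-1)(p-2)}{3}}{2} = \frac{(p-1)(p-2)}{24} + \frac{(p-1)(3(p-1) - (p-2))}{6} = \frac{(p-1)(p-2)}{24} + \frac{(p-1)(2p-1)}{6}$, and a short computation gives $\frac{(p-1)(p-2) + 4(p-1)(2p-1)}{24} = \frac{(p-1)(9p - 6)}{24} = \frac{3(p-1)(3p-2)}{24} = \frac{(p-1)(3p-2)}{8} = \frac{3p^2 - 5p + 2}{8}$ — which does \emph{not} match $\frac{3p^2 - 4p + 1}{8}$, so I will need to recheck the elementary point counts (likely the count of $\mb{A}^2_{X(3)}$ or the two-to-one/eight-to-one bookkeeping at the curves $j = 0, 1728$, where $\mathrm{Aut}(E)$ is larger and the fiber analysis of Lemma \ref{preimage lemma} needs the automorphism-free pair $(E,\omega)$ — this is exactly why the $\omega$-rigidified moduli space was introduced). \textbf{The main obstacle} is precisely this bookkeeping: getting the multiplicities right at every $(E,\omega)$, ensuring the elementary $\F_p$-counts of $\mb{A}^2_{X_1(3)}(\F_p)$ and $\mb{A}^2_{X(3)}(\F_p)$ correctly handle the vanishing loci (including whether $u = v$, $u^2 + uv + v^2 = 0$ etc. can coincide with $v = 0$), and reconciling the arithmetic so that it lands on $\frac{3p^2 - 4p + 1}{8}$; I expect the resolution is that $\#\mb{A}^2_{X(3)}(\F_p) = (p-1)(p-5)$ or similar once all three factors $u$, $u-v$, $u^2+uv+v^2$ are excluded, which changes $N_3$ and fixes the final count.
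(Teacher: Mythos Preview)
Your approach is essentially the same as the paper's: count $\F_p$-points on $\mb{A}^2_{X_1(3)}$ and $\mb{A}^2_{X(3)}$, and use the fiber sizes of $\pi_1$ and $\pi_2$ from Lemma \ref{preimage lemma} (which are exactly $2$ or $8$, respectively $24$, since the $\omega$-rigidification kills all automorphisms, so your worry about $j=0,1728$ is indeed a red herring). The only problem is a slip in the elementary point count of $\mb{A}^2_{X(3)}(\F_p)$.

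The condition is $u(u^3-v^3)\ne 0$, i.e.\ $u\ne 0$ and $u^3\ne v^3$. When $p\equiv 1\pmod 3$, for each of the $p-1$ nonzero values of $u$ there are exactly \emph{three} forbidden values of $v$, namely $v=u,\ \zeta_3 u,\ \zeta_3^2 u$, so
\[
\#\mb{A}^2_{X(3)}(\F_p)=(p-1)(p-3)=p^2-4p+3,
\]
not $(p-1)(p-2)$, and not $(p-1)(p-5)$ either (the four linear factors $u,\,u-v,\,u-\zeta_3 v,\,u-\zeta_3^2 v$ give four constraints, but one of them, $u\ne 0$, is on $u$ alone and the other three are on $v$ once $u$ is fixed). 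With $N_3=(p-1)(p-3)/24$ your formula gives
\[
W_{3,p}=N_3+\frac{(p-1)^2-8N_3}{2}
=\frac{(p-1)(p-3)+8p(p-1)}{24}
=\frac{(p-1)(9p-3)}{24}
=\frac{(p-1)(3p-1)}{8}
=\frac{3p^2-4p+1}{8},
\]
exactly as claimed. So the plan is correct; only that one count needs fixing.
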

 \begin{proof}
 Suppose first that $p\equiv 2 \pmod{3}$. Then we cannot have $E[3](\Q_p) \cong (\Z/3\Z)^2$, since $\zeta_3\notin \F_p$. Thus if $3\mid \#(E(\F_p))$, 
 we have $E(\F_p)=\Z/3\Z$. By Lemma \ref{preimage lemma} $\pi_1 : X_{\omega,1}(3)(\F_p) \xrightarrow{} X_{\omega}(\F_p)$ is a 2-to-1 surjective map. Identifying $X_{\omega,1}(3)=\mb{A}^2_{X_1(3)}$, that is defined in Equation \eqref{eq:A21}, we see that $\#(X_{\omega,1}(3)(\F_p))=p^2-p-p+1=(p-1)^2$, we conclude $W_{3,p}=(p-1)^2/2$. 
 
 Suppose that $p \equiv 1 \pmod{3}$ now. Then the field $\Q_p$ contains every cube root of unity, and so $\mu_3 \times \Z/3\Z \cong (\Z/3\Z)^2$ as a $\mathrm{Gal}(\overline{\Q_p}/\Q_p)$-module. We compute the number $W_{(\Z/3\Z)^2,p}$ of short Weierstrass equations with $E[3](\F_p) \cong (\Z/3\Z)^2$ by counting $\F_p$-points on the moduli space $X_{\omega}(3)$. By Lemma \ref{preimage lemma}, $\pi_2 : X_{\omega}(3)(\F_p) \xrightarrow[]{} X_{\omega}(\F_p)$ is a 24-to-1 cover, since if $E[3](\F_p) \cong (\Z/3\Z)^2$, then symplectic isomorphisms $\phi :M \xrightarrow[]{} E[3]$ can be identified with $\mathrm{SL}_2(\F_p)$, which has 24 elements. 
 
 Identifying $X_{\omega}(3)(\F_p)=\mb{A}^2_{X(3)}(\F_p)$, that is defined in Equation \eqref{eq:A2}, we see that $\#(X_{\omega}(3)(\F_p))=p^2-4p+3$ and hence $W_{(\Z/3\Z)^2,p}=(p^2-4p+3)/24$. We next consider $\pi_1 : X_{\omega,1}(3)(\F_p) \xrightarrow{} X_{\omega}(\F_p)$. Weierstrass equations with $E[3](\F_p)=\Z/3\Z$ have preimages of size 2 under $\pi_1$, while those with $E[3](\F_p)=(\Z/3\Z)^2$ have preimages of size 8. Since we know there are $(p^2-4p+3)/24$ of the latter ones, we know that their preimage in $X_{\omega,1}(3)(\F_p)$ consists of $(p^2-4p+3)/3$ points. This leaves $(p-1)^2-(p^2-4p+3)/3=\frac{2p^2-2p}{3}$ points which map to Weierstrass equations with $E[3](\F_p) \cong (\Z/3\Z)$. We conclude that there are $\frac{p^2-p}{3}$ such equations, and in total 
 \[
 W_{3,p}=\frac{p^2-p}{3}+\frac{p^2-4p+3}{24}=\frac{3p^2-4p+1}{8}.
 \]
 \end{proof}

By Lemmas~\ref{lem:l-divides-iff-l-tors} and ~\ref{good reduction count}, Proposition \ref{prop:assume-short-WE}, and the fact that there are $p^2$ short Weierstrass equations in $\F_p$, we conclude the following.
	
	\begin{proposition}\label{thm:gr}
		We have 
		\[
		\mu_{\Z_p^5}(W_{\gr})=\begin{cases} 
			\frac{3p^2-4p+1}{8p^2} \text{ if } p\equiv 1 \pmod 3, \\
			\frac{(p-1)^2}{2p^2} \text{ if } p\equiv 2 \pmod 3. \\
		\end{cases}
		\]
	\end{proposition}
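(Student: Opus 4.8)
The plan is to identify $\mu_{\Z_p^5}(W_{\gr})$ with the mod-$p$ count $W_{3,p}$ of Lemma~\ref{good reduction count}, divided by $p^2$. First I would invoke Proposition~\ref{prop:assume-short-WE}: the change of variables bringing a long Weierstrass equation to the short form $y^2=x^3+a_4x+a_6$ is measure-preserving, and it preserves the reduction type, the property of being minimal, and the group $E(\Q_p)$ (hence the $3$-torsion condition). So it suffices to compute the measure, in $\Z_p^2$ with its normalized Haar measure, of the set $W'$ of pairs $(a_4,a_6)$ for which $y^2=x^3+a_4x+a_6$ is minimal, has good reduction, and satisfies $\#E(\Q_p)[3]\geq 3$.

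Next I would show that membership in $W'$ depends only on the reduction $(\bar a_4,\bar a_6)\in\F_p^2$. A short Weierstrass equation over $\Z_p$ has good reduction exactly when $p\nmid\Delta$, and in that case it is automatically minimal, since minimality could fail only if $p^4\mid a_4$ and $p^6\mid a_6$, which would force $p^{12}\mid\Delta$. Given good reduction, Lemma~\ref{lem:l-divides-iff-l-tors} applied with $\ell=3\ne p$ — noting $E_0(\Q_p)=E(\Q_p)$ and $[E(\Q_p):E_1(\Q_p)]=\#\tilde E(\F_p)$ — shows that $E(\Q_p)$ has a non-trivial $3$-torsion point if and only if $3\mid\#\tilde E(\F_p)$, equivalently (by Cauchy's theorem, or directly by Hensel's lemma applied to the $3$-division polynomial, which is separable because $3\ne p$) if and only if $\tilde E$ has a non-trivial $\F_p$-rational $3$-torsion point. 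Thus $(a_4,a_6)\in W'$ precisely when $(\bar a_4,\bar a_6)$ is one of the $W_{3,p}$ short Weierstrass equations over $\F_p$ enumerated by Lemma~\ref{good reduction count}.

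Finally, since the reduction map $\Z_p^2\to\F_p^2$ pushes the normalized Haar measure forward to $p^{-2}$ times counting measure, and $W'$ is the preimage of those $W_{3,p}$ points, we obtain $\mu_{\Z_p^5}(W_{\gr})=W_{3,p}/p^2$; substituting the two values of $W_{3,p}$ from Lemma~\ref{good reduction count} gives the stated formula. The only steps requiring real care are the automatic-minimality remark and the Hensel lifting of a $3$-torsion point from $\tilde E$ to $E$ — the one place where $3\ne p$ is essential — and the check that the reduction-to-short-form map is compatible with the minimality and reduction-type conditions; the substantive combinatorics is already packaged into $W_{3,p}$, so I do not expect a genuine obstacle.
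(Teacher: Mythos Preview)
Your proposal is correct and follows essentially the same approach as the paper: reduce to short Weierstrass equations via Proposition~\ref{prop:assume-short-WE}, use Lemma~\ref{lem:l-divides-iff-l-tors} to reduce the $3$-torsion condition to a mod-$p$ condition on the reduced curve, and then invoke the count $W_{3,p}$ from Lemma~\ref{good reduction count} divided by $p^2$. The paper's proof is the one-line version of exactly this argument; you have simply filled in the details (automatic minimality under good reduction, and the observation that the long-to-short change of variables has $u=1$ and hence preserves $\Delta$) that the paper leaves implicit.
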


\subsection{Bad reduction}\label{sub:bad-reduction}
Define $\Tilde{E}_{ns}(\mb{F}_p)$ as the reduction modulo $p$ of the points in $E(\Q_p)$ that reduce to non-singular points modulo $p$ and recall $\Tilde{E}_{ns}(\mb{F}_p)\cong E_0(\Q_p)/E_1(\Q_p)$.
\begin{proposition}\label{thm:smr}
 We have 
		\[
		\mu_{\Z_p^5}(W_{\smr})=\begin{cases}
			\frac{p-1}{2p^2} \text{ if }p\equiv 1\pmod 3,\\
			\frac{p-1}{2p^2(p^2+p+1)} \text{ if }p\equiv 2\pmod 3.
		\end{cases}
		\]
	\end{proposition}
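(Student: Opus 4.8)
The plan is to combine Lemma \ref{lem:l-divides-iff-l-tors} with the structure of the component group in the split multiplicative case and the known probabilities of the Kodaira types from Cremona--Sadek. Recall that for a curve $E/\Q_p$ with split multiplicative reduction of type $\mathrm{I}_n$, one has $\tilde{E}_{ns}(\F_p) \cong \F_p^{\times}$, which has order $p-1$, and the component group $E(\Q_p)/E_0(\Q_p)$ is cyclic of order $n$. Hence $[E(\Q_p):E_1(\Q_p)] = n(p-1)$, and by Lemma \ref{lem:l-divides-iff-l-tors} the curve has a non-trivial $\Q_p$-rational $3$-torsion point if and only if $3 \mid n(p-1)$. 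When $p \equiv 1 \pmod 3$ this holds automatically (since $3 \mid p-1$), so \emph{every} curve with split multiplicative reduction contributes; when $p \equiv 2 \pmod 3$ it holds if and only if $3 \mid n$.

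First I would invoke \cite{cremona2020localpub} to get the probability that a random minimal Weierstrass equation has split multiplicative reduction of type $\mathrm{I}_n$ for each $n \geq 1$. The total probability of split multiplicative reduction (of any type $\mathrm{I}_n$, $n \geq 1$) should be of the form $\frac{p-1}{2p^2} \cdot \frac{1}{1-p^{-10}}$ or similar (matching the $p \equiv 1$ answer after the minimality normalization factor is applied as in the proof of Theorem \ref{thm:main}), so the $p \equiv 1 \pmod 3$ case is immediate: the answer is exactly the total split-multiplicative probability. For the $p \equiv 2 \pmod 3$ case, I would sum the probabilities of type $\mathrm{I}_n$ over $n$ divisible by $3$ only. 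Since the Cremona--Sadek probabilities for type $\mathrm{I}_n$ decay geometrically in $n$ (roughly like $p^{-n}$ up to constants), this is a geometric-type series; summing over $n \in 3\Z_{>0}$ picks out a sub-series whose ratio is $p^{-3}$ (or $p^{-2}$, depending on the exact form), producing the extra factor $\frac{1}{p^2+p+1} = \frac{p-1}{p^3-1}$ that appears in the stated formula. I would carry out this summation explicitly, being careful about the constant in front and about the conditional-on-minimality factor $\frac{1}{1-p^{-10}}$ which, per the proof of Theorem \ref{thm:main}, is applied uniformly at the end rather than here.

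The main obstacle I anticipate is bookkeeping rather than conceptual: extracting the precise per-$n$ probabilities of type $\mathrm{I}_n$ from \cite{cremona2020localpub} in a form valid for all $p \geq 5$, and then verifying that the sub-sum over $3 \mid n$ telescopes to exactly $\frac{p-1}{2p^2(p^2+p+1)}$. One subtlety to watch: for split multiplicative reduction the relevant reduction types are precisely the $\mathrm{I}_n$ with $n \geq 1$ (type $\mathrm{I}_0$ being good reduction, which is handled separately in Proposition \ref{thm:gr}), and one must confirm that Cremona--Sadek's tables distinguish split from non-split $\mathrm{I}_n$ correctly — the non-split case is treated in Proposition \ref{thm:nsmr}, where the analogous computation uses $\tilde{E}_{ns}(\F_p)$ of order $p+1$ and the component group has order $1$ or $2$ depending on the parity of $n$, so the divisibility condition $3 \mid (p+1)$ or $3 \mid \gcd(n,2)\cdot(\dots)$ behaves quite differently. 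Once the series is set up correctly, the evaluation is a routine manipulation of $\sum_{k \geq 1} x^{3k}$ and I would not expect further difficulties.
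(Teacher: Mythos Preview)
Your proposal is correct and follows essentially the same route as the paper: both use Lemma~\ref{lem:l-divides-iff-l-tors} together with $\tilde{E}_{ns}(\F_p)\cong\Z/(p-1)\Z$ and $E(\Q_p)/E_0(\Q_p)\cong\Z/n\Z$ for split type $I_n$ to reduce to the condition $3\mid n(p-1)$, and then sum the Cremona--Sadek probabilities $\tfrac{1}{2}(p-1)^2/p^{n+2}$ over all $n\geq 1$ (when $p\equiv 1\pmod 3$) or over $n\in 3\Z_{>0}$ (when $p\equiv 2\pmod 3$). The only uncertainty you flag---the exact per-$n$ probability and the common ratio---is resolved exactly as you anticipate (ratio $p^{-3}$ for the sub-series), and the minimality factor is indeed deferred to the proof of Theorem~\ref{thm:main}.
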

	
\begin{proof}		
Since $E$ has split multiplicative reduction, $\Tilde{E}_{ns}(\mb{F}_p) \cong \mb{Z}/(p-1)\mb{Z}$ is cyclic of order $p-1$. Thus, if $ p \equiv 1 \pmod{3}$, by Lemma~\ref{lem:l-divides-iff-l-tors}, there is a non-trivial $3$-torsion point in $E_0(\Q_p)$.

If $p \equiv 2 \pmod{3}$, by Lemma~\ref{lem:l-divides-iff-l-tors}, there is a non-trivial $3$-torsion point in $E(\Q_p)$ if and only if $3\mid n$, where $n$ is such that $E(\Q_p)/ E_0(\Q_p) \cong \mb{Z}/n\mathbb{Z}$, i.e., $n$ is equal to the valuation of the discriminant of $[a_1,a_2,a_3,a_4,a_6]$ (or equivalently equal to the minus of the valuation of the $j$-invariant), see \cite[Corollary C.15.2.1]{silverman2009arithmetic}. In this case, $E$ has Kodaira symbol $I_n$. Thus, $\mu_{\Z_p^5}(W_{\smr})$ is the probability that $3|n$, or equivalently the probability that the Kodaira symbol of $[a_1,a_2,a_3,a_4,a_6]$ is $I_{3k}$ for some $k \geq 1$, and that the reduction of $E$ is split.
		
The problem is thus reduced to computing the probability that $[a_1,a_2,a_3,a_4,a_6]$ has Kodaira symbol $I_{m}$ and split reduction, for arbitrary $k \geq 1$. This quantity is
computed in \cite[Theorem 5.3]{cremona2020localpub} to be equal to $\frac{1}{2}(p-1)^2/p^{m+2}$; see also \cite[p. 451]{cremona2020localpub} for this formula. We obtain the result by summing these values over all $m\geq 1$ for $p\equiv 1\pmod{3}$ and $3m$, for $m\geq 1$ for $p\equiv 2\pmod{3}$.
\end{proof}
\begin{proposition}\label{thm:nsmr}
		We have 
		\[
		\mu_{\Z_p^5}(W_{\nsmr})=\begin{cases}
			0 \text{ if }p\equiv 1\pmod 3,\\
			\frac{p-1}{2p^2} \text{ if }p\equiv 2\pmod 3.
		\end{cases}
		\]
	\end{proposition}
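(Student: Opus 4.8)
The plan is to follow the strategy of the proof of Proposition~\ref{thm:smr}, now using the structure of the filtration for a curve with non-split multiplicative reduction. First I would recall that for such a curve the quotient $E_0(\Q_p)/E_1(\Q_p)$ is the group of $\F_p$-points of the non-split one-dimensional torus, hence cyclic of order $p+1$, so $[E_0(\Q_p):E_1(\Q_p)]=p+1$. Next, by the Kodaira classification (see \cite[Table C.15.1]{silverman2009arithmetic} and \cite[Corollary C.15.2.1]{silverman2009arithmetic}), such a curve has Kodaira type $I_n$ for some $n\geq 1$, and the component group $E(\Q_p)/E_0(\Q_p)$ is cyclic of order $\gcd(2,n)\in\{1,2\}$. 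Combining these, $[E(\Q_p):E_1(\Q_p)]=\gcd(2,n)\,(p+1)$.

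By Lemma~\ref{lem:l-divides-iff-l-tors}, $E(\Q_p)$ has a non-trivial $3$-torsion point if and only if $3 \mid \gcd(2,n)(p+1)$; since $\gcd(2,n)$ is coprime to $3$, this is equivalent to $3\mid p+1$. If $p\equiv 1\pmod 3$, then $p+1\equiv 2\pmod 3$, so no curve with non-split multiplicative reduction has a non-trivial $\Q_p$-rational $3$-torsion point, and $\mu_{\Z_p^5}(W_{\nsmr})=0$. If $p\equiv 2\pmod 3$, then $3\mid p+1$, so every minimal Weierstrass equation with non-split multiplicative reduction automatically belongs to $W_{\nsmr}$; hence $\mu_{\Z_p^5}(W_{\nsmr})$ equals the probability that a random $[a_1,a_2,a_3,a_4,a_6]\in\Z_p^5$ is minimal with non-split multiplicative reduction.

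It then remains to evaluate this probability. Such a curve has Kodaira type $I_m$ for some $m\geq 1$, and by \cite[Theorem 5.3]{cremona2020localpub} (the non-split counterpart of the formula quoted in the proof of Proposition~\ref{thm:smr}, the split and non-split cases being equiprobable for each $m$) the probability of being minimal with type $I_m$ and non-split reduction is $\frac{1}{2}(p-1)^2/p^{m+2}$. Summing over all $m\geq 1$ yields $\frac{(p-1)^2}{2p^2}\cdot\frac{1}{p-1}=\frac{p-1}{2p^2}$, as claimed. I do not expect a genuine obstacle here; the one point worth stressing, in contrast to the split case, is that the non-split component group has order coprime to $3$ and so never contributes to divisibility by $3$, which is why the answer depends only on $p\bmod 3$ and no geometric series over multiples of $3$ enters.
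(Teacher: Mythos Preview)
Your proof is correct and follows essentially the same approach as the paper: both use Lemma~\ref{lem:l-divides-iff-l-tors} together with $\#\tilde{E}_{ns}(\F_p)=p+1$ and the fact that the component group has order prime to $3$, then quote Cremona--Sadek for the probability of non-split multiplicative reduction. The only cosmetic difference is that you sum the per-$I_m$ probabilities explicitly while the paper cites the total directly, and you state the component group order as $\gcd(2,n)$ rather than the paper's looser ``divides $4$''.
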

	\begin{proof}
In this case, $\#\Tilde{E}_{ns}(\mb{F}_p)=p+1$ (see \cite[Exercise 3.5.a.ii]{silverman2009arithmetic}), so for $p\equiv 2\pmod{3}$, by Lemma~\ref{lem:l-divides-iff-l-tors} there is a non-trivial $3$-torsion point in $E(\Q_p)$. Then $\mu_{\Z_p^5}(W_{\nsmr})$ is equal to the probability that $[a_1,a_2,a_3,a_4,a_6]$ defines a curve with non-split multiplicative reduction, which is given in \cite[Theorem 5.3]{cremona2020localpub}.

If $p\equiv 1\pmod{3}$, since $\#(E(\Q_p)/ E_0(\Q_p))$ has order that divides $4$ (see \cite[Corollary C.15.2.1]{silverman2009arithmetic}), by Lemma~\ref{lem:l-divides-iff-l-tors} there is no a non-trivial $3$-torsion point in $E(\Q_p)$.
\end{proof}

 \begin{proposition}\label{thm:ar}
 We have 
		\[
		\mu_{\Z_p^5}(W_{\arr})=\frac{p-1}{2p^5}+\frac{p-1}{2p^{8}}.
		\] 
	\end{proposition}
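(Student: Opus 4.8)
The plan is to translate membership in $W_{\arr}$ into a condition purely about Kodaira types and component groups, and then read off the density either by a short direct computation or from \cite{cremona2020localpub}. Since $E$ has additive reduction, $E_0(\Q_p)/E_1(\Q_p)$ is the additive group of $\F_p$, of order $p$, which is coprime to $3$ because $p\geq5$; hence $[E(\Q_p):E_1(\Q_p)]=p\cdot\#\Phi$ where $\Phi:=E(\Q_p)/E_0(\Q_p)$ is the component group, and by Lemma~\ref{lem:l-divides-iff-l-tors} the curve $E$ has a non-trivial $\Q_p$-rational $3$-torsion point if and only if $3\mid\#\Phi$. Running through the additive Kodaira types (\cite[Table~C.15.1 and Corollary~C.15.2.1]{silverman2009arithmetic}): $\#\Phi$ is $1$ for $\mathrm{II},\mathrm{II}^*$, is $2$ for $\mathrm{III},\mathrm{III}^*$, divides $4$ for $\mathrm{I}_0^*$ and $\mathrm{I}_n^*$, and is $1$ or $3$ for $\mathrm{IV}$ and $\mathrm{IV}^*$. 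So the $[a_1,\dots,a_6]$ contributing to $W_{\arr}$ are exactly those of Kodaira type $\mathrm{IV}$ or $\mathrm{IV}^*$ with $\Phi\cong\Z/3\Z$.

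Next I would compute the density of these. By Proposition~\ref{prop:assume-short-WE} we may work with short equations $y^2=x^3+a_4x+a_6$ over $\Z_p^2$, for which $(v(a_4),v(a_6))=(v(c_4),v(c_6))$. The classification of Kodaira types by $(v(c_4),v(c_6),v(\Delta))$ for $p\geq5$ shows that such an equation has type $\mathrm{IV}$ exactly when $v(a_4)\geq2$ and $v(a_6)=2$, and type $\mathrm{IV}^*$ exactly when $v(a_4)\geq3$ and $v(a_6)=4$; in both cases $v(a_6)<6$, so the equation is automatically minimal, and the two events have measure $p^{-2}(p^{-2}-p^{-3})=(p-1)/p^5$ and $p^{-3}(p^{-4}-p^{-5})=(p-1)/p^8$ respectively. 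It remains to show that, conditional on each type, the probability that $\Phi\cong\Z/3\Z$ is $\tfrac12$.

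For this I would examine the $3$-division polynomial of $y^2=x^3+a_4x+a_6$, namely $\psi_3=3x^4+6a_4x^2+12a_6x-a_4^2$. In the type-$\mathrm{IV}$ range its Newton polygon consists of a segment of horizontal length $1$ and integer slope together with a segment of slope $-\tfrac23$, so $\psi_3$ has exactly one $\Q_p$-rational root $x_0$, with $v(x_0)=2v(a_4)-2\geq2$; substituting, $x_0^3+a_4x_0+a_6=a_6(1+r)$ with $v(r)\geq1$, so this is a square in $\Q_p$ if and only if $a_6/p^{2}$ is, i.e.\ if and only if $a_6/p^2\bmod p$ is a quadratic residue, which given $v(a_6)=2$ happens with conditional probability $\tfrac12$. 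Since any $\Q_p$-rational $3$-torsion point must have $x$-coordinate $x_0$, this shows $E$ has a non-trivial $\Q_p$-rational $3$-torsion point with conditional probability exactly $\tfrac12$; the same computation in the type-$\mathrm{IV}^*$ range gives $\tfrac12$ (now with $a_6/p^4$). Hence
\[
\mu_{\Z_p^5}(W_{\arr})=\tfrac12\cdot\frac{p-1}{p^5}+\tfrac12\cdot\frac{p-1}{p^8}=\frac{p-1}{2p^5}+\frac{p-1}{2p^8}.
\]
One can equivalently obtain the two summands $\tfrac12(p-1)/p^5$ and $\tfrac12(p-1)/p^8$ as the densities of the refined local types ``$\mathrm{IV}$ with $c=3$'' and ``$\mathrm{IV}^*$ with $c=3$'' tabulated in \cite[Theorem~5.3]{cremona2020localpub}, or see the factor $\tfrac12$ via the fact that quadratic twist by a fixed non-square unit is a measure-preserving involution on each of the two sets that swaps the curves with and without a rational $3$-torsion point. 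The one genuinely delicate step is deciding, for a type $\mathrm{IV}$ or $\mathrm{IV}^*$ curve, when the arithmetic component group is $\Z/3\Z$ rather than trivial — i.e.\ establishing the quadratic-residue criterion — with the rest being the Kodaira/valuation dictionary and elementary measure estimates.
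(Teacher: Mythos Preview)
Your argument is correct and follows the same reduction as the paper: use Lemma~\ref{lem:l-divides-iff-l-tors} together with $\#\Tilde{E}_{ns}(\F_p)=p$ to see that a minimal additive equation lies in $W_{\arr}$ iff $3\mid c_p$, hence iff the Kodaira type is $\mathrm{IV}$ or $\mathrm{IV}^*$ with $c_p=3$. The only difference is in how the two densities $\tfrac{p-1}{2p^5}$ and $\tfrac{p-1}{2p^8}$ are obtained: the paper simply quotes \cite[Proposition~2.2 and proof of Theorem~5.3]{cremona2020localpub}, whereas you compute them directly from the valuation characterisation $(v(a_4)\ge 2,\ v(a_6)=2)$ and $(v(a_4)\ge 3,\ v(a_6)=4)$ together with a Newton-polygon/quadratic-residue analysis of $\psi_3$ to extract the factor $\tfrac12$. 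Your route is self-contained and makes the $\tfrac12$ transparent (and your twisting remark gives a second proof of it), while the paper's is shorter by deferring to Cremona--Sadek; you already note this equivalence in your final paragraph.
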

	\begin{proof}
		Since $\#\Tilde{E}_{ns}(\mb{F}_p) \mid p$, by Lemma~\ref{lem:l-divides-iff-l-tors} there is a non-trivial $3$-torsion point in $E(\Q_p)$ if and only if $3\mid [E(\Q_p): E_0(\Q_p)]$. This happens if and only if the Tamagawa number $c_p$ of $E$ is $3$, which happens only if the Kodaira symbol is $IV$ or $IV^*$, and these probabilities are given in \cite[Proposition 2.2 and the proof of Theorem 5.3]{cremona2020localpub}. See in particular \cite[Page 453 and 456]{cremona2020localpub}.
	\end{proof}
	\section{Isogenies of degree \texorpdfstring{$3$}{}}\label{sec:iso}
 \subsection{\texorpdfstring{$p$}{}-adic integration}
 Fix $p>3$. Let $\abs{\cdot }_{p}$ denote the $p$-adic absolute value on $\Q_{p}$, normalized so that $\abs{p}_p=\frac{1}{p}$. Let $r \in \mathbb{N}$ and let $dx$ denote the Haar measure on $\Z_p^r$, normalized so that that $\mu_{\Z_p^r}(\Z_p^r)=1$.
Computing the $p$-adic density of a measurable subset $S \subset \Z_p^r$ amounts to computing the integral $\int_{\Z_p^r} \mathbf{1}_{S} \,dx$ of its indicator function $\mathbf{1}_{S}$. The following theorem of Igusa provides a change of variables formula for these integrals. We will only use the result for polynomials.

\begin{theorem}[Igusa] \label{change of vars}
 Let $f: \Z_p^r \xrightarrow[]{} \Z_p^r$ given by $f(x)=(f_1(x),\ldots,f_r(x))$, where $f_i(x)$ are power series in $\Z_p[[x_1,\ldots,x_r]]$ that converge uniformly on $\Z_p^r$. Let $J(x)\coloneq\partial(f_1,\ldots,f_r)/\partial(x_1,\ldots,x_r)$ be the Jacobian determinant of $f$. Suppose that for $a \in \Z_p^r$ we have $\partial(f_1,\ldots,f_r)/\partial(x_1,\ldots,x_r)_{|a} \ne 0$. Then there are open neighbourhoods $U$ of $a$ and $V$ of $y=f(a)$ such that the restriction $f : U \xrightarrow[]{} V$ is a bijection and
 \[
 dy=\partial(f_1,\ldots,f_r)/\partial(x_1,\ldots,x_r) \,dx,
 \]
 where $dy$ is the restriction of the Haar measure to $V$ and $x$ the restriction to $U$. For any integrable $\mathbb{R}$-valued function $g$ on $V$, we have
 \[
 \int_{V} g(y) \,dy=\int_{U} g(f(x)) \abs{J(x)}_{p} \,dx.
 \]
\end{theorem}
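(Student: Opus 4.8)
The plan is to deduce Igusa's formula from the ultrametric inverse function theorem, together with an elementary computation of the Haar volume of a lattice, and then to globalise by a routine measure-theoretic patching argument. Write $J_f(x)$ for the Jacobian matrix of $f$ at $x$, so $J(x)=\det J_f(x)$, and $\|\cdot\|_p$ for the sup-norm on $\Q_p^r$; by hypothesis $M\coloneq J_f(a)\in\GL_r(\Q_p)$.

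First I would establish the local structure. Taylor-expanding the components $f_i$ around $a$ gives, for $x\in a+p\Z_p^r$, a relation $f(x)=f(a)+M(x-a)+R(x)$ in which $R$ is the sum of the terms of order $\geq 2$ in $(x-a)$; uniform convergence of the $f_i$ on $\Z_p^r$ yields a bound $\|R(x)-R(x')\|_p\leq C\|x-x'\|_p^2$ on $a+p\Z_p^r$ for a constant $C=C(f)$. I would then pick $m$ large enough that $C\,\|M^{-1}\|_{\mathrm{op}}\,p^{-m}<1$ and, using continuity of $|J|_p$, also that $|J|_p$ is constant (and nonzero) on $U\coloneq a+p^m\Z_p^r$. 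On $U$ the map $x\mapsto M^{-1}(f(x)-f(a))=(x-a)+M^{-1}R(x)$ is the identity plus a strict contraction of the complete ultrametric space $p^m\Z_p^r$ into itself, hence — by the contraction-mapping/fixed-point argument in the ultrametric setting — a bijection of $p^m\Z_p^r$ onto itself with analytic inverse. Equivalently $f$ restricts to an analytic isomorphism $U\xrightarrow{\ \sim\ }V\coloneq f(a)+M\cdot p^m\Z_p^r=f(U)$, which provides the neighbourhoods $U$ of $a$ and $V$ of $f(a)$ in the statement.

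Next I would compute volumes. Rerunning the same argument at an arbitrary $b\in U$ shows that for a small enough radius $p^{-m(b)}$ the map $f$ sends the ball $B=b+p^{m(b)}\Z_p^r$ bijectively onto $f(b)+J_f(b)\cdot p^{m(b)}\Z_p^r$, and (shrinking $B$) we may assume $|J|_p\equiv|J(b)|_p$ on $B$. Since the Haar measure of the lattice $J_f(b)\cdot p^{m(b)}\Z_p^r$ is $|\det J_f(b)|_p$ times that of $p^{m(b)}\Z_p^r$, we get $\mu(f(B))=|J(b)|_p\,\mu(B)=\int_B|J(x)|_p\,dx$. Now cover $U$ by countably many pairwise disjoint balls $B_i$ of this type; then $V=\bigsqcup_i f(B_i)$ and each $f|_{B_i}$ is a homeomorphism onto $f(B_i)$. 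For any ball $B'\subseteq B_i$ the same computation gives $\mu(f(B'))=\int_{B'}|J|_p\,dx$, and since such balls generate the Borel $\sigma$-algebra of $B_i$ and both sides are finite measures, the monotone class theorem promotes this to $\mu(f(S))=\int_S|J(x)|_p\,dx$ for every measurable $S\subseteq B_i$; summing over $i$ gives it for every measurable $S\subseteq U$. Taking $S=f^{-1}(E)$ for measurable $E\subseteq V$ then gives $\int_V\mathbf{1}_E\,dy=\int_U\mathbf{1}_E(f(x))\,|J(x)|_p\,dx$, i.e. the claimed identity for $g=\mathbf{1}_E$ (and in particular $dy=|J(x)|_p\,dx$ as measures on $V$). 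Extending by linearity to simple functions, by monotone convergence to nonnegative integrable $g$, and by splitting into positive and negative parts to general integrable $g$ completes the proof.

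The step I expect to be the main obstacle is the first one: extracting the quadratic estimate on $R$ from the bare hypothesis of uniform convergence of the $f_i$, and checking that the perturbation genuinely maps $p^m\Z_p^r$ into itself so the fixed-point argument applies; one must also verify in the volume step that $m(b)$ can be chosen so the balls $B$ actually tile a neighbourhood of $a$, which works because $|J(b)|_p$ and the relevant coefficient bounds are locally constant in $b$. In the case actually needed in this paper, where the $f_i$ are polynomials with $\Z_p$-coefficients, $R$ is itself a polynomial, the constant $C$ is explicit, and these points become elementary.
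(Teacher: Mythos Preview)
The paper does not prove this theorem at all: its ``proof'' is a one-line citation of Proposition~7.4.1 of Igusa's \emph{An Introduction to the Theory of Local Zeta Functions}, together with a pointer to Chapter~2 of the same book for the underlying analytic framework. Your sketch, by contrast, is a genuine outline of the standard proof of the $p$-adic change-of-variables formula via the ultrametric inverse function theorem, and it is essentially correct and presumably close to what Igusa does.

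One small slip worth flagging: the Lipschitz estimate you write for the remainder, $\|R(x)-R(x')\|_p\leq C\|x-x'\|_p^2$, is not the right shape (take $r=1$, $a=0$, $R(x)=x^2$, $x=p$, $x'=p+p^2$ to see it fail). What the Taylor expansion with bounded coefficients actually gives is
\[
\|R(a+z)-R(a+z')\|_p\;\leq\; C\,\max\bigl(\|z\|_p,\|z'\|_p\bigr)\,\|z-z'\|_p,
\]
which on $a+p^m\Z_p^r$ becomes $\|R(x)-R(x')\|_p\leq Cp^{-m}\|x-x'\|_p$. This linear bound with small Lipschitz constant is precisely what your contraction argument needs, and with it the rest of your proof---the fixed-point step yielding the bijection $U\xrightarrow{\sim}V=f(a)+M\cdot p^m\Z_p^r$, the lattice-volume identity $\mu(f(B))=|\det J_f(b)|_p\,\mu(B)$ on small balls, and the monotone-class extension to arbitrary integrable $g$---goes through cleanly. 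You already flag this estimate as the step needing care; just note the correct form is linear, not quadratic, in $\|x-x'\|_p$.
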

\begin{proof}
 See Proposition 7.4.1 of \cite{igusa2000introduction} for a proof and Chapter 2 of \cite{igusa2000introduction} for the definition of convergent power series and analytic functions in this context. 
\end{proof}

The importance of $p$-adic integrals for the problem can already be seen from the following proposition which allows us to look at different types of Weierstrass equations.

\begin{proposition}\label{prop:assume-short-WE}
Let $p>3$ be a prime, and $P$ a property of elliptic curves over $\Q_p$ that does not depend on the choice of a Weierstrass equation. 
Let 
\[
S_1=\left\{[a_1,a_2,a_3,a_4,a_6] \in \Z_p^5 \;\colon\; E\colon\, y^2+a_1xy+a_3y=x^3+a_2x^2+a_4x+a_6\;\text{has property $P$}\right\};
\]
\[
S_2=\left\{[a_2,a_4,a_6] \in \Z_p^3 \;\colon\; E\colon\, y^2=x^3+a_2x^2+a_4x+a_6\;\text{has property $P$}\right\};
\]
\[
S_3=\left\{[a_4,a_6] \in \Z_p^2\; \colon\; E\colon\, y^2=x^3+a_4x+a_6\;\text{has property $P$}\right\}.
\]
Then $\mu_{\Z_p^5}(S_1)=\mu_{\Z_p^3}(S_2)=\mu_{\Z_p^2}(S_3)$. If $p=3$, then $\mu_{\Z_p^5}(S_1)=\mu_{\Z_p^3}(S_2)$.
\end{proposition}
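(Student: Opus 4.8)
The plan is to reduce from long to short Weierstrass equations in two steps, first eliminating $a_1$ and $a_3$, then eliminating $a_2$, at each stage exhibiting an explicit polynomial change of variables on $\Z_p$-coefficients whose Jacobian determinant is a $p$-adic unit, so that Igusa's change of variables formula (Theorem~\ref{change of vars}) shows the relevant indicator integrals agree. The case $p=3$ is handled only by the first step, which is exactly why the proposition stops at $S_2$ there.

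First I would treat the passage from $S_1$ to the subset of long equations with $a_1=a_3=0$. Completing the square, the substitution $y \mapsto y - \tfrac12(a_1 x + a_3)$ sends $y^2 + a_1 xy + a_3 y = x^3 + a_2 x^2 + a_4 x + a_6$ to $y^2 = x^3 + b_2 x^2 + b_4 x + b_6$ with $b_2 = a_2 + \tfrac14 a_1^2$, $b_4 = a_4 + \tfrac12 a_1 a_3$, $b_6 = a_6 + \tfrac14 a_3^2$; since $p > 2$ the coefficients $\tfrac12, \tfrac14$ lie in $\Z_p$, so this is an honest polynomial self-map of $\Z_p^5$ (in the coordinates $(a_1, a_3, a_2, a_4, a_6) \mapsto (a_1, a_3, b_2, b_4, b_6)$), and its Jacobian is block-triangular with the identity in the $(a_1,a_3)$ block and the identity in the $(b_2,b_4,b_6)$ block, hence $|J|_p = 1$. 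This is a measure-preserving bijection of $\Z_p^5$, so $\mu_{\Z_p^5}(S_1)$ equals the measure of the corresponding set of equations with $a_1=a_3=0$, which — pushing forward along the coordinate projection forgetting the trivial $(a_1,a_3)$ coordinates and using $\mu_{\Z_p^2}(\Z_p^2)=1$ — is $\mu_{\Z_p^3}(S_2)$. Here I would note that the isomorphism class of $E$ over $\Q_p$ is unchanged, so property $P$ is preserved; this is where the hypothesis that $P$ is independent of the choice of Weierstrass equation enters.

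Next, assuming $p > 3$, I would pass from $S_2$ to $S_3$ by the substitution $x \mapsto x - \tfrac13 a_2$, which sends $y^2 = x^3 + a_2 x^2 + a_4 x + a_6$ to $y^2 = x^3 + c_4 x + c_6$ with $c_4 = a_4 - \tfrac13 a_2^2$ and $c_6 = a_6 - \tfrac13 a_2 a_4 + \tfrac{2}{27} a_2^3$; since $p \ne 3$ the denominators $3, 27$ are units in $\Z_p$, so $(a_2, a_4, a_6) \mapsto (a_2, c_4, c_6)$ is a polynomial self-map of $\Z_p^3$, again measure-preserving because its Jacobian is block-triangular (identity in the $a_2$ slot, identity in the $(c_4,c_6)$ block). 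As before the $\Q_p$-isomorphism class of $E$ is unchanged, so $P$ is preserved, and integrating out the redundant $a_2$ coordinate gives $\mu_{\Z_p^3}(S_2) = \mu_{\Z_p^2}(S_3)$. Combining the two steps yields $\mu_{\Z_p^5}(S_1) = \mu_{\Z_p^3}(S_2) = \mu_{\Z_p^2}(S_3)$ for $p > 3$ and $\mu_{\Z_p^5}(S_1) = \mu_{\Z_p^3}(S_2)$ for $p = 3$.

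I expect no serious obstacle here — the computations are routine completions of square and cube — but the one point that needs genuine care is the bookkeeping that the composite coordinate maps are genuinely bijections of $\Z_p^n$ onto itself (not merely injective, and not merely defined over $\Q_p$): one must check both that all coefficients of the new equations land in $\Z_p$ and that the inverse substitutions also have $\Z_p$-coefficients, which again uses $p > 2$ (resp.\ $p > 3$). Strictly speaking one could also invoke Theorem~\ref{change of vars} directly on the indicator functions $\mathbf 1_{S_1}, \mathbf 1_{S_2}, \mathbf 1_{S_3}$ with $g \equiv 1$ and $|J|_p \equiv 1$, composed with the projection that forgets the inert coordinates, but since the maps are volume-preserving bijections the conclusion is immediate and the appeal to Igusa is a convenience rather than a necessity.
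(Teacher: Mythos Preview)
Your proposal is correct and follows essentially the same approach as the paper: complete the square to eliminate $a_1,a_3$ (valid for $p\geq 3$) and complete the cube to eliminate $a_2$ (valid for $p>3$), in each case noting that the resulting polynomial self-map of $\Z_p^n$ has Jacobian determinant a $p$-adic unit and hence is measure-preserving, then integrate out the inert coordinates via Fubini. The paper carries out the two reductions in the opposite order and phrases the Fubini step slightly differently (fixing the inert coordinates and applying the change of variables to the remaining ones), but the content is the same.
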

\begin{proof}
The change of variables that transforms a shape from $S_3$ to $S_2$ is given by $\psi_{a_2}\colon x\mapsto x-\frac{a_2}{3}$, which maps $(a_4,a_6)$ to $(a'_4,a'_6)$ and it is a bijection whose determinant of the Jacobian is 1, for any $a_2\in\Z_p$. Hence, 
\[
\mu_{\Z_p^3}(S_2)=\int_{\Z_p^3} 1_{S_2}d\mu_{\Z_p^3}=\int_{a_2\in \Z_p}\int_{(a'_4,a'_6)\;\colon\; (a_2,a'_4,a'_6)\in S_2}d\mu_{\Z_p^2}d\mu_{\Z_p}=\int_{a_2\in\Z_p}\int_{(a_4,a_6)\in S_3}d\mu_{\Z_p^2}d\mu_{\Z_p}=\mu_{\Z_p^2}(S_3).
\]
To finish the proof we show $\mu_{\Z_p^5}(S_1)=\mu_{\Z_p^3}(S_2)$ for $p\geq 3$. We use a change of variables $y\mapsto y-\frac{a_1x}{2}-\frac{a_3}{2}$, which maps $(a_2,a_4,a_6)$ to $(a'_2,a'_4,a'_6)$, it is a bijection, and the determinant of the Jacobian of this transformation is $1$, for any $a_1,a_3\in \Z_p$. Then, in the same way, we prove that $\mu_{\Z_p^5}(S_1)=\mu_{\Z_p^3}(S_2)$.
\end{proof}

So, from now on, we see that we may consider only short Weierstrass equations to study the $3$-torsion (or $\ell$-torsion) points and similar questions.

\begin{remark}
We will mainly consider integrals of the form $\int_{\Z_p^r} \abs{f(x)}^s_{p} \,dx$, where $f(x) \in \Z[x]$ is a polynomial. Integrals of this type are known as Igusa zeta functions and there is extensive literature on their properties, see \cite{denef1990report} and references cited therein. 
\end{remark}

\begin{lemma} \label{p-adic integral lemma more general}
Let $k, m, n\in \Z_{\geq 0}$ and let $f(x,y)=g(x,y)h(x,y)$, where $h(x,y)=\prod^{n+1}_{i=1} l_i(x,y) $, where $l_i(x,y)$ are linear forms in $\Z_p[x,y]$ with pairwise distinct roots modulo $p$, and $\Tilde{g}(x,y)\neq 0$ is irreducible in $\F_p[x,y]$ of degree $d>1$. Let $f(x,y)=g(x,y)^kh(x,y)^m$. Then
\[
\int_{x,y \in \Z_p^2} \abs{f(x,y)}_{p} \,dx\,dy = \frac{p^{(n+1)m+kd}(p^{m+2}-p^{m+1}n+pn-1)}{(p^{(n+1)m+kd+2}-1)(1+p+\ldots+p^{m})}.
\]
\end{lemma}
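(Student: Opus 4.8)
The plan is to reduce the two-variable integral to a one-variable computation by integrating out the "simple" directions, and then to evaluate the resulting single-variable Igusa-type integral by a standard recursion on $p$-adic shells. First I would observe that the hypotheses are arranged precisely so that the zero locus of $f$ modulo $p$ is a transverse union of a single "fat" irreducible component of multiplicity $k$ (coming from $g$) and $n+1$ "fat" lines of multiplicity $m$ (coming from the $l_i$), and — crucially — no two of these $n+2$ components meet at a common $\F_p$-point, since the $l_i$ have pairwise distinct roots mod $p$ and $\tilde g$ is irreducible of degree $d>1$ (so $\tilde g$ cannot be a scalar multiple of any linear form, hence shares no $\F_p$-point with the lines, and similarly the $l_i$ are pairwise non-proportional). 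Therefore $\Z_p^2$ decomposes, up to measure zero, into: the locus where $\bar f\neq 0$, on which $|f|_p=1$; and, for each component, the "tube" of radius $p^{-1}$ around it, and these tubes are pairwise disjoint.

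Next I would handle each tube separately using Theorem~\ref{change of vars} (Igusa's change of variables) to make a local linear change of coordinates so that the relevant component becomes $\{x\equiv 0\}$. For a line $l_i$ of multiplicity $m$: after a measure-preserving unimodular substitution we may assume $l_i=x$, and on the tube $x\in p\Z_p$, $y\in\Z_p$ we get a contribution
\[
\int_{x\in p\Z_p}\int_{y\in\Z_p} |x|_p^m\,|u(x,y)|_p\,dx\,dy,
\]
where $u$ is a unit on this tube (the product of $g^k$ and the other $l_j^m$), so $|u|_p=1$ there, and $\int_{p\Z_p}|x|_p^m\,dx=\sum_{e\ge 1}(p^{-e}-p^{-e-1})p^{-em}=\frac{(p-1)p^{-(m+1)}}{1-p^{-(m+1)}}\cdot\frac{1}{p}$; one gets $(n+1)$ such identical contributions. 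For the irreducible component $\tilde g$ of degree $d>1$: the standard fact is that a smooth point of $\{\tilde g=0\}$ in $\F_p$ contributes a tube on which, after the change of variables, $|f|_p=|x|_p^{kd}\cdot(\text{unit})$ — here the exponent is $kd$ rather than $k$ because one is measuring the tube by the coordinate transverse to the curve and the curve's "thickness" in the ambient $\Z_p^2$ scales with its degree via the $p$-adic implicit function theorem; summing over the $\#\{\tilde g=0\}(\F_p)$ smooth points (and noting singular points of $\tilde g$ mod $p$ form a lower-order set whose tube contributes negligibly, or is absorbed) yields the $p^{kd}$-type denominator. Assembling: the measure of the complement of all tubes is $1-\frac{n+1}{p}-\frac{\#\{\tilde g=0\}}{p^{?}}$ times... — and then I would combine the "$|f|_p=1$" bulk term with the tube terms, clear denominators, and check the algebra matches the claimed closed form.

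The honest assessment is that the \textbf{main obstacle} is pinning down the contribution of the irreducible factor $g$ of degree $d$, i.e. justifying rigorously why its tube contributes an Igusa local factor with parameter $kd$ rather than $k$, and confirming the exact count of relevant $\F_p$-points and the handling of the singular locus of $\tilde g$ mod $p$ so that everything conspires into the single clean rational function stated. The line factors are routine (each is exactly the classical one-variable computation $\int_{\Z_p}|x|_p^m dx=\frac{p^{m+1}-p^m}{p^{m+1}-1}$ localized to a tube); the subtlety is entirely in the degree-$d$ piece and in verifying that the advertised formula is symmetric/consistent in the degenerate checks (e.g. formally $d\to 1$, or $n=0$, or $k=0$). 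I would finish by cross-checking the formula against the special case $g=1$ (so $f=h^m$ is a product of $n+1$ transverse lines), where the result should reduce to a known multi-linear Igusa computation, and against $m=0$, which should give the standard single-curve local zeta factor $\frac{p^{kd}(p-1)}{p^{kd+1}-1}+(1-\tfrac{\#\{\tilde g=0\}(\F_p)}{p})$-type expression; agreement in these cases plus the transversality decomposition pins down the general identity.
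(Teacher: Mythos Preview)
There is a genuine gap, and it stems from misreading what the hypothesis on $g$ buys you. All the factors here are homogeneous binary forms: the $l_i$ are linear forms, and $g$ is a form of degree $d$. An irreducible binary form of degree $d>1$ over $\F_p$ has \emph{no nonzero zeros} in $\F_p^2$ whatsoever (if $\tilde g(a,b)=0$ with, say, $b\neq 0$, then $bx-ay$ would divide $\tilde g$). So the locus $\{\tilde g=0\}$ in $\F_p^2$ is just the origin, and there are no ``smooth points of $\{\tilde g=0\}$'' to sum over, no tube around a curve, and no implicit function theorem to invoke. On all of $\Z_p^2\setminus (p\Z_p)^2$ one simply has $|g(x,y)|_p=1$, and the integral there reduces to $\int |h|_p^m$, which is exactly the routine linear-factor computation you already know how to do.

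The exponent $kd$ in the answer does not arise from any local analysis of $g$; it comes from a global self-similarity that you do not use. Since $f$ is homogeneous of total degree $(n+1)m+kd$, the substitution $(x,y)\mapsto(px,py)$ gives
\[
\int_{(p\Z_p)^2}|f|_p\,dx\,dy \;=\; p^{-(n+1)m-kd-2}\int_{\Z_p^2}|f|_p\,dx\,dy \;=\; p^{-(n+1)m-kd-2}\,I,
\]
so $I = p^{-(n+1)m-kd-2}I + \int_{\Z_p^2\setminus(p\Z_p)^2}|h|_p^m\,dx\,dy$, and solving this linear equation for $I$ produces the stated rational function. Your tube decomposition is the right picture for the $l_i$ part (and matches the paper's treatment of $\Z_p^2\setminus(p\Z_p)^2$), but the ``main obstacle'' you identify is illusory: $g$ is inert away from the origin, and the origin is handled in one stroke by homogeneity.
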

\begin{proof}
Since $\Tilde{g}(x,y)$ is irreducible of degree $d>1$, we have $p|g(x,y)$ if and only if $(x,y) \in (p\Z_p)^2$. Denote $I=\int_{x,y \in \Z_p^2} \abs{f(x,y)}_p \,dx\,dy$. Then, 
\[
I=\int_{(x,y) \in (p\Z_p)^2} \abs{f(x,y)}_{p} \,dx\,dy +\int_{(x,y) \in \Z_p^2 \setminus (p\Z_p)^2} \abs{h(x,y)}^m_{p}\,dx\,dy.
\]
The change of variables $(x,y) \mapsto (px,py)$ shows that 
\[
\int_{(x,y) \in (p\Z_p)^2} \abs{f(x,y)}_{p} \,dx\,dy=p^{-(n+1)m-kd-2}I.
\]
Let $(x_i:y_i)$ be a zero of $l_i(x,y)$, with $x_i$ and $y_i$ scaled so that $(x_i,y_i) \in \Z_p^2$ and $(x_i,y_i) \not\in (p\Z_p)^2$, for $1 \leq i \leq n+1$. Then 
\begin{align*}
&\int_{(x,y)\in \Z_p^2\backslash (p\Z_p)^2}\abs{h(x,y)}_p^m\,dx\,dy\\= &\left(\sum^{n+1}_{i=1} \int_{\substack{(x,y)\in \Z_p^2\backslash (p\Z_p)^2 \\ (x:y) \equiv (x_i:y_i) \pmod{p}}} \abs{l_i(x,y)}_p^m \,dx\,dy \right)+ \left( \int_{\substack{(x,y)\in \Z_p^2\backslash (p\Z_p)^2 \\ (x:y) \not\equiv (x_i:y_i) \pmod{p} \\ \text{for all $i$}}} \,dx\,dy \right).
\end{align*}
Here we used the assumption that $(x_i:y_i)$ are pairwise distinct modulo $p$. The second bracket is readily computed to be $ (p-n)(p-1)$. We compute the sum in the first bracket term by term. Fix an $i \leq n+1$. We want to compute the integral
\[
\int_{(x:y) \equiv (x_i:y_i) \pmod{p}} \abs{l_i(x,y)}_p^m \,dx\,dy.
\]
By the change of variables theorem, this integral is invariant under $\mathrm{SL}_2(\Z_p)$-substitutions of $x$ and $y$. We are free to assume that $l_i(x,y)=x$ and the above integral reduces to

\begin{align*}
\int_{\substack{(x,y)\in \Z_p^2 \\ (x:y) \equiv (0:1) \pmod{p}}} \abs{x}_p^m \,dx\,dy&= \int_{x \in p\Z_p, y \in \Z_p\backslash p\Z_p} \abs{x}_p^m \,dx\,dy\\
&=\left(1-\frac{1}{p}\right) \int_{x \in p\Z_p} \abs{x}_p^m dx=\left(\frac{p-1}{p}\right)^2\frac{1}{p^{m+1}-1}.
\end{align*}
The lemma now follows by a direct calculation.

\end{proof}

\subsection{Probability for \texorpdfstring{$\Q_p$}{}-rational \texorpdfstring{$3$}{}-torsion using \texorpdfstring{$p$}{}-adic integration} \label{integration proof of 3-torsion}
As a warm-up for computing probability $\Q_p$-rational $3$-isogenies and as a sanity check on our previous calculations, we give a second proof of Theorem \ref{thm:main} using $p$-adic integration. We introduce some notation. As in Section \ref{moduli space section}, let $\mb{A}^2_{X}(\Q_p) \subset \Q_p^2$ be the space of pairs $(a,b)$ such that $E_{a,b} : y^2=x^3+ax+b$ is an elliptic curve. We compute the Haar measure of the set 
\[
W_{\geq \Z/3\Z}=\left\{ (a,b) \in \mb{A}^2_{X}(\Z_p) : \#\left(E_{a,b}(\Q_p)[3]\right)\geq 3\right\}.
\] 
Let $W^{\Q_p}_{\geq \Z/3\Z}$ be the set of pairs $(a,b) \in \mb{A}^2_{X}(\Q_p)$ such that $\#(E_{a,b}(\Q_p)[3])\geq 3$, let $W^{\Q_p}_{\Z/3\Z \times \mu_3}$ be the set of pairs $(a,b) \in \mb{A}^2_{X}(\Q_p)$ such that $E_{a,b}(\Q_p)[3] \cong \Z/3\Z \times \mu_3 $ as a $\mathrm{Gal}(\overline{\Q_p}/\Q_p)$-module, and let $W^{\Q_p}_{\Z/3\Z}$ be the set of pairs $(a,b) \in \mb{A}^2_{X}(\Q_p)$ such that $E_{a,b}[3](\Q_p) \cong \Z/3\Z$. Define $W_{\Z/3\Z \times \mu_3}=W^{\Q_p}_{\Z/3\Z \times \mu_3} \cap \Z_p^2$ and $W_{ \Z/3\Z}=W^{\Q_p}_{\Z/3\Z} \cap \Z_p^2$ to be the subsets of integral pairs $(a,b)$. 

There are two main cases, depending on $p$ modulo $3$. When $p \equiv 1 \pmod 3$, then $\Z/3\Z \times \mu_3 \cong (\Z/3\Z)^2$ as $\mathrm{Gal}(\overline{\Q_p}/\Q_p)$-module, since $\Q_p$ contains each cube root of unity. Then $W_{\geq \Z/3\Z}=W_{\Z/3\Z}\sqcup W_{\Z/3\Z \times \mu_3} $.

If $p \equiv 2 \pmod{3}$, then $E_{a,b}[3] \cong (\Z/3\Z)^2$ is impossible for any $(a,b) \in \mb{A}^2_{X}(\Q_p)$, so in this case we have $W_{\geq \Z/3\Z}=W_{ \Z/3\Z}$.

By Proposition \ref{marked 3-tors point parametrization prop}, we have $W^{\Q_p}_{\geq \Z/3\Z}=\pi_1(\mb{A}^2_{X_1(3)}(\Q_p))$, where 
\[
\pi_1 : \mb{A}^2_{X_1(3)} (\Q_p) \xrightarrow[]{} \mb{A}^2_{X}(\Q_p)
\]
is the map of Lemma \ref{Jacobian lemma} that forgets the $3$-torsion point: $(E,\omega,P) \mapsto (E,\omega)$. The number of preimages a pair $(E,\omega)$ has under this map depends on whether $E[3](\Q_p)\cong \Z/3\Z$ or $E[3]\cong (\Z/3\Z)^2$, and so we define 
\[
\mb{A}^2_{X_1(3),\Z/3\Z}(\Q_p)=\left\{(a,b) \in \mb{A}^2_{X_1(3)} : E_{a,b}[3](\Q_p)=\Z/3\Z \right\}
\] 
and 
\[
\mb{A}^2_{X_1(3), (\Z/3\Z)^2}(\Q_p)=\left\{(a,b) \in \mb{A}^2_{X_1(3)} : E_{a,b}[3](\Q_p)=(\Z/3\Z)^2\right\}.
\] 

\begin{lemma} \label{integral covering split X1(3)}
We have $W_{(\Z/3\Z)^i}=\pi_1(\mb{A}^2_{X_1(3),(\Z/3\Z)^i}(\Z_p))$, and the restriction $\pi_1 :\mb{A}^2_{X_1(3),(\Z/3\Z)^i}(\Z_p) \xrightarrow[]{} W_{(\Z/3\Z)^i}$ is a $K_i$-to-1 covering map, where $K_1=2$ and $K_2=8$.
\end{lemma}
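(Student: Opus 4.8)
The plan is to deduce this from the abstract covering-map statement of Lemma~\ref{preimage lemma} together with the characterisation of $W_{(\Z/3\Z)^i}$ in terms of points of $X_{\omega,1}(3)$, being careful about the passage from $\Q_p$-points to $\Z_p$-points. First I would establish the set-theoretic equality $W_{(\Z/3\Z)^i}=\pi_1(\mb{A}^2_{X_1(3),(\Z/3\Z)^i}(\Z_p))$. The inclusion $\supseteq$ is immediate: if $(a,b)\in\Z_p^2$ lies in the image of an integral pair in $\mb{A}^2_{X_1(3),(\Z/3\Z)^i}$, then $E_{a,b}$ has the prescribed $\Q_p$-rational $3$-torsion, so $(a,b)\in W^{\Q_p}_{(\Z/3\Z)^i}\cap\Z_p^2=W_{(\Z/3\Z)^i}$. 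For $\subseteq$, take $(a,b)\in W_{(\Z/3\Z)^i}$; by Proposition~\ref{marked 3-tors point parametrization prop} and Lemma~\ref{preimage lemma} the fibre $\pi_1^{-1}(E_{a,b},\tfrac{dx}{y})$ is in bijection with $E_{a,b}[3](\Q_p)\setminus\{0_E\}$, which is non-empty; picking such a preimage gives a pair $(u,v)\in\mb{A}^2_{X_1(3)}(\Q_p)$ with $\pi_1(u,v)=(a,b)$. The point is that $(u,v)$ is automatically integral: the explicit formula $\pi_1(u,v)=(8uv-\tfrac{u^4}{3},\,16v^2-\tfrac{8vu^3}{3}+\tfrac{2u^6}{27})$ from Lemma~\ref{Jacobian lemma} forces, via the usual valuation bookkeeping (if $v(u)<0$ then the $u^4$ and $u^6$ terms dominate and $\pi_1(u,v)\notin\Z_p^2$; then similarly for $v$), that $u,v\in\Z_p$ whenever $\pi_1(u,v)\in\Z_p^2$ — here using $p>3$ so that $3$ is a unit.

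Next I would count the fibres. Over $\Q_p$, Lemma~\ref{preimage lemma} gives a Galois-equivariant bijection between $\pi_1^{-1}(E,\omega)$ and the non-zero $3$-torsion points of $E$; taking $\Gal(\overline{\Q_p}/\Q_p)$-invariants, the $\Q_p$-rational fibre is in bijection with $E[3](\Q_p)\setminus\{0_E\}$, which has $2$ elements when $E[3](\Q_p)\cong\Z/3\Z$ and $8$ elements when $E[3](\Q_p)\cong(\Z/3\Z)^2$. Combined with the integrality observation from the previous paragraph — which shows the $\Q_p$-rational fibre of a point of $W_{(\Z/3\Z)^i}$ already lies in $\mb{A}^2_{X_1(3)}(\Z_p)$ — this says precisely that $\pi_1:\mb{A}^2_{X_1(3),(\Z/3\Z)^i}(\Z_p)\to W_{(\Z/3\Z)^i}$ is surjective with all fibres of size $K_i$, with $K_1=2$ and $K_2=8$, i.e.\ it is a $K_i$-to-$1$ covering map.

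The main obstacle is the integrality claim: a priori a $\Q_p$-point of $\mb{A}^2_{X_1(3)}$ mapping into $\Z_p^2$ under $\pi_1$ need not itself be integral, and one must rule this out. I expect this to come down to a short but slightly delicate valuation analysis of the two components of $\pi_1$, exploiting that the leading terms in $u$ (namely $-\tfrac{u^4}{3}$ and $\tfrac{2u^6}{27}$) have $p$-adic unit coefficients when $p>3$, so a pole in $u$ cannot be cancelled; once $u\in\Z_p$ is known, the second coordinate $16v^2-\tfrac{8vu^3}{3}+\tfrac{2u^6}{27}$ being in $\Z_p$ forces $v\in\Z_p$ as well. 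Everything else is formal consequence of Lemma~\ref{preimage lemma} and the field-of-definition count of $3$-torsion points, together with the remark (already recorded before the lemma) that when $p\equiv 2\pmod 3$ the case $i=2$ is vacuous, so only $i=1$ contributes there.
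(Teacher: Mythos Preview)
Your overall strategy is correct and the fibre-count via Lemma~\ref{preimage lemma} is exactly right, but your route to the key integrality step differs from the paper's, and your sketch of that step has a small gap. The paper does not analyse valuations in the explicit formula for $\pi_1$; instead it invokes the Nagell--Lutz theorem: given $(a,b)\in W_{(\Z/3\Z)^i}$ and a non-zero $P\in E_{a,b}[3](\Q_p)$, one has $x(P),y(P)\in\Z_p$, and then the change of variables in Lemma~\ref{tate normal form} that moves $P$ to $(0,0)$ and the tangent line to $y=0$ is visibly defined over $\Z_p$, so the resulting Tate-normal-form coefficients $(u,v)$ lie in $\Z_p^2$. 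This is shorter and avoids any case analysis.

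Your valuation approach also works, and in fact it is the method the paper uses for the analogous Lemmas~\ref{integral covering split X(3)}, \ref{parametrization Eb lemma} and \ref{Integral twisted X_1(3) covering lemma}. However, your assertion that ``if $v_p(u)<0$ then the $u^4$ and $u^6$ terms dominate'' is not literally true: when $v_p(v)=3v_p(u)$ the terms $8uv$ and $-\tfrac{u^4}{3}$ have equal valuation in the first coordinate, and all three terms of the second coordinate have equal valuation, so cancellation in each coordinate separately is possible. To rule out \emph{simultaneous} cancellation one writes $u=p^{-k}u_0$, $v=p^{-3k}v_0$ with $u_0,v_0\in\Z_p^{*}$ and checks that $p\mid 24v_0-u_0^3$ and $p\mid 432v_0^2-72v_0u_0^3+2u_0^6$ together force $p\mid 144v_0^2$, impossible for $p>3$; equivalently, the resultants of the two components in $u$ and in $v$ are supported only at $2$ and $3$. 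This is exactly the resultant trick the paper employs in the other integrality lemmas. So your plan is sound, but the ``delicate valuation analysis'' you anticipate really is needed, and the Nagell--Lutz argument is the cleaner option here.
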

\begin{proof}
By definition we have $W^{\Q_p}_{\Z/3\Z}=\pi_1(\mb{A}^2_{X_1(3),\Z/3\Z}(\Q_p))$ and $W^{\Q_p}_{(\Z/3\Z)^2}=\pi_1(\mb{A}^2_{X_1(3),(\Z/3\Z)^2}(\Q_p))$.
By Lemma \ref{preimage lemma}, $\pi_1 :\mb{A}^2_{X_1(3),(\Z/3\Z)^i}(\Q_p) \xrightarrow[]{} W^{\Q_p}_{(\Z/3\Z)^i}$ is a $K_{i}$-to-1 covering. Consider a Weierstrass equation $y^2=x^3+Ax+B$ over $\Z_p$ with a marked non-zero $3$-torsion point $P\coloneq(x(P),y(P)) \in E[3](\Q_p)$. By the Nagell-Lutz Theorem (see \cite[Theorem VIII.7.1]{silverman2009arithmetic}), we have $x(P),y(P) \in \Z_p$. Hence the procedure of Lemma \ref{tate normal form} that puts $y^2=x^3+Ax+B$ into Tate normal form $y^2+uxy+vy=0$ does not introduce any denominators, so we have $u,v \in \Z_p$. Thus all preimages of $y^2=x^3+Ax+B$ are defined over $\Z_p$.
\end{proof}
\begin{lemma} \label{integral covering split X(3)}
We have $W_{\Z/3\Z \times \mu_3}=\pi_2(\mb{A}^2_{X(3)}(\Z_p))$, and the restriction $\pi_2 : \mb{A}^2_{X(3)}(\Z_p) \xrightarrow[]{} W_{\Z/3\Z \times \mu_3}$ is a $K_p$-to-1 covering map, where $K_p=2$ if $p \equiv 2 \pmod{3}$ and $K_p=24$ if $p \equiv 1 \pmod{3}$. 
\end{lemma}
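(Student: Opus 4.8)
The plan is to mirror the argument of Lemma \ref{integral covering split X1(3)} closely, using Lemma \ref{preimage lemma} for the generic covering statement over $\Q_p$ and then checking that integrality is preserved. First I would invoke Lemma \ref{preimage lemma}, which gives that over $\overline{\Q_p}$, hence over $\Q_p$, the fibre $\pi_2^{-1}(E,\omega) \cap X_{\omega}(3)(\Q_p)$ is in bijection with the set of $\Q_p$-rational symplectic isomorphisms $\Z/3\Z \times \mu_3 \xrightarrow{} E[3]$. When $E[3](\Q_p) \cong \Z/3\Z \times \mu_3$ as a Galois module — which is exactly the defining condition of $W_{\Z/3\Z \times \mu_3}$ — such isomorphisms exist; their number is the number of symplectic automorphisms of the module $\Z/3\Z \times \mu_3$ fixing the pairing, which is $\#\mathrm{SL}_2(\F_p) = 24$ when $p \equiv 1 \pmod 3$ (since then $\mu_3 \cong \Z/3\Z$ and the Galois action is trivial, so all of $\mathrm{SL}_2(\F_3)$... more precisely $\mathrm{SL}_2(\F_3)$ has order $24$) and is $2$ when $p \equiv 2 \pmod 3$ (the only $\Q_p$-rational symplectic automorphisms are $\pm \mathrm{Id}$, because any such automorphism must commute with the nontrivial Galois action that swaps the two factors up to the cyclotomic character, forcing it to be scalar, and the only scalars of determinant $1$ are $\pm 1$). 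This establishes $W^{\Q_p}_{\Z/3\Z \times \mu_3} = \pi_2(\mb{A}^2_{X(3)}(\Q_p))$ and the $K_p$-to-1 property over $\Q_p$.

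Next I would descend to $\Z_p$-points. The surjectivity statement $W_{\Z/3\Z \times \mu_3} = \pi_2(\mb{A}^2_{X(3)}(\Z_p))$ requires showing that if $(a,b) \in \mb{A}^2_X(\Z_p)$ lies in $W_{\Z/3\Z \times \mu_3}$, then every preimage $(u,v) \in \mb{A}^2_{X(3)}(\Q_p)$ actually lies in $\mb{A}^2_{X(3)}(\Z_p)$. The natural approach is the same as in Lemma \ref{integral covering split X1(3)}: a preimage corresponds to a choice of full level-$3$ structure on $E_{a,b}$, equivalently to a symplectic basis $(P,Q)$ of $E[3](\overline{\Q_p})$ adapted to $\phi$, and by the Nagell–Lutz theorem the coordinates of the $3$-torsion points $P, Q$ lie in $\Z_p$ (or in the ring of integers of the relevant unramified/ramified quadratic extension, in the $p\equiv 2$ case where $Q$ is only defined over $\Q_p(\mu_3)$). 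One then checks, via the explicit formulas of Corollary \ref{cor:bij3structure} relating $(u,v)$ to the $x$- and $y$-coordinates of $\phi(1,1)$ and $\phi(0,\zeta_3)$, that integral torsion coordinates force $(u,v)$ to be integral (after suitable normalization of the projective pair $(u:v)$). Conversely $\pi_2$ visibly maps $\mb{A}^2_{X(3)}(\Z_p)$ into $\mb{A}^2_X(\Z_p)$ since $\pi_2$ is given by polynomials with $\Z_p$-coefficients, and its image lies in $W_{\Z/3\Z \times \mu_3}$ because a point of $\mb{A}^2_{X(3)}(\Z_p)$ carries, by Proposition \ref{split X(3)}, a genuine $\Q_p$-rational level-$3$ structure of the prescribed Galois type.

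The main obstacle I anticipate is the integrality descent in the case $p \equiv 2 \pmod 3$: there the auxiliary $3$-torsion point $\phi(0,\zeta_3)$ is defined only over the quadratic extension $\Q_p(\zeta_3)/\Q_p$, so one cannot apply Nagell–Lutz over $\Q_p$ directly, and one must instead argue that the $\Q_p$-rational quantities built from these conjugate points (which are precisely the symmetric functions appearing in the Weil/Hesse formulas) remain $p$-integral. Since $\Q_p(\zeta_3)/\Q_p$ is unramified for $p \equiv 2 \pmod 3$, the ring of integers is still a DVR with uniformizer $p$, so Nagell–Lutz applies over the extension and the $\Q_p$-rational combinations stay in $\Z_p$; making this precise with the explicit parametrization is the one genuinely computational point. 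Everything else — the fibre-size count and the fact that $\pi_2$ is polynomial with integral coefficients — is formal given Lemmas \ref{preimage lemma} and \ref{Jacobian lemma} and Corollary \ref{cor:bij3structure}.
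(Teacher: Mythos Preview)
Your argument for the covering degree $K_p$ matches the paper's exactly: both invoke Lemma \ref{preimage lemma} to identify the fibre over $(E,\omega)$ with the set of $\Q_p$-rational symplectic automorphisms of $\Z/3\Z \times \mu_3$, and count these as $24$ or $2$ according to whether the Galois action on $\mu_3$ is trivial.

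For the integrality descent, however, you take a genuinely different route. You propose to invert the explicit torsion-point formulas of Corollary \ref{cor:bij3structure}: from $x(\phi(0,\zeta_3)) = -\tfrac{9}{4}v^2$ and $x(\phi(1,1)) = \tfrac{3}{4}(2u+v)^2$, integrality of the torsion coordinates (via Nagell--Lutz, extended to the unramified quadratic extension $\Q_p(\zeta_3)$ when $p\equiv 2\pmod 3$) forces first $v\in\Z_p$ and then $u\in\Z_p$. This works and keeps the argument parallel to Lemma \ref{integral covering split X1(3)}. The paper instead argues purely algebraically: writing $\pi_2(u,v)=(c_4(u,v),c_6(u,v))$, it assumes $(u,v)\notin\Z_p^2$, rescales to a primitive pair $(u',v')\in\Z_p^2\setminus(p\Z_p)^2$ with $p\mid c_4(u',v')$ and $p\mid c_6(u',v')$, and obtains a contradiction from the resultants $\mathrm{Res}_u(c_4,c_6)=2^{12}3^{36}v^{24}$ and $\mathrm{Res}_v(c_4,c_6)=-2^{16}3^{39}u^{24}$. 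The paper's approach is self-contained given the polynomial form of $\pi_2$ and sidesteps any discussion of where the torsion points live; yours is more geometric and makes the analogy with the $X_1(3)$ case transparent, at the cost of the (correct) extra remark on Nagell--Lutz over unramified extensions. One minor point: the pair $(u,v)$ in $X_{\omega}(3)$ is already affine, pinned down by the differential, so no ``projective normalization'' is needed.
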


\begin{proof}
If $p \equiv 2 \pmod{3}$, then $\Q_p$ does not contain a cube root of unity, and $\Z/3\Z \not\cong \mu_3$ as a $\mathrm{Gal}(\bar{\Q_p}/\Q_p)$-module. The group of $\Q_p$-rational symplectic automorphisms $M=\Z/3\Z \times \mu_3$ is generated by the involution $[-1]$ and isomorphic to $\Z/2\Z$. If $p \equiv 1 \pmod{3}$, then $\Z/3\Z \cong \mu_3$ as $\mathrm{Gal}(\overline{\Q_p}/\Q_p)$-module, and the group of symplectic $\Q_p$-automorphisms of $M$ is $\mathrm{SL}_2(\F_3)$ and has 24 elements. Lemma \ref{preimage lemma} then implies the statement of the lemma with $\Z_p$ replaced by $\Q_p$. To finish the proof it suffices to show that if for some $(u,v) \in \mb{A}^2_{X(3)}(\Q_p)$ we have $\pi_2(u,v) \in \Z_p^2$, then we have $(u,v) \in \Z_p^2$. 

By Lemma \ref{Jacobian lemma} we have 
\[
\pi_2(u,v)=(c_4(u,v),c_6(u,v))=\left(-216u^3v - 27v^4,-432u^6 -
	1080u^3v^3 + 54v^6\right).
 \]
 Suppose that $(u,v) \not\in \Z_p^2$, then by clearing denominators we can find $(u',v')=(p^{k}u,p^{k}v)$ for $k>0$ with $(u',v') \in \Z_p^2$ and
 $(u',v') \not\in (p\Z_p)^2$. Thus,
 \[
 \pi_2(u',v')=\left(p^{4k}\left(-216u^3v - 27v^4),p^{6k}(-432u^6 -
	1080u^3v^3 + 54v^6\right)\right).
 \]
 The resultants of $c_4(u,v)$ and $c_6(u,v)$, viewed as polynomials in $u$ and $v$ respectively, are $2^{12} 3^{36} v^{24}$ and $-2^{16} 3^{39} u^{24}$. Since we have $p|c_4(u',v')$ and $p|c_6(u',v')$ we see that $p|u'$ and $p|v'$, contradicting our assumption that $(u',v') \not\in (p\Z_p)^2$.
\end{proof}

\begin{lemma} \label{overcounting lemma}The following hold.
\begin{enumerate}[label=(\roman*)]
\item We have
\[
\mu_{\Z_p^2}\left(W_{\geq \Z/3\Z}\right)=\frac{1}{2}\int_{\mb{A}^2_{X_1(3)}(\Z_p)} \abs{J(\pi_1)(u,v)}_{p} \,du\,dv-3\mu_{\Z_p^2}\left(W_{(\Z/3\Z)^2}\right).
\]

 \item 
When $p \equiv 1 \pmod{3}$, we have 
\[
\mu_{\Z_p^2}\left(W_{(\Z/3\Z)^2}\right)=\mu_{\Z_p^2}\left(W_{\Z/3\Z\times \mu_3}\right)=\frac{1}{24}\int_{\mb{A}^2_{X(3)}(\Z_p)}\abs{J(\pi_2)(u,v)}_{p} \,du\,dv,
\]
and so 
\[
\mu_{\Z_p^2}(W_{\geq \Z/3\Z})=\frac{1}{2}\int_{\mb{A}^2_{X_1(3)}(\Z_p)} \abs{J(\pi_1)(u,v)}_{p} \,du\,dv-\frac{1}{8}\int_{\mb{A}^2_{X(3)}(\Z_p)}\abs{J(\pi_2)(u,v)}_p \,du\,dv.
\]
\item When $p \equiv 2 \pmod{3}$, we have $\mu_{\Z_p^2}(W_{(\Z/3\Z)^2})=0$ and so
\[
\mu_{\Z_p^2}(W_{\geq \Z/3\Z})=\frac{1}{2}\int_{\mb{A}^2_{X_1(3)}(\Z_p)} \abs{J(\pi_1)(u,v)}_p \,du\,dv.
\]
\end{enumerate}

\end{lemma}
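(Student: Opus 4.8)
The plan is to deduce all three parts from Igusa's change-of-variables formula (Theorem~\ref{change of vars}) applied to the forgetful maps $\pi_1$ and $\pi_2$, using the covering degrees established in Lemmas~\ref{integral covering split X1(3)} and~\ref{integral covering split X(3)}. The single mechanism I will invoke is the following standard global form of Theorem~\ref{change of vars}: if $X$ is a measurable subset of $\Z_p^2$, $f\colon X\to\Z_p^2$ is a $p$-adic analytic map with $J(f)$ nowhere vanishing on $X$, and $f$ is $K$-to-$1$ onto its image $Y=f(X)$, then
\[
\int_X \abs{J(f)(x)}_p\,dx = K\cdot\mu_{\Z_p^2}(Y).
\]
To obtain this from Theorem~\ref{change of vars}, cover $X$ by a countable family of balls on each of which $f$ is injective — possible since $f$ is étale, hence locally injective, and balls in $\Z_p^2$ are pairwise nested or disjoint — pass to a disjoint refinement, apply Theorem~\ref{change of vars} on each ball, and sum, noting that each preimage fibre is then covered exactly $K$ times. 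Both $\pi_1$ and $\pi_2$ are everywhere étale on their domains: by Lemma~\ref{Jacobian lemma}, $J(\pi_1)(u,v)=256v^2$ vanishes only when $v=0$, and $J(\pi_2)(u,v)=-559872u^2(u-v)^2(u^2+uv+v^2)^2$ vanishes exactly on $u(u^3-v^3)=0$, and these are precisely the loci excluded from $\mb{A}^2_{X_1(3)}$ and $\mb{A}^2_{X(3)}$.

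For part~(i), decompose the domain of $\pi_1$ as $\mb{A}^2_{X_1(3)}(\Z_p)=\mb{A}^2_{X_1(3),\Z/3\Z}(\Z_p)\sqcup\mb{A}^2_{X_1(3),(\Z/3\Z)^2}(\Z_p)$; this is a genuine partition because the $\Z_p$-point $(u,v)$ carries the $\Q_p$-rational $3$-torsion point $(0,0)$, so $E_{u,v}[3](\Q_p)$ must be either $\Z/3\Z$ or $(\Z/3\Z)^2$. By Lemma~\ref{integral covering split X1(3)}, $\pi_1$ restricts to a $2$-to-$1$ cover of $W_{\Z/3\Z}$ on the first piece and an $8$-to-$1$ cover of $W_{(\Z/3\Z)^2}$ on the second, so the displayed identity applied to each piece gives
\[
\int_{\mb{A}^2_{X_1(3)}(\Z_p)}\abs{J(\pi_1)(u,v)}_p\,du\,dv = 2\,\mu_{\Z_p^2}(W_{\Z/3\Z}) + 8\,\mu_{\Z_p^2}(W_{(\Z/3\Z)^2}).
\]
As observed before the lemma, $W_{\geq\Z/3\Z}=W_{\Z/3\Z}\sqcup W_{(\Z/3\Z)^2}$ (the second summand being empty when $p\equiv 2\pmod 3$), so $\mu_{\Z_p^2}(W_{\Z/3\Z})=\mu_{\Z_p^2}(W_{\geq\Z/3\Z})-\mu_{\Z_p^2}(W_{(\Z/3\Z)^2})$; dividing the last display by $2$ and substituting yields (i). The correction term $-3\mu_{\Z_p^2}(W_{(\Z/3\Z)^2})$ is exactly the \emph{overcounting} of the lemma's name: after halving, a curve with full $\Q_p$-rational $3$-torsion is weighted by $4$ instead of $1$.

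For part~(ii), assume $p\equiv 1\pmod 3$, so that $\Z/3\Z\times\mu_3\cong(\Z/3\Z)^2$ as Galois modules and hence $W_{\Z/3\Z\times\mu_3}=W_{(\Z/3\Z)^2}$. By Lemma~\ref{integral covering split X(3)}, $\pi_2\colon\mb{A}^2_{X(3)}(\Z_p)\to W_{\Z/3\Z\times\mu_3}$ is $24$-to-$1$, so the displayed identity gives $\mu_{\Z_p^2}(W_{(\Z/3\Z)^2})=\frac{1}{24}\int_{\mb{A}^2_{X(3)}(\Z_p)}\abs{J(\pi_2)(u,v)}_p\,du\,dv$, and plugging this into (i) produces the stated formula. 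For part~(iii), assume $p\equiv 2\pmod 3$; then $(\Z/3\Z)^2$ cannot be realized as $E[3](\Q_p)$ since $\zeta_3\notin\Q_p$, so $W_{(\Z/3\Z)^2}=\emptyset$, its measure is $0$, and (i) collapses to the stated identity. The only genuinely delicate point — the main obstacle — is the passage from the pointwise Theorem~\ref{change of vars} to the global $K$-to-$1$ statement above, together with the measurability of the strata $\mb{A}^2_{X_1(3),(\Z/3\Z)^i}(\Z_p)$; the latter follows because $\mb{A}^2_{X_1(3),(\Z/3\Z)^2}(\Z_p)$ is the $\pi_1$-preimage of $W_{(\Z/3\Z)^2}$, which is either empty or the continuous image $\pi_2(\mb{A}^2_{X(3)}(\Z_p))$ of a closed set, and the other stratum is its complement. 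Everything else is bookkeeping.
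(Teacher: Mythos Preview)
Your proof is correct and follows essentially the same route as the paper's: partition the domain of $\pi_1$ according to whether $E[3](\Q_p)$ is $\Z/3\Z$ or $(\Z/3\Z)^2$, apply the change-of-variables formula with the covering degrees $2$ and $8$ from Lemma~\ref{integral covering split X1(3)}, rearrange to isolate the overcount, and then handle $\pi_2$ via Lemma~\ref{integral covering split X(3)} for part~(ii). One cosmetic slip: in your measurability remark you call $\mb{A}^2_{X(3)}(\Z_p)$ a closed set, but it is the open complement of the locus $u(u^3-v^3)=0$; the image $\pi_2(\mb{A}^2_{X(3)}(\Z_p))$ is still measurable (indeed open, since $\pi_2$ is \'etale hence an open map), so nothing in the argument is affected.
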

\begin{proof}
Similar to Lemma \ref{good reduction count}. We partition 
\[
\mu_{\Z_p^2}\left(W_{\geq \Z/3\Z}\right)=\mu_{\Z_p^2}\left(W_{ \Z/3\Z}\right) + \mu_{\Z_p^2}\left(W_{(\Z/3\Z)^2}\right).
\]
Let $p\equiv 1\pmod 3$. By Theorem \ref{change of vars} and Lemma \ref{integral covering split X1(3)}, we have 
\[
\mu_{\Z_p^2}\left(W_{ \Z/3\Z}\right)=\frac{1}{2}\int_{\mb{A}^2_{X_1(3),\Z/3\Z}(\Z_p)}\abs{J(\pi_1)(u,v)}_{p} \,du\,dv
\]
and 
\[
\mu_{\Z_p^2}\left(W_{(\Z/3\Z)^2}\right)=\frac{1}{8}\int_{\mb{A}^2_{X_1(3),(\Z/3\Z)^2}(\Z_p)}\abs{J(\pi_1)(u,v)}_{p} \,du\,dv,
\]
so
\begin{align*}
\mu_{\Z_p^2}\left(W_{\geq \Z/3\Z}\right)&=\frac{1}{2}\int_{\mb{A}^2_{X_1(3),\Z/3\Z}(\Z_p)}\abs{J(\pi_1)(u,v)}_{p} \,du\,dv+\frac{1}{8}\int_{\mb{A}^2_{X_1(3),(\Z/3\Z)^2}(\Z_p)}\abs{J(\pi_1)(u,v)}_{p} \,du\,dv\\
&=\frac{1}{2}\int_{\mb{A}^2_{X_1(3)}(\Z_p)}\abs{J(\pi_1)(u,v)}_{p} \,du\,dv-\frac{3}{8}\int_{\mb{A}^2_{X_1(3),(\Z/3\Z)^2}(\Z_p)}\abs{J(\pi_1)(u,v)}_{p} \,du\,dv\\
&=\frac{1}{2}\int_{\mb{A}^2_{X_1(3)}(\Z_p)} \abs{J(\pi_1)(u,v)}_{p} \,du\,dv-3\mu_{\Z_p^2}\left(W_{(\Z/3\Z)^2}\right),
\end{align*}
where the second equality is obtained using 
\[
\mb{A}^2_{X_1(3)}(\Z_p)=\mb{A}^2_{X_1(3),\Z/3\Z}(\Z_p) \sqcup \mb{A}^2_{X_1(3),(\Z/3\Z)^2}(\Z_p).
\]
By Lemma \ref{integral covering split X(3)} and Theorem \ref{change of vars}, we have 
\[
\mu_{\Z_p^2}(W_{\Z/3\Z \times \mu_3})=\frac{1}{24}\int_{\mb{A}^2_{X(3)}(\Z_p)}\abs{J(\pi_2)(u,v)}_{p} \,du\,dv,
\] 
and now (ii) follows. The case $p\equiv 2\pmod 3$ is similar.
\end{proof}

\begin{lemma} \label{integral over X1 lemma}
 We have 
 \[
 \int_{\mb{A}^2_{X_1(3)}(\Z_p)} \abs{J(\pi_1)(u,v)}_{p} \,du\,dv=\frac{p^2}{p^2+p+1}.
 \]
\end{lemma}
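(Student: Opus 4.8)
The plan is to reduce this two-dimensional integral to a one-variable computation and then sum a geometric series. First I would invoke Lemma~\ref{Jacobian lemma}, which tells us that $J(\pi_1)(u,v)=256\,v^2=2^8v^2$. Since we are working with $p>3$, the constant $2^8$ is a $p$-adic unit, so $\abs{J(\pi_1)(u,v)}_{p}=\abs{v}_p^{2}$ for every $(u,v)\in\Z_p^2$; in particular the integrand does not depend on $u$ at all.

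Next I would dispose of the domain. By definition (Equation~\eqref{eq:A21}) the region $\mb{A}^2_{X_1(3)}(\Z_p)$ is $\Z_p^2$ with the zero locus of $(u^3-27v)v^3$ removed, and this locus is a finite union of proper subvarieties of $\mb{A}^2$, hence has Haar measure $0$. Therefore the integral over $\mb{A}^2_{X_1(3)}(\Z_p)$ equals the integral of $\abs{v}_p^2$ over all of $\Z_p^2$, and by Fubini this factors as $\left(\int_{\Z_p} du\right)\left(\int_{\Z_p}\abs{v}_p^{2}\,dv\right)=\int_{\Z_p}\abs{v}_p^{2}\,dv$, using $\mu_{\Z_p}(\Z_p)=1$.

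Finally I would evaluate the remaining integral by slicing $\Z_p$ according to $\ord(v)$: the set $\{\ord(v)=k\}$ has measure $p^{-k}(1-p^{-1})$, and on it $\abs{v}_p^{2}=p^{-2k}$, so
\[
\int_{\Z_p}\abs{v}_p^{2}\,dv=\sum_{k\ge 0}p^{-2k}\cdot p^{-k}\left(1-p^{-1}\right)=\left(1-p^{-1}\right)\sum_{k\ge 0}p^{-3k}=\frac{1-p^{-1}}{1-p^{-3}}=\frac{p^2}{p^2+p+1}.
\]
(Alternatively, this is the degenerate specialization $k=0$, $m=2$, $n=0$ of Lemma~\ref{p-adic integral lemma more general}.) There is no genuine obstacle here; the only points that need a word of justification are that $256$ is a $p$-adic unit (immediate since $p>3$) and that the excised discriminant locus is Haar-negligible, after which the computation is a routine Igusa-type integral.
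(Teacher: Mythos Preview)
Your proof is correct and follows essentially the same route as the paper: invoke Lemma~\ref{Jacobian lemma} for $J(\pi_1)=256v^2$, observe that the discriminant locus has measure zero so one may integrate over all of $\Z_p^2$, and then evaluate $\int_{\Z_p^2}\abs{v}_p^2\,du\,dv$. The only cosmetic difference is that the paper appeals directly to Lemma~\ref{p-adic integral lemma more general} for the final value, whereas you carry out the geometric-series computation by hand (and then note the same lemma as an alternative).
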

\begin{proof}
 By Lemma \ref{Jacobian lemma}, we have $J(\pi_1)(u,v)=256 v^2$. Notice that $ \Z_p^2\setminus \mb{A}^2_{X_1(3)}$ has measure zero. By Lemma \ref{p-adic integral lemma more general}, 
 \[
 \int_{\mb{A}^2_{X_1(3)}(\Z_p)} \abs{J(\pi_1)(u,v)}_{p} \,du\,dv=\int_{\Z_p^2} \abs{v}_p^2 \,du\,dv=\frac{p^2}{p^2+p+1}.
 \]
\end{proof}
\begin{proposition} \label{density of split X(3) equations}
If $p \equiv 1 \pmod{3}$, then
 \[
 \mu_{\Z_p^2}\left(W_{\Z/3\Z \times \mu_3}\right)=\frac{1}{24}\frac{p^{8}(p^{4}-3p^{3}+3p-1)}{(p^{10}-1)(1+p+p^{2})}.
 \]
\end{proposition}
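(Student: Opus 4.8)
The plan is to deduce the statement from Lemma~\ref{overcounting lemma}(ii), which for $p \equiv 1 \pmod 3$ gives
\[
\mu_{\Z_p^2}\left(W_{\Z/3\Z \times \mu_3}\right)=\frac{1}{24}\int_{\mb{A}^2_{X(3)}(\Z_p)}\abs{J(\pi_2)(u,v)}_{p} \,du\,dv,
\]
and then to evaluate this integral with Lemma~\ref{p-adic integral lemma more general}. First I would observe that $\Z_p^2 \setminus \mb{A}^2_{X(3)}$ is the zero locus of the nonzero polynomial $u(u^3-v^3)$, hence has Haar measure zero, so the integral may be taken over all of $\Z_p^2$. By Lemma~\ref{Jacobian lemma}, $J(\pi_2)(u,v)=-559872\,u^2(u-v)^2(u^2+uv+v^2)^2$; since $559872=2^8\cdot 3^7$ is a $p$-adic unit for $p>3$, this yields $\abs{J(\pi_2)(u,v)}_p=\abs{u}_p^2\,\abs{u-v}_p^2\,\abs{u^2+uv+v^2}_p^2$.

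The ingredient specific to $p\equiv 1\pmod 3$ is that $t^2+t+1$ splits modulo $p$ with simple roots, so by Hensel's lemma $\Z_p$ contains a primitive cube root of unity $\omega$, and $u^2+uv+v^2=(u-\omega v)(u-\omega^2 v)$. Therefore $\abs{J(\pi_2)(u,v)}_p=\abs{h(u,v)}_p^2$ with $h(u,v)=u(u-v)(u-\omega v)(u-\omega^2 v)$ a product of $n+1=4$ linear forms over $\Z_p$. Their reductions modulo $p$ vanish at $(0:1),(1:1),(\omega:1),(\omega^2:1)\in\PP^1(\F_p)$, which are pairwise distinct since $0,1,\omega,\omega^2$ are four distinct residues for $p>3$. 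Hence the hypotheses of Lemma~\ref{p-adic integral lemma more general} hold with $n+1=4$ linear factors, exponent $m=2$, and no irreducible factor $g$ (that is, $k=0$).

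Plugging $kd=0$, $(n+1)m=8$, $m=2$, $n=3$ into Lemma~\ref{p-adic integral lemma more general} gives
\[
\int_{\Z_p^2}\abs{h(u,v)}_p^2\,du\,dv=\frac{p^{8}\left(p^{4}-3p^{3}+3p-1\right)}{\left(p^{10}-1\right)\left(1+p+p^{2}\right)},
\]
and dividing by $24$ produces exactly the asserted value. The only point needing care is the mild extension of Lemma~\ref{p-adic integral lemma more general} to the case with no $g$-factor: that lemma is stated with an irreducible $\tilde g$ of degree $d>1$, but its proof uses $g$ only via the homogeneity of $f=g^kh^m$ (which still holds, of degree $(n+1)m$) and the fact that $\abs{g}_p=1$ off $(p\Z_p)^2$ (trivial when $g\equiv 1$), so the partition-by-nearest-root argument and the geometric-series summation go through verbatim; alternatively one can simply run that computation directly for $\int_{\Z_p^2}\abs{h}_p^2\,du\,dv$, which is routine.
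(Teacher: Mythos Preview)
Your proof is correct and follows essentially the same route as the paper: reduce to the integral of $\abs{J(\pi_2)}_p$ over $\Z_p^2$, factor $u^2+uv+v^2$ over $\Z_p$ using a primitive cube root of unity, and apply Lemma~\ref{p-adic integral lemma more general} with $m=2$, $n=3$, $k=0$. Your extra remark on the degenerate case $k=0$ of Lemma~\ref{p-adic integral lemma more general} is a reasonable precaution that the paper simply leaves implicit.
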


\begin{proof}
 By Lemma \ref{Jacobian lemma} the Jacobian determinant of $\pi_2$ is given by
 \[
	J(u,v)=-2^{8} 3^{7} u^2(u-v)^2(u^2+uv+v^2)^2.
	\]
 By Theorem \ref{change of vars} and Lemma \ref{integral covering split X(3)}, 
 \[ 24\mu_{\Z_p^2}(W_{\Z/3\Z \times \mu_3})=
24\int_{W_{\Z/3\Z \times \mu_3}} d\mu_{\Z_p^2}(a,b)=\int_{ \mb{A}^2_{X(3)}(\Z_p) } \abs{J(u,v)}_{p} d\mu_{\Z_p^2}(u,v).
\]
The complement of $ \mb{A}^2_{X(3)}(\Z_p) \subset \Z_p^2$, the set of pairs $(u,v)$ that define singular Weierstrass equations, has measure zero, and so the above integral is equal to the integral over the entire $\Z_p^2$. Hence,
\[
\int_{\Z_p^2} \abs{J(u,v)}_{p} d\mu_{\Z_p^2}(u,v)=\int_{\Z_p^2} \abs{u^2(u-v)^2(u^2+uv+v^2)^2}_{p} \,du\,dv.
\]
Since $p \equiv 1 \pmod{3}$, we have $u^2+uv+v^2=(u-\zeta_3v)(u-\zeta_3^2)$ with $\zeta_3\in \Z_p^*$. Now the result follows by Lemma \ref{p-adic integral lemma more general}, with $m=2$, $n=3$, $k=0$, and $f=u(u-v)(u-\zeta_3v)(u-\zeta_3^2v)$.

\end{proof}
\begin{theorem}
 When $p \equiv 1 \pmod{3}$, we have 
 \[
 \mu_{\Z_p^2}\left(W_{\geq\Z/3\Z}\right)=\frac{3p^8 + 4p^4 - 4p^3 + 4p^2}{8(p^8 + p^6 + p^4 + p^2 + 1)},
 \]
 and when $p \equiv 2 \pmod{3}$, we have 
 \[
 \mu_{\Z_p^2}\left(W_{\geq\Z/3\Z}\right)=\frac{1}{2}\frac{p^2}{p^2+p+1}.
 \]
\end{theorem}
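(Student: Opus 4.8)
The idea is simply to assemble ingredients that are already in place: Lemma~\ref{overcounting lemma}, Lemma~\ref{integral over X1 lemma}, and Proposition~\ref{density of split X(3) equations}. First I would dispatch the easier case $p \equiv 2 \pmod 3$: by part~(iii) of Lemma~\ref{overcounting lemma} we have $\mu_{\Z_p^2}(W_{\geq \Z/3\Z}) = \tfrac12 \int_{\mb{A}^2_{X_1(3)}(\Z_p)} \abs{J(\pi_1)(u,v)}_p \, du \, dv$, and inserting the value $\tfrac{p^2}{p^2+p+1}$ from Lemma~\ref{integral over X1 lemma} gives $\tfrac{1}{2}\cdot\tfrac{p^2}{p^2+p+1}$ at once.

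For $p \equiv 1 \pmod 3$ I would start from part~(ii) of Lemma~\ref{overcounting lemma}, namely $\mu_{\Z_p^2}(W_{\geq \Z/3\Z}) = \tfrac12 \int_{\mb{A}^2_{X_1(3)}(\Z_p)} \abs{J(\pi_1)(u,v)}_p \, du\, dv - 3\,\mu_{\Z_p^2}(W_{\Z/3\Z\times\mu_3})$, substitute $\tfrac{p^2}{p^2+p+1}$ from Lemma~\ref{integral over X1 lemma} for the first term, and substitute the value of $\mu_{\Z_p^2}(W_{\Z/3\Z\times\mu_3})$ from Proposition~\ref{density of split X(3) equations} for the second. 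This produces
\[
\mu_{\Z_p^2}(W_{\geq \Z/3\Z}) \;=\; \frac{1}{2}\cdot\frac{p^2}{p^2+p+1} \;-\; \frac{p^8\,(p^4 - 3p^3 + 3p - 1)}{8\,(p^{10}-1)\,(p^2+p+1)} .
\]
It then remains to simplify. I would put both terms over the common denominator $8(p^2+p+1)(p^{10}-1)$, so that the numerator becomes $4p^2(p^{10}-1) - p^8(p^4-3p^3+3p-1) = 3p^{12} + 3p^{11} - 3p^9 + p^8 - 4p^2$, and then use the factorisations $p^{10}-1 = (p^2-1)(p^8+p^6+p^4+p^2+1)$ and $(p^2+p+1)(p^2-1) = p^4+p^3-p-1$, together with the identity $3p^{12} + 3p^{11} - 3p^9 + p^8 - 4p^2 = (3p^8 + 4p^4 - 4p^3 + 4p^2)(p^4+p^3-p-1)$, to cancel the common factor $p^4+p^3-p-1$ and arrive at $\frac{3p^8 + 4p^4 - 4p^3 + 4p^2}{8(p^8+p^6+p^4+p^2+1)}$.

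The only step that requires any work is this last algebraic simplification, and even that is entirely routine — it amounts to expanding a single product of polynomials, and can be checked by a computer algebra system — so I do not anticipate any genuine obstacle; all of the substantive content (the degrees of the coverings $\pi_1,\pi_2$, their Jacobian determinants, and the evaluation of the relevant Igusa-type integrals via Lemma~\ref{p-adic integral lemma more general}) is already in hand. Combined with Proposition~\ref{prop:assume-short-WE}, which identifies $\mu_{\Z_p^2}(W_{\geq \Z/3\Z})$ with $\mu_{\Z_p^5}(W_{\tors})$, this yields the promised second proof of Theorem~\ref{thm:main} for $p \geq 5$; as a sanity check one confirms that the rational functions obtained here agree with those assembled from the Kodaira-type analysis in Propositions~\ref{thm:gr}, \ref{thm:smr}, \ref{thm:nsmr}, and~\ref{thm:ar}.
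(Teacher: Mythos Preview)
Your proposal is correct and follows exactly the paper's own proof: combine Lemma~\ref{overcounting lemma}, Lemma~\ref{integral over X1 lemma}, and Proposition~\ref{density of split X(3) equations}, then simplify the resulting rational function. The paper records only the one-line computation $\frac{1}{2}\frac{p^2}{p^2+p+1}-\frac{1}{8}\frac{p^{8}(p^{4}-3p^{3}+3p-1)}{(p^{10}-1)(1+p+p^{2})}=\frac{3p^8 + 4p^4 - 4p^3 + 4p^2}{8(p^8 + p^6 + p^4 + p^2 + 1)}$ without spelling out the factorisation you give, but your added detail is correct and the approaches are identical.
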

\begin{proof}
 By Lemma \ref{overcounting lemma}, Lemma \ref{integral over X1 lemma}, and Proposition \ref{density of split X(3) equations},
 \[
 \mu_{\Z_p^2}(W_{\geq\Z/3\Z})=\frac{1}{2}\frac{p^2}{p^2+p+1}-\frac{1}{8}\frac{p^{8}(p^{4}-3p^{3}+3p-1)}{(p^{10}-1)(1+p+p^{2})}=\frac{3p^8 + 4p^4 - 4p^3 + 4p^2}{8(p^8 + p^6 + p^4 + p^2 + 1)}.
 \]
 Case $p\equiv 2 \pmod{3}$ is obtained in the same way.
\end{proof}
\begin{remark}
Notice that this agrees with Theorem \ref{thm:main}.
\end{remark}
\subsection{Probability for \texorpdfstring{$\Q_p$}{}-rational \texorpdfstring{$3$}{}-isogeny}
Define
\[
W_{\iso}(\Q_p)=\{ (a,b) \in \mb{A}^2_{X}(\Q_p) : \text{ $E_{a,b}(\Q_p)$ admits a cyclic $3$-isogeny defined over $\Q_p$} \}
\]
and $W_{\iso}=W_{\iso}(\Q_p)\cap \mb{A}^2(\Z_p)$.
In this section we compute the measure of $W_{\iso}$,
proving Theorem \ref{thm:mainiso}. Recall that we are assuming $p>3$. By Lemma \ref{3-isogeny lemma}, for such an elliptic curve $E_{a,b}$, the kernel of a $\Q_p$-rational $3$-isogeny $\phi: E \xrightarrow{} E'$ consists of three points $0_E,P$ and $-P$, with $P$ either defined over $\Q_p$ or over a quadratic extension of $\Q_p$.

In Section \ref{marked torsion point section}, for a quadratic character $\psi$ we have defined moduli spaces $X_{\omega,\psi}(3)$ parametrizing pairs $(E,\omega,P)$ where the point $P$ is defined over the quadratic field associated with $\psi$, and in Proposition \ref{3-isogeny parametrization} we identified $X_{\omega,\psi}(3)$ with an open subset $\mb{A}^2_{X_{\psi}(3)}$ of the affine plane. Let $\pi_{\psi} : \mb{A}^2_{X_{\psi}(3)} \xrightarrow{} \mb{A}^2_{X}$ be the map $(E,\omega,P) \mapsto (E,\omega)$, which we explicitly defined in Lemma \ref{lem:explchar}.

There are three different quadratic extensions $\Q_p(\sqrt{\lambda})$ of $\Q_p$ inside $\overline{\Q_p}$, obtained by taking $\lambda=\nu ,p$ and $\nu p$, where $\nu \in \Z^{*}_{p}$ is a quadratic non-residue modulo $p$. Let $\chi_{\nu },\chi_{p}$ and $\chi_{\nu p}$ be the corresponding quadratic characters of $\mathrm{Gal}(\overline{\Q_p}/\Q_p)$. Together with the case of a $\Q_p$-rational non-trivial $3$-torsion point, they determine the four types of $3$-isogenies over $\Q_p$, and we have 
\begin{equation} \label{decomposition count isogeny}
W_{\iso}(\Q_p)=\pi_{1}(\mb{A}^2_{X_{\chi_1}(3) }(\Q_p))\cup \pi_{\chi_{\nu }}(\mb{A}^2_{X_{\chi_{\nu }}(3) }(\Q_p)) \cup \pi_{\chi_{p}}(\mb{A}^2_{X_{\chi_{p}}(3)}(\Q_p)) \cup \pi_{\chi_{\nu p}}(\mb{A}^2_{X_{\chi_{\nu p}}(3)}(\Q_p)).
\end{equation}
Our strategy is to refine this description of $W_{\iso}(\Q_p)$ to a description of the integral subset $W_{\iso}(\Z_p)$, and compute the measure $\mu_{\Z_p^2}(W_{\iso}(\Z_p))$ similarly as in the $3$-torsion case, by computing integrals of Jacobian functions on a covering space of $W_{\iso}$. 
\begin{lemma} \label{multiple isogeny lemma}
 Suppose $E_{a,b} : y^2=x^3+ax+b$ is an elliptic curve over $\Q_p$ with at least two different $3$-isogenies defined over $\Q_p$. Then: 
 \begin{enumerate}[label=(\roman*)]
 \item If $p \equiv 1 \pmod {3}$, then $E_{a,b}[3]=\chi \oplus \chi$, where $\chi$ is one of the four quadratic characters $1,\chi_\nu ,\chi_p,\chi_{\nu p}$, and $E_{a,b}$ admits four different $3$-isogenies, all with the same character $\chi$. With $E_{a,b}[3]=\chi \oplus \chi$ we mean that $\Gal(\overline{\Q_p}/\Q_p)$ acts on the $2$-dimension $\F_3$-vector space $E_{a,b}[3]$ as $\chi \oplus \chi$.
 \item If $p \equiv 2 \pmod{3}$, then $E_{a,b}[3]=1\oplus \chi_{\nu }$, or $E_{a,b}[3]=\chi_{\nu }\oplus \chi_{\nu p}$, and $E_{a,b}$ admits exactly two different $3$-isogenies defined over $\Q_p$.
 \end{enumerate}
\end{lemma}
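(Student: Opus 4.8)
The plan is to analyze the mod-3 Galois representation $\rho_{E,3} : \Gal(\overline{\Q_p}/\Q_p) \to \GL_2(\F_3)$ under the hypothesis that $E_{a,b}$ admits at least two distinct $\Q_p$-rational $3$-isogenies. Each $\Q_p$-rational $3$-isogeny corresponds to a $\Gal(\overline{\Q_p}/\Q_p)$-stable line $L \subset E_{a,b}[3]$, and by Lemma \ref{3-isogeny lemma} the Galois action on such a line $L$ is through one of the four quadratic characters $1, \chi_\nu, \chi_p, \chi_{\nu p}$ (the action on a line is through a character valued in $\F_3^\times = \{\pm 1\}$, and the quadratic characters of $\Gal(\overline{\Q_p}/\Q_p)$ are exactly these four). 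Having two distinct stable lines $L_1 \neq L_2$ forces $E_{a,b}[3] = L_1 \oplus L_2$ as a Galois module, so $\rho_{E,3}$ is diagonalizable, say $\rho_{E,3} \cong \chi_1 \oplus \chi_2$ with $\chi_1, \chi_2$ quadratic characters.

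The first key constraint comes from the Weil pairing: $\det \rho_{E,3} = \chi_{\cycl}$, the mod-3 cyclotomic character, which equals the trivial character when $p \equiv 1 \pmod 3$ and equals the unique unramified quadratic character $\chi_\nu$ when $p \equiv 2 \pmod 3$. Hence $\chi_1 \chi_2 = \chi_{\cycl}$. First I would treat the case $p \equiv 1 \pmod 3$: here $\chi_1 \chi_2 = 1$, so $\chi_2 = \chi_1^{-1} = \chi_1$ (as each $\chi_i$ has order dividing $2$), giving $E_{a,b}[3] = \chi \oplus \chi$ for a single quadratic character $\chi$. In this case \emph{every} line in the $2$-dimensional space is Galois-stable with the same character $\chi$, and there are exactly $(3^2-1)/(3-1) = 4$ lines in $\F_3^2$, hence exactly four $3$-isogenies, all with character $\chi$. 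For $p \equiv 2 \pmod 3$: now $\chi_1 \chi_2 = \chi_\nu$, and since $\chi_1 \neq \chi_2$ (if $\chi_1 = \chi_2$ then $\chi_1^2 = 1 = \chi_\nu$, a contradiction), the possibilities for $\{\chi_1, \chi_2\}$ with product $\chi_\nu$ are $\{1, \chi_\nu\}$ and $\{\chi_p, \chi_{\nu p}\}$ (using $\chi_p \chi_{\nu p} = \chi_\nu$ in the group of quadratic characters, which is $(\Z/2\Z)^2$ with $\chi_\nu, \chi_p, \chi_{\nu p}$ the three nontrivial elements). Since $\chi_1 \neq \chi_2$, the decomposition $E_{a,b}[3] = \chi_1 \oplus \chi_2$ has exactly two Galois-stable lines (the two eigenlines), so $E_{a,b}$ admits exactly two $3$-isogenies defined over $\Q_p$.

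The main obstacle, which is really the only subtle point, is justifying that the character through which Galois acts on a stable line is genuinely \emph{quadratic} (order dividing $2$) and not merely a homomorphism to $\F_3^\times$ of larger image — but $\F_3^\times$ has order $2$, so any such character is automatically quadratic; this is immediate. A secondary point requiring care is the claim that when $\rho_{E,3} = \chi \oplus \chi$ all four lines carry the same character: this holds because the Galois action is by a scalar matrix $\chi(\sigma) I$, which fixes every line and acts on each by $\chi(\sigma)$. I would also note in passing that the case $\chi = 1$ in (i) corresponds to full $\Q_p$-rational $3$-torsion (consistent with $p \equiv 1 \pmod 3$ being necessary for $E_{a,b}[3] \cong (\Z/3\Z)^2$), tying this lemma back to the $X(3)$ and $X_\psi(3)$ parametrizations of Section \ref{marked torsion point section}. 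All remaining verifications — that the listed cases are exhausted and mutually exclusive — are routine character arithmetic in $(\Z/2\Z)^2$.
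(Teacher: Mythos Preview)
Your proof is correct and follows essentially the same route as the paper's: decompose $E_{a,b}[3]$ as $\chi_1 \oplus \chi_2$ from the two stable lines, use the Weil pairing to get $\chi_1\chi_2 = \chi_{3,\cycl}$, and then split on whether $\chi_{3,\cycl}$ is trivial ($p\equiv 1$) or equals $\chi_\nu$ ($p\equiv 2$). You are simply more explicit than the paper in counting the stable lines in each case and in listing the admissible pairs $\{\chi_1,\chi_2\}$; in fact your arithmetic shows the second option in part~(ii) should read $\chi_p \oplus \chi_{\nu p}$ rather than $\chi_\nu \oplus \chi_{\nu p}$, since $\chi_\nu\cdot\chi_{\nu p}=\chi_p\neq\chi_\nu$.
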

\begin{proof}
 By assumption, we have $E_{a,b}[3] \cong \chi_1 \oplus \chi_2$, for two quadratic characters $\chi_1,\chi_2$ of $\mathrm{Gal}(\bar{\Q_p}/\Q_p)$. By the Galois equivariance of the Weil pairing, we have $\chi_1 \chi_2=\chi_{3,cyc}$, where $\chi_{3,cyc}$ is the modulo $3$ cyclotomic character (see \cite[Proposition III.8.3]{silverman2009arithmetic}). The result follows by observing that $\chi_{3,cyc}$ is trivial if $p \equiv 1 \pmod{3}$ and $\chi_{3,cyc}=\chi_{\nu }$ is the unique unramified character if $p \equiv{2} \pmod{3}$.
\end{proof}

Define
	\[
	\eta_i=\mu_{\Z_p^2}\left(\{[a,b]\in \Z_p^2: y^2=x^3+ax+b \text{ admits exactly $i$ $\Q_p$-rational $3$-isogenies}\}\right)
	\]
 for $i=1,2,4$. We want to compute $\eta_1+\eta_2+\eta_4$. We begin by computing $\eta_2$ and $\eta_4$. 
	
 Consider the elliptic curve $E_{b}: y^2=x^3+b$ with $b=16p^{3\delta}\nu ^{3\ve}$ and $\delta,\ve\in\{0,1\}$, where $\nu \in \Z_p^{*}$ is a non-square as before. Let $\chi$ be the quadratic character associated with $\sqrt{\nu ^\ve p^\delta}$. The non-trivial $3$-torsion points of $E_b$ are of the form $(0,\pm 4p^\delta\nu ^\ve\sqrt{\nu ^\ve p^\delta})$ and $(-4\zeta_3^ip^\delta \nu ^\ve,\pm 4p^\delta \nu ^\ve\sqrt{\nu ^\ve p^\delta}\sqrt{-3})$ with $i=0,1,2$. From this we see that we have the decomposition $E_{b}[3]=\chi\oplus\chi\cdot \chi_{3,\cycl}$.

 Define $W^{\Q_p}_{b}\coloneq\{ (A,B) \in \mb{A}^2_{X}(\Q_p) : E_{A,B}[3] \cong E_b[3] \}$ and $W_b=W^{\Q_p}(b) \cap \Z_p^2$. The following lemma generalizes Lemma \ref{integral covering split X(3)}. 
\begin{lemma} \label{parametrization Eb lemma}
Let $b=16p^{3\delta}\nu ^{3\ve}$ with $\delta,\ve\in\{0,1\}$.
		 Define a map $\Phi : \Z_p^2 \setminus \{{\Delta_\Phi = 0}\} \xrightarrow{} \mb{A}^2_{X} $ by
		\begin{align*}
		 \Phi(u,z)&=\pi_{E_b}\left(u,zp^{-\delta}\right)\\&=\left(- 2^{16} 3^5 p^{2\delta}\nu ^{6\ve} z^4 + 2^7 3^2 p^{2\delta}\nu ^{3\ve} u^3 z, - 2^{25} 3^6p^{3\delta}\nu ^{9\ve} z^6 - 2^{15}3^35 p^{3\delta}\nu ^{6\ve} u^3 z^3 + 2^4p^{3\delta}\nu ^{3\ve}u^6\right),
		\end{align*}
 where $\Delta_\Phi(u,z)$ is the discriminant of the Weierstrass equation corresponding to $\Phi(u,z)$. Then,
 we have $W_{b}=\Phi(\Z_p^2 \setminus \{\Delta_\Phi = 0\} )$, and the map $\Phi$ is a $K_p$-to-$1$ covering of $W_{b}$, where $K_p=24$ for $p\equiv 1\pmod 3$ and $K_p=2$ for $p\equiv 2 \pmod 3$.
	\end{lemma}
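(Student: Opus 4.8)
The plan is to follow the proof of Lemma~\ref{integral covering split X(3)}, of which this statement is the analogue with the twisted modular curve $X_{E_b}(3)$ in place of $X(3)$: first I determine the number of preimages over $\Q_p$, and then I upgrade this to $\Z_p$ by an integrality argument resting on a resultant computation. The key preliminary remark is that $\Phi$ is the composition of the map $\pi_{E_b}$ of Lemma~\ref{twisted X(3) jacobian lemma} with the coordinate substitution $(u,z)\mapsto(u,zp^{-\delta})$, which is a bijection of $\Q_p^2$ carrying $\Q_p^2\setminus\{\Delta_\Phi=0\}$ onto $\mb{A}^2_{X_{E_b}(3)}(\Q_p)$. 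Hence $\Phi$ and $\pi_{E_b}$ have the same fibre cardinalities, and by Proposition~\ref{twisted X(3)} we get $\Phi(\Q_p^2\setminus\{\Delta_\Phi=0\})=\pi_{E_b}(\mb{A}^2_{X_{E_b}(3)}(\Q_p))=W_b^{\Q_p}$.

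For the count over $\Q_p$, the twisted analogue of Lemma~\ref{preimage lemma} says that the fibre of $\pi_{E_b}$ above a point $(A,B)\in W_b^{\Q_p}$ is a torsor under the group of $\Q_p$-rational symplectic automorphisms of $E_b[3]$, so its cardinality equals the order of that group. I then use the decomposition $E_b[3]=\chi\oplus\chi\cdot\chi_{3,\cycl}$ recorded just before the lemma. If $p\equiv 1\pmod 3$, then $\chi_{3,\cycl}$ is trivial, so $\mathrm{Gal}(\overline{\Q_p}/\Q_p)$ acts on $E_b[3]$ through the scalar character $\chi$; consequently every $\F_3$-linear automorphism of $E_b[3]$ is defined over $\Q_p$, and the symplectic ones are exactly those of determinant $1$, forming $\SL_2(\F_3)$, which has $24$ elements. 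If $p\equiv 2\pmod 3$, then $\chi_{3,\cycl}=\chi_\nu$ is the nontrivial unramified quadratic character, so $\chi\neq\chi\cdot\chi_{3,\cycl}$; a $\Q_p$-rational automorphism must then preserve the two distinct eigenlines, hence is diagonal, and the determinant-one diagonal automorphisms form a group of order $2$. This yields $K_p=24$ and $K_p=2$ in the two cases.

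To pass from $\Q_p$ to $\Z_p$, I prove $W_b=\Phi(\Z_p^2\setminus\{\Delta_\Phi=0\})$ together with the statement that every $\Q_p$-point in a fibre of $\Phi$ over a point of $W_b$ already lies in $\Z_p^2$. The inclusion $\supseteq$ is immediate: $\Phi$ has coefficients in $\Z_p$ (since $p>3$, the constants $2,3,5,\nu$ and all powers of $p$ lie in $\Z_p$), and $\Phi$ maps the indicated domain into $W_b^{\Q_p}$ by Proposition~\ref{twisted X(3)}. For the converse, let $(A,B)\in W_b\subseteq\Z_p^2$ and $(u,z)\in\Q_p^2$ with $\Phi(u,z)=(A,B)$. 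Both components $\Phi_1,\Phi_2$ of $\Phi$ are homogeneous, of degrees $4$ and $6$; if $(u,z)\notin\Z_p^2$, choose $k\geq 1$ with $(u',z')\coloneq(p^ku,p^kz)\in\Z_p^2\setminus(p\Z_p)^2$, so that $\Phi_1(u',z')=p^{4k}A$ and $\Phi_2(u',z')=p^{6k}B$. Writing $\Phi_1=p^{2\delta}\nu^{3\ve}\widetilde{\Phi}_1$ and $\Phi_2=p^{3\delta}\nu^{3\ve}\widetilde{\Phi}_2$ with $\widetilde{\Phi}_1,\widetilde{\Phi}_2\in\Z_p[u,z]$, the bounds $4k-2\delta\geq 2$ and $6k-3\delta\geq 3$ give $p\mid\widetilde{\Phi}_1(u',z')$ and $p\mid\widetilde{\Phi}_2(u',z')$, so $(\overline{u'},\overline{z'})\in\F_p^2$ is a common zero of the reductions of $\widetilde{\Phi}_1$ and $\widetilde{\Phi}_2$. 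A direct computation gives $\operatorname{Res}_z(\widetilde{\Phi}_1,\widetilde{\Phi}_2)=c\,u^{24}$ and $\operatorname{Res}_u(\widetilde{\Phi}_1,\widetilde{\Phi}_2)=c'\,z^{24}$ with $c,c'\in\Z_p^\times$; since a resultant lies in the ideal generated by the two polynomials it vanishes at any common zero, forcing $\overline{u'}=\overline{z'}=0$, contradicting $(u',z')\notin(p\Z_p)^2$. Therefore $(u,z)\in\Z_p^2$, and $\Delta_\Phi(u,z)\neq 0$ because $(A,B)$ defines an elliptic curve; this establishes the reverse inclusion and the integrality of fibres, and together with the $\Q_p$-count it shows that $\Phi\colon\Z_p^2\setminus\{\Delta_\Phi=0\}\to W_b$ is $K_p$-to-$1$.

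I expect the main obstacle to be the resultant computation, and in particular the realization that the rescaling $v\mapsto zp^{-\delta}$ is precisely what makes the resultants coprime to $p$. Concretely, $\widetilde{\Phi}_1$ factors as $2^7 3^2\nu^{3\ve}\,z\,(u^3-2^9 3^3\nu^{3\ve}z^3)$, and evaluating $\widetilde{\Phi}_2$ (seen as a quadratic in $u^3$, respectively in $z^3$) at the roots of $\widetilde{\Phi}_1$ collapses at each root to $-27$ times a $p$-adic unit times $u^6$, respectively $z^6$; multiplying out the contributions gives the claimed pure powers $u^{24}$ and $z^{24}$. Had one worked with $\pi_{E_b}$ directly, one would factor $p^{3\delta}$ out of its first component and be left with the coefficient $2^9 3^3 p^{3\delta}\nu^{3\ve}$, which is divisible by $p$ when $\delta=1$, so the reduced forms would share the spurious common component $\{u=0\}\cup\{z=0\}$ and the argument would fail. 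Everything else — homogeneity of $\Phi_1,\Phi_2$, the explicit resultants, and the identity $\Phi=\pi_{E_b}\circ(v\mapsto zp^{-\delta})$ — consists of routine verifications, done in MAGMA as elsewhere in the paper.
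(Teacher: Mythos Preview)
Your proof is correct and follows essentially the same approach as the paper: the $\Q_p$-fibre count via the symplectic automorphism group of $E_b[3]=\chi\oplus\chi\cdot\chi_{3,\cycl}$, followed by an integrality argument using the resultants of the two components. Your integrality step is in fact slightly cleaner than the paper's, which splits into cases according to whether $v_p(u)$ and $v_p(z)$ are equal before invoking the resultant, whereas your use of homogeneity handles all cases at once; the concluding remark explaining why the rescaling $v\mapsto zp^{-\delta}$ is needed (to make the resultants $p$-adic units) is a nice addition not made explicit in the paper.
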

\begin{proof}
 By Lemma \ref{twisted X(3) jacobian lemma}, we have $W^{\Q_p}_{b}=\pi_{E_b}(\mb{A}^2_{X_{E_b}(3)}(\Q_p) )$, where the map $\pi_{E_{b}} :\mb{A}^2_{X_{E_b}(3)} (\Q_p) \xrightarrow{} \mb{A}^2_{X}$ is defined by
		\[
		\pi_{E_b}(u,v)= \left(- 2^{16} 3^5 p^{6\delta}\nu ^{6\ve} v^4 + 2^7 3^2 p^{3\delta}\nu ^{3\ve} u^3 v, - 2^{25} 3^6p^{9\delta}\nu ^{9\ve} v^6 - 2^{15}3^35 p^{6\delta}\nu ^{6\ve} u^3 v^3 + 2^4p^{3\delta}\nu ^{3\ve}u^6\right).
		\] 
 Note that we have $\Phi(u,v)=\pi_{E_b}(u,zp^{-\delta})=(p^{2\delta}\Phi_1(u,z),p^{3\delta}\Phi_2(u,z))$ where 
 \begin{align*}
 \Phi_1&=- 2^{16} 3^5\nu ^{6\ve} z^4 + 2^7 3^2\nu ^{3\ve} u^3 z, \\
 \Phi_2&=- 2^{25} 3^6\nu ^{9\ve} z^6 - 2^{15}3^35 \nu ^{6\ve} u^3 z^3 + 2^4\nu ^{3\ve}u^6.
 \end{align*}

 It is clear that we have $ \Phi(\Q_p^2)=\pi_{E_b}(\Q_p^2)$. Let $E_b[3]=\chi_1\otimes \chi_2$.
 If $p\equiv 2\pmod 3$, then $\chi_1\neq \chi_2$ since $\chi_1\chi_2=\chi_{3,\cycl}\neq \Id$. The only Galois invariant automorphism of $E_b[3]$ that preserves the Weil pairing is the multiplication by $-1$, so $K_p=2$. If $p\equiv 1\pmod 3$, then $\chi_1= \chi_2$ and so $K_p=\SL_2(\F_3)=24$ by Lemma \ref{preimage lemma}. We conclude that the map $\pi_{E_b}$ is a $K_p$-to-1 covering of $W^{\Q_p}_{b}$ by the same argument as in Lemma \ref{integral covering split X(3)}. To finish the proof, it suffices to prove that the set of points $(u,z)$ in $W_b^{\Q_p}$ for which we have $\Phi(u,z) \in \Z_p^2$ is equal to $\Z_p^2 \setminus \{\Delta_\Phi = 0\}$. 

 The resultant of $\Phi_1$ and $\Phi_2$ in $u$ is $-2^{108}3^{39} \nu ^{45 \ve} z^{24}$ and the one in $z$ is $-2^{112}3^{39} \nu ^{48 \ve} u^{24}$. Let $(u,z) \in \Q_p^2 \setminus \{\Delta_\Phi = 0\}$, and suppose that we have $\Phi(u,z)\in \Z_p^2$. If $v_p(u)>v_p(z)$, then $v_p(p^{2\delta}\Phi_1(u,z))=2\delta+4v_p(z)$. As $\delta\in \{0,1\}$, we have $v_p(z)\geq 0$. If $v_p(z)>v_p(u)$, a similar argument shows that $v_p(u)\geq 0$. In both cases we have $(u,z) \in \Z_p^2$, as desired.
 
 Assume now $v_p(u)=v_p(z)=-k$. Suppose first that $k >0$ holds. Clearing denominators, we write $u=u'p^{-k}$ and $z=z'p^{-k}$ with $v_p(u')=v_p(z')=0$. We then have $0 \leq v_p(p^{2\delta}\Phi_1(u,z))=2\delta-4k +v_p(\Phi_1(u',z'))$, and $0 \leq v_p(p^{3\delta}\Phi_2(u,z))=3\delta-6k +v_p(\Phi_2(u',z'))$. Since $0 \leq \delta \leq 1$
 and $k \geq 1$, $p$ divides both $\Phi_1(u',z')$ and $\Phi_2(u',z')$. So $p$ must divide both resultants and then $p$ divides $u'$ and $z'$, but this is impossible. Thus we have $k \leq 0$ and $(u,z) \in \Z_p^2$.
\end{proof}

	\begin{lemma}\label{lem:finalmeasure}
		If $p\equiv 1\pmod 3$, then
		\[
		\mu_{\Z_p^2}(W_b)=\mu_{\Z_p^2}(\Phi(\Q_p^2)\cap \Z_p^2)=\frac{p^{8-5\delta}(p-1)(p^4-3p^3+3p-1)}{24(p^{10}-1)(p^3-1)}.
		\]
		If $p\equiv 2\pmod 3$, then 
		\[
		\mu_{\Z_p^2}(W_b)=\mu_{\Z_p^2}(\Phi(\Q_p^2)\cap \Z_p^2)=\frac{p^{8-5\delta}(p - 1)^2  (p^3 +1)}{2(p^{10}-1)(p^3-1)}.
		\]
	\end{lemma}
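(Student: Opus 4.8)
The plan is to compute $\mu_{\Z_p^2}(W_b)$ by applying Igusa's change of variables formula (Theorem~\ref{change of vars}) to the covering $\Phi$ of Lemma~\ref{parametrization Eb lemma}, following the template of Section~\ref{integration proof of 3-torsion}. The first equality $\mu_{\Z_p^2}(W_b)=\mu_{\Z_p^2}(\Phi(\Q_p^2)\cap\Z_p^2)$ is immediate from Lemma~\ref{parametrization Eb lemma} once one notes that the set of singular Weierstrass equations has measure zero. For the numerical value: by Lemma~\ref{parametrization Eb lemma} the map $\Phi$ is a $K_p$-to-$1$ covering of $W_b$ with $K_p=24$ if $p\equiv1\pmod3$ and $K_p=2$ if $p\equiv2\pmod3$; the locus $\{\Delta_\Phi=0\}$ (where $\Phi$ is undefined) and the zero locus of the Jacobian of $\Phi$ both have measure zero, so decomposing $\Z_p^2$ into sheets exactly as in the proof of Lemma~\ref{overcounting lemma} and applying Theorem~\ref{change of vars} yields
\[
\mu_{\Z_p^2}(W_b)=\frac{1}{K_p}\int_{\Z_p^2}\abs{J(\Phi)(u,z)}_p\,du\,dz.
\]

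Next I would identify the Jacobian. Writing $\Phi=(p^{2\delta}\Phi_1,\,p^{3\delta}\Phi_2)$ as in the proof of Lemma~\ref{parametrization Eb lemma}, we get $J(\Phi)=p^{5\delta}J(\Phi_1,\Phi_2)$, which accounts for the factor $p^{-5\delta}$ in the statement. Comparing coefficients, $(\Phi_1,\Phi_2)$ is exactly the covering map $\pi_{F_0}$ of Lemma~\ref{twisted X(3) jacobian lemma} attached to the $j=0$ curve $F_0\colon y^2=x^3+16\nu^{3\ve}$ (the base curve $E_b$ with the factor $p^\delta$ stripped off). As in the proof of Lemma~\ref{Jacobian lemma}, a direct computation in MAGMA shows that $-16(4\Phi_1^3+27\Phi_2^2)=c'\,D(u,z)^3$ and $J(\Phi_1,\Phi_2)=c\,D(u,z)^2$ for a quartic form $D(u,z)\in\Z_p[u,z]$, the discriminant form of $X_{F_0}(3)$, where $c,c'$ are products of powers of $2,3$ and $\nu$, hence units of $\Z_p$ since $p>3$. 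Thus $\abs{J(\Phi)}_p=p^{-5\delta}\abs{D(u,z)}_p^2$ and
\[
\mu_{\Z_p^2}(W_b)=\frac{p^{-5\delta}}{K_p}\int_{\Z_p^2}\abs{D(u,z)}_p^2\,du\,dz.
\]

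It remains to evaluate the Igusa integral via Lemma~\ref{p-adic integral lemma more general}, after determining how $D$ factors modulo $p$. The four roots of $D$ in $\PP^1$ are the four cusps of $X_{F_0}(3)$, and, as in Lemma~\ref{multiple isogeny lemma}, the Galois action on them is read off from the mod-$3$ representation $F_0[3]=\chi_0\oplus\chi_0\cdot\chi_{3,\cycl}$ of $F_0$: modulo scalars the image of $\mathrm{Gal}(\overline{\Q_p}/\Q_p)$ in $\operatorname{PGL}_2(\F_3)$ is generated by $\chi_{3,\cycl}$, which on the four cusps $\PP^1(\F_3)$ fixes two points and transposes the other two. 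If $p\equiv1\pmod3$ then $\chi_{3,\cycl}$ is trivial, so all four cusps are $\Q_p$-rational and (by the good reduction of $X(3)$ at $p>3$) have pairwise distinct reductions, so $D$ is a unit times a product of four linear forms with distinct reductions; Lemma~\ref{p-adic integral lemma more general} with $m=2$, $n+1=4$, $k=0$ then gives $\int_{\Z_p^2}\abs{D}_p^2=\frac{p^8(p^4-3p^3+3p-1)}{(p^{10}-1)(1+p+p^2)}$, and dividing by $K_p=24$, multiplying by $p^{-5\delta}$ and using $p^3-1=(p-1)(1+p+p^2)$ gives the first formula. If $p\equiv2\pmod3$ then $\chi_{3,\cycl}$ restricts to the unramified quadratic character, so two cusps are $\Q_p$-rational while the other two are conjugate over the unramified quadratic extension; hence $D$ is a unit times $\ell_1\ell_2q$ with $\ell_1,\ell_2$ linear forms with distinct reductions and $q$ a quadratic form with irreducible reduction, and Lemma~\ref{p-adic integral lemma more general} with $m=2$, $n+1=2$, $k=2$, $d=2$ gives $\int_{\Z_p^2}\abs{D}_p^2=\frac{p^8(p^4-p^3+p-1)}{(p^{10}-1)(1+p+p^2)}=\frac{p^8(p-1)(p^3+1)}{(p^{10}-1)(1+p+p^2)}$; dividing by $K_p=2$, multiplying by $p^{-5\delta}$ and using $p^3-1=(p-1)(1+p+p^2)$ gives the second formula.

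The main obstacle is the factorization of $D$ and, above all, its behaviour under reduction modulo $p$: one must verify that the $\Q_p$-rational cusps reduce to pairwise distinct points for every $p>3$ in the given residue class, and that $q$ stays irreducible modulo such $p$ — equivalently, that the discriminant of the binary quartic $D$ and the relevant resultants of its factors are coprime to $p$ (the only primes that could interfere are $2$ and $3$, which are excluded). This is what forces the answer to be a single rational function of $p$ per residue class, and is the delicate point; by contrast, the passage from the local statement of Theorem~\ref{change of vars} to the global $K_p$-to-$1$ identity (handled exactly as in Lemma~\ref{overcounting lemma}) and the final algebraic simplifications are routine.
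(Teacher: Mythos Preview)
Your approach is correct and essentially the same as the paper's: use the $K_p$-to-$1$ covering of Lemma~\ref{parametrization Eb lemma} together with Theorem~\ref{change of vars} to reduce to an Igusa integral of the Jacobian, then evaluate via Lemma~\ref{p-adic integral lemma more general}. The only difference is in how the Jacobian and its factorization are obtained. The paper simply records the explicit formula
\[
J(\Phi)(u,z)=-(48)^3\nu^{6\ve}p^{5\delta}\,u^2\bigl((48\nu^{\ve}z)^3+u^3\bigr)^2,
\]
from which the factorization of the quartic $D(u,z)=u\bigl((48\nu^{\ve}z)^3+u^3\bigr)$ is elementary: one just factors $a^3+b^3=(a+b)(a^2-ab+b^2)$, and the quadratic factor splits over $\F_p$ if and only if $p\equiv1\pmod3$. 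This makes your ``main obstacle'' (distinct reductions of the roots, irreducibility of $q$ modulo $p$) a triviality --- the only primes appearing in the relevant resultants are $2$ and $3$. Your detour through the cusps of $X_{F_0}(3)$ and the Galois action via $\chi_{3,\cycl}$ reaches the same conclusion, and is a nice conceptual explanation of \emph{why} the factorization behaves this way, but it is not needed once the Jacobian is written down explicitly.
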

 \begin{proof}
 The Jacobian of $\Phi$ is directly computed to be
		\[
		J(\Phi(u,z))=-(48)^3\nu ^{6\ve}p^{5\delta}u^2((48 \nu ^\ve z)^3+u^3)^2.
		\]
 The result now follows by applying Lemma \ref{p-adic integral lemma more general} and using that $\Phi$ is a $K_p$-to-1 covering.
 \end{proof}

\begin{lemma}\label{lem:u24}
 Assume $p\equiv 1\pmod 3$. Then $\eta_2=0$ and
 \[
 \eta_4=\frac{p^{3}(p-1)(p^4-3p^3+3p-1)(p^5+1)}{12(p^{10}-1)(p^3-1)}.
 \]
 Assume $p\equiv 2\pmod 3$. Then $\eta_4=0$ and
 \[
 \eta_2=\frac{p^3(p - 1)^2 (p^5+1) (p^3 +1)}{2(p^{10}-1)(p^3-1)}.
 \]
\end{lemma}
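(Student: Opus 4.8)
The plan is to combine the classification of curves with multiple $3$-isogenies in Lemma \ref{multiple isogeny lemma} with the twisted parametrization of Lemma \ref{parametrization Eb lemma} and the volume computation of Lemma \ref{lem:finalmeasure}, organizing the curves that admit more than one $\Q_p$-rational $3$-isogeny according to the Galois-module structure of their $3$-torsion. The key observation is that, for $b=16p^{3\delta}\nu^{3\ve}$, the discussion preceding Lemma \ref{parametrization Eb lemma} gives $E_b[3]=\chi\oplus\chi\cdot\chi_{3,\cycl}$, where $\chi$ is the quadratic character cut out by $\sqrt{\nu^\ve p^\delta}$, and that the modules $E_b[3]$ arising this way exhaust the possibilities in Lemma \ref{multiple isogeny lemma}.

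First I would treat the case $p\equiv 1\pmod 3$. By Lemma \ref{multiple isogeny lemma}(i) no curve admits exactly two $\Q_p$-rational $3$-isogenies, so $\eta_2=0$; and a curve admits more than one $3$-isogeny precisely when $E_{a,b}[3]\cong\chi\oplus\chi$ for one of the four quadratic characters $\chi\in\{1,\chi_\nu,\chi_p,\chi_{\nu p}\}$, in which case it admits exactly four. Since $\chi_{3,\cycl}$ is trivial here, $E_b[3]=\chi\oplus\chi$, and as $(\delta,\ve)$ ranges over $\{0,1\}^2$ the character $\chi$ ranges over all four quadratic characters; as these four modules are pairwise non-isomorphic (the module $\chi\oplus\chi$ determines $\chi$), the set of integral $(a,b)$ defining curves with four $3$-isogenies is the disjoint union of the four sets $W_b$. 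Hence $\eta_4=\sum_{(\delta,\ve)\in\{0,1\}^2}\mu_{\Z_p^2}(W_b)$; plugging in the value from Lemma \ref{lem:finalmeasure}, which depends only on $\delta$, and summing (there is a factor $2$ from the two values of $\ve$, and $p^8+p^3=p^3(p^5+1)$) yields the stated formula for $\eta_4$.

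For $p\equiv 2\pmod 3$, Lemma \ref{multiple isogeny lemma}(ii) gives $\eta_4=0$, and a curve admits two $3$-isogenies exactly when $E_{a,b}[3]$ is Galois-isomorphic to $1\oplus\chi_\nu$ or to $\chi_\nu\oplus\chi_{\nu p}$. Now $\chi_{3,\cycl}=\chi_\nu$, so $E_b[3]=\chi\oplus\chi\cdot\chi_\nu$: for $\delta=0$ (both values of $\ve$) this is $1\oplus\chi_\nu$, and for $\delta=1$ (both values of $\ve$) it is $\chi_p\oplus\chi_{\nu p}$. Thus the set of integral $(a,b)$ defining curves with two $3$-isogenies is $W_{b_0}\sqcup W_{b_1}$, where $b_0=16$ and $b_1=16p^3$ are one representative for each of the two module types, so $\eta_2=\mu_{\Z_p^2}(W_{b_0})+\mu_{\Z_p^2}(W_{b_1})$; substituting the values from Lemma \ref{lem:finalmeasure} and simplifying gives the stated formula for $\eta_2$.

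The algebraic simplifications are routine. The one point that needs care is the bookkeeping of which Galois module $E_b[3]$ each pair $(\delta,\ve)$ produces: in the case $p\equiv 2\pmod 3$ the four choices of $(\delta,\ve)$ collapse onto only two isomorphism classes of modules, so one must pick a single $b$ for each class to avoid double-counting, and must invoke (as in the proof of Lemma \ref{parametrization Eb lemma}) the fact that $W_b$ depends only on the isomorphism class of $E_b[3]$.
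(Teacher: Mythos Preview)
Your proof is correct and follows the same approach as the paper: both use Lemma~\ref{multiple isogeny lemma} to determine which Galois-module types occur, observe that the family $E_b[3]$ with $b=16p^{3\delta}\nu^{3\ve}$ realizes exactly these types, and then sum the values from Lemma~\ref{lem:finalmeasure}. Your write-up is in fact slightly more careful than the paper's about the bookkeeping in the case $p\equiv 2\pmod 3$ (noting that the four $(\delta,\ve)$ collapse to two module types and that one must take a single representative per type), and your identification $\chi_p\oplus\chi_{\nu p}$ for the $\delta=1$ case corrects what appears to be a typo in the paper's statement of Lemma~\ref{multiple isogeny lemma}(ii).
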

\begin{proof}
Assume $p\equiv 1\pmod 3$. Let $b=16p^{3\delta}\nu ^{3\ve}$ for $\delta,\ve\in\{0,1\}$. If $\delta=\ve=0$, then $E_b[3]=\chi_1\oplus \chi_1$. If $\delta=0$, $\ve=1$, then $E_b[3]=\chi_\nu \oplus \chi_\nu $. If $\delta=1$, $\ve=0$, then $E_b[3]=\chi_p\oplus \chi_p$. If $\delta=\ve=1$, then $E_b[3]=\chi_{\nu p}\oplus \chi_{\nu p}$. So, by \eqref{decomposition count isogeny},
\[
\eta_4=\sum_{\delta,\ve\in\{0,1\}} \mu_{\Z_p^2}(W_{16p^{3\delta}\nu ^{3\ve}}).
\]
The lemma now follows from Lemmas \ref{multiple isogeny lemma} and \ref{lem:finalmeasure}.

Assume $p\equiv 2\pmod 3$. Proceeding as in the previous case, $\eta_4=0$ and
\[
\eta_2=\sum_{\delta\in\{0,1\}} \mu_{\Z_p^2}(W_{16p^{3\delta}}).
\]
\end{proof}
Now, we need to compute $\eta_1$.
\begin{lemma}
 Fix $\lambda\in\{1,p,\nu ,\nu p\}$. Let $\Psi_{\lambda}:\Q_p^2\to \Q_p^2$ be defined by
 \[
 \Psi_{\lambda}(u,v)=\left(2\lambda u v-\frac{\lambda^2u^4}{3},\lambda v^2-\frac{2\lambda^2vu^3}{3}+\frac{2\lambda ^3u^6}{27}\right).
 \]
 An elliptic curve $y^2=x^3+Ax+B$ admits a $\Q_p$-rational $3$-isogeny with quadratic character $\sqrt{\lambda}$ if and only if $(A,B)$ is in the image of $\Psi_{\lambda}$.
\end{lemma}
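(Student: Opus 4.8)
The plan is to read the statement off from Lemma~\ref{3-isogeny lemma} together with the explicit normal form recorded in Proposition~\ref{3-isogeny parametrization} and Lemma~\ref{lem:explchar}, after observing that $\Psi_\lambda$ is exactly the forgetful map $\pi_\psi$ of Lemma~\ref{lem:explchar} for $\psi=\psi_\lambda$ the quadratic character $\sigma\mapsto\sigma(\sqrt\lambda)/\sqrt\lambda$ attached to $\Q_p(\sqrt\lambda)/\Q_p$ (with $\psi_1$ trivial). Since $p>2$, the classes $1,p,\nu,\nu p$ form a complete set of representatives of $\Q_p^\times/(\Q_p^\times)^2$, so every quadratic character of $\operatorname{Gal}(\overline{\Q_p}/\Q_p)$ is of this shape; moreover a one-line computation gives $\Psi_{t^2\lambda}(u,v)=\Psi_\lambda(tu,tv)$ for $t\in\Q_p^\times$, so that the image of $\Psi_\lambda$ depends only on the square class of $\lambda$. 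This lets me pass freely between representatives when invoking Lemma~\ref{3-isogeny lemma}.

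For the ``if'' direction I would argue as follows. Assume $(A,B)=\Psi_\lambda(u,v)$ with $4A^3+27B^2\neq0$, so $E_{A,B}$ is an elliptic curve. By the computation behind Proposition~\ref{3-isogeny parametrization} (equivalently Lemma~\ref{lem:explchar}), the equation $y^2=x^3+Ax+B$ is exactly the equation $y^2=(x-\alpha)^3+\lambda\bigl(u(x-\alpha)+v\bigr)^2$ with $\alpha=\lambda u^2/3$. Lemma~\ref{3-isogeny lemma} then exhibits a $\Q_p$-rational $3$-isogeny on $E_{A,B}$ with kernel $\{0_{E},(\alpha,\pm\sqrt\lambda\,v)\}$. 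On the two non-zero kernel points $\operatorname{Gal}(\overline{\Q_p}/\Q_p)$ acts through $\sigma\mapsto\sigma(\sqrt\lambda)/\sqrt\lambda=\psi_\lambda(\sigma)$, so this $3$-isogeny has quadratic character $\sqrt\lambda$, as required.

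For the ``only if'' direction I would rerun the proof of Lemma~\ref{3-isogeny lemma}. Let $\{0_E,P,-P\}$ be the kernel of a $\Q_p$-rational $3$-isogeny of $E=E_{A,B}$ whose character is $\psi_\lambda$, so $\sigma(P)=\psi_\lambda(\sigma)P$; then $x(P)\in\Q_p$ and $y(P)\in\sqrt\lambda\cdot\Q_p$. That proof produces a function $y+\mathcal{A}x+\mathcal{B}$ on $E$ with a triple zero at $P$ and a triple pole at $0_E$, with $\mathcal{A},\mathcal{B}\in\Q_p(\sqrt\lambda)$, for which the defining equation rewrites as $y^2=(x-x(P))^3+(\mathcal{A}x+\mathcal{B})^2$; comparing the $x^2$-coefficients forces $x(P)=\mathcal{A}^2/3$, and, exactly as in that proof, $\mathcal{A}/\sqrt\lambda,\mathcal{B}/\sqrt\lambda\in\Q_p$ because $P$ and $-P$ are Galois-conjugate when $\psi_\lambda\neq1$ (and $\mathcal{A},\mathcal{B}\in\Q_p$ outright when $\lambda=1$). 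Writing $\mathcal{A}=\sqrt\lambda\,u_0$, $\mathcal{B}=\sqrt\lambda\,v_0$ and recentring at $\alpha=x(P)=\lambda u_0^2/3$, the equation becomes $y^2=(x-\alpha)^3+\lambda\bigl(u_0(x-\alpha)+v\bigr)^2$ with $v=v_0+\lambda u_0^3/3$, whose short Weierstrass coefficients are $\Psi_\lambda(u_0,v)$ by Lemma~\ref{lem:explchar}. Hence $(A,B)=\Psi_\lambda(u_0,v)$ lies in the image of $\Psi_\lambda$, which completes the argument.

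I do not expect a genuine obstacle here: the statement is essentially a repackaging of Lemma~\ref{3-isogeny lemma}. The only points demanding care are the square-class bookkeeping: (i) matching the Galois action on the $3$-isogeny kernel to the specific character $\psi_\lambda$ rather than to an unspecified quadratic character; (ii) arranging the normal form of Lemma~\ref{3-isogeny lemma} with our fixed representative $\lambda\in\{1,p,\nu,\nu p\}$, which is legitimate by the identity $\Psi_{t^2\lambda}(u,v)=\Psi_\lambda(tu,tv)$; and (iii) confirming that the recentred form $y^2=(x-\alpha)^3+\lambda(u(x-\alpha)+v)^2$ with $\alpha=\lambda u^2/3$ is a short Weierstrass equation with coefficients exactly $\Psi_\lambda(u,v)$, which is already contained in Lemma~\ref{lem:explchar}.
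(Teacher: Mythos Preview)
Your proposal is correct and follows essentially the same approach as the paper: the paper's proof is the one-liner ``Follows from Lemma~\ref{lem:explchar}'', and you have simply unpacked this by tracing back through Proposition~\ref{3-isogeny parametrization} and Lemma~\ref{3-isogeny lemma}, with the additional care of verifying the square-class identity $\Psi_{t^2\lambda}(u,v)=\Psi_\lambda(tu,tv)$ to justify working with the fixed representative~$\lambda$.
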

\begin{proof}
 Follows from Lemma \ref{lem:explchar}.
\end{proof}
\begin{lemma} \label{Integral twisted X_1(3) covering lemma}
 We have
 $\Psi_{\lambda}^{-1}(\Z_p^2) \subset \Z_p^2$.
 
\end{lemma}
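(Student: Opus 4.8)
The plan is to imitate the proof of Lemma~\ref{integral covering split X1(3)}, but working over the quadratic extension $K=\Q_p(\sqrt{\lambda})$ instead of $\Q_p$. Suppose $(u,v)\in\Q_p^2$ satisfies $\Psi_{\lambda}(u,v)=(A,B)\in\Z_p^2$. By Lemma~\ref{lem:explchar} together with Proposition~\ref{3-isogeny parametrization}, the map $\Psi_\lambda$ is the forgetful map $\pi_\psi$, so the curve $E\colon y^2=x^3+Ax+B$ carries the point $P=\bigl(\tfrac{\lambda u^2}{3},\sqrt{\lambda}\,v\bigr)$, which is a $3$-torsion point defined over $K$ (when $\lambda=1$ one has $K=\Q_p$ and this reduces to Lemma~\ref{integral covering split X1(3)} restricted to the image of $\Psi_1$). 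First I would show $P$ has coordinates in $\mathcal O_K$; then the integrality of $u$ and $v$ drops out.

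For the integrality of $P$ I would run a Nagell--Lutz argument. The $x$-coordinate of a non-trivial $3$-torsion point of $E$ is a root of the $3$-division polynomial $\psi_3(x)=3x^4+6Ax^2+12Bx-A^2$; the vanishing $\psi_3(x(P))=0$ is in fact a polynomial identity in $(u,v)$ (it holds on the Zariski-dense open set where Proposition~\ref{3-isogeny parametrization} applies), so it holds for all $(u,v)\in\Q_p^2$, including those for which $E$ is singular. Since $p>3$ we have $3\in\mathcal O_K^{\times}$, and since $A,B\in\Z_p\subseteq\mathcal O_K$, if $v_K(x(P))<0$ then the term $3\,x(P)^4$ strictly dominates the other three terms in $v_K$, contradicting $\psi_3(x(P))=0$. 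Hence $x(P)\in\mathcal O_K$, and then $y(P)^2=x(P)^3+Ax(P)+B\in\mathcal O_K$ forces $y(P)\in\mathcal O_K$. The one place where ramification of $K/\Q_p$ could be a nuisance is exactly here, and it is harmless: the argument uses only the $3$-division polynomial and the fact that $3$ is a unit, so no input about formal groups over ramified extensions is required.

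To finish, normalize $v_K$ so that $v_K|_{\Q_p}=v_p$; then $v_K(\sqrt{\lambda})=\tfrac12 v_p(\lambda)\in\{0,\tfrac12\}$ and $v_p(\lambda)\le 1$. From $x(P)=\tfrac{\lambda u^2}{3}\in\mathcal O_K\cap\Q_p=\Z_p$ we get $v_p(\lambda)+2v_p(u)\ge 0$, hence $v_p(u)\ge-\tfrac12 v_p(\lambda)\ge-\tfrac12$, so $v_p(u)\ge 0$; from $y(P)=\sqrt{\lambda}\,v\in\mathcal O_K$ we get $\tfrac12 v_p(\lambda)+v_p(v)\ge 0$, so likewise $v_p(v)\ge 0$. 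Therefore $(u,v)\in\Z_p^2$, which is the claim. An alternative, CAS-free but more computational route, closer in spirit to Lemmas~\ref{integral covering split X(3)} and~\ref{parametrization Eb lemma}, is to factor $A=\tfrac{\lambda u}{3}(6v-\lambda u^3)$ and estimate $v_p(A)$ and $v_p(B)$ by splitting into the three cases $v_p(v)<v_p(\lambda)+3v_p(u)$, $v_p(v)=v_p(\lambda)+3v_p(u)$, and $v_p(v)>v_p(\lambda)+3v_p(u)$; the borderline case of equality is the only delicate one, and it is handled by noting that since $p>3$ the element $v$ is $p$-adically close to at most one of $\tfrac{\lambda u^3}{6}$ and the two roots of $B$ in $v$ (these three being pairwise incongruent), so no cancellation can raise $v_p(A)$ and $v_p(B)$ simultaneously enough to permit $u,v\notin\Z_p$.
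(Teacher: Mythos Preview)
Your proof is correct and takes a genuinely different route from the paper. The paper argues directly with the polynomial map $\Psi_\lambda$: it splits into the three cases $v_p(v)\gtrless 3v_p(u)+v_p(\lambda)$, reading off $v_p(u),v_p(v)\ge 0$ immediately from the second coordinate in the strict-inequality cases, and in the equality case rescales to $(u',v')$ with $v_p(u')=v_p(v')=0$ and uses that the two resultants of $2u'v'-(u')^4/3$ and $(v')^2-2v'(u')^3/3+2(u')^6/27$ (in $u'$ and in $v'$) are powers of $3$ times $(v')^8$ and $(u')^8$, respectively, to derive a contradiction. Your argument instead exploits the geometry: $\Psi_\lambda$ is the forgetful map from Proposition~\ref{3-isogeny parametrization}, so the pair $(u,v)$ produces a genuine $3$-torsion point $P=(\lambda u^2/3,\sqrt{\lambda}\,v)$ on $E_{A,B}$ over $K=\Q_p(\sqrt{\lambda})$, and a Nagell--Lutz estimate on the $3$-division polynomial (valid over any extension since $p>3$) forces $x(P),y(P)\in\mathcal O_K$, from which $u,v\in\Z_p$ falls out by the bound $v_p(\lambda)\le 1$.

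Your approach is the cleaner of the two: it is the natural extension of the Nagell--Lutz step in Lemma~\ref{integral covering split X1(3)} to the twisted setting, it avoids any case analysis or resultant computation, and it makes transparent \emph{why} integrality holds (torsion points are integral). The paper's computational approach, on the other hand, is entirely self-contained within $\Q_p$ and does not require invoking the moduli interpretation or passing to an extension. Your closing sketch of the ``alternative, CAS-free'' route is indeed in the spirit of the paper's proof, though the paper handles the borderline case via resultants rather than the separation-of-roots argument you outline.
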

\begin{proof}
We need to show that if for $(u,v) \in \Q_p^2$ we have $\Psi_{\lambda}(u,v)\in \Z_p$, then $(u,v) \in \Z_p^2$.
 Assume $v_p(v)>3v_p(u)+v_p(\lambda)$. Then the valuation of the second coordinate is $3v_p(\lambda)+6v_p(u)\geq 0$ and then $v_p(u)\geq 0$ (since $v_p(\lambda)\leq 1$). So, $v_p(v)\geq 0$ and both $u$ and $v$ are integers.

 Assume $v_p(v)<3v_p(u)+v_p(\lambda)$. Then the valuation of the second coordinate is $v_p(\lambda)+2v_p(v)\geq 0$ and then $v_p(v)\geq 0$. So, $v_p(u)\geq 0$ and $u$ and $v$ are integers.

 Assume $v_p(v)=3v_p(u)+v_p(\lambda)$, put $v_p(u)=k$. If $k\geq 0$, then $u$ and $v$ are integers. It remains the case $k<0$. Put $u'=up^{-k}$ and $v'=vp^{-3k}\lambda^{-1}$ and notice $v_p(u')=v_p(v')=0$. Moreover,
 \begin{align*}
 \Psi_{\lambda}(u,v)&=\left(2\lambda u v-\frac{\lambda^2u^4}{3},\lambda v^2-\frac{2\lambda^2vu^3}{3}+\frac{2\lambda ^3u^6}{27}\right)\\&=\left(\lambda^2p^{4k}\left(2 u' v'-\frac{(u')^4}{3}\right),\lambda^3p^{6k}\left( (v')^2-\frac{2v'(u')^3}{3}+\frac{2 (u')^6}{27}\right)\right).
 \end{align*}
 Since $\Psi_{\lambda}(u,v)\in \Z_p$ and $k<0$, we have that $p$ divides both $(2 u' v'-(u')^4/3)$ and $( (v')^2-2v'(u')^3/3+2 (u')^6/27)$. 
 The resultant of the two polynomials in $u'$ and $v'$ is $-(v')^8/3^9$ and $-(u')^8/3^3$ and so $p$ must divide $u',v'$, contradiction. So, if $k<0$, $\Psi_{\lambda}(u,v)$ is not an integer. 
\end{proof}
\begin{lemma}\label{lem:124}
 Let $\eta_\lambda$ be the measure of the image of $\Psi_\lambda(\Z_p^2)$. Then
 \[
 2\eta_1+4\eta_2+8\eta_4=\sum_{\lambda \in 
 \{1,\nu ,p,\nu  p\}
 }\eta_\lambda.
 \]
\end{lemma}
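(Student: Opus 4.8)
The plan is to reduce the identity to the $p$-adic change of variables formula (Theorem~\ref{change of vars}), matching the coefficients $2,4,8$ with the degrees of the covering maps $\Psi_\lambda$. The starting point is that, by Theorem~\ref{change of vars} together with Lemma~\ref{Integral twisted X_1(3) covering lemma} (which gives $\Psi_\lambda^{-1}(\Z_p^2)\subseteq\Z_p^2$, so that $\Psi_\lambda(\Z_p^2)=\Psi_\lambda(\Q_p^2)\cap\Z_p^2$ and all preimages of integral points are integral), the measure of the image of $\Psi_\lambda$ is computed, with multiplicity, by
\[
\eta_\lambda=\int_{\Z_p^2}\abs{J(\Psi_\lambda)(u,v)}_{p}\,du\,dv=\int_{\Z_p^2}\#\Psi_\lambda^{-1}(a,b)\,d\mu_{\Z_p^2}(a,b).
\]
The first equality is the change of variables formula; the second follows by cutting $\Z_p^2$ into countably many measurable pieces on which $\Psi_\lambda$ is injective (possible off the null set where $J(\Psi_\lambda)$ vanishes), applying the local form of Theorem~\ref{change of vars} on each piece, and noting that $\#\Psi_\lambda^{-1}(a,b)=0$ outside $\Psi_\lambda(\Z_p^2)$.

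Next I would compute the fibre size $\#\Psi_\lambda^{-1}(a,b)$ for $(a,b)\in\Z_p^2$ with $\Delta\neq 0$ and $j(E_{a,b})\notin\{0,1728\}$ — the excluded pairs forming a null set. Since $\Psi_\lambda$ is the map $\pi_\psi$ of Lemma~\ref{lem:explchar}, the analogue of Lemma~\ref{preimage lemma} for the twisted moduli space $X_{\omega,\psi}(3)$ identifies $\Psi_\lambda^{-1}(a,b)$ with the set of non-zero points $P\in E_{a,b}[3](\overline{\Q_p})$ satisfying $\sigma(P)=\psi(\sigma)P$ for all $\sigma$, where $\psi$ is the quadratic character attached to $\Q_p(\sqrt\lambda)$; distinct such $P$ give non-isomorphic triples because the only automorphism of $(E_{a,b},\omega)$ is the identity (here $\Aut(E_{a,b})=\{\pm1\}$ and $[-1]^{*}\omega=-\omega$). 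Such points occur exactly in pairs $\{P,-P\}$, each pair being the non-zero part of a Galois-stable line of $E_{a,b}[3]$ on which Galois acts through $\psi$, and such lines correspond bijectively to the $\Q_p$-rational $3$-isogenies of $E_{a,b}$ whose kernel carries the character $\psi$. Writing $N_\lambda(E)$ for the number of these isogenies, we obtain $\#\Psi_\lambda^{-1}(a,b)=2N_\lambda(E_{a,b})$ for all such $(a,b)$ (both sides being zero when there is no such isogeny); this is consistent with the degrees $K_1=2$, $K_2=8$ of Lemma~\ref{integral covering split X1(3)} in the case $\lambda=1$.

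Finally I would sum over $\lambda$. Every $\Q_p$-rational $3$-isogeny has a quadratic character, since its kernel generator $P$ satisfies $P^\sigma=\pm P$ (as in the proof of Lemma~\ref{3-isogeny lemma}), and $1,\chi_\nu,\chi_p,\chi_{\nu p}$ exhaust the quadratic characters of $\Gal(\overline{\Q_p}/\Q_p)$; hence $\sum_{\lambda}N_\lambda(E)$ is exactly the total number $T(E)$ of $\Q_p$-rational $3$-isogenies of $E$. Thus
\[
\sum_{\lambda\in\{1,\nu,p,\nu p\}}\eta_\lambda=\int_{\Z_p^2}2\sum_\lambda N_\lambda(E_{a,b})\,d\mu_{\Z_p^2}=\int_{\Z_p^2}2\,T(E_{a,b})\,d\mu_{\Z_p^2}.
\]
By Lemma~\ref{multiple isogeny lemma} we have $T(E)\in\{0,1,2,4\}$; splitting this integral according to the value of $T(E)$ and recalling that $\eta_1,\eta_2,\eta_4$ are the measures of the sets where $T(E)$ equals $1,2,4$ respectively, we get $\int_{\Z_p^2}2\,T(E_{a,b})\,d\mu_{\Z_p^2}=2\eta_1+4\eta_2+8\eta_4$, which is the claim.

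I expect the main obstacle to be the middle step: correctly pinning down the fibre size $\#\Psi_\lambda^{-1}(a,b)=2N_\lambda(E_{a,b})$, which requires carefully tracking the factor $2$ from the $P\leftrightarrow-P$ symmetry, using triviality of $\Aut(E,\omega)$ to see that $P$ and $-P$ give genuinely distinct points of $X_{\omega,\psi}(3)$, and checking that the bad pairs with $j(E_{a,b})\in\{0,1728\}$ or $\Delta=0$ (where the automorphism group grows or the parametrization degenerates) lie in a null set and can be discarded.
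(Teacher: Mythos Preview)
Your proof is correct and follows essentially the same approach as the paper: both compute the fibre sizes of the maps $\Psi_\lambda$ via the moduli-theoretic identification of preimages with non-zero $3$-torsion points carrying the character $\chi_\lambda$ (the paper phrases this concretely as the pair $(u,v),(-u,-v)$), then sum over $\lambda$ and match against the total number of $\Q_p$-rational $3$-isogenies using Lemma~\ref{multiple isogeny lemma}. Your presentation is somewhat more streamlined, packaging the count as $\#\Psi_\lambda^{-1}(a,b)=2N_\lambda(E_{a,b})$ and integrating directly, whereas the paper splits into the cases $p\equiv 1,2\pmod 3$ and argues each separately; but the underlying idea is the same. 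Note also that your reading of $\eta_\lambda$ as $\int_{\Z_p^2}\abs{J(\Psi_\lambda)}_p\,du\,dv$ (measure \emph{with multiplicity}) rather than the literal measure of the image is exactly what is needed, and is consistent with how the paper actually computes $\eta_\lambda$ in Lemma~\ref{lem:mulambda}.
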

\begin{proof}
 The idea is to count how many times a Weierstrass equation appears in the decomposition (\ref{decomposition count isogeny}), with the possibilities being $1,2$ and $4$ based on the number of $\Q_p$-rational 3-isogenies. Assume first that $p\equiv 2\pmod 3$. By Lemma \ref{multiple isogeny lemma}, every elliptic curve over $\Q_p$ has at most two different $\Q_p$-rational 3-isogenies, and $\eta_4=0$. Next, we show that the map $\Psi_\lambda : \Z_p^2 \xrightarrow{} \Z_p^2$ is $2$-to-$1$ onto its image. Consider $(a,b) \in \Psi_{\lambda}(\Z_p^2)$ that defines an elliptic curve $E_{a,b}$. If $E_{a,b}$ has a unique $\Q_p$-rational 3-isogeny (corresponding to $\lambda$), then by Proposition \ref{3-isogeny parametrization} and an argument similar to the one used in Lemma \ref{preimage lemma}, there are exactly two preimages $(u,v),(-u,-v)$ in $\Q_p^2$, which are actually inside $\Z_p^2$ by Lemma \ref{Integral twisted X_1(3) covering lemma}. If $E_{a,b}$ has multiple $\Q_p$-rational 3-isogenies, then by Lemma \ref{multiple isogeny lemma} it has two distinct 3-isogenies, each of which and the same argument applied to each of the isogenies shows that each of them has two preimages in $\Z_p^2$ under $\Psi_{\lambda}$, and the lemma follows.  The case $p\equiv 1\pmod 3$ is exactly the same with the added possibility of 4 different $\Q_p$-rational 3-isogenies.
\end{proof}
\begin{lemma}\label{lem:mulambda}
 Let $\lambda\in\{1,\nu ,p,\nu p\}$. We have $\eta_\lambda=\abs{\lambda}_p^2\frac{p^3-p^2}{p^3-1}$ and \[
 \sum_{\lambda \in 
 \{1,\nu ,p,\nu p\}
 }\eta_\lambda=\frac{2(p-1)(p^2+1)}{p^3-1}.
 \]
\end{lemma}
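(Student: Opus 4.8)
The plan is to evaluate $\eta_\lambda$ directly as a single Igusa-type $p$-adic integral via the change of variables formula, and then to sum the four contributions. First, I record the Jacobian: the map $\Psi_\lambda$ is exactly the map $\pi_\psi$ of Lemma~\ref{lem:explchar} attached to the parameter $\lambda$, so that lemma gives $J(\Psi_\lambda)(u,v)=4\lambda^2v^2$ immediately (this identity is a polynomial identity, valid also when $\lambda$ is a square, in which case $\Psi_\lambda=\Psi_1$ is the map parametrising curves with a $\Q_p$-rational $3$-torsion point).

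Next, I would pass from the volume of the image to an integral. Since $p>3$, the only denominators occurring in the coordinates of $\Psi_\lambda$ are powers of $3$, which are units, so $\Psi_\lambda$ maps $\Z_p^2$ into $\Z_p^2$; moreover $\Psi_\lambda^{-1}(\Z_p^2)\subseteq\Z_p^2$ by Lemma~\ref{Integral twisted X_1(3) covering lemma}, and away from a set of measure zero $\Psi_\lambda$ is a $2$-to-$1$ cover of its image, the two preimages of a generic point being $(u,v)$ and $(-u,-v)$, exactly as in the proof of Lemma~\ref{lem:124}. Partitioning $\Z_p^2$ (up to measure zero) into two pieces on each of which $\Psi_\lambda$ is a bijection onto its image and applying Theorem~\ref{change of vars}, one obtains, with the normalisation of $\eta_\lambda$ fixed in Lemma~\ref{lem:124},
\[
\eta_\lambda=\int_{\Z_p^2}\abs{J(\Psi_\lambda)(u,v)}_p\,du\,dv=\abs{4\lambda^2}_p\int_{\Z_p^2}\abs{v}_p^2\,du\,dv .
\]

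Finally, I would evaluate this integral and sum. As $p>3$ we have $\abs{4}_p=1$, hence $\abs{4\lambda^2}_p=\abs{\lambda}_p^2$; and by Fubini — or by Lemma~\ref{p-adic integral lemma more general} with $k=0$, $m=2$, $n=0$ — we get $\int_{\Z_p^2}\abs{v}_p^2\,du\,dv=\int_{\Z_p}\abs{v}_p^2\,dv=\frac{1-p^{-1}}{1-p^{-3}}=\frac{p^3-p^2}{p^3-1}$, which gives the first formula $\eta_\lambda=\abs{\lambda}_p^2\,\frac{p^3-p^2}{p^3-1}$. For the sum, the three quadratic extensions of $\Q_p$ correspond to $\lambda\in\{\nu,p,\nu p\}$ with $\nu\in\Z_p^\times$, so $\abs{1}_p=\abs{\nu}_p=1$ and $\abs{p}_p=\abs{\nu p}_p=p^{-1}$, whence $\sum_{\lambda\in\{1,\nu,p,\nu p\}}\abs{\lambda}_p^2=2+2p^{-2}=\frac{2(p^2+1)}{p^2}$; multiplying by $\frac{p^3-p^2}{p^3-1}=\frac{p^2(p-1)}{p^3-1}$ yields $\sum_\lambda\eta_\lambda=\frac{2(p-1)(p^2+1)}{p^3-1}$.

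The one delicate point is the covering-degree bookkeeping in the middle step: $\Psi_\lambda$ is $2$-to-$1$ onto its image, so one must keep the same convention for $\eta_\lambda$ as was used in Lemma~\ref{lem:124} (i.e.\ so that the identity $2\eta_1+4\eta_2+8\eta_4=\sum_\lambda\eta_\lambda$ there stays consistent with the value computed here). Everything else is a routine Igusa-zeta-function computation.
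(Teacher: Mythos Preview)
Your proof is correct and follows essentially the same route as the paper: compute the Jacobian via Lemma~\ref{lem:explchar}, integrate $\abs{4\lambda^2v^2}_p$ over $\Z_p^2$ using Lemma~\ref{p-adic integral lemma more general}, and sum over the four values of $\lambda$. Your extra care about the $2$-to-$1$ covering convention is well placed---the paper calls $\eta_\lambda$ ``the measure of the image'' but in fact computes the full Jacobian integral $\int_{\Z_p^2}\abs{J(\Psi_\lambda)}_p$, which is what makes the identity $2\eta_1+4\eta_2+8\eta_4=\sum_\lambda\eta_\lambda$ of Lemma~\ref{lem:124} come out right; you are simply making this explicit.
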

\begin{proof}
 The Jacobian of $\Psi_\lambda$ is $4v^2\lambda^2$ by Lemma \ref{lem:explchar} and so
 \[
 \eta_\lambda=\int_{\Z_p^2}\abs{4v^2\lambda^2}_p \,du \,dv=\abs{\lambda}_p^2\int_{\Z_p}\abs{v}_p^2\,dv=\abs{\lambda}_p^2\frac{p^3-p^2}{p^3-1}.
 \]
 The last equation follows from Lemma \ref{p-adic integral lemma more general}. So,
 \[
 \sum_{\lambda \in 
 \{1,\nu ,p,\nu p\}
 }\eta_\lambda=\frac{p^3-p^2}{p^3-1}\left(2+\frac{2}{p^2}\right)=\frac{2(p-1)(p^2+1)}{p^3-1}.
 \]
\end{proof}
We are now ready to prove Theorem \ref{thm:mainiso}. By Lemma \ref{multiple isogeny lemma}, $\mu_{\Z_p^2}(W_{\iso})=\eta_1+\eta_2+\eta_4$. We computed $\eta_2$, $\eta_4$, and $2\eta_1+4\eta_2+8\eta_4$, whence we just need to combine these results.
\begin{proof}[Proof of Theorem \ref{thm:mainiso}]
 Assume $p\equiv 1\pmod 3$. Then, by Lemma \ref{lem:124},
 \[
 \eta_1+\eta_2+\eta_4=(\eta_1+2\eta_2+4\eta_4)-\eta_2-3\eta_4=\frac{\sum_{\lambda \in 
 \{1,\nu ,p,\nu p\}
 }\eta_\lambda}{2}-\eta_2-3\eta_4=\frac{\sum_{\lambda \in 
 \{1,\nu ,p,\nu p\}
 }\eta_\lambda}{2}-3\eta_4.
 \]
 By Lemmas \ref{lem:u24} and \ref{lem:mulambda}, this is equal to
 \begin{align*}
 &\frac{(p-1)(p^2+1)}{p^3-1}-\frac{p^{3}(p-1)(p^4-3p^3+3p-1)(p^5+1)}{4(p^{10}-1)(p^3-1)}\\&=\frac{3p^4 + 3p^3 + 4p^2 + 4}{4(p^4 + p^3 + p^2 + p + 1)}.
 \end{align*}

 Assume now $p\equiv 2\pmod 3$. Then, by Lemmas \ref{lem:u24} and \ref{lem:mulambda}, we have
 \begin{align*}
 \eta_1+\eta_2+\eta_4&=\frac{\sum_{\lambda \in 
 \{1,\nu ,p,\nu p\}
 }\eta_\lambda}{2}-\eta_2\\&=\frac{(p-1)(p^2+1)}{p^3-1}-\frac{p^3(p - 1)^2 (p^5+1) (p^3 +1)}{2(p^{10}-1)(p^3-1)}\\&=\frac{p^4 + p^3 + 2p^2 + 2}{2(p^4 + p^3 + p^2 + p + 1)}.
 \end{align*}
\end{proof}

\section{\texorpdfstring{$p$}{}-torsion over \texorpdfstring{$\Q_p$}{}} \label{p-torsion section}

In Section \ref{sec:3tors}, we assumed $p>3$, so that the characteristic is different from the order of the torsion points we are searching for. Now, we want to count how many elliptic curves over $\Z_p$ have a $\Q_p$-rational non-trivial $p$-torsion point. We are going to make this computation explicit for $p=3$.
	Let $p\geq 3$. Let $E/\Q_p$ be defined by a Weierstrass equation with coefficients in $\Z_p$. From \cite[Proposition VII.2.2]{silverman2009arithmetic} and \cite[Theorem .6.4]{silverman2009arithmetic}, the map 
	\[
	\phi(P)=\log -\frac{x(P)}{y(P)}
	\]
	from $E_1(\Q_p)$ to $p\Z_p$ is an isomorphism (the group law on $p\Z_p$ is the addition). Notice that $v_p(x(P)/y(P))>0$ if $P\in E_1(\Q_p)$ and $v_p(x(P)/y(P))=-v_p(x(P))/2$. So, $\phi(P)\in p^2\Z_p$ if and only if $v_p(x(P)/y(P))\geq 2$ and then $-v_p(x(P))\geq 4$ (since $\phi(P)\equiv-\frac{x(P)}{y(P)}\pmod{p}$) .
\begin{lemma}\label{prop:condpsi2}
 Let $E/\Q_p$ be an elliptic curve defined by a Weierstrass equation in minimal form.
There is a $\Q_p$-rational non-trivial $p$-torsion point $Q$ on $E$ such that $Q$ is non-singular in the reduction modulo $p$ if and only if the following hold: there exists $(x_0,y_0)\in (\Z/p^2\Z)^2$ such that $\psi_p(x_0)\equiv 0\pmod {p^2}$, the pair $(x_0,y_0)$ satisfies the Weierstrass equation modulo $p^2$ and $(x_0,y_0)$ reduces to a non-singular point modulo $p$.
\end{lemma}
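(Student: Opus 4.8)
The plan is to prove the two implications separately; the forward direction is routine, and the work lies in producing an honest $p$-torsion point from the mod-$p^2$ data. For the forward direction, let $Q\in E(\Q_p)$ be a non-trivial $p$-torsion point with non-singular reduction. Since $p\geq 3$, the group $E_1(\Q_p)$ is torsion-free (it is isomorphic to $\Z_p$, equivalently by the formal logarithm $\phi$ to $p\Z_p$), so $Q\notin E_1(\Q_p)$; hence $Q$ does not reduce to the point at infinity, so $v_p(x(Q))\geq 0$, and reading the Weierstrass equation as a monic quadratic in $y$ gives $x(Q),y(Q)\in\Z_p$. From $pQ=0_E$ and $Q\neq 0_E$ we get $\psi_p(x(Q))=0$ (for odd $p$, $\psi_p$ is a polynomial in $x$ with coefficients in $\Z_p$). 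Then $(x_0,y_0):=(x(Q),y(Q))\bmod p^2$ satisfies the three conditions, the last because the reduction of $Q$ is non-singular.

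For the converse, suppose $(x_0,y_0)\in(\Z/p^2\Z)^2$ is as described. The conditions that $\psi_p(x_0)\equiv 0\pmod{p^2}$ and that $(x_0,y_0)$ satisfies the Weierstrass equation mod $p^2$ say that $(x_0,y_0)$ is a $p$-torsion point of $E$ over $\Z/p^2\Z$ not on the zero section --- here one uses that, away from the zero section, the subscheme $E[p]$ is cut out inside $E$ by the vanishing of $\psi_p$. Since $(x_0,y_0)$ reduces to a non-singular point of $\tilde E$, Hensel's lemma lifts it to a point $P\in E(\Z_p)$ with $(x(P),y(P))\equiv(x_0,y_0)\pmod{p^2}$. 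Then $[p]P$ maps to $0_E$ in $E(\Z/p^2\Z)$, i.e.\ $[p]P$ lies in the kernel of $E(\Z_p)\to E(\Z/p^2\Z)$; by the theory of formal groups (and $p\geq 3$) this kernel equals $E_2(\Q_p):=\{R\in E_1(\Q_p):\phi(R)\in p^2\Z_p\}=p\cdot E_1(\Q_p)$. Hence there is a unique $R\in E_1(\Q_p)$ with $pR=pP$, and $Q:=P-R$ satisfies $pQ=0_E$; since $R\in E_1(\Q_p)$ the point $Q$ has the same reduction as $P$, which is non-singular and non-trivial, so $Q$ is the desired point.

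The main obstacle is the bracketed identification: that the listed congruences genuinely exhibit $(x_0,y_0)$ as a point of $E[p](\Z/p^2\Z)$, equivalently that $\psi_p=0$ cuts out $E[p]$ outside the zero section. This is immediate on the generic fibre and extends over $\Z/p^2\Z$ whenever the mod-$p$ reduction of $\psi_p$ is non-degenerate (for instance for good ordinary reduction), but it requires separate care exactly when that reduction degenerates. For $p=3$ these degenerate cases are the supersingular reductions, where $\overline{\psi_3}$ is a non-zero constant and so the hypothesis is in fact never met, and certain additive reductions, where $\overline{\psi_3}\equiv 0$ but the non-singularity condition rules out the extraneous solutions; one checks each such type directly. (Alternatively, one can replace the formal-group step by a valuation computation: $v_p(\psi_p(x(P)))\geq 2$ together with $x([p]P)=\theta_p(x(P))/\psi_p(x(P))^2$, where $\theta_p=x\psi_p^2-\psi_{p-1}\psi_{p+1}$, forces $v_p(x([p]P))\leq -4$ as soon as $\theta_p(x(P))$ is a unit, which holds because $\overline{P}$ has exact order $p$ and $\gcd(p,p\pm1)=1$.)
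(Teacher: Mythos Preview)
Your argument follows the same overall strategy as the paper: lift $(x_0,y_0)$ to a point on $E(\Q_p)$ via Hensel, show that multiplying by $p$ lands in $E_2(\Q_p)=p\cdot E_1(\Q_p)$, and then correct by an element of $E_1(\Q_p)$ to produce a genuine $p$-torsion point with the same (non-singular) reduction. The paper in fact takes exactly your parenthetical ``alternative'' route: after lifting to a point $R$, it computes
\[
-v_p(x(pR))=2v_p\bigl(\psi_p(x(R))\bigr)-v_p\bigl(\theta_p(x(R))\bigr)\ge 4,
\]
and then uses the formal logarithm to conclude $\phi(pR)\in p^2\Z_p$, exactly as you do.

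The one substantive difference is how the vanishing $v_p(\theta_p(x(R)))=0$ is justified. You argue via ``$\overline{P}$ has exact order $p$ and $\gcd(p,p\pm1)=1$'', which is fine in good ordinary reduction but becomes delicate otherwise: in good supersingular reduction the hypothesis $\overline{\psi_p}=0$ is vacuous, and in bad reduction the Weierstrass model is not a group scheme over $\Z_p$, so both your scheme-theoretic ``main approach'' via $E[p](\Z/p^2\Z)$ and the order-of-$\overline{P}$ reasoning need separate justification. The paper sidesteps all of this by citing \cite[Theorem~1.1]{voutier2021greatest}, which directly gives $v_p(\theta_p(x(R)))=0$ for any point $R$ with non-singular reduction on a minimal model, uniformly across reduction types. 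With that reference in hand your alternative argument is complete and matches the paper's proof; your main approach, as you yourself flag, would still require the case-by-case check you sketch.
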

\begin{proof}
		If there is a $\Q_p$-rational non-trivial $p$-torsion point $Q$, then we take the reduction modulo $p^2$ of the coordinates of $Q$ and easily conclude.
		
		Now, we prove the converse. Let $R\in E(\Q_p)$ be a lift of $(x_0,y_0)$, that exists since $(x_0,y_0)$ is non-singular modulo $p$, and notice $R\notin E_1(\Q_p)$ since $R$ does not reduce to the identity modulo $p$. Hence, $\psi_p(x(R))\equiv 0\pmod {p^2}$. We have
		\[
		-v_p(x(pR))=2v_p(\psi_p(x(R)))-v_p(\theta_p(x(R)))=2v_p(\psi_p(x(R)))\geq 4
		\]
		since $R$ reduces to a non-singular point (see \cite[Theorem 1.1]{voutier2021greatest}) and then $\phi(pR)\in p^2\Z_p$ (recall $\theta_p$ is defined in Section \ref{preliminaries}). 
		Let $P\in E_1(\Q_p)$ be such that $\phi(P)=\phi(pR)/p\in p\Z_p$, that exists since $\phi$ is an isomorphism. 
		So,
		\[
		\phi(p(P-R))=p\frac{\phi(pR)}{p}-\phi(pR)=0
		\]
		and then $p(P-R)=0_E$. Put $Q=P-R$ and notice $Q\neq 0_E$ since $P\in E_1(\Q_p)$ and $R\notin E_1(\Q_p)$. This is a non-trivial $\Q_p$-rational $p$-torsion point and its reduction is non-singular since $P$ and $R$ both reduce to a non-singular point.
\end{proof}

Assume that $E$ has split multiplicative reduction. Let $q\in \Q_p$ be such that the $j$-invariant of the Tate curve $E_q$ is equal to the $j$-invariant of $E$, that exists by \cite[Theorem C.14.1]{silverman2009arithmetic}. In particular, by \cite[Theorem C.14.1.b]{silverman2009arithmetic}, $E$ and $E_q$ are isomorphic over $\Q_p$. We have $v_p(q)=v_p(\Delta)=N>0$ and $E(\Q_p)$ is isomorphic to $\Q_p^*/q^{\Z}$ (see \cite[Page 444 and Theorem C.14.1]{silverman2009arithmetic}). Moreover, $q_1\equiv \Delta_1 \pmod {p^2}$, for $q=p^Nq_1$ and $\Delta=p^N\Delta_1$. We are going to use the well-known fact that the equation $x^p-x_1=0$ with $x_1\in \Z_p^*$ has a solution in $\Q_p$ if and only if it has a solution modulo $p^2$. 

 \begin{lemma}\label{lem:mult31}
		Let $E/\Q_p$ be an elliptic curve with split multiplicative reduction. There is a $\Q_p$-rational non-trivial $p$-torsion point $Q$ on $E$ such that $Q$ is singular modulo $p$ if and only if $p$ divides $N$ and $x^p\equiv \Delta_1\pmod {p^2}$ has a solution.
	\end{lemma}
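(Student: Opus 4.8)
The plan is to carry out the whole argument inside the Tate parametrisation. Since $E$ has split multiplicative reduction, we have $E(\Q_p) \cong \Q_p^{*}/q^{\Z}$ with $v_p(q) = v_p(\Delta) = N$, and the subgroup $E_0(\Q_p)$ of points with non-singular reduction is precisely the image of $\Z_p^{*}$. So if $Q$ corresponds to the class of $w \in \Q_p^{*}$, then: $Q$ has singular reduction $\iff N \nmid v_p(w)$; and $Q$ is a non-trivial $p$-torsion point $\iff w \notin q^{\Z}$ and $w^{p} = q^{m}$ for some $m \in \Z$. Writing $q = p^{N}q_1$ and $w = p^{v}w_1$ with $q_1, w_1 \in \Z_p^{*}$ (so $v = v_p(w)$), the relation $w^{p}=q^{m}$ splits into $pv = mN$ together with $w_1^{p} = q_1^{m}$. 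I would also note at the outset that the case $v = 0$ is automatically excluded: then $N \mid v$, so $Q \in E_0(\Q_p)$, contradicting singular reduction; in particular $m \neq 0$.

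For the forward implication, suppose such a $Q$ exists. From $pv = mN$ and $N \nmid v$ I would first deduce $p \mid N$: otherwise $\gcd(p,N)=1$ forces $N \mid v$. Writing $N = pN'$, the relation becomes $v = mN'$, and $N \nmid v$ forces $p \nmid m$. Now $w_1^{p} = q_1^{m}$ exhibits $q_1^{m}$ as a $p$-th power in $\Z_p^{*}$; picking $a,b \in \Z$ with $am+bp=1$ gives $q_1 = q_1^{am+bp} = (w_1^{a}q_1^{b})^{p}$, so $q_1$ is a $p$-th power in $\Z_p^{*}$, hence so is its reduction, and since $q_1 \equiv \Delta_1 \pmod{p^{2}}$ the congruence $x^{p} \equiv \Delta_1 \pmod{p^{2}}$ is solvable.

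For the converse, assume $p \mid N$ and that $x^{p} \equiv \Delta_1 \pmod{p^{2}}$ has a solution; then so does $x^{p} \equiv q_1 \pmod{p^{2}}$, and by the quoted Hensel-type fact there is $w_1 \in \Z_p^{*}$ with $w_1^{p} = q_1$. Put $N' = N/p$ and $w = p^{N'}w_1$; then $w^{p} = p^{N}q_1 = q$, while $v_p(w) = N'$ is not divisible by $N$ (as $N \geq p \geq 3$), so $w \notin q^{\Z}$. The point $Q$ corresponding to the class of $w$ is therefore a non-trivial $p$-torsion point, and $N \nmid v_p(w)$ means it has singular reduction, as required.

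I do not expect a genuine obstacle here; the only steps needing a little care are the valuation bookkeeping that turns $w^{p}=q^{m}$ into the two exact relations $pv=mN$ and $w_1^{p}=q_1^{m}$ (which is why it is convenient to normalise $w$ so that $w_1$ is a unit), and the passage from ``$q_1^{m}$ is a $p$-th power with $p \nmid m$'' to ``$q_1$ is a $p$-th power'', both of which are elementary. The identification of $E_0(\Q_p)$ with the image of $\Z_p^{*}$ and the congruence $q_1 \equiv \Delta_1 \pmod{p^{2}}$ are taken from the Tate curve theory recalled just before the statement.
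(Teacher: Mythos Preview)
Your proof is correct and follows essentially the same route as the paper: both work inside the Tate parametrisation $E(\Q_p)\cong\Q_p^*/q^{\Z}$, identify $E_0(\Q_p)$ with the image of $\Z_p^*$, translate ``singular $p$-torsion'' into $w^p=q^m$ with $N\nmid v_p(w)$, and use a B\'ezout identity to pass from ``$q_1^m$ is a $p$-th power with $p\nmid m$'' to ``$q_1$ is a $p$-th power''. Your write-up is in fact a bit more explicit about the valuation bookkeeping than the paper's.
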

	\begin{proof}
		Assume that there is a singular $\Q_p$-rational $p$-torsion point $Q$. Then $Q\notin E_0(\Q_p)$ and $pQ\in E_0(\Q_p)$. Let $u\in \Q_p^*/q^{\Z}$ be representing $Q$ under the isomorphism above. So, $u^p=q^a$ for some $a\in \Z$. Since $Q\notin E_0$, we have $p\nmid a$. Hence, $p\mid v_p(q^a)=Na$ and so $p\mid N$. Let $u=p^{aN/p}u_1$ with $u_1\in\Z_p^*$ and then $u_1^p=u^p/p^{aN}=q_1^a$. Let $b,r\in\Z$ be such that $ab=1+pr$. So, $q_1=u_1^{pb}q_1^{-pr}$ and then $q_1$ is a $p$-th power and we conclude that $\Delta_1$ is a $p$-th power modulo $p^2$.
		
		Assume now that $p\mid N$. Let $u_1$ be a solution of the equation $u_1^p=q_1$, that exists since $x^p\equiv \Delta_1\equiv q_1 \pmod {p^2}$ has a solution. Let $u=p^{N/p}u_1$ and we have $u^p=q$. The point corresponding to $u$ in the isomorphism is the $p$-torsion point we were searching for. Indeed $u\notin E_0(\Q_p)$ by \cite[Theorem C.14.1.(c)]{silverman2009arithmetic} noticing that $p\mid u$ and $u^p$ is the identity in $\Q_p^*/q^{\Z}$.
	\end{proof}
	\begin{lemma}\label{lem:mult3}
		Let $E/\Q_p$ be an elliptic curve with split multiplicative reduction. The density of elliptic curves such that $v_p(\Delta)=N$ and $x^p\equiv \Delta_1\pmod {p^2}$ has a solution is $(p-1)^2/2p^{N+3}$.
	\end{lemma}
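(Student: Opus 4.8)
The plan is to upgrade the known count of split multiplicative curves of fixed discriminant valuation to one that also records the residue $\Delta_1 \bmod p^2$, by proving that this residue is equidistributed. By Proposition~\ref{prop:assume-short-WE} I may work with the model $y^2 = x^3 + a_2 x^2 + a_4 x + a_6$ (or, when $p>3$, the short model $y^2 = x^3 + a_4 x + a_6$). Let $S \subseteq \Z_p^3$ be the set of coefficient tuples whose Weierstrass equation is minimal with split multiplicative reduction and satisfies $v_p(\Delta) = N$. For such equations $p \nmid c_4$, so whether a tuple lies in $S$ depends only on its reduction modulo $p$ (which must define a split nodal cubic, forcing $p\nmid c_4$ and $p\mid\Delta$) together with the extra condition $v_p(\Delta)=N$; thus $S$ is the disjoint union, over residues $\bar a=(\bar a_2,\bar a_4,\bar a_6)$ defining a split nodal cubic, of the sets $\{a\equiv\bar a : v_p(\Delta(a))=N\}$. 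As recorded in the proof of Proposition~\ref{thm:smr}, \cite[Theorem~5.3]{cremona2020localpub} gives $\mu_{\Z_p^3}(S) = (p-1)^2/(2p^{N+2})$. Since $(\Z/p^2\Z)^{*}$ is cyclic of order $p(p-1)$, its subgroup of $p$-th powers has index $p$, so the lemma will follow once I show that the map $S \to (\Z/p^2\Z)^{*}$ sending a tuple to $\Delta_1 \bmod p^2$ pushes the normalized measure on $S$ forward to the uniform measure; then the density sought is $\tfrac1p\cdot\mu_{\Z_p^3}(S)$.

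The key point is that $\partial \Delta / \partial a_6$ is a $p$-adic unit at every tuple with multiplicative reduction. A direct computation gives $\partial \Delta/\partial a_6 = -64 a_2^3 - 864 a_6 + 288 a_2 a_4 \equiv -a_2^3 \pmod 3$ in the model $y^2 = x^3 + a_2 x^2 + a_4 x + a_6$, and $\partial \Delta / \partial a_6 = -864 a_6$ in the short model. Multiplicative reduction forces $p\nmid c_4$, and $c_4 = 16a_2^2 - 48 a_4 \equiv a_2^2 \pmod 3$ (resp. $c_4=-48a_4$), so $p\nmid a_2$ (resp. $p\nmid a_4$; in the short model $p\mid\Delta$ together with $p\nmid a_4$ then also forces $p\nmid a_6$). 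Hence $\partial \Delta / \partial a_6 \in \Z_p^{*}$ throughout. Since $\Delta$ is a polynomial in $a_6$, this shows that for fixed $a_2,a_4$ (in residues giving a split node) the map $a_6 \mapsto \Delta$ restricts, on each residue disk $\bar a_6 + p\Z_p$, to a measure-preserving bijection onto a disk of the same radius contained in $p\Z_p$, hence onto all of $p\Z_p$ (using $p\mid\Delta$ identically on the disk); this is the change-of-variables formula, Theorem~\ref{change of vars}, with unit Jacobian.

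To conclude I would partition $S$ by the reduction $\bar a$, noting that the split/non-split dichotomy and $p\mid\Delta$ depend only on $\bar a$. On each split-node class, fixing $a_2,a_4$ and letting $a_6$ range over its residue disk, the previous paragraph makes $\Delta$ equidistributed over $p\Z_p$; imposing $v_p(\Delta)=N$ makes $\Delta$ equidistributed over $\{x:v_p(x)=N\}$ and hence $\Delta_1 = p^{-N}\Delta$ equidistributed over $\Z_p^{*}$, so $\Delta_1 \bmod p^2$ is uniform on $(\Z/p^2\Z)^{*}$. Averaging over $a_2,a_4$ within the class (Fubini) and then summing over the finitely many split-node classes, each contributing with its own measure but the same conditional distribution, preserves uniformity, giving the pushforward claim. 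Therefore the density is $\tfrac1p\cdot (p-1)^2/(2p^{N+2}) = (p-1)^2/(2p^{N+3})$.

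I expect the only genuine subtlety to be the bookkeeping in the last step: making precise that, after fixing the mod-$p$ residue, the condition $v_p(\Delta)=N$ is exactly what is cut out by varying $a_6$ over a single disk while the higher digits of $a_2,a_4$ are free, and that the Fubini integration over $(a_2,a_4)$ and the finite sum over residue classes both preserve the equidistribution. The unit-Jacobian computation and the index-$p$ fact about $(\Z/p^2\Z)^{*}$ are routine.
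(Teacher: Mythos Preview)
Your argument is correct and reaches the same conclusion as the paper by the same overall structure: show that, conditional on split multiplicative reduction with $v_p(\Delta)=N$, the unit part $\Delta_1$ is uniformly distributed in $\Z_p^{*}$, then use that $p$-th powers have index $p$ in $(\Z/p^2\Z)^{*}$ and multiply by the Cremona--Sadek density $(p-1)^2/(2p^{N+2})$.

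The paper executes the equidistribution step differently. It quotes from the proof of \cite[Proposition~2.5]{cremona2020localpub} that, after translating the node to the origin, one has $p\nmid b_2$, $p^N\Vert a_6$, and the explicit congruence $\Delta\equiv -b_2^{3}a_6\pmod{p^{N+2}}$; writing $a_6=p^{N}a_6'$ with $a_6'\in\Z_p^{*}$ gives $\Delta_1\equiv -b_2^{3}a_6'\pmod{p^{2}}$, and uniformity of $\Delta_1$ follows immediately since $a_6'$ is uniform in $\Z_p^{*}$ and $-b_2^{3}$ is a unit. Your route avoids this normal form and the explicit congruence, replacing it with the observation that $\partial\Delta/\partial a_6\in\Z_p^{*}$ on the split-node locus, so the map $a_6\mapsto\Delta$ is a measure-preserving bijection of each residue disk onto $p\Z_p$. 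Your approach is more conceptual and transfers to other situations where one knows only that a discriminant-like quantity has unit derivative, at the cost of the Fubini bookkeeping you flag; the paper's approach is shorter because the translated normal form makes the linear dependence of $\Delta_1$ on $a_6'$ visible by inspection.
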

	\begin{proof}
		As explained in \cite[proof of Proposition 2.5]{cremona2020localpub}, after a translation $\Delta\equiv -b_2^3a_6\pmod {p^{N+2}}$ with $p\nmid b_2$ and $p^N\mid\mid a_6$. Let $a_6=a_6'p^N$ for $a_6'\in\Z_p^*$ and then $\Delta_1\equiv -b_2^3a_6'\pmod {p^{2}}$. Once $b_2$ is fixed, the densities of $a_6'\in\Z_p^*$ such that $x^p\equiv \Delta_1\pmod {p^2}$ has a solution is $1/p$. We conclude by \cite[Proposition 2.5]{cremona2020localpub}.
	\end{proof}
	\begin{lemma}
		Let $E/\Q_p$ be an elliptic curve with non-split multiplicative reduction. There is no $\Q_p$-rational non-trivial $p$-torsion point $Q$ on $E$ such that $Q$ is singular modulo $p$.
	\end{lemma}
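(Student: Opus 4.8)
The plan is to reduce the statement to a one-line observation about the component group of $E$. First I would recall the basic reduction-theoretic dictionary: for $E$ given by a minimal Weierstrass equation, a point $Q \in E(\Q_p)$ reduces to a singular point modulo $p$ precisely when $Q \notin E_0(\Q_p)$, where $E_0(\Q_p)$ denotes the subgroup of points with non-singular reduction. So the content of the claim is that every $\Q_p$-rational $p$-torsion point of $E$ lies in $E_0(\Q_p)$.

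Next I would invoke the structure of the special fibre in the non-split multiplicative case. If $E$ has non-split multiplicative reduction, of Kodaira type $I_n$, then the component group $E(\Q_p)/E_0(\Q_p)$ has order $1$ or $2$ (see \cite[Corollary C.15.2.1]{silverman2009arithmetic}). Now suppose, for contradiction, that there is a non-trivial $p$-torsion point $Q$ on $E$ that is singular modulo $p$, and let $\bar Q$ be its image in $E(\Q_p)/E_0(\Q_p)$. By assumption $\bar Q \neq 0$, so $\bar Q$ has order exactly $2$; but $pQ = 0_E$ forces $p\bar Q = 0$ in a group of exponent dividing $2$, and since $p$ is odd this gives $\bar Q = 0$, a contradiction. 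Hence no such $Q$ exists.

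I do not expect any real obstacle here: identifying the component group for type $I_n$ with non-split reduction is standard, and the rest is elementary group theory. I would also append a remark recording the stronger conclusion that $E(\Q_p)$ has no non-trivial $p$-torsion point \emph{at all} in this case: the torsion subgroup of $E_0(\Q_p)$ injects into $\Tilde{E}_{ns}(\F_p) \cong E_0(\Q_p)/E_1(\Q_p)$, which has order $p+1$ and hence is coprime to $p$ (recall $E_1(\Q_p) \cong \Z_p$ is torsion-free), so $E_0(\Q_p)[p] = 0$; combined with the component-group argument this forces any $Q \in E(\Q_p)[p]$ to lie in $E_0(\Q_p)[p] = 0$.
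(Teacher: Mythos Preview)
Your argument is correct and is essentially the same as the paper's: both use that in the non-split multiplicative case the component group $E(\Q_p)/E_0(\Q_p)$ has order at most $2$ (citing \cite[Corollary C.15.2.1]{silverman2009arithmetic}), whence no odd-order torsion point can be singular modulo $p$. Your added remark that in fact $E(\Q_p)[p]=0$ is a nice bonus, though not needed for the lemma as stated.
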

	\begin{proof}
		As explained in the proof of Corollary C.15.2.1 \cite{silverman2009arithmetic}, $E(\Q_p)/E_0(\Q_p)$ has order at most $2$ and then we cannot have a singular $\Q_p$-rational $p$-torsion point.
	\end{proof}

\subsection{\texorpdfstring{$3$}{}-torsion over \texorpdfstring{$\Q_3$}{}}
	Now, we are going to assume $p=3$ and prove Theorem \ref{thm:main} in this case.
 \begin{lemma}\label{lem:gr3}
 Let $W_{\gr,3}$ be the set of triples $[a_2,a_4,a_6]\in \Z_3$ such that $y^2=x^3+a_2x^2+a_4x+a_6$ is in minimal form and non-singular, and it has a non-zero non-singular $\Q_3$-rational $3$-torsion point.
 Then $\mu_{\Z_3^2}(W_{\gr,3})=2/27$.
 \end{lemma}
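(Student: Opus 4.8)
The idea is to reduce this to a finite computation over $\F_3$ together with the lifting result of Proposition~\ref{prob lift of p torsion}. By Proposition~\ref{prop:assume-short-WE} with $p=3$ we may work on $\Z_3^3$ with coordinates $[a_2,a_4,a_6]$. Since a curve with good reduction has all of its points reducing to non-singular points, $W_{\gr,3}$ is exactly the set of $[a_2,a_4,a_6]$ for which the reduction $\tilde E:y^2=x^3+\bar a_2x^2+\bar a_4x+\bar a_6$ is a smooth cubic over $\F_3$ (equivalently, the given model is minimal with good reduction) and $E$ has a non-trivial $\Q_3$-rational $3$-torsion point. Because $E_1(\Q_3)\cong\Z_3$ is torsion-free by the theory of formal groups, a non-zero point of $E(\Q_3)[3]$ reduces to a point of exact order $3$ on $\tilde E(\F_3)$; hence a \emph{necessary} condition is that $\tilde E$ be an \emph{ordinary} elliptic curve with $3\mid\#\tilde E(\F_3)$, i.e.\ $\#\tilde E(\F_3)\in\{3,6\}$.

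The next step is a short count over $\F_3$. In characteristic $3$ one has $c_4\equiv 16a_2^2\equiv a_2^2\pmod 3$, so $c_4\equiv 0\pmod 3$ exactly when $\bar a_2=0$; thus when $\bar a_2=0$ the smooth reductions have $j=0$ and are supersingular (no non-zero $3$-torsion at all, so no contribution), while when $\bar a_2\neq 0$ the smooth reductions have $j\neq 0$ and are ordinary. A direct enumeration shows that the equations $y^2=x^3+a_2x^2+a_4x+a_6$ over $\F_3$ with $\bar a_2\neq 0$ and $\bar\Delta\neq 0$ fall into $12$ values of $(\bar a_2,\bar a_4,\bar a_6)$, which group into $4$ isomorphism classes of ordinary curves (translation $x\mapsto x+r$ does not change $\bar a_2$ in characteristic $3$ and gives the $3$ models of each curve). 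The quadratic-twist involution $(\bar a_2,\bar a_4,\bar a_6)\mapsto(-\bar a_2,\bar a_4,-\bar a_6)$ negates the trace of Frobenius and has no fixed points among these, pairing the $4$ curves into $2$ twist-pairs; since one checks that none of them has $\#\tilde E(\F_3)\in\{1,7\}$, exactly one member of each pair satisfies $\#\tilde E(\F_3)\in\{3,6\}$. So there are exactly $2$ such curves, i.e.\ exactly $6$ triples $(\bar a_2,\bar a_4,\bar a_6)\in\F_3^3$ whose reduction is an ordinary elliptic curve admitting a non-trivial $\F_3$-rational $3$-torsion point.

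Finally, for each of these $6$ residue classes, Proposition~\ref{prob lift of p torsion} (whose mod-$9$ incarnation is Lemma~\ref{prop:condpsi2}) shows that the non-trivial $3$-torsion $\F_3$-point of $\tilde E$ lifts to a non-trivial $\Q_3$-rational $3$-torsion point of $E$ for a set of lifts of density exactly $1/3$. Combining, $\mu_{\Z_3^3}(W_{\gr,3})=6\cdot\frac1{27}\cdot\frac13=\frac{2}{27}$, as claimed. The main obstacle, and the place where this differs from the $\ell\neq p$ analysis of Section~\ref{sec:3tors}, is that multiplication by $3$ is inseparable in characteristic $3$, so a $3$-torsion point of $\tilde E(\F_3)$ need not lift by Hensel's lemma; pinning the local lifting density down to $1/3$ rather than $0$ or $1$ is precisely the content of Proposition~\ref{prob lift of p torsion} (via the Serre–Tate canonical lift being the unique lift modulo $9$ with the lifting property). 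A secondary, purely finite, point that needs care is the enumeration giving the number $6$, which can also be confirmed by a brief machine computation.
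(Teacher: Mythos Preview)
Your overall strategy is exactly the paper's: (i) count the triples $(\bar a_2,\bar a_4,\bar a_6)\in\F_3^3$ for which the reduction is a smooth elliptic curve carrying a non-trivial $\F_3$-rational $3$-torsion point (you get $6$, as does the paper), and then (ii) multiply by the probability $1/3$ that such a point lifts to a $\Q_3$-rational $3$-torsion point. Your twist argument for step (i) is in fact more conceptual than the paper's bare assertion ``exactly $6$ of the $27$ equations work'': once you note that $c_4\equiv a_2^2\pmod 3$ forces ordinarity when $\bar a_2\neq 0$, the pairing under $(-a_2,a_4,-a_6)$ together with $\#E+\#E^{(-1)}=8$ cleanly gives exactly one curve with $3\mid\#E$ in each twist pair.

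The gap is in step (ii). You justify the lifting density $1/3$ by invoking Proposition~\ref{prob lift of p torsion}, but that proposition is stated and proved only for $p\geq 5$, and its proof is tied to the short Weierstrass form $y^2=x^3+Ax+B$: the count ``$p$ good pairs $(A,B)$ out of $p^2$'' uses that the only equation-preserving isomorphisms are scalings $(A,B)\mapsto(u^4A,u^6B)$. For $p=3$ one must work with three parameters $(a_2,a_4,a_6)$, and the automorphisms of this form include the translations $x\mapsto x+r$; the Serre--Tate/$j$-invariant argument does not transparently yield ``$1$ good lift out of $3$'' in this setting without further work. Indeed, the paper itself remarks immediately after Proposition~\ref{prob lift of p torsion} that its $p=3$ analogue is \emph{deduced from} the proof of Lemma~\ref{lem:gr3}, not the other way around, so your citation is circular in the paper's logical order. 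Your parenthetical reference to Lemma~\ref{prop:condpsi2} does not close this gap either: that lemma only says that the lifting condition is determined modulo $9$, not that it cuts out a set of density $1/3$.

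What the paper does instead is a short direct computation with the $3$-division polynomial. After translating so that the $\F_3$-torsion point has $x\equiv 0\pmod 3$, one has $\psi_3(0)\equiv 4a_2a_6-a_4^2\pmod 9$; since $3\nmid a_6$ (else the model would be singular), for each fixed $(\bar a_4,\bar a_6)$ the congruence $4a_2a_6\equiv a_4^2\pmod 9$ singles out exactly one of the three lifts of $\bar a_2$, and Lemma~\ref{prop:condpsi2} then gives the $\Q_3$-rational $3$-torsion point. This is the missing ingredient; once you insert it in place of the appeal to Proposition~\ref{prob lift of p torsion}, your argument is complete and essentially coincides with the paper's.
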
 
 \begin{proof}
 Let $[\overline{a_2},\overline{a_4},\overline{a_6}]$ be the reduction modulo $9$ of $[a_2,a_4,a_6]$. By Lemma \ref{prop:condpsi2}, $y^2=x^3+a_2x^2+a_4x+a_6$ is in $W_{\gr,3}$ if and only there exists $(x_0,y_0)\in (\Z/9\Z)^2$ such that $\psi_3(x_0)\equiv 0\pmod 9$, the pair $(x_0,y_0)$ satisfies the Weierstrass equation modulo $9$, and the Weierstrass equation is non-singular modulo $3$. 

 Let $\Tilde{E}$ be the reduction modulo $3$. If $\Tilde{E}$ does not have any non-zero $\F_3$-rational $3$-torsion point, then $E$ does not belong to $W_{\gr,3}$. Assume $\Tilde{E}$ has a non-zero $\F_3$-rational $3$-torsion point $\Tilde{P}$. After a change of variables, we can assume that $x(\Tilde{P})\equiv 0\pmod 3$ and so $4a_2a_6\equiv a_4^2\pmod{3}$ since $\psi_3(0)\equiv 0\pmod 3$. Moreover, $3\nmid a_6$ since otherwise the curve would be singular. The equation $\psi_3(x(\Tilde{P}))$ is equal to $4a_2a_6-a_4^2$ modulo $9$ and so, once $a_4$ and $a_6$ are fixed, we can lift $a_2$ with probability $1/3$.
 Of the $27$ Weierstrass equations $y^2=x^3+a_2x^2+a_4x+a_6$ modulo $3$, exactly $6$ are non-singular and have a non-trivial $\F_3$-rational $3$-torsion point. So, $\mu_{\Z_3^2}(W_{\gr,3})=(6/27)/3=2/27$.
 \end{proof}
 \begin{lemma}\label{lem:add3}
 Let $W_{\add,3}$ (resp. $W_{\mult,3}$) be the set of triples $[a_2,a_4,a_6]\in \Z_3$ such that $y^2=x^3+a_2x^2+a_4x+a_6$ is in minimal form and with additive reduction (resp. multiplicative reduction), and it has a non-zero non-singular $\Q_p$-rational $3$-torsion point.
 Then, $\mu_{\Z_3^2}(W_{\add,3})=1/27$ and $\mu_{\Z_3^2}(W_{\mult,3})=0$.
 \end{lemma}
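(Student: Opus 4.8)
The two assertions are proved by quite different means, so I would handle them in turn. The multiplicative one is immediate: I claim $W_{\mult,3}$ is empty. If $Q$ were a non-zero non-singular $\Q_3$-rational $3$-torsion point on a minimal $E$ with multiplicative reduction, then $Q\in E_0(\Q_3)$ but $Q\notin E_1(\Q_3)$, since $E_1(\Q_3)\cong\Z_3$ is torsion-free (cf.\ Lemma~\ref{lem:l-divides-iff-l-tors}); hence the reduction of $Q$ is a non-trivial $3$-torsion element of $\Tilde{E}_{ns}(\F_3)\cong E_0(\Q_3)/E_1(\Q_3)$. But $\#\Tilde{E}_{ns}(\F_3)$ equals $p-1=2$ for split and $p+1=4$ for non-split multiplicative reduction, neither of which is divisible by $3$; contradiction. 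Hence $\mu_{\Z_3^2}(W_{\mult,3})=0$.

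For the additive part I would first identify additive reduction modulo $3$: since $(x-r)^3=x^3-r$ in characteristic $3$, the cubic $x^3+a_2x^2+a_4x+a_6$ is a perfect cube modulo $3$ — equivalently $E$ has additive reduction — exactly when $3\mid a_2$ and $3\mid a_4$, a set of density $1/9$ in $\Z_3^3$. On this locus the reduction $y^2=(x+\overline{a_6})^3$ (here $\overline{a_6}$ denotes $a_6$ modulo $3$) has cusp $(-\overline{a_6},0)$, and $\Tilde{E}_{ns}(\F_3)\cong\mathbb{G}_a(\F_3)\cong\Z/3\Z$, whose two non-trivial points have $x\equiv 1-\overline{a_6}\pmod 3$. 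By Lemma~\ref{prop:condpsi2}, a minimal such $[a_2,a_4,a_6]$ lies in $W_{\add,3}$ if and only if there is $(x_0,y_0)\in(\Z/9\Z)^2$ with $\psi_3(x_0)\equiv 0\pmod 9$, satisfying the Weierstrass equation modulo $9$ and reducing to a non-singular point modulo $3$; non-singularity forces $x_0\equiv 1-\overline{a_6}\pmod 3$.

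Next comes the congruence computation. Writing $a_2=3\alpha$ and $a_4=3\beta$, a direct expansion gives $\psi_3(x)\equiv 3\bigl(x^4+\beta x^3+a_6x+\alpha a_6\bigr)\pmod 9$, so $\psi_3(x_0)\bmod 9$ depends only on $x_0,\alpha,\beta,a_6$ modulo $3$; substituting $x_0\equiv 1-\overline{a_6}$ rewrites $\psi_3(x_0)\equiv 0\pmod 9$ as the single linear congruence $1+\beta+\overline{a_6}(\alpha-\beta-1)\equiv 0\pmod 3$. Moreover the Weierstrass right-hand side at such an $x_0$ is $\equiv 1\pmod 3$, so the candidate point is automatically a smooth point of the integral model and lifts to $\Z/9\Z$ by Hensel's lemma — the existence of $y_0$ imposes nothing further. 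I would then count: the congruence has $9$ solutions $(\alpha,\beta,\overline{a_6})\in\F_3^3$, each lifting to exactly $3$ residue classes of $(a_2,a_4,a_6)$ modulo $9$, so $W_{\add,3}$ is a union of $27$ residue classes modulo $9$, whence $\mu_{\Z_3^2}(W_{\add,3})=27/9^3=1/27$.

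The step I expect to be the main obstacle is reconciling this count with the minimality requirement built into the definition of $W_{\add,3}$, since minimality is not visible modulo $9$. The plan is to observe that a non-minimal equation in the additive family has the form $y^2=(x-r)^3+9A_2(x-r)^2+81A_4(x-r)+729A_6$ for some $r\in\Z_3$ and some minimal $[A_2,A_4,A_6]$, so that $(\alpha,\beta,\overline{a_6})\equiv(-r,r^2,-r)\pmod 3$; evaluating $1+\beta+\overline{a_6}(\alpha-\beta-1)$ at these three triples gives $1,2,1$, never $0$. Hence the mod-$9$ condition of Lemma~\ref{prop:condpsi2} already forces minimality, and the count above is correct as it stands. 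The remaining ingredients — the reduction-type bookkeeping, the expansion of $\psi_3$, and the enumeration of the nine solutions — are routine.
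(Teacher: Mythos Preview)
Your argument follows the paper's (one-line) proof: for multiplicative reduction the smooth locus of the special fibre has order $2$ or $4$ and hence no $3$-torsion; for additive reduction the cuspidal locus $3\mid a_2,\ 3\mid a_4$ has measure $1/9$, the non-identity points of $\Tilde{E}_{ns}(\F_3)\cong\mathbb{G}_a(\F_3)$ are always $3$-torsion, and the lifting criterion of Lemma~\ref{prop:condpsi2} holds with probability $1/3$. Your explicit minimality check is a welcome addition, since the paper glosses over why the non-minimal equations inside the additive locus do not contribute.

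There is, however, a computational slip. In your reduction of $\psi_3$ modulo $9$ the coefficient of $x^3$ should be $\alpha$, not $\beta$: with $a_1=a_3=0$ one has $\psi_3(x)=3x^4+4a_2x^3+6a_4x^2+12a_6x+4a_2a_6-a_4^2$, so $a_2=3\alpha$ gives $12\alpha x^3\equiv 3\alpha x^3\pmod 9$. (You may have been misled by the displayed formula in \S\ref{preliminaries}, where the $x^3$-coefficient is printed as $a_1^2+4a_4$; the standard $b_2$ is $a_1^2+4a_2$.) The correct expression
\[
\psi_3(x)\equiv 3(x^4+\alpha x^3+a_6x+\alpha a_6)=3(x+\alpha)(x^3+a_6)\pmod 9
\]
yields, at $x_0\equiv 1-\overline{a_6}$, the simpler condition $\alpha\equiv \overline{a_6}-1\pmod 3$, with $\beta$ free. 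By a fortunate accident your incorrect congruence $1+\beta+\overline{a_6}(\alpha-\beta-1)\equiv 0$ also has exactly nine solutions in $\F_3^3$ and also rejects the three non-minimal residue triples $(-r,r^2,-r)$, so both the final measure $1/27$ and your minimality verification survive --- but as written you are identifying the wrong set of $27$ residue classes modulo $9$. (A minor point: your parametrisation of non-minimal equations should allow arbitrary integral $[A_2,A_4,A_6]$, not only minimal ones, to cover equations needing more than one minimalisation step; this is harmless since the residues $(-r,r^2,-r)$ depend only on $r$.)
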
 
 \begin{proof}
Similar to the previous one. The Weierstrass equations in minimal form and with additive reduction have measure $1/9$, they always have a non-singular $3$-torsion point in $\F_3$, and the probability of lifting such a point is $1/3$. Weierstrass equations with multiplicative reduction do not have a non-singular $3$-torsion point in $\F_3$.
 \end{proof}
 
	Assume that $E$ is of type $IV$.
	After a change of variables, $E$ is defined by a minimal Weierstrass equation of the form $y^2=x^3+a_2x^2+a_4x+a_6$ with $3\mid a_2$, $3^2\mid a_4$, $3^2\mid a_6$, and $3^3\nmid a_6$ (see \cite[Section 7]{tatealg}). In the next lemma, we are going to assume this.

 In the same way, if $E$ is of type $IV^*$, then $E$ is defined, after a change of variables, by a minimal Weierstrass equation of the form $y^2=x^3+a_2x^2+a_4x+a_6$ with $3^2\mid a_2$, $3^3\mid a_4$, $3^4\mid a_6$, and $3^5 \nmid a_6$ (see again \cite[Section 8]{tatealg}).

\begin{lemma}\label{lem:IV3}\hfill
\begin{enumerate}
\item Assume that $E$ is of type $IV$. There is a $3$-torsion point $Q\in E(\Q_p)$ such that $Q$ reduces to a singular point $(0,0)$ modulo $3$ if and only if $9\mid a_2$ and $\frac{a_6}{3^2}\equiv 1\pmod{3}$.
\item Assume that $E$ is of type $IV^*$. There is a $3$-torsion point $Q\in E(\Q_p)$ such that $Q$ reduces to a singular point $(0,0)$ modulo $3$ if and only if $3^7\mid a_2a_6-a_4^2$ and $\frac{a_6}{3^4}\equiv 1\pmod{3}$.
\item The density of $E$ with additive reduction having a non-trivial $3$-torsion point $(x_0,y_0)\in E(\Q_p)$ reducing to $(0,0)$ modulo $p$ is $1/6$ for type $IV$ and $1/6$ for type $IV^*$.	
\end{enumerate}
\end{lemma}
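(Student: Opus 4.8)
The plan is to treat parts (1) and (2) uniformly by tracking $3$-adic valuations in the Weierstrass equation and in the $3$-division polynomial
\[
\psi_3(x)=3x^4+4a_4x^3+6a_4x^2+12a_6x+4a_2a_6-a_4^2
\]
(the specialisation of the formula of Section \ref{preliminaries} to $a_1=a_3=0$), then closing the converse direction with Hensel's lemma; part (3) is then a short probability computation from the resulting congruences. For (1), we are in type $IV$, so $v_3(a_2)\geq 1$, $v_3(a_4)\geq 2$, $v_3(a_6)=2$, and the reduction is the cuspidal cubic $y^2=x^3$ with singular point $(0,0)$. If $Q=(x_0,y_0)\in E(\Q_3)$ reduces to $(0,0)$ then $v_3(x_0)\geq 1$, hence $x_0\in\Z_3$; substituting into the Weierstrass equation the summand $a_6$ strictly dominates, so $v_3(y_0^2)=2$ and $y_0^2/9\equiv a_6/9\pmod 3$, whence $y_0\in\Q_3$ forces $a_6/3^2\equiv 1\pmod 3$. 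If moreover $Q$ is $3$-torsion then $\psi_3(x_0)=0$, and with $v_3(x_0)\geq 1$ every summand of $\psi_3(x_0)$ except $4a_2a_6$ has valuation $\geq 4$, while $v_3(4a_2a_6)=v_3(a_2)+2$; so $\psi_3(x_0)=0$ forces $v_3(a_2)\geq 2$, i.e.\ $9\mid a_2$. Conversely, assuming $9\mid a_2$ and $a_6/3^2\equiv 1\pmod 3$, write $a_i=9b_i$ with $b_6\in\Z_3^{*}$ and substitute $x=3t$: this gives $\psi_3(3t)=81\bigl(3t^4+12b_4t^3+6b_4t^2+4b_6t+4b_2b_6-b_4^2\bigr)$, whose reduction mod $3$ is the linear polynomial $b_6t+b_2b_6-b_4^2$ with unit leading coefficient, so Hensel's lemma yields a root $t_0\in\Z_3$; then $x_0=3t_0$ satisfies $\psi_3(x_0)=0$ and $x_0^3+a_2x_0^2+a_4x_0+a_6$ has valuation $2$ with unit part $\equiv a_6/9\equiv 1\pmod 3$, hence is a square in $\Q_3$, producing a non-trivial $3$-torsion point reducing to $(0,0)$.

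For (2) the structure is identical with the shifted valuations $v_3(a_2)\geq 2$, $v_3(a_4)\geq 3$, $v_3(a_6)=4$ of type $IV^*$. Now a point reducing to $(0,0)$ must have $v_3(x_0)\geq 2$ (if $v_3(x_0)=1$ the right-hand side of the Weierstrass equation would have odd valuation $3$), the same dominance argument gives $a_6/3^4\equiv 1\pmod 3$, and imposing $\psi_3(x_0)=0$ with $v_3(x_0)\geq 2$ leaves only the constant $4a_2a_6-a_4^2$ (of valuation $\geq 6$), all other summands having valuation $\geq 7$; hence $\psi_3(x_0)=0$ forces $v_3(4a_2a_6-a_4^2)\geq 7$, which, since $v_3(3a_2a_6)\geq 7$, is equivalent to $3^7\mid a_2a_6-a_4^2$. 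For the converse one substitutes $x=9s$ and divides $\psi_3(9s)$ by $3^7$: the resulting polynomial in $s$ reduces mod $3$ to a linear polynomial whose leading coefficient is $4a_6/3^4$ (coming from the $12a_6x$ term), again a unit, so Hensel's lemma applies, and $a_6/3^4\equiv 1\pmod 3$ ensures the corresponding $y_0$ lies in $\Q_3$.

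For (3), work inside the type-$IV$ (resp.\ type-$IV^*$) normal form set up just before the lemma; the residual automorphisms act by $(a_2,a_4,a_6)\mapsto(u^2a_2,u^4a_4,u^6a_6)$ with $u\in\Z_3^{*}$, and since $u^2\equiv u^4\equiv u^6\equiv 1\pmod 3$ for units the conditions of (1)--(2) are invariant, so the conditional densities are well-defined. In the type-$IV$ case the conditions are ``$9\mid a_2$'' and ``$a_6/9\equiv 1\pmod 3$'', independent events of probabilities $1/3$ and $1/2$, giving $1/6$. In the type-$IV^*$ case write $a_2=9\alpha$, $a_4=27\beta$, $a_6=81\gamma$ with $\gamma\in\Z_3^{*}$; then ``$3^7\mid a_2a_6-a_4^2$'' reads $\alpha\gamma\equiv\beta^2\pmod 3$, which has probability $1/3$ for every fixed unit $\gamma$ because $\alpha\gamma$ is uniform modulo $3$, hence is independent of ``$\gamma\equiv 1\pmod 3$'', again giving $\tfrac13\cdot\tfrac12=\tfrac16$. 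Combining these conditional densities with the densities of the two strata from \cite{cremona2020localpub} (cf.\ Proposition \ref{thm:ar}) produces the contributions to the density computed in Theorem \ref{thm:main}.

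I expect the delicate point to be the valuation bookkeeping in (2): one must verify that after substituting $x=9s$ and dividing by $3^7$ the reduced polynomial is genuinely linear modulo $3$ with a unit leading coefficient — neither identically zero nor of higher degree — so that Hensel's lemma produces an $x_0$ of valuation exactly $2$; the analogous step in (1) is milder because only two powers of $3$ are divided out. A secondary point needing care is checking that ``$3^7\mid a_2a_6-a_4^2$'' is equidistributed with probability exactly $1/3$ and is not skewed by the type-$IV^*$ constraints on $a_2,a_4,a_6$, which is what the uniformity of $\alpha\gamma$ modulo $3$ provides.
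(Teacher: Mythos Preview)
Your proof is correct and follows essentially the same route as the paper: in both cases you parametrise the singular $3$-torsion point as $x_0=3t$ (resp.\ $x_0=9s$), read off the square condition $a_6/3^2\equiv 1$ (resp.\ $a_6/3^4\equiv 1$) from the Weierstrass equation, isolate the lowest-valuation term of $\psi_3$ to obtain the divisibility condition on $a_2$ (resp.\ on $a_2a_6-a_4^2$), and then run Hensel on the resulting linear reduction for the converse. The density computation in (3) is also the paper's: the two conditions are independent with probabilities $1/3$ and $1/2$.

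One cosmetic correction: the $x^3$ coefficient of the displayed $\psi_3$ should be $4a_2$, not $4a_4$ (the paper's formula in \S\ref{preliminaries} has the same typo). This is harmless for your argument, since that term always carries valuation $\geq 4$ (resp.\ $\geq 8$) and is killed by the factor $12\equiv 0\pmod 3$ in the Hensel step, so neither the forward valuation count nor the linearity of the reduced polynomial is affected.
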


\begin{proof}
We first prove (1). Assume that $(3x,3y)\in E(\Q_p)$ is a $3$-torsion point such that $x,y\in\Z_p$. Write $a_2=3a'_2$, $a_4=3^2a'_4$, $a_6=3^2a'_6$, where $a'_2,a'_4\in \Z_3$ and $a'_6\in \Z_3^*$. Then $3^3\mid (3x)^3+3a'_2(3x)^2+3^2a'_4(3x)$, so we have the congruence $y^2\equiv a'_6 \pmod{3}$, and then $a'_6\equiv 1\pmod{3}$. By definition of $\psi_3$, 
\[
\psi_3(3x)=3^5x^4+3^4\cdot4a'_2x^3+3^5\cdot 2a'_4x^2+3^4\cdot 4a'_6x+3^3\cdot 4a_2'a'_6-3^4\cdot 4{a'_4}^2.
\]
Since $\psi(3x)=0$, we have $3^4\mid 3^3\cdot 4a_2'a'_6$, so $3\mid a'_2$ and we are done. 

Now, we prove the converse. Assume $a'_2=3a''_2$, for $a''_2\in \Z_3$ and $a_6'\equiv 1\pmod{3}$. Then, 
\[
\dfrac{\psi_3(3x)}{3^4}=3x^4+3\cdot4a_2''x^3+3\cdot 2a'_4x^2+ 4a'_6x+ 4a_2''a'_6- 4{a'_4}^2,
\]
which reduces to a linear polynomial modulo 3, hence has a solution $x\in \Z_3$ by Hensel's lemma. Moreover, $3^3\mid (3x)^3+3a'_2(3x)^2+3^2a'_4(3x)$, so we have the congruence $y^2\equiv a'_6 \pmod{3}$, that has a solution since $a'_6\equiv 1\pmod{3}$ and we can lift it to $y\in \Z_3$. This gives the point we were searching for.

Now we prove (2). Assume that $(3x,3y)\in E(\Q_p)$ is a $3$-torsion point such that $x,y\in\Z_p$. Write $a_2=3^2a'_2$, $a_4=3^3a'_4$, $a_6=3^4a'_6$, where $a'_2,a'_4\in \Z_3$ and $a'_6\in \Z_3^*$. If $3\nmid x$, then 
\[
v_3((3x)^3+3^2a'_2(3x)^2+3^3a'_4(3x)+3^4a'_6)=3,
\] hence $y\notin \Q_3$. So, we have $3\mid x$, and we may assume that $(3^2x,3^2y)\in E(\Q_p)$ is a $3$-torsion point (replacing $(x,y)$ with $(3x,3y)$). The equation of $E$ divided by $3^4$ gives $y^2\equiv a'_6\pmod{3}$, so again, we have $a'_6\equiv 1\pmod{3}$. Moreover,
\[\psi_3(3^2x)=3^9x^4+3^8\cdot4a'_2x^3+3^8\cdot 2a'_4x^2+3^7\cdot 4a'_6x+3^6\cdot 4a_2'a'_6-3^6\cdot 4{a'_4}^2.\]
Since $\psi_3(3^2x)=0$, we have $3\mid a_2'a_6'-{a'_4}^2$.

Now, we prove the converse. Assume $3\mid a_2'a_6'-{a'_4}^2$, so $\frac{\psi_3(3^2x)}{3^7}$ reduces to a linear polynomial modulo 3, hence, it has a root by Hensel's lemma. Once again we can find $y\in \Z_p$ such that $(3^2x,3^2y)$ is the $3$-torsion point we were searching for since $a'_6\equiv 1\pmod{3}$.

Finally for (3), the density (1) is clear, and the density in (2) follows from the fact that $3\nmid a'_6$, so $a'_2$ is uniquely determined modulo $3$ by $a_4'$ and $a_6'$.

\end{proof}
\begin{remark}
 One may wonder why in the case $IV$ and $IV^*$ we have to look at the equation modulo a higher power of $3$ (compared to the case good reduction, where we were just looking at modulo $p^2$). This follows from the fact that for a singular point $P$, the valuation of $\gcd(\psi_3^2(x(P)),\theta_3(x(P)))$ is $6$ or $12$ (see \cite[Table 1.1]{voutier2021greatest}) and so to see if $x(3P)^{-1}\equiv 0\pmod 3$ (that is, $P$ is $3$-torsion modulo $3$) one has to check if $\psi_3(x(P))$ is $0$ modulo $3^4$ or $3^7$.
\end{remark}
Finally, we prove Theorem \ref{thm:main} for $p=3$.
 \begin{theorem}\label{thm:3torsQ3}
 The probability that $E/\Q_3$ has a $\Q_3$-rational non-trivial $3$-torsion point is $\frac{3}{26}$.
\end{theorem}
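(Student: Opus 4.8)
The plan is to stratify $W_{\tors}$, restricted to minimal Weierstrass equations, by the reduction type of $E$ and, within each type, by whether a non-trivial $3$-torsion point of $E(\Q_3)$ reduces into the smooth locus $\tilde{E}_{ns}(\F_3)$ or to the singular point, and then to read each contribution off from the lemmas of this section together with the Kodaira-type densities of Cremona--Sadek. First I would invoke Proposition~\ref{prop:assume-short-WE} to pass to the three-parameter family $y^2=x^3+a_2x^2+a_4x+a_6$ over $\Z_3^3$ (for $p=3$ one cannot also eliminate $a_2$). What makes the stratification a genuine disjoint union is the structural fact that, since $\zeta_3\notin\Q_3$, the group $E(\Q_3)[3]$ has order $1$ or $3$, so $E$ has at most one subgroup $H$ of order $3$; and since $E_0(\Q_3)$ is a subgroup, $H\cap E_0(\Q_3)$ is either $\{0_E\}$ or all of $H$. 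Hence having a non-singular non-trivial $3$-torsion point and having a singular non-trivial $3$-torsion point are mutually exclusive, and $\mu(W_{\tors}\cap\{\text{minimal}\})$ is a sum of contributions indexed by (reduction type, singular/non-singular).

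For the non-singular part: good reduction has no singular points and contributes $\mu(W_{\gr,3})=\tfrac{2}{27}$ by Lemma~\ref{lem:gr3}; additive reduction contributes $\mu(W_{\add,3})=\tfrac{1}{27}$ by Lemma~\ref{lem:add3}; multiplicative reduction contributes $0$, since there $\tilde{E}_{ns}(\F_3)$ is $\F_3^{*}$ or the norm-one torus, neither of which has a $3$-torsion point (Lemma~\ref{lem:add3}). For the singular part, a singular non-trivial $3$-torsion point can exist only if $3$ divides the order of the component group $E(\Q_3)/E_0(\Q_3)$; this excludes non-split multiplicative reduction (component group of order at most $2$) and, among additive reductions, leaves only Kodaira types $IV$ and $IV^*$. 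Split multiplicative reduction then contributes, by Lemmas~\ref{lem:mult31} and~\ref{lem:mult3}, $\sum_{k\ge 1}\frac{(3-1)^2}{2\cdot 3^{3k+3}}=\frac{1}{351}$, summing the density of curves with $v_3(\Delta)=N\equiv 0\pmod 3$ and $x^3\equiv\Delta_1\pmod 9$ solvable; and types $IV$, $IV^*$ contribute $\tfrac{1}{6}\bigl(d_{IV}+d_{IV^*}\bigr)$, where $\tfrac{1}{6}$ is the conditional probability of Lemma~\ref{lem:IV3}(3) and $d_{IV},d_{IV^*}$ are the densities of these Kodaira types supplied by \cite{cremona2020localpub}.

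Finally I would add the six contributions to obtain $\mu(W_{\tors}\cap\{\text{minimal}\})$ and then pass to $\mu_{\Z_3^5}(W_{\tors})$ by the minimal-model rescaling, exactly as in the proof of Theorem~\ref{thm:main} for $p\ge 5$, using \cite{cremona2020localpub} for the density of minimal equations; the result is $\tfrac{3}{26}$. The main obstacle is not any single integral but the bookkeeping: checking that the strata are disjoint and exhaustive, that among additive reductions only $IV$ and $IV^*$ can carry a singular $3$-torsion point, and correctly invoking the Cremona--Sadek densities for the Kodaira types $IV,IV^*$ and for minimal Weierstrass equations, which for $p=3$ is more delicate than in the $p\ge 5$ case. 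With those ingredients in place, the six contributions combine to $\tfrac{3}{26}$.
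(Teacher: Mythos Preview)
Your proposal is correct and follows essentially the same approach as the paper's own proof: both stratify by reduction type, separate non-singular from singular $3$-torsion using the fact that $\zeta_3\notin\Q_3$ forces $\#E(\Q_3)[3]\le 3$, invoke Lemmas~\ref{lem:gr3}, \ref{lem:add3}, \ref{lem:mult31}, \ref{lem:mult3}, \ref{lem:IV3} together with the Cremona--Sadek Kodaira-type densities, and then renormalize by the density $1-3^{-10}$ of minimal equations. The paper writes the additive singular contribution as $(3^{-5}+3^{-8})/3$ and the split multiplicative contribution as $\sum_{k\ge 1}2/3^{3k+3}=2/(3^3(3^3-1))=1/351$, in agreement with your expressions.
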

\begin{proof}
 The probability of having a good reduction point is $2/27+1/27$ by Lemmas \ref{lem:gr3} and \ref{lem:add3}. The probability that $E$ is in minimal form and has additive reduction and a singular $\Q_3$-rational $3$-torsion point is $(3^{-5}+3^{-8})/3$ by Lemma \ref{lem:IV3} and \cite[Propostion 2.2]{cremona2020localpub}. Notice that we cannot have at the same time a singular and non-singular non-trivial $\Q_3$-rational $3$-torsion point because we cannot have full torsion. The probability that $E$ is in minimal form, has split multiplicative reduction, and a singular $\Q_3$-rational $3$-torsion point is 
 \[
 \sum_{k\geq 1}\frac{2}{3^{3k+3}}=\frac{2}{3^{6}}\frac{3^3}{3^3-1}=\frac{2}{3^3(3^3-1)}
 \]
 by Lemmas \ref{lem:mult31} and \ref{lem:mult3}. If $E$ has any other reduction type, there is no singular $3$-torsion. In conclusion, we get
 \[
 \frac{3^{10}}{3^{10}-1}\left(\frac{2}{27}+\frac{1}{27}+\frac{1}{3^{6}}+\frac{1}{3^{9}}+\frac{2}{3^3(3^3-1)}\right)=\frac{3}{26}.
 \]
\end{proof}

\subsection{Good ordinary reduction}

Let $E/\F_p$ be an elliptic curve over $\F_p$, $p \geq 3$, with ordinary reduction, so that $E(\F_p)\cong\Z/p\Z$. The goal of this section is to compute the probability that $E$ can be lifted to an elliptic curve over $\Z_p$ with a $\Q_p$-rational non-trivial $p$-torsion point.

Associated with $E$ there is a canonical lifting of $E$ to an elliptic curve $E_{can}$ over $\Q_p$, which is uniquely characterized by the following properties: the modulo $p$ reduction of $E_{can}$ is $E$, and reduction modulo $p$ induces an isomorphism $\mathrm{End}(E_{can}) \xrightarrow{} \mathrm{End}(E)$ of endormorphism rings. This was first observed by Deuring \cite{deuring1941typen}, and later generalized to abelian varieties by Lubin-Serre-Tate \cite{lubin1964elliptic}.

\begin{remark}
To define rigorously the "reduction modulo $p$" map $\mathrm{End}(E_{can}) \xrightarrow[]{} \mathrm{End}(E)$, consider the Neron model $\mathcal{E}/\Z_p$ of $E_{can}$, a smooth group scheme over $\Z_p$ with generic fiber $E_{can}/\Q_p$. By the universal property of Neron models, we can extend any endomorphism of $E_{can}$ uniquely to an endomorphism of $\mathcal{E}$. We have a canonical bijection $\mathrm{End}(E_{can}) \xrightarrow[]{} \mathrm{End}(\mathcal{E})$, which we compose with the map $\mathrm{End}(\mathcal{E}) \xrightarrow[]{} \mathrm{End(E)}$ obtained by base change to special fiber. \end{remark}

\begin{lemma}
 If $E/\F_p$ has a non-zero $p$-torsion point defined over $\F_p$, then the canonical lift $E_{can}$ has a non-zero $p$-torsion point defined over $\Q_p$.
\end{lemma}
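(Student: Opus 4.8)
The plan is to use the Serre--Tate description of the canonical lift together with the connected--étale sequence of the associated $p$-divisible group. Write $\mathcal{E}/\Z_p$ for the smooth Weierstrass model of $E_{can}$ (which exists since $E_{can}$ has good reduction), so that its $p$-divisible group $\mathcal{E}[p^\infty]$ is a $p$-divisible group over $\Z_p$ with special fibre $E[p^\infty]$. Since $E$ is ordinary, over the perfect field $\F_p$ we have a canonical connected--étale splitting $E[p^\infty] \cong \widehat{E} \times E[p^\infty]^{\mathrm{et}}$, where $\widehat{E}$ is the formal group of $E$ (of height $1$) and $E[p^\infty]^{\mathrm{et}}$ is étale of height $1$.

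The first step is to identify the étale quotient at level $p$. As $p$ is odd, the formal group $\widehat{E}$ has no non-zero $\F_p$-point, so the reduction map $E[p](\F_p) \hookrightarrow E[p]^{\mathrm{et}}(\F_p)$ is injective; since $E[p]^{\mathrm{et}}(\overline{\F_p}) \cong \Z/p\Z$ and, by hypothesis, $E[p](\F_p) \ne 0$, we conclude $E[p]^{\mathrm{et}}(\F_p) \cong \Z/p\Z$. Hence the order-$p$ étale group scheme $E[p]^{\mathrm{et}}$ has an $\F_p$-rational point and is therefore the constant group scheme $\underline{\Z/p\Z}$ over $\F_p$. (This is exactly where the hypothesis is used: for an ordinary $E$ with $p\nmid\#E(\F_p)$ the étale part would be a non-trivial form of $\underline{\Z/p\Z}$.)

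The second step is to invoke the defining property of the canonical lift, in the form: the connected--étale sequence of $\mathcal{E}[p^\infty]$ over $\Z_p$ is split, i.e. $\mathcal{E}[p^\infty]$ is isomorphic over $\Z_p$ to the product of a connected $p$-divisible group and an étale $p$-divisible group $T$; equivalently, the Serre--Tate parameter of $E_{can}$ is trivial (see \cite{lubin1964elliptic}, and \cite{deuring1941typen} in the elliptic case). Here $T$ is the unique étale lift of $E[p^\infty]^{\mathrm{et}}$. Consequently $T[p]$ is a subgroup scheme of $\mathcal{E}[p]$, finite étale over $\Z_p$, with special fibre $\underline{\Z/p\Z}$; since an étale group scheme over the Henselian local ring $\Z_p$ is determined by its special fibre, $T[p] \cong \underline{\Z/p\Z}$ over $\Z_p$. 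A generator of $T[p](\Z_p) \cong \Z/p\Z$ is then a $\Z_p$-point of $\mathcal{E}$ of exact order $p$, and its image in the generic fibre is a non-zero $\Q_p$-rational $3$-torsion... rather $p$-torsion point of $E_{can}$.

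The main obstacle is the input that the connected--étale sequence of $\mathcal{E}[p^\infty]$ splits over $\Z_p$: this is precisely the characterisation of the canonical lift in Serre--Tate theory, so the substance of the proof is to cite it correctly and to check the formal point that a splitting at the level of $p$-divisible groups over $\Z_p$ restricts to a splitting of $p$-torsion subgroup schemes, thereby producing an honest $\Z_p$-point (hence $\Q_p$-point) of the Weierstrass model rather than merely a point of the special fibre. One should also note that no unramified base change is needed, since by the first step the relevant étale group scheme is already constant over $\F_p$.
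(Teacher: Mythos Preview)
Your argument is correct, but it takes a different route from the paper's. The paper uses the characterisation of the canonical lift already stated in the text, namely that reduction induces an isomorphism $\mathrm{End}(E_{can}) \to \mathrm{End}(E)$. Concretely, since over $\F_p$ we have $E^{(p)}=E$, the Frobenius and Verschiebung $F,V \in \mathrm{End}(E)$ lift to endomorphisms $F_{can},V_{can}$ of $E_{can}$ with $F_{can}\circ V_{can}=[p]$. The paper then extends $V_{can}$ to the N\'eron model, checks directly via K\"ahler differentials that $\ker(\mathcal{V})/\Z_p$ is \'etale (its special fibre $\ker(V)$ is \'etale because $E$ is ordinary, and its generic fibre is \'etale because $\Q_p$ has characteristic~$0$), and then applies Hensel's lemma to lift the $\F_p$-rational points of $\ker(V)$ to $\Z_p$. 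Your approach instead invokes the Serre--Tate description of the canonical lift as the lift with trivial Serre--Tate parameter, i.e.\ with split connected--\'etale sequence for $\mathcal{E}[p^\infty]$ over $\Z_p$, and then reads off the constant \'etale factor at level $p$. Both proofs ultimately rest on the same fact that finite \'etale schemes over the Henselian base $\Z_p$ are determined by their special fibre; the paper's version has the advantage of staying within the endomorphism-ring framework already introduced and avoiding any reference to $p$-divisible groups or Serre--Tate parameters, while yours is conceptually cleaner once that machinery is granted and makes the role of the ordinariness hypothesis (the existence of a non-trivial \'etale quotient) more transparent.
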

\begin{proof}
 Recall that the Verschiebung $V: E \xrightarrow[]{} E$ is a degree $p$ isogeny defined as the dual of the Frobenius $F: E \xrightarrow[]{} E$, and that $F \circ V=V \circ F=[p]$ is the multiplication-by-$p$ morphism. Since the smoothness of a map $\mathrm{End}(E_{can}) \xrightarrow{} \mathrm{End}(E)$ is an isomorphism, both $V$ and $F$ lift to degree $p$-isogenies $V_{can}: E_{can}\xrightarrow{} E_{can}$ and $F_{can} :E_{can}\xrightarrow[]{}E_{can}$ that satisfy $F_{can} \circ V_{can}=V_{can} \circ F_{can}=[p]$. Let $\mathcal{V} : \mathcal{E} \xrightarrow[]{} \mathcal{E}$ be the extension of $V_{can}$ to an endomorphism of the Neron model $\mathcal{E}$ of $E_{can}$. Then $\mathrm{ker}( \mathcal{V})$ is a scheme over $\Z_p$ with special fiber $\mathrm{ker}(V)$ and generic fiber $\mathrm{ker}(V_{can})$. This implies that $\mathrm{ker}(\mathcal{V})$ is smooth (even étale) over $\mathrm{Spec} \ \Z_p$: consider the module of Kahler differentials $\Omega_{\mathrm{Ker}(\mathcal{V})/\mathrm{Spec \Z_p}}$. Since $\mathrm{ker}(V_{can})/\mathrm{Spec} \ \Q_p$ and $\mathrm{ker}(V)/\mathrm{Spec} \ \F_p$ are smooth of relative dimension 0, we have $\Omega_{\mathrm{Ker}(\mathcal{V})/ \Z_p}\otimes_{\Z_p}\Q_p=\Omega_{\mathrm{Ker}(V_{can})/\Q_p}=0$ and $\Omega_{\mathrm{Ker}(\mathcal{V})/ \Z_p}\otimes_{\Z_p}\F_p=\Omega_{\mathrm{Ker}(V)/\F_p}=0$, and so by Nakayama's lemma $\Omega_{\mathrm{Ker}(\mathcal{V})/\Z_p}=0$ and $\mathrm{Ker}(\mathcal{V})/\Z_p$ is smooth by \cite[Proposition III.10.4]{hartshorne2013algebraic} with $n=0$. By Hensel's lemma \cite[Corollary 6.2.13]{qingliu}, every $\F_p$-rational point of $\mathrm{ker}(V)$ lift to a $\Z_p$-point of $\mathrm{ker}( \mathcal{V})$. By specializing to the generic fiber we deduce that $\mathrm{ker}(V_{can})\cong \Z/p\Z$, and so we also have $E_{can}[p](\Q_p)=\Z/p\Z$.
\end{proof}

We will need the following criterion of Serre for the field $\Q_p(E[p])$ to be tamely ramified. For a proof, see Theorem 2 of \cite{nakamura1993note}.
\begin{theorem} \label{Serre criterion}
 Let $E/\Q_p$ be an elliptic curve with good ordinary reduction. Let $j_{can} \in \Q_p$ be the $j$-invariant of the canonical lift of the reduction of $E$ modulo $p$. Then the extension $\Q_p(E[p])/\Q_p$ is tamely ramified if and only if $j \equiv j_{can} \pmod{p^{\mu+1}}$, where $\mu=1$ if $j \ne 0,1728$, $\mu=2$ if $j=1728$, and $\mu=3$ if $j=0$.
\end{theorem}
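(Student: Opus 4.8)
The plan is to read off the ramification of $\Q_p(E[p])$ from the connected--\'etale filtration of $E[p]$, and then convert the resulting condition into a congruence on $j$-invariants via Serre--Tate deformation theory (the proof in \cite{nakamura1993note} runs along these lines). First I would recall the structure of $E[p]$ for good ordinary reduction: the connected--\'etale sequence of the finite flat group scheme $E[p]/\Z_p$ is a non-split extension $0\to C\to E[p]\to D\to 0$ with $C$ connected of order $p$ and $D$ \'etale of order $p$; since $C\cong\widehat{E}[p]\cong\mu_p$ over $\Z_p^{\mathrm{ur}}$ and $D\cong\widetilde E[p]$, the group $\Gal(\overline{\Q_p}/\Q_p)$ acts on $C(\overline{\Q_p})$ by $\overline\chi_{\mathrm{cyc}}\cdot\psi$ and on $D(\overline{\Q_p})$ by $\psi^{-1}$, where $\overline\chi_{\mathrm{cyc}}$ is the mod $p$ cyclotomic character and $\psi$ the unramified character taking $\Frob_p$ to the unit root of the Frobenius characteristic polynomial of $\widetilde E$ (the product being $\overline\chi_{\mathrm{cyc}}$ by the Weil pairing). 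Hence $\Q_p(E[p])$ is the compositum of $\Q_p(\zeta_p)$, of an unramified extension splitting $\psi$, and of the extension cut out by the class of the above sequence; as the first is tamely ramified and the second unramified, $\Q_p(E[p])/\Q_p$ is tamely ramified iff this extension class becomes unramified after those two base changes.

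Next I would use Serre--Tate theory (Lubin--Serre--Tate \cite{lubin1964elliptic}, building on Deuring \cite{deuring1941typen}) to identify that class. Viewing $E/\Z_p$ as a deformation of $\widetilde E$, there is a Serre--Tate parameter $q=q(E)\in 1+p\Z_p$, well defined up to raising to a $p$-adic unit power (so that $v_p(q-1)$ is intrinsic), such that the connected--\'etale sequence of $E[p]$ is, up to the unramified and cyclotomic twists above, the Kummer class of $q$; equivalently a non-zero $p$-torsion point of $E$ is obtained by extracting a $p$-th root of $q$ inside the formal group, so that $\Q_p(E[p])=\Q_p(\zeta_p,\,q^{1/p})$ up to an unramified extension, and $q=1$ exactly when the sequence splits, i.e.\ when $E=E_{can}$. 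The tameness criterion thus reduces to an elementary computation in $L\coloneq\Q_p(\zeta_p)$: writing $\pi_L=\zeta_p-1$ (so $v_L(p)=p-1$), one checks $(1+\mathfrak{m}_L^{\,j})^p=1+\mathfrak{m}_L^{\,j+p-1}$ for $j\ge 2$ while $(1+\mathfrak{m}_L)^p=1+\mathfrak{m}_L^{\,p+1}$ (Wilson's theorem $(p-1)!\equiv -1$ enters here to kill the term at level $p$), whence for $q\in 1+p\Z_p$ the extension $L(q^{1/p})/L$ is unramified iff $q\in(\mathcal{O}_L^{\times})^p$ iff $v_p(q-1)\ge 2$; the standing hypothesis $p\ge 3$ of this section is used through $2(p-1)\ge p+1$.

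Finally I would convert $v_p(q(E)-1)\ge 2$ into the stated congruence. The Serre--Tate parameter identifies $\mathrm{Def}(\widetilde E)$ with $\Z_p[[t]]$, $t=q-1$, on which $\Aut(\widetilde E)$ acts through its cyclic quotient $\Aut(\widetilde E)/\{\pm 1\}$ of order $\mu\in\{1,2,3\}$ by $\mu$-th roots of unity; since $j$ is invariant, $j(E)-j_{can}=c\,t^{\mu}+(\text{higher order})$ with $c\in\Z_p^{\times}$, the leading coefficient being a unit because the coarse moduli map $\mathcal{M}_{1,1}\to\mathbb{A}^1_j$ is \'etale at $\widetilde E$ (note $j_{can}=0$ forces $p\equiv 1\pmod 3$ and $j_{can}=1728$ forces $p\equiv 1\pmod 4$, so $p\nmid\mu$ and $\widetilde E$ is never a stacky point). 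Thus $v_p(j(E)-j_{can})=\mu\cdot v_p(q(E)-1)$, and since $\mu<\mu+1\le 2\mu$ we get $v_p(q(E)-1)\ge 2\iff v_p(j(E)-j_{can})\ge\mu+1$, i.e.\ $j\equiv j_{can}\pmod{p^{\mu+1}}$, as claimed. The main obstacle is the middle step: the identification of the connected--\'etale extension class of $E[p]$ with the Kummer class of the Serre--Tate parameter, together with the integral statement that $j-j_{can}$ has unit leading coefficient in the Serre--Tate coordinate; the structure theory of $E[p]$ in the ordinary case and the local Kummer computation are routine by comparison.
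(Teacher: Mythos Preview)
The paper does not prove this theorem; it simply quotes the statement and refers the reader to Theorem~2 of \cite{nakamura1993note} for a proof. So there is no in-paper argument to compare against.

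Your outline is essentially the argument of \cite{nakamura1993note} and is correct in its broad strokes: the connected--\'etale filtration reduces tameness of $\Q_p(E[p])/\Q_p$ to whether $L(q^{1/p})/L$ is unramified for $L=\Q_p(\zeta_p)$ and $q$ the Serre--Tate parameter, the local Kummer computation gives the criterion $v_p(q-1)\ge 2$, and the translation to $j$-invariants uses that $j-j_{can}$ vanishes to order exactly $\mu=\#(\Aut(\widetilde E)/\{\pm1\})$ in the Serre--Tate coordinate. Your observation that $v_p(j-j_{can})$ only takes values in $\mu\Z_{\ge 1}$, so that $v_p(j-j_{can})\ge\mu+1$ is equivalent to $v_p(j-j_{can})\ge 2\mu$, is exactly what makes the exponent $\mu+1$ come out. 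One small point of phrasing: the coarse moduli map $\mathcal{M}_{1,1}\to\mathbb{A}^1_j$ is \emph{not} \'etale at $j=0,1728$; what you need (and what your parenthetical about $p\nmid\mu$ actually supplies) is that the ramification is tame there, which is what forces the leading coefficient $c$ of $j-j_{can}$ in the Serre--Tate coordinate to be a $p$-adic unit. With that clarification, the sketch is sound.
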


 \begin{proposition} \label{prob lift of p torsion}
 Let $p\geq 3$. Let $E/\F_p$ be an elliptic curve over $\F_p$ with an $\F_p$-rational non-trivial $p$-torsion point, defined by a fixed Weierstrass equation $y^2=x^3+ax+b$ for some $a,b \in \F_p$. The probability that a random lift of $E$ to $\Q_p$ has a $\Q_p$-rational non-trivial $p$-torsion point is $1/p$, and the Weierstrass equation of any such lift is congruent modulo $p^2$ to a Weierstrass equation of the canonical lift. More precisely, 
 \[
 \frac{\mu_{\Z_p^2}\left(\left\{A,B \in \Z_p^2 : A \equiv a \pmod{p}, B \equiv b \pmod{p}, \#\left(E_{A,B}(\Q_p)[p]\right)=p\right\}\right)}
 {\mu_{\Z_p^2}\left(\{A,B \in \Z_p^2 : A \equiv a \pmod{p}, B \equiv b \pmod{p}\}\right)}=\frac{1}{p}
 \] 
 where $\mu_{\Z_p^2}$ is the Haar measure of $\Z_p^2$.
 Moreover, the set 
 \[
 \left\{A,B \in \Z_p^2 : A \equiv a \pmod{p}, B \equiv b \pmod{p}, \#\left(E_{A,B}(\Q_p)[p]\right)=p\right\}
 \]
 is defined by congruence conditions modulo $p^2$.
 \end{proposition}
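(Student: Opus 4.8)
The plan is to show that, among all lifts of the fixed equation $y^2=x^3+ax+b$ to $\Z_p$, the ones defining an elliptic curve with a $\Q_p$-rational non-trivial $p$-torsion point are exactly those that are isomorphic modulo $p^2$ to a Weierstrass equation of the canonical lift $E_{can}$. The first observation is that every lift $E_{A,B}$ with $A\equiv a$, $B\equiv b\pmod p$ automatically has good ordinary reduction, since $\Delta(A,B)\equiv\Delta(a,b)\not\equiv 0\pmod p$ and the reduction is the fixed ordinary curve. Consequently any non-trivial $p$-torsion point of $E_{A,B}(\Q_p)$ reduces to a non-singular point, because it cannot lie in $E_1(\Q_p)$, which is torsion-free for $p\geq 3$; so Lemma~\ref{prop:condpsi2} shows that having such a point is equivalent to a condition on $(A,B)$ modulo $p^2$. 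This already yields the last assertion of the proposition, that the set in question is cut out by congruences modulo $p^2$.

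For the ``if'' direction: the lemma preceding the proposition shows that $E_{can}$ has a $\Q_p$-rational non-trivial $p$-torsion point, so by the previous paragraph every lift congruent modulo $p^2$ to a Weierstrass equation of $E_{can}$ has one as well. To count these I would enumerate the short Weierstrass equations of $E_{can}$ that reduce to $y^2=x^3+ax+b$, namely $y^2=x^3+\hat u^4A_{can}x+\hat u^6B_{can}$ with $\hat u\in\Z_p^\times$ satisfying $\hat u^2\equiv 1\pmod p$; writing $\hat u\equiv\pm(1+pt)\pmod{p^2}$, these reduce modulo $p^2$ to exactly the $p$ pairs $(A_{can}+4pta,\,B_{can}+6ptb)$ with $t\in\Z/p\Z$ (here $j\neq 0,1728$, i.e.\ $a,b\not\equiv 0\pmod p$, is used; for $j=0$ or $1728$ one runs the analogous count with the larger automorphism group and again finds $p$ classes). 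Since there are $p^2$ lifts of $(a,b)$ modulo $p^2$, this set has relative density $1/p$.

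The ``only if'' direction is the crux. Suppose $E_{A,B}$ has a $\Q_p$-rational non-trivial $p$-torsion point $Q$. Because $E_{A,B}$ has good reduction, $E_{A,B}[p]$ is the generic fibre of a finite flat group scheme over $\Z_p$; its connected--étale sequence, together with the Weil pairing and the fact that the reduction has $E_{a,b}(\F_p)[p]\cong\Z/p\Z$ (so the étale quotient carries trivial Galois action), exhibits $E_{A,B}[p]$ as an extension of Galois modules $0\to\mu_p\to E_{A,B}[p]\to\Z/p\Z\to 0$. Since $\mu_p$ has no non-zero Galois-fixed vector for $p\geq 3$, the point $Q$ must map onto a generator of the quotient, so the extension splits and $\Q_p(E_{A,B}[p])=\Q_p(\mu_p)$ is tamely ramified. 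By Serre's criterion (Theorem~\ref{Serre criterion}) this forces $j(E_{A,B})\equiv j_{can}\pmod{p^{\mu+1}}$, in particular modulo $p^2$. Finally, two short Weierstrass equations over $\Z/p^2\Z$ with invertible discriminant, equal reductions modulo $p$, and equal $j$-invariants modulo $p^2$ differ by a twist $\theta\equiv 1\pmod p$, and such a $\theta$ is a square in $(\Z/p^2\Z)^\times$ (as $p$ is odd), so the two curves are isomorphic over $\Z/p^2\Z$; hence $(A,B)$ is congruent modulo $p^2$ to a Weierstrass equation of $E_{can}$. (When $j_{can}\in\{0,1728\}$ Serre's criterion only gives a congruence modulo $p^3$ or $p^4$, but then $A_{can}=0$, resp.\ $B_{can}=0$, and this stronger congruence still forces $E_{A,B}\cong E_{can}$ over $\Z/p^2\Z$.)

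Combining the two directions, the set of lifts of $(a,b)$ with a $\Q_p$-rational non-trivial $p$-torsion point equals the orbit of $E_{can}$ modulo $p^2$, which has relative density $1/p$ by the count above; every such lift is by construction congruent modulo $p^2$ to a Weierstrass equation of $E_{can}$, and the defining condition is modulo $p^2$. I expect the main obstacle to be the ``only if'' step, precisely the identification $\Q_p(E_{A,B}[p])=\Q_p(\mu_p)$: this is exactly where good \emph{ordinary} reduction is needed, through the connected--étale structure of $E_{A,B}[p]$ over $\Z_p$ together with the Weil pairing, and it is what makes Serre's criterion applicable. A secondary annoyance is keeping the $j=0,1728$ cases straight, where the automorphism group is larger and Serre's bound is a higher power of $p$, though both effects turn out to be benign.
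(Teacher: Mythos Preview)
Your proof is correct and follows essentially the same route as the paper: both use the connected--\'etale sequence and the Weil pairing to identify $\Q_p(E_{A,B}[p])=\Q_p(\mu_p)$ when a rational $p$-torsion point exists, invoke Serre's criterion (Theorem~\ref{Serre criterion}) to deduce $j\equiv j_{can}$, and then count the $p$ resulting classes among the $p^2$ lifts modulo $p^2$. Your ``if'' direction via the canonical-lift lemma together with Lemma~\ref{prop:condpsi2} is in fact more explicit than the paper's (which states the equivalence but only argues one implication), and you are right not to discard the case $j\equiv 1728$: the paper's dismissal relies on $\#E(\F_p)=p$, which follows from the Hasse bound only for $p\geq 7$, whereas for $p=5$ the curve $y^2=x^3+3x$ has $\#E(\F_5)=10$ and a rational $5$-torsion point.
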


\begin{proof}
We already know that Proposition \ref{prob lift of p torsion} holds for $p=3$, as shown in the proof of Lemma \ref{lem:gr3}, so we assume $p\geq 5$.

 Assume $j\not\equiv 0,1728\pmod{p}$. First, we show that $E'/\Q_p$ is a lift of $E/\F_p$ with a $\Q_p$-rational non-trivial $p$-torsion point if and only if $j(E')\equiv j(E_{can})\pmod {p^2}$. If $E'/\Q_p$ is a lift of $E/\F_p$ with a $\Q_p$-rational non-trivial $p$-torsion point, then the $p$-torsion splits as a Galois module as $E[p]=\mu_p \times \Z/p\Z$, where the factor $\mu_p$ corresponds to the kernel of reduction modulo $p$. Hence the field $\Q_p(E[p])=\Q_p(\mu_p)$ is tamely ramified, and $j(E')\equiv j(E_{can})\pmod {p^2}$. If $j(E')\not\equiv j(E_{can})\pmod {p^2}$, then $\Q_p(E[p])$ is not tamely ramified, in particular $\Q_p(E[p])\neq \Q_p(\mu_p)$, so there is no $\Q_p$-rational non-trivial $p$-torsion. 
 
 Let $E': y^2=x^3+Ax+B$ and $E_{can} : y^2=x^3+Cx+D$. Notice that $A\equiv C\pmod{p} $ and $B\equiv D \pmod{p}$ since they reduce to the same Weierstrass equation modulo $p$. We have $j(E')\equiv j(E_{can})\pmod {p^2}$ if and only if there exists $u\in \Z/p^2\Z$ such that $u^2 C \equiv A \pmod {p^2}$ and $u^3D \equiv B \pmod{p^2}$. So having a $\Q_p$-rational non-trivial $p$-torsion point depends only on the reduction modulo $p^2$ of the coefficients. There are $p^2$ pairs $(\overline{A},\overline{B})\in (\Z/p^2\Z)^2$ such that $\overline{A}\equiv a\pmod{p} $ and $\overline{B}\equiv b \pmod{p}$. The number of pairs $(\overline{A},\overline{B})\in (\Z/p^2\Z)^2$ such that $\overline{A}\equiv a\pmod{p} $, $\overline{B}\equiv b \pmod{p}$, $u^2 C \equiv A \pmod {p^2}$, and $u^3D \equiv B \pmod{p^2}$ for some $u\in \Z/p^2\Z$ is $p$ (one for each $u\equiv 1+pu_1\pmod{p^2}$ for $u_1\in \Z/p\Z$) and so we conclude that the probability of having a lift is $p/p^2=1/p$.

 Now, we do the case $j \equiv 0 \pmod{p}$. In this case, since $\mathrm{End}(E_{can}) \cong \mathrm{End}(E)$ as rings, $E_{can}$ has an automorphism of order $3$, and so we have $j_{can}=0$, and $E_{can}$ is defined by an equation $y^2=x^3+D$. Proceeding as in the first case, $E'/\Q_p$ is a lift of $E/\F_p$ with a $\Q_p$-rational non-trivial $p$-torsion point if and only if $j(E')\equiv j(E_{can})\pmod {p^4}$. Moreover, $j(E') \equiv 0 \pmod{p^4}$ if and only if $\overline{A}\equiv 0\pmod{p^2}$. So, $E$ has a $\Q_p$-rational non-trivial $p$-torsion point if and only if $\overline{A}\equiv a\pmod{p} $, $\overline{B}\equiv b \pmod{p}$, $\overline{A} \equiv 0 \pmod {p^2}$. We conclude as in the first case.

 It remains the case $j \equiv 1728 \pmod{p}$. Note that for $p\geq 7$, we have that $E(\F_p)[p]=p$ implies that $\#E(\F_p)=p$ by Hasse's bound, however this is not true for $p=5$. So, if $p
 \geq 7$, we cannot have $j \equiv 1728 \pmod{p}$ since then $E$ has a non-trivial $2$-torsion point over $\F_p$, so $\#E(\F_p)$ is even, contradicting $E(\F_p)=\Z/p\Z$. If $p=5$, as in the case $j \equiv 0 \pmod{p}$, since $\mathrm{End}(E_{can}) \cong \mathrm{End}(E)$ as rings, we may assume that  $E_{can}$ is defined by an equation $y^2=x^3+Dx$. The conclusion is also the same: $j(E')\equiv j(E_{can})\pmod{p^3}$ if and only if $\overline{B}\equiv 0\pmod{p^2}$, so $\overline{A}$ can be taken to be any lift modulo $p$, hence there are again exactly $p$ pairs. 
\end{proof}

\begin{remark}\label{rem:p-torsion-div-poly}
We first noticed the probability $1/p$ in the good reduction case by considering the division polynomial $\psi_p$ for fixed values of the prime $p$. By a linear change of variables, we may assume that $x=0$ is the $x$-coordinate of a non-zero $p$-torsion point in $E(\F_p)$. We observed that the coefficient of $x$ in $\psi_p$ is divisible by exactly $p$, regardless of $a_2,a_4,a_6\in\Z_p$. We similarly see that the constant coefficient is divisible by $p$, and mod $p^2$ is given by a linear form $l_p$ in $a_4,a_6\in\Z_p$, the coefficients of which depend only on the prime $p$. Combining these observations and Hensel's lemma, the root $x=0$ of $\psi_p$ modulo $p$ lifts to a root of $\psi_p$ in $\Z_p$ if and only if the constant coefficient is divisible by $p^2$, which happens with the probability $1/p$. However, we have not managed to find a direct proof of these properties of the division polynomial $\psi_p$ for a general prime $p$, since it is very difficult to compute the coefficients of $\psi_p$ directly.
\end{remark}

\section{Complements}\label{sec:comm}

 \subsection{A sieve argument}\label{sieve section}
The goal of this section is to show that local information can help in counting elliptic curves with non-trivial $3$-torsion and with coefficients in a fixed interval. Let $E$ be an elliptic curve defined over $\Q$. There exists a unique Weierstrass equation $y^2=x^3+Ax+B$ defining $E$ and with $\gcd(A^3,B^2)$ 12-th power-free. Define the height of $E$ as $H(E)=\max\{\abs{A}^3,\abs{B}^2\}$. In \cite[Theorem 5.5]{harron2017counting}, Harron and Snowden showed that elliptic curves with a $3$-torsion point of naive height up to $X$ are $ c_3X^{1/3}+O(X^{1/6})$, with $c_3>0$. We focus on a strictly related problem. Let $M_1, N_1,M_2,N_2$ four integers with $N_1,N_2>0$. Let
\[
S=\left\{A,B\in \Z :  
    \begin{array}{l}
    M_1\leq A\leq M_1+N_1,
    \\ M_2\leq B\leq M_2+N_2, 
    \end{array}
    y^2=x^3+Ax+B \text{ is an e.c.}, E[3](\Q)\neq \{0_E\}\right\}.
\]

By applying the large sieve, we establish an upper bound for the cardinality of $S$.  For the box $[-X^2,X^2] \times [-X^3,X^3]$ that essentially counts Weierstrass equations of naive height less than $X$, our result is weaker than  \cite[Theorem 5.5]{harron2017counting} that even gives an asymptotic formula for the number of curves with a 3-torsion point. However, their method uses the Principle of Lipschitz and is based on using geometry of numbers to count lattice points in a bounded region, and only works well when the shape of this region is regular in a certain sense. Extending their method to treat boxes of arbitrary shape and center would require a rather delicate analysis. Using the large sieve we will now see that it is straightforward to prove an estimate showing that there are very few elliptic curves with a rational 3-torsion point in an arbitrary box. 
\begin{proposition} \label{sieve bound}
Let $M_1, N_1,M_2,N_2$ four positive integers. Let
\[
S=\left\{A,B\in \Z :  
    \begin{array}{l}
    M_1\leq\abs{A}\leq M_1+N_1,
    \\ M_2\leq \abs{B}\leq M_2+N_2, 
    \end{array}
    y^2=x^3+Ax+B \text{ is an e.c.}, E[3](\Q)\neq \{0_E\}\right\}.
\]
Then, 
    \[
		\#S \ll \frac{N_1N_2}{\sqrt{\min\{N_1,N_2\}}}.
		\]
\end{proposition}
Note that this improves on the trivial bound $\#S\ll N_1N_2$.
\begin{proof}
    Let $L=\sqrt{\min\{N_1,N_2\}}$ and, for all $p\leq L$, define 
    \[
    Y_p=\left\{(A,B)\in(\Z/p\Z)^2: E:Y^2=x^3+Ax+B \text{ is not an e.c. or } E[3](\F_p)=\{0_E\}\right\}.
    \]
    If $(A,B)\in S$, then the reduction modulo $p$ of $(A,B)$ does not belong to $Y_p$ for all $p\leq L$. Thus, by an application of the large sieve, in the form proved in \cite[Lemma 5.1]{cameron}, it holds
    \[
    \#S\ll \frac{(\sqrt{N_1}+L)^2(\sqrt{N_2}+L)^2}{F(L)}
    \]
    with 
    \[
    F(L)=\sum_{n\leq L}\mu^2(n)\prod_{p\mid n}(\delta_p/(1-\delta_p))
    \]
    for $\delta_p=\#Y_p/p^2$. To conclude, we just need to show that $F(L)\gg L$. By Lemma \ref{good reduction count}, $\delta_p\geq 1/2$ for all $p$ and then
    \[
    F(L)=\sum_{n\leq L}\mu^2(n)\prod_{p\mid n}(\delta_p/(1-\delta_p))\gg
    \sum_{n\leq L}\mu^2(n)\gg L.
    \]
    \end{proof}
The techniques of geometry of numbers that \cite{harron2017counting} employ give a better estimate than Proposition \ref{sieve bound} for elliptic curves because the large sieve is far from optimal for this situation. Efthymios Sofos has pointed out to us that it has much more success in the setting of short intervals, a topic that has recently been of considerable interest, see for example \cite{matomaki2016multiplicative}. If the interval is of the form $[J^2-J^a,J^2]\times [J^3-J^b,J^3]$, we note that the trivial bound is $O(J^{b+c})$, so if the intervals are too small, then this is already better than $O(J^2)$. If, for example, $b=c\in (0,\frac43)$, the large sieve gives a new result. We record the estimates we obtain in both of these settings below.
       \begin{corollary}\label{cor:sieve-cases} Assume the setting of Proposition \ref{sieve bound} and that $J$ is large enough. 
\begin{enumerate} 
\item If $M_1=M_2=0$, $N_1=J^2$, $N_2=J^3$, then $\#S\ll J^4$.
\item Assume  $M_1=J^2-J^b$, $M_2=J^3-J^c$, $N_1=J^b$, $N_2=J^c$ for $0<b<2$, $0<c<3$. Then $\#S\ll J^{b+c-\frac{\min(b,c)}{2}}$.
\end{enumerate}    
\end{corollary}

	\subsection{Asymptotic probability of \texorpdfstring{$\Q_p$}{}-rational \texorpdfstring{$\ell$}{}-torsion }\label{subsec:l-torsion}
In this section we give an asymptotic result for $\Q_p$-rational non-trivial $\ell$-torsion points in $\Q_p$ as the prime $p \to \infty$.
	Let $\ell$ be an odd prime.
 Let $E/\F_p$ be an elliptic curve for $p\neq \ell$. Then, $\Frob_p$ acts on $E[\ell]$, so we can see it as a matrix in $\GL_2(\F_\ell)$. We have $\det(\Frob_p)\equiv p\pmod \ell$ and $\Tr(\Frob_p)\equiv p+1\pmod \ell$, if $\ell\mid \#E(\F_p)$ since $\#E(\F_p)=p+1-\Tr(\Frob_p)$.
 Then, by the geometric version of Chebotarev density theorem, we have
 		\[
		\lim_{p\to\infty}\abs{\frac{\#\{E/\F_p \text{ s.t. } \ell\mid \#E(\F_p)\}/\sim}{\#\{E/\F_p \}/\sim}-\frac{\#\{M\in \GL_2(\F_\ell)\mid \det(M)= p, \Tr(M)=p+1\}}{\#\{M\in \GL_2(\F_\ell)\mid \det(M)=p\}}}=0,
		\]
 and so, fixing $s\in \Z$ coprime with $\ell$,
	\[
	\lim_{\substack{p\to \infty\\ p\equiv s\pmod \ell}}\frac{\#\{E/\F_p \text{ s.t. } \ell \mid \#E(\F_p)\}/\sim}{\#\{E/\F_p \}/\sim}= \frac{\#\{M\in \GL_2(\F_\ell)\mid \det(M)= s, \Tr(M)=s+1\}}{\#\{M\in \GL_2(\F_\ell)\mid \det(M)=s\}}.
	\]
 
	Let $W_{\ell,p}$ be the subset of $\Z_p^5$ of those $[a_1,a_2,a_3,a_4,a_6]$ for which $y^2+a_1xy+a_3y=x^3+a_2x^2+a_4x+a_6$ is an elliptic curve and has a non-trivial $\Q_p$-rational $\ell$-torsion point. Since the measure of coefficients such that the Weierstrass equation reduces to a singular curve modulo $p$ goes to $0$ has $p$ goes to infinity, we have
	\begin{equation}\label{eq:limgamma}
		\lim_{\substack{p\to \infty\\ p\equiv s\pmod \ell}}\mu_{\Z_p^5}(W_{\ell,p})=\lim_{\substack{p\to \infty\\ p\equiv s\pmod \ell}}\frac{\#\{E/\F_p \text{ s.t. } \ell \mid \#E(\F_p)\}/\sim}{\#\{E/\F_p \}/\sim}=\frac{\gamma_{s,s+1}}{\sum_{0\leq t\leq \ell-1}\gamma_{s,t}}
	\end{equation}
	where 
	\[
	\gamma_{s,t}=\#\{M\in \GL_2(\F_\ell)\mid \det(M)= s, \Tr(M)=t\}.
	\]
	Notice that
	\[
	\sum_{0\leq t\leq \ell-1}\gamma_{s,t}=\#\{M\in \GL_2(\F_\ell)\mid \det(M)= s\}=\#\SL_2(\F_\ell)=\ell^3-\ell.
	\]
	\begin{lemma}\label{lemma:countmat}
		Let $\legendre{\cdot}{\cdot}$ be the Legendre symbol. Then
		\[
		\gamma_{s,t}=\ell^2+\legendre{t^2-4s}{\ell} \ell.
		\]
		In particular,
		\[
		\gamma_{s,s+1}=\begin{cases}
			\ell^2 \mbox{ if } s\equiv 1\pmod \ell, \\
			\ell^2+\ell \mbox{ otherwise.}
		\end{cases}
		\]
	\end{lemma}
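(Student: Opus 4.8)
The plan is to count pairs $(s,t) \in \F_\ell^* \times \F_\ell$ with $s \ne 0$, and for each such pair count the matrices $M \in \GL_2(\F_\ell)$ with $\det M = s$ and $\Tr M = t$. The characteristic polynomial of such an $M$ is necessarily $X^2 - tX + s$, so the count splits according to the factorization type of this quadratic over $\F_\ell$, which is governed by its discriminant $D := t^2 - 4s$. I would organize the argument as follows.

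First I would fix the characteristic polynomial $f(X) = X^2 - tX + s$ (with $s \ne 0$, so $0$ is not a root and every $M$ with this characteristic polynomial is invertible) and count matrices $M \in \Mat_2(\F_\ell)$ with that characteristic polynomial. There are three cases. If $\legendre{D}{\ell} = 1$, then $f$ has two distinct roots $\alpha \ne \beta$ in $\F_\ell$; such $M$ are exactly the diagonalizable matrices with eigenvalues $\alpha,\beta$, and their number is the size of the conjugacy class, namely $|\GL_2(\F_\ell)| / |\{\text{diagonal invertible}\}| = (\ell^2-1)(\ell^2-\ell)/(\ell-1)^2 = \ell^2 + \ell = \ell(\ell+1)$. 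If $\legendre{D}{\ell} = -1$, then $f$ is irreducible, $M$ is a regular semisimple element with eigenvalues in $\F_{\ell^2}\setminus\F_\ell$, and the conjugacy class has size $|\GL_2(\F_\ell)|/(\ell^2-1) = \ell^2 - \ell = \ell(\ell-1)$. If $D = 0$, then $f = (X-\alpha)^2$ with $\alpha = t/2 \ne 0$ (here $\ell$ odd is used), and $M$ is either the scalar $\alpha I$ (one matrix) or a non-semisimple Jordan block, whose conjugacy class has size $|\GL_2(\F_\ell)|/|\{\text{matrices commuting with a Jordan block}\}| = (\ell^2-1)(\ell^2-\ell)/(\ell(\ell-1)) = \ell^2-1$; the total is $\ell^2 - 1 + 1 = \ell^2$. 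In all three cases the count is $\ell^2 + \legendre{D}{\ell}\,\ell$, with the convention $\legendre{0}{\ell} = 0$, which gives the first formula.

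Then the second formula follows by specialization: with $t = s+1$ we have $D = (s+1)^2 - 4s = (s-1)^2$, so $\legendre{D}{\ell} = 1$ if $s \not\equiv 1 \pmod \ell$ and $D = 0$ if $s \equiv 1 \pmod \ell$, yielding $\gamma_{s,s+1} = \ell^2 + \ell$ in the former case and $\ell^2$ in the latter. (As a sanity check, one may verify $\sum_{t} \gamma_{s,t} = \ell \cdot \ell^2 + \ell \sum_{t \in \F_\ell} \legendre{t^2-4s}{\ell} = \ell^3 + \ell\cdot(-1) = \ell^3 - \ell = |\SL_2(\F_\ell)|$, using that $\sum_{t} \legendre{t^2-c}{\ell} = -1$ for $c \ne 0$.)

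The only mildly delicate point — and the step I would be most careful with — is the $D = 0$ case, where one must remember to include the central matrix $\alpha I$ in addition to the single non-semisimple conjugacy class, and must verify that $\alpha = t/2$ is nonzero (which holds precisely because $s = \alpha^2 \ne 0$); this is where the hypothesis that $\ell$ is odd enters, and it is the case most prone to an off-by-one error. The rest is a routine orbit-counting computation via the orbit–stabilizer theorem applied to the conjugation action of $\GL_2(\F_\ell)$.
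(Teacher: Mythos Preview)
Your proof is correct, but it takes a different route from the paper. The paper argues by brute force on coordinates: writing $M=\begin{pmatrix} a & b \\ c & d\end{pmatrix}$, it fixes $(b,c)$ and counts ordered pairs $(a,d)$ with $a+d=t$ and $ad=s+bc$, which amounts to counting roots of $X^2-tX+(s+bc)$ via the Legendre symbol of $t^2-4s-4bc$, and then sums over $(b,c)\in\F_\ell^2$. You instead observe that the set of $M$ with characteristic polynomial $X^2-tX+s$ is a union of $\GL_2(\F_\ell)$-conjugacy classes and compute each class size by orbit--stabilizer, splitting on whether the polynomial is split, irreducible, or a square. Your approach is more structural and essentially quotes the standard conjugacy-class table of $\GL_2(\F_\ell)$; the paper's approach is more elementary and self-contained, at the cost of a small case analysis over the values of $bc$. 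Both handle the specialization $t=s+1$ identically, and your sanity check $\sum_t\gamma_{s,t}=\ell^3-\ell$ is a nice consistency verification that the paper does not include.
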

	\begin{proof}
		Let $M=\begin{pmatrix}
			a &b \\
			c & d 
		\end{pmatrix}$. Then we have to count $a,b,c,d\in \F_\ell$ such that $ad-bc=s$, $a+d=t$. If we fix $b$ and $c$, we have two choices for $a$ and $d$ if $t^2-4bc-4s$ is a non-zero square, one choice if it is zero, and zero choices if it is not a square. Assume that $t^2-4s$ is not a square. Then, if $bc=0$, $t^2-4bc-4s$ is not a square. If $4bc=t^2-4s$, then we have $1$ choice for $a$ and $d$. In this case, we have $\ell-1$ choices. We have $(\ell-1)/2$ choices for $bc\neq 0$ such that $t^2-4bc-4s$ is a non-zero square. For all of them, we have two choices for $a$ and $d$. Since we have $(\ell-1)^2/2$ choices for $b$ and $c$, we obtain $(\ell-1)^2$ choices. Adding them to the previous case, we obtain in total $(\ell-1)^2+\ell-1=\ell^2-\ell$. The other cases are similar.
		
		If $s\equiv 1\pmod \ell$, then $(s+1)^2-4s\equiv 0\pmod \ell$ and then $\gamma_{s,s+1}=\ell^2$. If not, $(s+1)^2-4s=(s-1)^2$ is a non-zero square modulo $\ell$ and then $\gamma_{s,s+1}=\ell^2+\ell$.
	\end{proof}
	\begin{theorem} \label{asymptotic probability}
		Let $\delta_{p\equiv 1\pmod \ell}$ be equal to $1$ if $p\equiv 1\pmod \ell$ and zero otherwise. Then,
		\[
		\lim_{\substack{p\to\infty}}\mu_{\Z_p^5}(W_{\ell,p})=\frac{1}{\ell-1}-\frac{\delta_{p\equiv 1\pmod \ell}}{\ell^2-1}.
		\]
	\end{theorem}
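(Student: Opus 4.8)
The plan is to combine the Chebotarev computation already recorded in \eqref{eq:limgamma} with the matrix count of Lemma \ref{lemma:countmat}. Fix a residue $s$ coprime to $\ell$ and restrict attention to primes $p\equiv s\pmod\ell$; then by \eqref{eq:limgamma} the quantity $\mu_{\Z_p^5}(W_{\ell,p})$ converges along this sequence of primes to $\gamma_{s,s+1}/\sum_{0\le t\le \ell-1}\gamma_{s,t}$. The denominator has already been identified with $\#\SL_2(\F_\ell)=\ell^3-\ell$, so the only thing left to evaluate is the numerator $\gamma_{s,s+1}$.

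First I would invoke the second part of Lemma \ref{lemma:countmat}, namely $\gamma_{s,s+1}=\ell^2$ when $s\equiv 1\pmod\ell$ and $\gamma_{s,s+1}=\ell^2+\ell$ otherwise. Substituting into the ratio, in the case $s\equiv 1\pmod\ell$ we get
\[
\frac{\ell^2}{\ell^3-\ell}=\frac{\ell}{\ell^2-1}=\frac{1}{\ell-1}-\frac{1}{\ell^2-1},
\]
while in the case $s\not\equiv 1\pmod\ell$ we get
\[
\frac{\ell^2+\ell}{\ell^3-\ell}=\frac{\ell+1}{\ell^2-1}=\frac{1}{\ell-1}.
\]
Both of these are precisely the value $\frac{1}{\ell-1}-\frac{\delta_{p\equiv 1\pmod\ell}}{\ell^2-1}$ for the corresponding residue class, which is the asserted limit. (The symbol $\lim_{p\to\infty}$ in the statement is to be read as the limit over primes $p$ in a fixed congruence class modulo $\ell$, with $s=p\bmod\ell$; the indicator $\delta_{p\equiv 1\pmod\ell}$ is simply the uniform way of packaging the two cases into one formula.)

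I do not expect any real obstacle here: the only substantive inputs are \eqref{eq:limgamma} and Lemma \ref{lemma:countmat}, both proved earlier, together with the elementary identities $\frac{\ell}{\ell^2-1}=\frac{1}{\ell-1}-\frac{1}{\ell^2-1}$ and $\frac{\ell+1}{\ell^2-1}=\frac{1}{\ell-1}$. The only point meriting a comment is the interpretation of the limit: since $\mu_{\Z_p^5}(W_{\ell,p})$ genuinely depends on $p\bmod\ell$ (it takes the two values computed above according to whether $p\equiv 1\pmod\ell$ or not), the limit must be understood within a fixed congruence class, and the formula in the theorem records this two-valued answer in closed form.
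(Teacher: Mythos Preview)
Your argument is correct and follows essentially the same route as the paper: both invoke \eqref{eq:limgamma} together with Lemma \ref{lemma:countmat}, and then simplify the resulting ratio $\gamma_{s,s+1}/(\ell^3-\ell)$. The paper compresses the two cases into the single expression $(\ell^2+\ell-\ell\cdot\delta_{p\equiv 1\pmod\ell})/(\ell^3-\ell)$, whereas you spell out the simplification in each case separately, but the content is identical.
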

	\begin{proof}
		By \eqref{eq:limgamma} and Lemma \ref{lemma:countmat}, 
		\[
		\lim_{\substack{p\to\infty}}\mu_{\Z_p^5}(W_{\ell,p})=\frac{\ell^2+\ell-\ell \cdot \delta_{p\equiv 1\pmod \ell}}{\ell^3-\ell}.
		\]
	\end{proof}
 Notice in particular that this agrees with Theorem \ref{thm:main}.
 
\begin{remark}\label{rmk:l-torsion}
 We briefly describe an algorithm to compute the probability that an elliptic curve over $\Q_p$ has a non-trivial $\Q_p$-rational $\ell$-torsion point for $p,\ell\geq 5$, $p\neq \ell$. The measure of Weierstrass equations with singular reduction modulo $p$ and a $\Q_p$-rational $\ell$-torsion point can be computed in the exact same way as in the case $\ell=3$ treated in Section \ref{sub:bad-reduction}. To compute the measure of Weierstrass equations with non-singular reduction modulo $p$, we need to generalize the result of Lemma \ref{good reduction count} to $\ell\geq 3$. To make this result explicit, we need to know the cardinality of $\Q_p$-rational points on the modular curves $X_{\omega}(\ell)(\F_p)$ and $X_{\omega,1}(\ell)(\F_p)$, which is not explicitly known in general. Following the notation of Section \ref{moduli space section}, we denote by $X_{\omega,1}(\ell)$ and $X_{\omega}(\ell)$ the cardinality of short Weierstrass equation in $\F_p$ that admits a non-trivial $\F_p$-rational $\ell$-torsion point, or full $\ell$-torsion.

 Assume $p\equiv 1\pmod \ell$.
 \begin{itemize}
 \item If $E$ has split multiplicative reduction, then $\ell\mid p-1=\#\Tilde{E}_{ns}(\F_p)$ and then $E$ always has a $\Q_p$-rational non-trivial $\ell$-torsion point. So the measure is given by the probability of having split multiplicative reduction, that is $(p-1)/2p^2$.
 \item If $E$ has non-split multiplicative reduction, then $\ell\nmid p+1=\#\Tilde{E}_{ns}(\F_p)$ and then $E$ does not have a $\Q_p$-rational non-trivial $\ell$-torsion point. Recall that the multiplication by $\ell$ is an isomorphism over $E_1(\Q_p)$ and $E(\Q_p)/E_0(\Q_p)$ has order at most $4$.
 \item If $E$ has additive reduction, then $\ell\nmid p=\#\Tilde{E}_{ns}(\F_p)$. Moreover, $\ell$ does not divide $E(\Q_p)/E_0(\Q_p)$ and so $E$ does not have a $\Q_p$-rational non-trivial $\ell$-torsion point.
 \item If $E$ has good reduction, then there are $\#(X_{\omega}(\ell)(\F_p))/24$ Weierstrass equation in short forms with full torsion and $\#(X_{\omega,1}(\ell)(\F_p))/2-\#(X_{\omega}(\ell)(\F_p))/6$ with $E(\F_p)[\ell]=(\Z/\ell\Z)$.
 \end{itemize}
 In conclusion, 
 \[
 W_{\tors,\ell}=\frac{p^{10}}{p^{10}-1}\left(\frac{\#(X_{\omega,1}(\ell)(\F_p))}{2p^2}-\frac{\#(X_{\omega}(\ell)(\F_p))}{8p^2}+\frac{p-1}{2p^2}\right).
 \]
 Assume $p\not\equiv 1,-1\pmod \ell$, so we can never have full torsion.
 \begin{itemize}
 \item If $E$ has split multiplicative reduction, then $\ell\nmid p-1=\#\Tilde{E}_{ns}(\F_p)$. So, $E$ has a $\Q_p$-rational non-trivial $\ell$-torsion point only if the reduction type is $I_{\ell k}$ for some $k$. The measure is then equal to $(p-1)^2/(2p^{\ell+2}(1-1/p^\ell))$.
 \item If $E$ has non-split multiplicative reduction, then $\ell\nmid p+1=\#\Tilde{E}_{ns}(\F_p)$ and then $E$ does not have a $\Q_p$-rational non-trivial $\ell$-torsion point.
 \item If $E$ has additive reduction, then $\ell\nmid p=\#\Tilde{E}_{ns}(\F_p)$. Moreover, $\ell$ does not divide $E(\Q_p)/E_0(\Q_p)$ and so $E$ does not have a $\Q_p$-rational non-trivial $\ell$-torsion point.
 \item If $E$ has good reduction, then there are $\#(X_{\omega,1}(\ell)(\F_p))/2$ Weierstrass equations in short form with $E(\F_p)[\ell]=(\Z/\ell\Z)$.
 \end{itemize}
In conclusion
 \[
 W_{\tors,\ell}=\frac{p^{10}}{p^{10}-1}\left(\frac{\#(X_{\omega,1}(\ell)(\F_p))}{2p^2}+\frac{(p-1)^2}{2p^2(p^{\ell}-1)}\right).
 \]
Assume $p\equiv -1\pmod \ell$. With a similar strategy, we can prove that
 \[
 W_{\tors,\ell}=\frac{p^{10}}{p^{10}-1}\left(\frac{\#(X_{\omega,1}(\ell)(\F_p))}{2p^2}+\frac{(p-1)^2}{2p^2(p^{\ell}-1)}+\frac{p-1}{2p^2}\right).
 \] 
In particular, these formulas imply that $W_{\tors,\ell}$ is a rational number, a fact that is not a priori obvious.  We remark that using the results of Section 6 and  \cite[Theorem 1]{pannekoek2012p} to handle the case of additive reduction, the same strategy can also be used to prove a similar result in the case $\ell=p$ as well. We omit the details.
 \end{remark}

 \subsection{\texorpdfstring{$\ell$}{}-torsion in families of twists}\label{subsec:twists-l-torsion}
Let $j\in \Q_p$. We want to compute the probability that a Weierstrass equation with $j$-invariant equal to $j$ has a $\Q_p$-rational non-trivial $\ell$-torsion point. This problem is in some sense in between the problem of finding $\ell$-torsion and $\ell$-isogenies. Indeed, as we will show in Corollary \ref{cor:twist}, the probability of having a non-trivial $3$-torsion point in the family of Weierstrass equations with the same $j$-invariant depends only on how many $\Q_p$-rational $3$-isogenies does an element of the family admit. Notice that this problem is quite easy over $\Q_p$ due to the fact that $\Q_p^*/(\Q_p^*)^2$ has only $4$ elements and so elliptic curves with the same $j$-invariant can be divided in $4$ classes, and elements in the same class are isomorphic over $\Q_p$.
\begin{lemma}
Let $\ell\neq p$ be two odd primes with $p\geq 5$.
 Let $j\in \Q_p$ and $E$ be an elliptic curve with $j$-invariant equal to $j$. Let $m$ be the cardinality of the $\Q_p$-rational roots of $\psi_\ell(x)$, the $\ell$-division polynomials of $E$.
 Let $P_{j,\ell}$ be the probability that an elliptic curve with $j$-invariant $j$ has a $\Q_p$-rational non-trivial $\ell$-torsion point. Then
 \[
 P_{j,\ell}=\begin{cases}
 0 \text{ if }m=0,\\
 \frac{1}{4} \text{ if } m=\frac{\ell-1}{2},\\
 \frac{1}{4}+\frac{1}{4}\delta_{(p \not\equiv 1\pmod \ell)} \text{ if } m>\frac{\ell-1}{2}.
 \end{cases}
 \]
 With $\delta_{(p \not\equiv 1\pmod \ell)}$ we mean $1$ if $p \not\equiv 1\pmod \ell$ holds and $0$ otherwise.
\end{lemma}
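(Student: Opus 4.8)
\emph{Proof plan.} Throughout I take $j\neq 0,1728$, so that the elliptic curves over $\Q_p$ with $j$-invariant $j$ are, up to $\Q_p$-isomorphism, exactly the quadratic twists $E^{(d)}$ of the fixed curve $E$ as $d$ ranges over $\Q_p^*/(\Q_p^*)^2=\{1,\nu,p,\nu p\}$; then $P_{j,\ell}$ is $\tfrac14$ times the number of these four classes with $E^{(d)}(\Q_p)[\ell]\neq 0$. (The cases $j=0,1728$, where there are more twists, require a separate but analogous analysis, or may be excluded.) Everything will be read off the mod-$\ell$ representation $\bar\rho=\bar\rho_{E,\ell}\colon G_{\Q_p}=\Gal(\overline{\Q_p}/\Q_p)\to\GL(E[\ell])\cong\GL_2(\F_\ell)$, whose determinant is the mod-$\ell$ cyclotomic character $\chi_{\mathrm{cyc}}$ (unramified, sending a Frobenius to $p\bmod\ell$). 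Since $\bar\rho_{E^{(d)}}=\bar\rho\otimes\chi_d$, where $\chi_d\colon G_{\Q_p}\to\{\pm1\}\subset\F_\ell^*$ is the quadratic character attached to $d$ by Kummer theory, the curve $E^{(d)}$ has a $\Q_p$-rational non-trivial $\ell$-torsion point if and only if $\bar\rho$ admits a Galois-stable line $L$ on which it acts through the character $\chi_d$. Writing $S$ for the set of $d$ for which such an $L$ exists, we get $P_{j,\ell}=|S|/4$, and $S$ is in bijection with the set of characters valued in $\{\pm1\}$ that arise as $\bar\rho|_L$ for a stable line $L$.

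Next I relate $m$ to the stable lines of $\bar\rho$. For a stable line $L$, a point $P\in L\setminus\{0\}$ satisfies $x(P)\in\Q_p$ exactly when $\sigma P=\pm P$ for all $\sigma$, i.e.\ when $\bar\rho|_L$ is valued in $\{\pm1\}$; in that case all $(\ell-1)/2$ pairs $\{P,-P\}\subset L$ give $\Q_p$-rational roots of $\psi_\ell$, and otherwise $L$ contributes none. Distinct stable lines contribute distinct $x$-coordinates, so $m=\tfrac{\ell-1}{2}\,N$, where $N$ is the number of stable lines with $\bar\rho|_L$ valued in $\{\pm1\}$; moreover $|S|$ equals the number of \emph{distinct} such characters. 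A non-scalar matrix in $\GL_2(\F_\ell)$ fixes at most two lines and a scalar fixes all $\ell+1$, so $N\in\{0,1,2,\ell+1\}$, giving $m\in\{0,\tfrac{\ell-1}{2},\ell-1,\tfrac{\ell^2-1}{2}\}$; in particular the three cases $m=0$, $m=\tfrac{\ell-1}{2}$, $m>\tfrac{\ell-1}{2}$ are exhaustive.

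Now the case analysis. If $m=0$ then $N=0$ and $S=\emptyset$, so $P_{j,\ell}=0$. If $m=\tfrac{\ell-1}{2}$ then $N=1$, there is a unique stable line with character in $\{\pm1\}$, so $|S|=1$ and $P_{j,\ell}=\tfrac14$. If $m>\tfrac{\ell-1}{2}$ then $N\geq2$, and by the fixed-point count $\bar\rho$ is either scalar or has exactly two stable lines, both of which must then carry characters valued in $\{\pm1\}$. In the scalar case $\bar\rho=\psi\cdot\mathrm{Id}$ with $\psi$ valued in $\{\pm1\}$, so $\chi_{\mathrm{cyc}}=\det\bar\rho=\psi^2=\mathbf 1$, i.e.\ $p\equiv1\pmod\ell$; all lines carry the single character $\psi$, so $|S|=1$ and $P_{j,\ell}=\tfrac14$. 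In the other case $\bar\rho\cong\psi_1\oplus\psi_2$ with $\psi_1\neq\psi_2$ both valued in $\{\pm1\}$, so $\chi_{\mathrm{cyc}}=\psi_1\psi_2$ is a non-trivial quadratic character, forcing $p\equiv-1\pmod\ell$ (hence $p\not\equiv1\pmod\ell$, as $\ell$ is odd); here $S=\{\psi_1,\psi_2\}$ and $P_{j,\ell}=\tfrac12$. Since $p\equiv1\pmod\ell$ rules out the second scenario (it would force $\psi_2=\psi_1^{-1}=\psi_1$), in the regime $m>\tfrac{\ell-1}{2}$ we obtain $\tfrac14$ precisely when $p\equiv1\pmod\ell$ and $\tfrac12$ otherwise, i.e.\ $\tfrac14+\tfrac14\delta_{(p\not\equiv1\pmod\ell)}$.

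The crux is the second step: identifying $m$ with $\tfrac{\ell-1}{2}N$ — in particular that a single stable line always contributes either its full set of $(\ell-1)/2$ rational $x$-coordinates or none at all — together with, in the range $m>\tfrac{\ell-1}{2}$, exploiting the Weil-pairing constraint $\det\bar\rho=\chi_{\mathrm{cyc}}$ to translate the module structure of $E[\ell]$ into the residue of $p$ modulo $\ell$. The bijection between quadratic twists and twists of $\bar\rho$, the Kummer-theoretic identification of $\Q_p^*/(\Q_p^*)^2$ with quadratic characters of $G_{\Q_p}$, and the $\GL_2(\F_\ell)$ fixed-line count are all standard.
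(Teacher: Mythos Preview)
Your proof is correct and follows essentially the same approach as the paper: both restrict to $j\neq 0,1728$, identify the four quadratic twists, observe that $x(P)\in\Q_p$ iff $\langle P\rangle$ is Galois-stable with a $\{\pm1\}$-valued character, and then use the Weil-pairing constraint $\det\bar\rho=\chi_{\mathrm{cyc}}$ to split the case $m>(\ell-1)/2$ according to $p\bmod\ell$. Your phrasing in terms of the mod-$\ell$ representation and stable lines is a clean repackaging of the paper's explicit coordinate argument (where the paper tracks $y(P)\in\Q_p(\sqrt{v})$ directly), but the logical structure is identical.
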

 \begin{proof}
 We do the case $j\neq 0,1728$, the other cases are similar. Let $E$ be an elliptic curve with $j$-invariant equal to $j$ and defined by the equation $y^2=x^3+ax+b$. Any other elliptic curve with $j$-invariant equal to $j$ is isomorphic over $\Q_p$ to an elliptic curve $E_u$ defined by the equation $y^2=x^3+au^2x+bu^3$ for $u\in \{1,\nu ,p,\nu p\}$ where $\nu $ is a non-square in $\Z_p^*$. The isomorphism over $\overline{\Q_p}$ from $E_u$ to $E_{u'}$ is given by the map $(x,y)\to ((u/u')x,(u/u')\sqrt{u/u'}y)$. Notice in particular that $x(P)$ is $\Q_p$-rational if and only if it is $\Q_p$-rational after the twist. Assume that $\psi_\ell$ does not have any $\Q_p$-rational solution. Then every non-zero $\Q_p$-rational $\ell$-torsion point of $E$ is of the form $(x(P),y(P))$ for $x(P)\notin \Q_p$. Hence, each non-zero $\Q_p$-rational $\ell$-torsion point of $E_u$ is of the form $(ux(P),u\sqrt{u}y(P))$ for $x(P)\notin \Q_p$. Thus, $E_u$ does not have a $\Q_p$-rational non-trivial $\ell$-torsion point for all $u$.
 
 Assume now that $\psi_\ell (x(P))=0$ with $x(P)\in \Q_p$ for a point $P$. Then $\psi_\ell$ has at least $(\ell-1)/2$ $\Q_p$-rational zeros (given by $x(nP)$ for $n=1,\dots ,(\ell-1)/2$). Assume that there are exactly $(\ell-1)/2$ $\Q_p$-rational zeros. Moreover, $y(P)\in\Q_p(\sqrt{v})$ with $v\in \{1,\nu ,p,\nu p\}$ and $y(iP)\in \Q_p(\sqrt{v})$ for all $i$. In particular, since $y(iP)^2=x(iP)^3+ax(iP)+b$ and $x(iP)\in \Q_p$, then $y(iP)\sqrt{v}\in \Q_p$. So, $P$ is a $\Q_p$-rational non-trivial $\ell$-torsion point for $E_v$ and it is not $\Q_p$-rational for $u\neq v \in\{1,\nu ,p,\nu p\}$. In this case, we can easily conclude that $P_{j,\ell}=1/4$.
 
 Assume now that $\psi_\ell$ has more than $(\ell-1)/2$ zeros. Then $E[\ell]=\chi_u\oplus \chi_{u}\chi_{cycl,\ell}$ for $u\in \{1,\nu ,p,\nu p\}$ and $\chi_{cycl,\ell}$ is the cyclotomic character associated with the $\ell$-th root of unity. If $\chi_{cycl,\ell}$ is trivial (that is, $p\equiv 1\pmod \ell$), then each non-trivial $\Q_p$-rational $\ell$-torsion point in $E$ is of the form $(x(P),y(P))$ with $x(P)\in \Q_p$ and $y(P)\in\sqrt{u}\Q_p$. So, $E_u$ has full torsion and $E_v$ does not have any $\Q_p$-rational non-trivial $\ell$-torsion point for $v\neq u\in \{1,\nu ,p,\nu p\}$. In this case, we conclude that $P_{j,\ell}=1/4$. If $\chi_{cycl,\ell}$ is non-trivial (that is, $p\not\equiv 1\pmod \ell$), then $E[\ell]=\chi_u\oplus \chi_{u\zeta_\ell}$ where $\zeta_\ell$ is a primitive $\ell$-th root of unit and notice that $u$ and $u\zeta_\ell$ belong to different classes of $\Q_p^*/(\Q_p^*)^2$. Proceeding as before, we have that $E_u$ and $E_{u\zeta_\ell}$ have a $\Q_p$-rational non-trivial $\ell$-torsion point and $E_v$ does not have a $\Q_p$-rational non-trivial $\ell$-torsion point for the other two classes. In this case, $P_{j,\ell}=1/2$.
 \end{proof}

 \begin{corollary}\label{cor:twist}
 Let $p\geq 5$. Let $j\in \Q_p$ and $E$ be an elliptic curve with $j$-invariant equal to $j$. Let $n$ be the cardinality of $\Q_p$-rational $3$-isogenies of $E$. Then,
 \[
 P_{j,3}=\begin{cases}
 0 \text{ if }n=0,\\
 \frac 14 \text{ if } n=1,\\
 \frac 12 \text{ if } n=2,\\
 \frac 14\text{ if }n=4.
 \end{cases}
 \]
 \end{corollary}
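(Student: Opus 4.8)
The plan is to deduce this directly from the preceding lemma, which computes $P_{j,\ell}$ in terms of $m$, the number of $\Q_p$-rational roots of the $\ell$-division polynomial $\psi_\ell$. For $\ell=3$ it therefore suffices to show that $m$ coincides with $n$, the number of $\Q_p$-rational $3$-isogenies of $E$, and then to match up the three cases of the lemma (with $(\ell-1)/2=1$) with the four cases of the corollary. First I would recall that $\psi_3(x)$ is separable of degree $4$ over $\overline{\Q_p}$: its four roots are the $x$-coordinates of the four pairs $\{R,-R\}$ of non-zero points of $E[3]\cong(\Z/3\Z)^2$, and distinct pairs give distinct $x$-coordinates since $x(R)=x(R')$ forces $R'=\pm R$.

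Next I would identify the $\Q_p$-rational roots of $\psi_3$ with the $\Q_p$-rational $3$-isogenies. A root $x(R)$ is $\Q_p$-rational exactly when $\mathrm{Gal}(\overline{\Q_p}/\Q_p)$ fixes $x(R)$, equivalently when every element of the Galois group sends $R$ to $\pm R$, equivalently when the order-$3$ subgroup $\langle R\rangle$ is Galois-stable — which is precisely the condition that $\langle R\rangle$ be the kernel of a $\Q_p$-rational $3$-isogeny (compare the proof of Lemma \ref{3-isogeny lemma}). Since the four pairs $\{R,-R\}$ biject with the four cyclic subgroups of order $3$ of $E$ over $\overline{\Q_p}$, this gives $m=n$.

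Finally I would read off the table. By the preceding lemma, $n=m=0$ gives $P_{j,3}=0$ and $n=m=1$ gives $P_{j,3}=1/4$. If $n=m>1$, then Lemma \ref{multiple isogeny lemma} forces $n=2$ when $p\equiv 2\pmod 3$ and $n=4$ when $p\equiv 1\pmod 3$; hence $n=2$ implies $p\not\equiv 1\pmod 3$, so the lemma yields $P_{j,3}=\tfrac14+\tfrac14=\tfrac12$, while $n=4$ implies $p\equiv 1\pmod 3$, so the lemma yields $P_{j,3}=\tfrac14$. This is exactly the claimed table; the cases $j=0$ and $j=1728$ go through in the same way using the corresponding parts of the preceding lemma. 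I expect the only mildly delicate point to be the verification that $m=n$ — namely the separability of $\psi_3$ and the bijection between $x$-coordinates of $3$-torsion pairs and order-$3$ subgroups — but this is standard; everything else is bookkeeping against the two lemmas already established.
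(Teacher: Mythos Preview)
Your proposal is correct and follows essentially the same route as the paper's proof: both establish the bijection between $\Q_p$-rational roots of $\psi_3$ and $\Q_p$-rational $3$-isogenies (so $m=n$), then read off the cases from the preceding lemma, invoking the dichotomy of Lemma~\ref{multiple isogeny lemma} (equivalently, the structure $E[3]=\chi_u\oplus\chi_u$ versus $\chi_u\oplus\chi_v$ depending on $p\bmod 3$) to pin down whether $n=2$ or $n=4$ in the case $m>1$. Your write-up is slightly more explicit about why $m=n$ (separability of $\psi_3$, the bijection between pairs $\{R,-R\}$ and cyclic subgroups), but this is the only difference.
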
 
 \begin{proof}
 Notice that if $x(P)$ is a $\Q_p$-rational root of $\psi_3$ for $P\in E[3](\overline{\Q_p})$, then the isogeny with kernel $\{P,2P,0_E\}$ is $\Q_p$-rational. Assume that $n=0$. Then $\psi_3$ does not have any $\Q_p$-rational root, since otherwise $E$ would have a $\Q_p$-rational isogeny. So, $P_{j,3}=0$. If $n=1$, then $\psi_3$ has exactly one $\Q_p$-rational root (given by $x(P)$ with $P$ in the kernel of the isogeny). So, $P_{j,3}=1/4$. 

 It remains the case $n>1$. If $p\equiv 1\pmod 3$, then $E[3]=\chi_u\oplus\chi_u$ for $u \in \{1,\nu ,p,\nu p\}$, $E$ admits $4$ different $\Q_p$-rational isogenies and $\psi_3$ has $4$ $\Q_p$-rational roots. Then, $n=4$ and $P_{j,3}=1/4$. If $p\equiv 2\pmod 3$, then $E[3]=\chi_u\oplus\chi_v$ for $\chi_u\neq \chi_v$ and $E$ admits $2$ different $\Q_p$-rational isogenies. So, $n=2$ and $P_{j,3}=1/2$.
 \end{proof}

 \bibliographystyle{plain}
 \bibliography{references}
Stevan Gajovi\'{c}, MPIM Bonn; Charles University Prague, Faculty of Mathematics and Physics, Department of Algebra; Mathematical Institute MISANU\\
\textit{E-mail address}: \url{stevangajovic@gmail.com}\\
 Lazar Radi\v{c}evi\'{c}, King's College London, Mathematical Institute MISANU\\
\textit{E-mail address}: \url{lazaradicevic@gmail.com}\\
 Matteo Verzobio, IST Austria, Am Campus 1, 3400 Klosterneuburg, Austria\\
\textit{E-mail address}: \url{matteo.verzobio@gmail.com}
\end{document}